\date{}
\newtheorem{proposition}{Proposition}[section]
\newtheorem{theorem}[proposition]{Theorem}
\newtheorem{lemma}[proposition]{Lemma}
\newtheorem{definition}[proposition]{Definition}
\newtheorem{corollary}[proposition]{Corollary}
\def\der{\partial }
\def\nFM0{{\nu }_{F,M_0}}
\def\nFN0{{\nu }_{F,N_0}}
\def\nGN0{{\nu }_{G,N_0}}
\def\N0{ {\bf N}_0 }
\def\t{\otimes}
\def\g{\gamma}
\def\ra{\rightarrow}
\def\Xpm{X^{\pm }}
\def\s{\sigma}
\def\Z{\mathbb{Z}}
\def\l1{{\lambda}_1}
\def\a{\alpha}
\def\a0{ {\alpha }_0}
\def\a1{ {\alpha }_1}
\def\l{\lambda}
\def\o{\omega}
\def\nFGM0{{\nu }_{F,G,M_0}}
\def\nFN0{{\nu}_{F,N_0}}
\def\sm{{\sigma}^m}
\def\sm1{{\sigma}^{-1}}
\def\smtp1{{\sigma}^{-t+1}}
\def\o{\omega }
\def\S1{S^{-1}}
\def\Xpm1{X^{\pm 1}_1}
\def\sPM1{{\sigma }^{\pm 1}}
\def\sMP1{{\sigma }^{\mp 1 }}
\def\b{\beta}
\def\d{\delta}
\def\di{{\rm d.ind}}
\def\L{\Lambda}
\def\O{\Omega}
\def\CA{{\cal A}}
\def\CD{{\cal D}}
\def\Ytm1{Y^{t-1}}
\def\Yim1{Y^{i-1}}
\def\CK{{\cal K}}
\def\CM{{\cal M}}
\def\CN{{\cal N}}
\def\CF{{\cal F}}
\def\CG{{\cal G}}
\def\CH{{\cal H}}
\def\ass{{\rm ass}}
\def\CZ{{\cal Z}}
\def\Aut{{\rm Aut}}
\def\bA{\overline{A}}
\def\dim{{\rm dim }}
\def\char{{\rm char }}
\def\ker{ {\rm ker } }
\def\gr{ {\rm gr} }
\def\gcd{ {\rm gcd } }
\def\Ev{ {\rm Ev} }
\def\SL2Z{ {\rm SL}_2({\bf Z}) }
\def\CZ{ {\cal Z}}
\def\Gp1{ G^{1 , 1 } }
\def\P11{ P^{-1 , 1 } }
\def\Pp1{ P^{1 , 1 } }
\def\lcm{{\rm lcm }}
\def\CE{{\cal E}}
\def\nCLsr{{}^\nu\kern-2pt {\cal L}^{\sigma , \rho  }}
\def\nP{{}^\nu \kern-2pt P}
\def\nL{{}^\nu\kern-2pt L}
\def\nLL{{}^\nu\kern-2pt \Lambda}
\def\nPsr{{}^\nu\kern-2pt P^{\sigma , \rho  }}
\def\nLsr{{}^\nu\kern-2pt L^{\sigma , \rho  }}
\def\nuCL{{}^\nu\kern-2pt  {\cal L}}
\def\nCLsr{{}^\nu\kern-2pt {\cal L}^{\sigma , \rho  }}
\def\nCL1m{{}^\nu\kern-2pt {\cal L}^{-1 , 1  }}
\def\x1nu{x^\frac{1}{\nu}}
\def\xm1nu{x^{-\frac{1}{\nu}}}
\def\rad{{\rm rad}}
\def\bF{\overline{F}}
\def\CN{{\cal N}}
\def\ra{\rightarrow }
\def\CB{{\cal B}}
\def\lcm{{\rm lcm}}
\def\CI{{\cal I}}
\def\CC{ {\cal C}}
\def\CE{ {\cal E} }
\def\CH{ {\cal H}}
\def\nAM0{{\nu }_{{\cal A},M_0}}
\def\nAN0{{\nu }_{{\cal A},N_0}}
\def\End{ {\rm End }}
\def\det{ {\rm det }}
\def\ga{\mathfrak{a}}
\def\gb{\mathfrak{b}}
\def\gn{\mathfrak{n}}
\def\gm{\mathfrak{m}}
\def\gp{\mathfrak{p}}
\def\gq{\mathfrak{q}}
\def\gr{\mathfrak{r}}
\def\GL{{\rm GL}}
\def\SL{{\rm SL}}
\def\Spec{{\rm Spec}}
\def\di!{\frac{\der^i}{i!}}
\def\dik!{\frac{\der^k_i}{k!}}
\def\hA{\widehat{A}}
\def\Fp{\mathbb{F}_p}
\def\gl{\mathfrak{l}}
\def\Max{{\rm Max}}
\def\N{\mathbb{N}}
\def\0{\overline{0}}
\def\1{\overline{1}}
\def\Ln1{\L_{n,\overline{1}}}
\def\a1{a_{\overline{1}}}
\def\S{\Sigma}
\def\vn1{\overrightarrow{n-1}}
\def\im{{\rm im}}
\def\gl{{\rm gl}}
\def\sl{{\rm sl}}
\def\mA{\mathbb{A}}
\def\soc{{\rm soc}}
\def\Inn{{\rm Inn}}
\def\mJ{\mathbb{J}}
\def\mI{\mathbb{I}}
\def\ann{{\rm ann}}
\def\mM{\mathbb{M}}
\def\mT{\mathbb{T}}
\def\mX{\mathbb{X}}
\def\mU{\mathbb{U}}
\def\Out{{\rm Out}}
\def\K1{{\rm K}_1}
\def\mK{\mathbb{K}}
\def\mY{\mathbb{Y}}
\def\hmI1{\widehat{\mI_1}}
\def\tmI1{\widetilde{\mI_1}}
\def\tmJ1{\widetilde{\mJ_1}}
\def\hB1{\widehat{B_1}}
\def\hCB1{\widehat{\CB_1}}
\def\Den{{\rm Den}}
\def\Ore{{\rm Ore}}
\def\Den{{\rm Den}}
\def\ga{\mathfrak{a}}
\def\tor{{\rm tor}}
\def\sl2{\mathfrak{sl}_2}
\def\gl2{\mathfrak{gl}_2}
\def\mK{\mathbb{K}}
\def\Prim{{\rm Prim}}
\def\res{{\rm res}}
\def\Irr{{\rm Irr}}
\def\b1{\overline{1}}
\def\Z{\mathbb{Z}}
\def\mR{\mathbb{R}}
\def\Deg{{\rm Deg}}
\def\Ind{{\rm Ind}}
\def\fC{{\mathfrak{C}}}
\def\fCK{{\mathfrak{C}}(K)}
\def\fCdK{{\mathfrak{C}}_d(K)}
\def\gl{{\mathfrak{l}}}
\begin{document}

\author{V. V. \  Bavula 
}

\title{Classifications of  prime ideals and simple modules of the quantum Weyl algebra $A_1(q)$ ($q$ is a root of unity)}

\maketitle

\begin{abstract}
This paper consists of three parts: (I) To develop general theory of a (large) class of central simple finite dimensional  algebras and answering some natural questions about them (that  in general situation it is not even clear how to approach them, and the Brauer group is a step in the right directions), (II) To introduce and develop general theory of a  large class of rings,  PLM-rings,  (intuitively, they  are  the most general form of Quillen' Lemma), and (III) to apply these results for the following four classic algebras that turned out to be PLM-rings.  
 Let $K$ be an arbitrary field and $q\in K\backslash \{ 0,1\}$ be a primitive $n$'th root of unity. Classifications of 
prime, completely prime,  maximal and  primitive 
ideals,  and simple modules  
are obtained for the quantum Weyl algebra $A_1(q)=K\langle x,y \, | \,  xy-qyx =1\rangle$,  
  the skew polynomial algebra $\mA = K[h][x;\s ]$,  the  skew Laurent polynomial algebras $\CA :=  K[h][x^{\pm 1};\s ]$,  and   $\CB :=  K[h^{\pm 1}][x^{\pm 1};\s ]$   where $\s (h) = qh$. The quotient rings (of fractions) of  prime factor algebras of the algebras $A_1$, $\mA$, $\CA$,  and $\CB$  are explicitly described. Each quotient ring is a  central simple finite dimensional algebra, i.e. isomorphic to the matrix algebra $M_d(D)$ for some $d\geq 1$ and  a  central simple division algebra $D$. The division algebra $D$ is either a finite field extension of $K$ or a {\em cyclic} algebra.
   These descriptions are a key fact in the classifications of  prime ideals, completely prime ideals, and simple modules  for the algebras above. For each simple module its basis, dimension, and endomorphism algebra are given.

   Explicit descriptions are obtained for the  automorphism groups of the algebras  $A_1$, $\mA$, $\CA$,  and $\CB$.\\

{\em Key Words:   quantum  Weyl algebra,  division algebra,  cyclic algebra,  skew polynomial ring,  skew Laurent polynomial ring,  Galois group,  norm, matrix norm,  prime ideal,  primitive ideal,  maximal ideal,  completely prime ideal,  simple module, automorphism group. }

 {\em Mathematics subject classification
 2020: 16D70, 16D60, 16K20, 16S36, 16S32, 16D25, 16W20.}

$${\bf Contents}$$
\begin{enumerate}
\item Introduction.
\item The algebras $\CE = (E(s), \s , a)$. 
\item PLM-rings and Quillen's Lemma.
\item   Classifications of  prime ideals and simple modules  of the  algebras  $\mA$, $\CA$,  and $\CB$.
\item Classifications of  prime ideals  and simple modules of the quantum Weyl algebra $A_1(q)$.
\item Automorphism groups of the  algebras $\mA$, $A_1$, $\CA$, and $\CB$.

\end{enumerate}
\end{abstract}


\section{Introduction}\label{INTR}

This paper completes the study of the Weyl algebra $A_1(1)$ and its quantum analogues $A_1(q)$ as far as description of their automorphisms groups and classifications of prime, primitive, completely prime ideals, and simple modules are concerned over {\em arbitrary field}.\\

{\bf The Weyl algebra $A_1(1)$ and algebras $\mA (1)$ and  $\CA (1)$.}  If the field $K$ has characteristic zero  then the Weyl algebra $A_1(1):=K\langle x,y \, | \,  xy-yx =1\rangle$ is a central simple Noetherian domain. 
Over the field of complex numbers, simple $A_1$-modules were classified by Block, \cite{Bl} and over an arbitrary field of characteristic zero by Bavula \cite{Bav-UkrMathJ-92, Bav 2, Bav 3, Bav 5} by using approach of generalized Weyl algebras. For  an {\em algebraically closed} field $K$ of characteristic $p>0$, it is not difficult to show (by using Quillen's Lemma) that   every primitive ideal of the Weyl algebra $A_1(1)$ is of the form $(x^p-\l , y^p-\mu )$ for unique $\l , \mu \in K$ and the factor algebra $A_1(1)/(x^p-\l , y^p-\mu )$ is isomorphic to the matrix algebra $M_p(K)$,  and so every simple $A_1(1)$-module is $p$-dimensional. In prime characteristic, $A_1$-modules were studied by Tsuchimoto \cite{Tsuchi'03, Tsuchi'05},   Belov-Kanel and  Kontsevich
 \cite{Bel-Kon05JCDP},  and Bavula \cite{JC2n-DPn-Bavula}.
 
 Recently, for an arbitrary field $K$ of prime characteristic,    Bavula \cite{Bav-SpecWeylcharp} obtained classifications of prime, completely prime,   and  primitive 
ideals,  and simple modules 
 for the Weyl algebra $A_1(1)$,  
  the skew polynomial algebra $\mA (1):= K[h][x;\s ]$,  and the  skew Laurent polynomial algebra $\CA(1) :=  K[h][x^{\pm 1};\s ]$  where $\s (h) = h-1$. The quotient rings (of fractions) of  prime factor algebras of the algebras $A_1(1)$, $\mA (1)$,  and $\CA (1)$  are described. They are either fields or matrix algebras over {\em fields} or {\em cyclic} division algebras (i.e. central simple  finite dimensional division algebras with  maximal subfield which is a Galois extension of the centre of the algebra and the Galois group is a cyclic group).   These descriptions are a key fact in the classifications of completely prime ideals and simple modules  for the algebras above. Surprisingly,  the {\em general case} of central simple algebras does not appear, i.e. the matrix algebras $M_n(D)$ over division rings $D$ that are not fields  and $n\geq 2$. But for the quantum analogues of the algebras above the general case {\em does appear} and in order to treat it  a new machinery is developed.
  
  In characteristic zero  Dixmier  \cite{Dix}, and in prime characteristic   Makar-Limanov \cite{Makar-Limanov-1984}, gave an explicit set of  generators for  the automorphism group $\Aut_K (A_1(1))$ (see also \cite{Bav-AutWeylCharp} for more results on $\Aut_K (A_1(1))$ in prime characteristic).\\

{\bf The quantum Weyl algebra $A_1(q)$ and algebras $\mA$, $\CA$, and $\CB$.} In this paper, module means a left module; 
 $K$ is an arbitrary field and  $K^\times =K\backslash \{ 0\}$; $q\in K$ is a {\bf primitive $n$'th root of unity};  $\Fp=\Z /p\Z$ is the field with $p$ elements where $p$ is a prime number; algebra means a unital $K$-algebra; $$A_1=A_1(q):=K\langle x,y \, | \,  xy-qyx =1\rangle,$$   is the (first) {\bf quantum Weyl algebra}; $$\mA = K[h][x;\s ]$$ is  a skew polynomial algebra  where $K[h]$ is a polynomial algebra in a variable $h$ and  the automorphism $\s$ of $K[h]$ is given by the rule $\s (h) = qh$; and $$\CA = K[h][x, x^{-1};\s ]\;\; {\rm  and}\;\; \CB :=  K[h^{\pm 1}][x^{\pm 1};\s ]$$    are skew Laurent polynomial algebras. The four algebras are Noetherian domains of Gelfand-Kirillov dimension 2.  We identify the algebra $\mA$ with its image under the algebra monomorphism:
\begin{equation}\label{qmAAWy}
\mA \ra A_1, \;\; h\mapsto y x+\frac{1}{q-1} , \;\; x\mapsto x.
\end{equation}
It follows that 
\begin{equation}\label{qmAAWy1}
\mA \subset A_1\subset \CA=\mA_x = A_{1,x}\subset \CB = \mA_{x, h} =  A_{1,x, h}
\end{equation}
where the algebras $\mA_x$ and  $ A_{1,x}$ are the localizations of the algebras $\mA$ and  $ A_1$ at the powers of the element $x$, respectively, and the algebras $\mA_{x, h}$   and  $ A_{1,x, h}$  are the localizations of the algebras $\mA$ and  $ A_1$ at the Ore set $S$ which is generated by the elements $h$ and $x$ as a monoid, respectively.  The quantum Weyl algebra $A_1$ is neither left nor right finitely generated $\mA$-module.\\

{\bf (I) The algebras $\CE =(E(s),\s, a)$.}
Let $F$ be a field that contains the primitive $n$'th root of unity $q$,  $F[h]$ be a polynomial algebra in the variable $h$, $\s$ be an $F$-automorphism of $F[h]$ which is given by the rule $\s (h)=qh$, and $s,a\in F$. The algebra $\CE =(E(s),\s, a)$ is an $F$-algebra that is generated by the elements $h$ and $x$ subject to the defining relations:
$$xh=qhx, \;\; h^n=s, \;\; {\rm and}\;\; x^n=a.$$
The algebra $\CE$ has dimension $n^2$ over the field $F$ and contains the commutative subalgebra $E=E(s):=F[h]/(h^n-s)$, and $\s$ is also an $F$-automorphism of the algebra $E(s)$. Generically, the quotient ring (of fractions) of a prime factor algebra of any of the algebras $A_1$, $\mA$, $\CA$, or $\CB$ belong to this class. So, generically,  classifications of completely prime ideals, primitive ideals, and simple modules for the four algebras are reduced to answering the following questions for the algebra $\CE$:
\begin{itemize}
\item {\bf Q1}. {\em  Give a criterion for the algebra $\CE$ to be a central simple algebra.}

\item {\bf Q2}. {\em Suppose that the algebra $\CE$ is a central simple algebra, i.e. $\CE\simeq M_m(D)$ for some natural number $m=m(\CE)\geq 1$ and a central division algebra $D=D(\CE)$. Give an explicit formula for the number $m$ and an explicit description of the division algebra $D$.}

\item {\bf Q3}. {\em For a central simple algebra $\CE$ give an explicit construction for a unique (up to isomorphism) simple $\CE$-module.}
\end{itemize}

 Theorem \ref{qA2Sep20}.(3a) and Theorem \ref{qA2Sep20}.(4) are criteria for the algebra $\CE$ be a simple and cyclic algebra, respectively.    Theorem \ref{qA2Sep20}.(5) is a criterion for the cyclic algebra $\CE$ to be isomorphic to the matrix algebra $M_n(F)$ over a field $F$.

Any central simple finite dimensional algebra  $A$ is isomorphic to a matrix algebra $M_n(D)$ over a division algebra $D$ which is also a  central simple finite dimensional algebra. The number $n$ and the  division algebra $D$ are  unique (up to isomorphism). For a given algebra $A$ it is not an easy task to find the pair $(n,D)$ or more generally to present the algebra $A$ as a matrix algebra  $M_m(B)$ where $B$ is central simple finite dimensional algebra. Theorem \ref{MnK-char} is a criterion for $A\simeq M_n(K)$ where $K$ is a field. 
Corollary \ref{MnK-char1}  presents sufficient conditions  for a central simple algebra $A$  to be isomorphic to the matrix algebra $M_n(B)$ for a central simple algebra  $B$. 

As far as the algebras $\CE$ are concerned there are two distinct cases either $s\neq 0$ and $a\neq 0$ or otherwise either $s= 0$ or $a=0$. The second case is easy and is considered in Lemma \ref{a25Jun23} where classifications of  prime ideals and simple modules for the algebra $\CE$ are obtained. So, we assume that $s\neq 0$ and $a\neq 0$. In this case the algebra $\CE$ is a cenral simple algebra.  Theorem \ref{NF-qA2Sep20} shows that   $\CE\simeq M_{m(s)}(\CE')$ where $\CE'$ is an explicit  cyclic algebra and Corollary \ref{Xa27Jun23} shows that   $\CE'\simeq M_{m(s,a)}(\CE'')$ where $\CE''$ is an explicit  cyclic algebra.

 Theorem \ref{12Jun23} presents every  algebra $\CE $ as a matrix algebra $M_m(D)$ over a division algebra $D$, i.e. the numbers $m$ and $d=\Deg (D)$ are described (where $\Deg (A)$ is the degree of a central simple algebra $A$), the division ring $D$ is realized as an explicit  subalgebra of the matrix algebra $M_d(E)$   where $d=\Deg (D)$. The division algebra $D$ is the fixed algebra of an explicit automorphism of the matrix algebra $M_d(E)$ (Theorem \ref{12Jun23}.(2)).
  An expression for the  natural number $m$  (Theorem \ref{12Jun23}.(1)) is given via the concept of the {\em matrix $(\s,i)$-norm} of a matrix $\mY\in M_d(E)$,
 $$N^{\s, i}_{M_d(E)} : M_d(E)\ra M_d(E), \;\; \mY\mapsto \mY^{\s^{i-1}}\cdots  \mY^\s \mY, \;\; i\geq 1,$$
which is  introduced in (\ref{xu=usxe4}). It is a matrix generalization of the field norm $$N_{E/F}: E\ra F\subseteq E, \;\; e\mapsto \prod_{j=0}^{n-1}\s^j (e)$$ since   $N^{\s, n}_{M_1(E)}=N_{E/F}.$
 Proposition \ref{A21Jun23} describes properties of the matrix $(\s,i)$-norm. There is a strong connection between  $N^{\s, i}_{M_d(E)}$ and $N_{E/F}$, e.g.
 $$\det \, N^{\s, n}_{M_d(E)}=N_{E/F}\, \det\;\; \;\;{\rm (Proposition \, \ref{A21Jun23}.(1))}.$$
  Lemma \ref{a14Jun23} gives an explicit description of the unique simple $\CE$-module.  
  
  The automorphism group $\Aut_F(D)$ is presented as a factor group of two explicit subgroups of $\Inn (M_d(E))$.
   Corollary \ref{b17Jun23} is a criterion for the algebra $\CE$ to be a division algebra.
  Corollary \ref{a12Jun23} gives a sufficient condition for the algebra $\CE$ to be a division algebra via the norm of the field extension $E/K$. Proposition \ref{c17Jun23} is a criterion for $\CE\simeq M_n(F)$ where $F$ is a field.
 
  If $\Deg(\CE)=\prod_{i=1}^dp_i^{n_i}$, where $p_i$ are distinct prime numbers and $n_i\geq 1$,  then the algebra  $$\CE =\bigotimes_{i=1}^d \CE_i$$ is a tensor product of its explicit  subalgebras $\CE_i$ with $\Deg(\CE_i)=p_i^{n_i}$  (Proposition \ref{A31May23}). \\

{\bf (II) PLM-rings, general picture and motivations, \cite{PLM-rings}.} In the Representation Theory of algebras,  the fact (when it is true) that {\em `the central elements of an algebra $A$ over an  algebraically closed field  act as scalars on every simple $A$-module $M$'}  is  of great importance. The  Representation Theory with this property is much more easy than without it and  is a key fact in classification of various classes of simple modules. When the ground field is not necessarily algebraically closed,  the direct generalization of the fact above is  the condition that 
\begin{eqnarray*}
 {\bf (*)} & \emph{ the image of every  central element of $A$ in the endomorphism algebra $\End_A(M)$ of each simple}\\
 & \emph{$A$-module $M$ is an algebraic element, i.e. the image of the centre of $A$ is an algebraic algebra}. 
\end{eqnarray*}
In particular, the kernel of the homomorphism $Z(A)\ra \End_A(M)$ is a {\em maximal} ideal of the centre $Z(A)$. This is the defining property for PLM-rings. Therefore, the PLM-rings is the most general form of this very useful phenomenon (that `central elements acts as scalars') and  of Quillen's Lemma \cite{Quillen'sLemma} that states that the condition (*) holds for all algebras that admit an ascending $\N$-filtration such that the associative graded algebra is a finitely generated commutative algebra.

Let $R$ be a ring, $\Spec (R)$ and $\Max (R)$ be the sets of prime and  maximal ideals of $R$, respectively, and $\Prim (A)$ be the set of primitive ideals of $R$.  The {\bf restriction map}
\begin{equation}\label{SpecRZR}
\Spec (R)\ra \Spec (Z(R)), P\mapsto P\cap Z(R)
\end{equation}
is an well-defined map. 
The map 
\begin{equation}\label{SpecRZR1}
\Prim (R)\ra \Spec (Z(R)), P\mapsto P\cap Z(R)
\end{equation}
is called the {\bf primitive restriction map}. 
\begin{definition}
The images of the restriction and  primitive restriction maps are called  the {\bf central locus} and  the {\bf primitive locus} of $R$ and are denoted by ${\rm ZL} (R)$ and ${\rm PL} (R)$, respectively. 
\end{definition}
\begin{eqnarray*}
\Spec (R)&=&\coprod_{\gp\in {\rm ZL}(R)}\Spec (R,\gp)\;\; {\rm where}\;\; \Spec (R,\gp):=\{P\in \Spec (R)\, | \, P\cap Z(R)=\gp\},\\
\Prim (R)&=&\coprod_{\gp\in {\rm PL}(R)}\Prim (R,\gp)\;\; {\rm where}\;\; \Prim (R,\gp):=\{P\in \Prim (R)\, | \, P\cap Z(R)=\gp\}.
\end{eqnarray*}

\begin{definition}
The ring $R$ is called a {\bf  PLM-ring} if ${\rm PL} (R)\subseteq \Max (Z(R))$, that is for every simple left $R$-module $M$, $\ann_R(M)\cap Z(A)\in \Max (Z(A))$ (PLM-ring is the abbreviation of `prime locus maximal ring'). 
\end{definition}

All commutative rings are PLM-rings. All rings such that the centre  is a field are PLM-rings. In particular, all simple rings are PLM-rings, e.g.,  rings of differential operators on regular affine algebras over a perfect field (e.g., a field of zero characteristic). An algebra $A$ over a field $F$ is called an {\bf algebraic algebra} if each element of $A$ is algebraic over $F$, i.e. a `root' of a monic polynomial over the field $F$. 

\begin{proposition}\label{B30Jul23}
The following classes of algebras are PLM-algebras:
\begin{enumerate}
\item Algebras such that the endomorphism algebra of each simple module is  an algebraic algebra. 
\item  Algebras that admit an ascending $\N$-filtration such that the associative graded algebra is a finitely generated commutative algebra.
\item The universal enveloping algebra $U(\CG)$ of  a finite dimensional Lie algebra.

\end{enumerate}
\end{proposition}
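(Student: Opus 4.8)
The statement to prove is Proposition \ref{B30Jul23}, that each of the three listed classes of algebras is a PLM-algebra, i.e. for every simple left module $M$ the ideal $\ann_R(M)\cap Z(R)$ is a maximal ideal of $Z(R)$. The key observation organizing the whole proof is that being PLM is controlled entirely by the action of the centre on simple modules, so the natural strategy is to show that each class satisfies condition $(*)$ from the introduction — that the image of $Z(R)$ in $\End_R(M)$ is an algebraic algebra over the ground field — and then deduce maximality of the contracted annihilator from a short commutative-algebra argument. That reduction argument should be isolated first: if $M$ is simple then $D:=\End_R(M)$ is a division algebra by Schur's Lemma, the composite $Z(R)\ra D$ has image a commutative domain lying in the centre of $D$, and if every element of this image is algebraic over $F$ then the image is a field; hence its kernel, which is exactly $\ann_R(M)\cap Z(R)$, is a maximal ideal of $Z(R)$. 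This establishes part (1) directly and simultaneously shows parts (2) and (3) reduce to verifying $(*)$.

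**Part (2).** This is precisely Quillen's Lemma \cite{Quillen'sLemma} as recalled in the introduction: if $R$ admits an ascending $\N$-filtration $\{R_i\}$ with $\Gr(R)$ a finitely generated commutative $F$-algebra, then for any simple $R$-module $M$ the endomorphism division algebra $\End_R(M)$ is algebraic over $F$. I would simply cite this, noting that a filtered module $M$ (take the filtration induced by generating $M$ by one element over a finite-dimensional piece, or $0\subseteq M$ with jumps) has $\Gr(M)$ finitely generated over $\Gr(R)$, and any endomorphism of $M$ is determined by its effect on a generator, landing inside a Noetherian situation where the classical Quillen/Amitsur argument (a division algebra that is a homomorphic image of something commutative-finite-type acts algebraically) applies. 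The point here is that no new work is needed; the class in (2) is by construction the class for which Quillen's Lemma is designed.

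**Part (3).** For the universal enveloping algebra $U(\CG)$ of a finite-dimensional Lie algebra $\CG$ over $F$, the Poincar\'e--Birkhoff--Witt theorem provides exactly the filtration required by part (2): the standard filtration by degree of monomials in a basis of $\CG$ has associated graded algebra $\Gr(U(\CG))\cong \Sym(\CG)=F[t_1,\dots,t_{\dim \CG}]$, a finitely generated (indeed polynomial) commutative $F$-algebra. Hence $U(\CG)$ lies in the class of part (2), and part (3) follows immediately from part (2).

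**The main obstacle.** There is no serious obstacle; the proposition is essentially a repackaging of Quillen's Lemma plus Schur's Lemma plus PBW. The only point requiring a little care is the reduction step in part (1): one must be sure that "the image of $Z(R)$ in $\End_R(M)$ is algebraic over $F$" genuinely forces that image to be a \emph{field} (not merely an integral domain), which uses that a commutative algebraic domain over a field is a field, and then that the kernel of a surjection of commutative rings onto a field is maximal. I would state this as a one-line lemma at the start. I expect the writing to be three or four sentences per item; the substance is the organizing remark that all three reduce to condition $(*)$.
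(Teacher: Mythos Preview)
Your proposal is correct and follows essentially the same approach as the paper: part (1) uses Schur's Lemma to see that the image of $Z(R)$ in the division algebra $\End_R(M)$ is an algebraic commutative domain, hence a field, so the kernel is maximal; part (2) is reduced to part (1) by Quillen's Lemma; and part (3) is reduced to part (2) via the PBW filtration. The paper's proof is terser but structurally identical.
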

So, the class of PLM-rings is a large class of rings. In Section \ref{PLMRINGS}, several results about PLM-rings are proved. Theorem \ref{31Jul23}.(2) is a criterion for a prime ideal $\gp\in {\rm ZL} (R)$ to belong to the set 
${\rm PL} (R)$. Corollary \ref{c1Aug23} describes the sets ${\rm PL} (R)$ and $\Prim (R)$. Theorem \ref{A1Aug23} is a criterion for a ring to be a PLM-ring. 

Theorem \ref{2Aug23}  is applied to the algebras $A_1$, $\mA$, $\CA$, and $\CB$ as they are PLM-rings, see also Corollary \ref{b2Aug23}. For a prime ideal $\gp$ of the centre $Z(R)$ of a ring $R$, $k(\gp)$ if the field of fractions of the commutative domain $Z(R)/\gp$.

\begin{theorem}\label{2Aug23}
Suppose that the ring $R$ is a free $Z(R)$-module and  for every $\gp \in {\rm ZL}(R)$,   the ring $k(\gp)\t_{Z(R)}R/\gp$ is a simple  artinian ring. Then  
\begin{enumerate}
\item The ring $R$ is PLM-ring with ${\rm PL}(R)=\Max (Z(R))$.
\item  For every $\gm \in \Max (Z(R))$, $\Prim (R,\gm)=\{ R\gm\}$.
\item $\Prim(R)=\Max (R)=\{ R\gm\, | \, \gm\in \Max (Z(R))\}$. 
\item $\widehat{R} =\{ U(\gm )\, | \, \gm \in \Max(Z(R))\}$  where $U(\gm)$ is a unique simple module of the simple artinian ring $k(\gm)\t_{Z(R)}R/R\gm$.
\item The map $\widehat{R} \ra \Prim (R)$, $U(\gm )\mapsto \ann_R(U(\gm))=R/R\gm$ is a bijection with inverse $R\gm\mapsto U(\gm)$.
\item For every maximal ideal $\gm \in \Max (R)$, $\End_R(U(\gm ))\simeq D(\gm)$ where $R/R\gm \simeq M_{n(\gm)}(D(\gm ))$ for some natural number $n(\gm)$ and a division ring $D(\gm )$. We write endomorphisms on the right.
\end{enumerate}
\end{theorem}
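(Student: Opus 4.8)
The plan is to verify the six assertions of Theorem \ref{2Aug23} by first reducing everything to the structure of the rings $k(\gp)\t_{Z(R)}R/\gp$ and then invoking the Artin--Wedderburn theorem together with elementary facts about PLM-rings from Section \ref{PLMRINGS}. First I would observe that since $R$ is a free $Z(R)$-module, for each $\gp\in\Spec(Z(R))$ the factor $R/\gp R$ is a free $Z(R)/\gp$-module, hence torsion-free, so the localization map $R/\gp R\ra k(\gp)\t_{Z(R)}R/\gp =: R_\gp$ is injective; in particular $Z(R)/\gp\hookrightarrow Z(R_\gp)$ and $k(\gp)\subseteq Z(R_\gp)$, with $k(\gp)$ central in $R_\gp$. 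When $R_\gp$ is simple artinian (the hypothesis), write $R_\gp\simeq M_{n}(\Delta)$ for a division ring $\Delta$ finite-dimensional over the field $k(\gp)$.

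Next I would prove (1). Take a simple left $R$-module $M$ and let $\gp=\ann_R(M)\cap Z(R)\in{\rm ZL}(R)$. One must show $\gp\in\Max(Z(R))$. The key point is that $M$ is naturally a module over $R/\ann_R(M)$, hence over $R/\gp R$, and since $Z(R)/\gp$ acts on $M$ torsion-freely (as $R/\gp R$ is $Z(R)/\gp$-torsion-free and $M$ is a quotient... actually one must be slightly careful here: $M$ is a quotient of $R/\gp R$ only after choosing a cyclic generator, which is fine since $M$ is simple). The action of $Z(R)/\gp$ on $M$ extends to an action of $k(\gp)$: indeed each nonzero $\bar z\in Z(R)/\gp$ acts injectively on $M$, hence bijectively since $M$ is simple, so $M$ becomes a $k(\gp)\t_{Z(R)}R/\gp = R_\gp$-module, and it is simple as such. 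But $R_\gp$ is simple artinian, so it has, up to isomorphism, a unique simple module $U$, and $\End_{R_\gp}(U)\simeq\Delta^{\rm op}$ (writing endomorphisms on the right makes this $\Delta$). Then $\End_R(M)=\End_{R_\gp}(M)=\Delta$ is a division algebra finite-dimensional over $k(\gp)$; but also $k(\gp)$ sits inside $\End_R(M)$ via the central action, and the image of $Z(R)$ in $\End_R(M)$ is precisely the image of $Z(R)/\gp$, whose quotient field inside $\End_R(M)$ is $k(\gp)$. Thus the image of $Z(R)$ in $\End_R(M)$ is an algebraic algebra over... no — rather, I would argue directly: $\ann_{Z(R)}(M)=\gp$ and $Z(R)/\gp$ acts on $M$ by injective endomorphisms, each of which is invertible, so $Z(R)/\gp$ is a domain every nonzero element of which is a unit in $\End_R(M)$; since by Proposition \ref{B30Jul23}(1) (applicable because $\End_R(M)=\Delta$ is finite-dimensional over the field $k(\gp)$, hence over any subfield, so an algebraic $k(\gp)$-algebra... ) the appropriate maximality follows. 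Cleanest: use that $R/\gp R\ra R_\gp$ is a flat epimorphism of rings with $R_\gp$ simple; a simple $R/\gp R$-module that survives to $R_\gp$ forces $\gp R$ to be... Here I will simply cite Theorem \ref{A1Aug23} or Theorem \ref{31Jul23} which gives the criterion for PLM; the content of (1) is exactly that $R_\gp$ simple artinian for all $\gp\in{\rm ZL}(R)$ makes ${\rm PL}(R)\subseteq\Max(Z(R))$, and moreover every $\gm\in\Max(Z(R))$ lies in ${\rm PL}(R)$ because $R_\gm=k(\gm)\t R/R\gm=R/R\gm$ (as $k(\gm)=Z(R)/\gm$ already a field) is simple artinian and thus has a simple module whose annihilator in $Z(R)$ is $\gm$.

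Then (2)--(6) follow quickly. For (2): if $\gm\in\Max(Z(R))$ and $P\in\Prim(R,\gm)$, then $P\supseteq R\gm$ (since $\gm\subseteq P$ and $R\gm$ is the two-sided ideal generated), and $R/R\gm=R_\gm$ is simple, so $P=R\gm$; thus $\Prim(R,\gm)=\{R\gm\}$. For (3): $\Prim(R)=\coprod_{\gm\in\Max(Z(R))}\Prim(R,\gm)=\{R\gm\}$, and each $R\gm$ is a maximal two-sided ideal since $R/R\gm$ is simple, while conversely any maximal ideal $\mathfrak{M}$ of $R$ contracts to a maximal ideal of $Z(R)$ (standard, using that $R$ is integral-ish over its centre — actually this uses that $R/\mathfrak M$ is simple artinian with centre a field containing $Z(R)/(\mathfrak M\cap Z(R))$ and that the latter must then be a field by the freeness/torsion-free argument again, i.e. $\mathfrak M\cap Z(R)\in{\rm ZL}(R)\subseteq\Max(Z(R))$), giving $\mathfrak M\supseteq R(\mathfrak M\cap Z(R))$ hence equality; so $\Max(R)=\{R\gm\mid\gm\in\Max(Z(R))\}=\Prim(R)$. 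For (4): $\widehat R$ = iso classes of simple $R$-modules = $\coprod$ over $\gm$ of simple modules with that central annihilator = the unique simple $R_\gm$-module $U(\gm)$ for each $\gm$. For (5): the maps $U(\gm)\mapsto\ann_R(U(\gm))=R/R\gm$... I mean $=R\gm$, and $R\gm\mapsto U(\gm)$, are mutually inverse bijections $\widehat R\leftrightarrow\Prim(R)$ by (3)--(4). For (6): $\End_R(U(\gm))=\End_{R_\gm}(U(\gm))\simeq D(\gm)$ where $R/R\gm=R_\gm\simeq M_{n(\gm)}(D(\gm))$ by Artin--Wedderburn, endomorphisms written on the right so the opposite does not intrude.

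The main obstacle, and the one step deserving genuine care rather than routine bookkeeping, is the extension-of-scalars argument in (1) and in the converse half of (3): namely showing that a simple $R$-module $M$ is automatically a module over $R_\gp=k(\gp)\t_{Z(R)}R/\gp$ (equivalently that $Z(R)/\gp$ acts by invertible operators), and, for (3), that the contraction of a maximal two-sided ideal of $R$ to $Z(R)$ is maximal. Both rest on the same mechanism: $R$ being $Z(R)$-free makes $R/\gp R$ a faithful torsion-free $Z(R)/\gp$-module, so multiplication by a nonzero central element is injective on $R/\gp R$ and hence on any simple subquotient $M$; injective plus $M$ simple gives bijective; this is the only place the freeness hypothesis is really used, and it is what upgrades the $Z(R)/\gp$-action to a $k(\gp)$-action and lets $R_\gp$'s simplicity do the rest. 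Everything else is an application of Artin--Wedderburn and of the decompositions $\Spec(R)=\coprod\Spec(R,\gp)$, $\Prim(R)=\coprod\Prim(R,\gp)$ already recorded in the excerpt, together with the PLM criteria of Section \ref{PLMRINGS}.
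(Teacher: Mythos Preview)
Your overall architecture is right, and once you defer to Theorem \ref{31Jul23}/Theorem \ref{A1Aug23} for statement (1) you are doing exactly what the paper does (via Corollary \ref{b31Jul23}); statements (2)--(6) then follow as you say. But two points in your write-up are genuinely off and worth correcting.

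First, your direct attack on (1) does not close. You correctly show that if $M$ is simple with $\gp=\ann_{Z(R)}(M)$ then every $z\in Z(R)\setminus\gp$ acts bijectively on $M$, so $M$ is an $R_\gp$-module; but nothing in that argument forces $\gp$ to be maximal. (Knowing that $Z(R)/\gp$ embeds in the division ring $\End_R(M)$ and that its nonzero elements become units there says only that $k(\gp)\subseteq\End_R(M)$, not that $Z(R)/\gp=k(\gp)$.) The paper's route via Theorem \ref{31Jul23}(2) supplies the missing step: one shows that if $\gp\in{\rm PL}(R)$ then $\phi_\gp:R/R\gp\hookrightarrow k(\gp)\t_{Z(R)}R/R\gp$ is \emph{bijective}, and then freeness gives the contradiction when $\gp$ is not maximal, since a free $Z(R)/\gp$-module of rank $|I|$ sits properly inside the corresponding free $k(\gp)$-module whenever $Z(R)/\gp\subsetneq k(\gp)$.

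Second, your final paragraph mislocates the role of freeness. The assertion ``multiplication by a nonzero central element is injective on $R/\gp R$ and hence on any simple subquotient $M$'' is a non-sequitur: injectivity does not pass to quotients. In fact the invertibility of $z\in Z(R)\setminus\gp$ on a simple $M$ needs no freeness at all --- it is immediate from centrality and simplicity (both $\ker(z\cdot)$ and $zM$ are $R$-submodules of $M$). Freeness is used elsewhere, and for a different purpose: (i) to make $R/R\gp$ torsion-free over $Z(R)/\gp$, so that $\phi_\gp$ is \emph{injective} (this is what feeds into Theorem \ref{31Jul23}); (ii) to make $\phi_\gp$ \emph{not surjective} whenever $\gp\notin\Max(Z(R))$ (the rank comparison above), which is exactly Corollary \ref{b31Jul23}; and (iii) via faithful flatness, to ensure $R\gm\neq R$ for $\gm\in\Max(Z(R))$, so that $\Max(Z(R))\subseteq{\rm PL}(R)$ (Lemma \ref{a2Aug23}). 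Also a minor slip in (3): you want $\mathfrak M\cap Z(R)\in{\rm PL}(R)=\Max(Z(R))$ (since $\mathfrak M$ is primitive), not $\in{\rm ZL}(R)$.
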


{\bf (III) Prime, completely prime, primitive ideals, and simple modules of the algebras $A_1$, $\mA$, $\CA$, and $\CB$.}  The aim of  Section \ref{qAKHSKEW} (resp.,   Section \ref{QUANTWEYL}) is to classify the sets of prime, completely prime,  maximal and primitive ideals,   and simple modules of the algebras  $\mA$, $\CA$,    and $\CB$ (resp., $A_1$), and also describe their automorphism groups.  
 Using the classification of primitive ideals we obtain a classification of simple modules for the algebras  $\mA$, $\CA$, $\CB$, and $A_1$. The case of the quantum Weyl algebra  $A_1$ is  more difficult than cases of the algebras  $\mA$, $\CA$, and $\CB$ and we use a different approach via generalized Weyl algebras exploiting the fact that the algebra $A_1$ is a generalized Weyl algebra.  These results are too technical and involved to present them in the introduction. The description of prime ideals for the algebra $A_1$ below gives a flavour of the results of Sections 
\ref{qAKHSKEW} and \ref{QUANTWEYL}. In Theorem \ref{28Jul23},  the solid (resp., dotted) lines represent containments (resp., obvious possible containments) of prime ideals.

\begin{theorem}\label{28Jul23}

 $\Spec (A_1)=\Spec_{trs} (A_1)\, \sqcup \, \Spec_{t} (A_1/(r)) \, \sqcup \, \Spec_{r} (A_1/(t)) \, \sqcup \,\Spec (A_1/(t,r)) \, \sqcup \,
\Spec (A_1/(h))$ where
\begin{eqnarray*}
\Spec_{trs} (A_1) &=& \{   A_1\gp \, | \, \gp \in \Spec(Z(A_1) ); t,r,s \not\in \gp\} =\{ 0\} \, \sqcup \,\CN'' \, \sqcup \, \mM'', \\
\Spec_{t} (A_1/(r)) &=& \{ (r), (r,f)\, | \, f\in \Irr_m(K[t]\backslash \{ t\}\} = \{ (r)\}  \, \sqcup \, \mR, \\
 \Spec_{r} (A_1/(t)) &=& \{ (t), (t,g)\, | \, g\in \Irr_m(K[r]\backslash \{ r\}\}= \{ (t)\} \, \sqcup \, \mT,\\
\Spec (A_1/(t,r)) &=& \{ (t,r)\},  \\
\Spec (A_1/(h))&=& \{ (h), (h,f)\, | \, f\in \Irr_m(K[x]\backslash \{ x\}\}=\{ (h)\} \, \sqcup \, \mathbb{H}'', \\  
 \CN'' &:=& \{A_1\gn \,|\, \gn \in \Spec\, Z(A_1 ), {\rm ht} (\gn ) =1, \gn \neq (t), (r), (s)  \},\\ 
\mM'' &:=& 	\{ A_1 \gm\,|\, \gm \in \Max (Z(A_1 )); t,r,s \not\in \gm \}, \\  
  \mR &:=& \{ (r,f)\, | \, f\in \Irr_m(K[t]\backslash \{ t\}\}, \\
 \mT&=&  \{  (t,g)\, | \, g\in \Irr_m(K[r]\backslash \{ r\}\},\\
  \mathbb{H}'' &:=& \{ (h,f)\, | \, f \in \Irr_m(K[x])\backslash \{ x\}\},  
\end{eqnarray*}

 \begin{align}
	\begin{tikzpicture}[scale=1.3]
	\node (T) at (-4,2) {\small$\mT$};
	\node (t) at (-4, 1) {\small$(t)$};	
	\node (tr) at (-2,2) {\small$(t,r)$};
	\node (R) at (0, 2) {\small$\mR$};	
	\node (0) at (0,0)  {\small$\{ 0\}$};
	\node (r) at (0, 1) {\small$(r)$};
	\node (M) at (2, 2) {\small$\mM''$};	
	\node (N) at (2,1) {\small$\CN''$};
	\node (h) at (4, 1) {\small$(h)$};	
	\node (H) at (4, 2) {\small$\mathbb{H}''$};
	\draw [ shorten <=-2pt, shorten >=-2pt] (0)--(t)--(T);
	\draw [ shorten <=-2pt, shorten >=-2pt] (0)--(r)--(tr);
	\draw [ shorten <=-2pt, shorten >=-2pt] (0)--(N);
	\draw [ shorten <=-2pt, shorten >=-2pt] (0)--(h)--(H);
	\draw [ shorten <=-2pt, shorten >=-2pt] (t)--(tr);
	\draw [ shorten <=-2pt, shorten >=-2pt] (tr)--(r)--(R);	
\draw [semithick, dotted][ shorten <=-2pt, shorten >=-2pt]  (T)--(N)--(tr);
\draw [semithick, dotted][ shorten <=-2pt, shorten >=-2pt]  (R)--(N)--(M);
\draw [semithick, dotted][ shorten <=-2pt, shorten >=-2pt]  (N)--(H);
	\end{tikzpicture}  \label{qXYZA2} 
	\end{align}

\end{theorem}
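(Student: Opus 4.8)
The plan is to analyze $\Spec(A_1)$ by first restricting to the centre $Z(A_1)$ and then stratifying according to the behaviour of the three distinguished central elements, call them $t, r, s$, and the normal element $h$. First I would recall the explicit presentation of $A_1=A_1(q)$ as a generalized Weyl algebra over the polynomial ring $K[h]$ and identify $Z(A_1)$ explicitly: since $q$ is a primitive $n$'th root of unity, the elements $x^n$, $y^n$ (equivalently $h^n$) and a suitable "product" element generate the centre subject to one relation, so $Z(A_1)$ is a finitely generated commutative domain of Krull dimension $2$, and $t, r, s$ are the natural generators (the $x$-type, $y$-type, and $h$-type elements), with a single defining polynomial relation among them. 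The localization at the multiplicative set generated by $t, r, s$ should be computed directly, yielding that $(A_1)_{trs}$ is an Azumaya algebra (in fact a central simple algebra over its centre's fraction field on each fibre), so that by the general nonsense of Azumaya algebras the restriction map $\Spec((A_1)_{trs})\to\Spec(Z(A_1)_{trs})$ is a bijection $P\mapsto P\cap Z$, $\gp\mapsto A_1\gp$. This gives the component $\Spec_{trs}(A_1)$ and its internal structure $\{0\}\sqcup\CN''\sqcup\mM''$, where the height-one primes $\CN''$ avoiding $t,r,s$ and the maximal ideals $\mM''$ avoiding $t,r,s$ are exactly the primes of $Z(A_1)_{trs}$.

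Next I would handle the "boundary" strata: the factor algebras $A_1/(r)$, $A_1/(t)$, $A_1/(t,r)$, and $A_1/(h)$. Setting $r=0$ (or $t=0$) collapses the generalized Weyl algebra structure — the relevant defining element in the GWA presentation becomes reducible — so $A_1/(r)$ should be computed to be (a localization/factor of) a skew polynomial ring over $K[t]$, hence its prime spectrum is transparent: the prime ideals are $(r)$ itself and the ideals $(r,f)$ for $f$ running over monic irreducibles in $K[t]$ other than $t$ (the element $t$ being invertible or already accounted for in the $(t,r)$ stratum). Symmetrically for $A_1/(t)$. The algebra $A_1/(t,r)$ should turn out to be $K[h^{\pm 1}]$ or a quantum-torus-like quotient with a unique relevant prime $(t,r)$, and $A_1/(h)$ should be (isomorphic to) a skew polynomial or Laurent algebra in $x$ over $K$ with $\sigma=\mathrm{id}$ on scalars, giving primes $(h)$ and $(h,f)$ for monic irreducibles $f\in K[x]\setminus\{x\}$. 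At each step I would invoke the classification results of Section \ref{qAKHSKEW} for $\mA$, $\CA$, $\CB$ (which are the quotient/localization rings appearing here) and Theorem \ref{2Aug23} to transfer prime-ideal information across the localization $A_1\subset\CA$.

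Then I would verify that these five strata are genuinely disjoint and exhaustive: disjointness follows by checking which of $t, r, s, h$ lie in a given prime (a prime either contains none of $t,r,s$, or contains $r$ but not $t$, or $t$ but not $r$, or both $t,r$, or contains $h$ — and one checks $h$ is not in $(t,r)$ and the mutual exclusions using the relation in $Z(A_1)$ together with $h^n = \pm s$ up to a unit), and exhaustiveness follows because every prime $P$ of $A_1$ either meets $\{t,r,s,h\}$ or not, and if it does not meet $\{t,r,s\}$ it lands in $\Spec_{trs}$. Finally the Hasse diagram \eqref{qXYZA2} is assembled: the solid edges (genuine containments $0\subset(t)\subset\mT$, $0\subset(r)\subset(t,r)$, $0\subset(r)\subset\mR$, $0\subset(h)\subset\mathbb{H}''$, $0\subset\CN''$, and $(t)\subset(t,r)$, $(t,r)\supset(r)$) come from the explicit generators, while the dotted edges record the height-theoretic possibilities (a height-one prime in $\CN''$ could conceivably specialize to $(t,r)$, to an element of $\mR$ or $\mM''$, or to $\mathbb{H}''$) that depend on the arithmetic of $K$ and the factorization of the central relation, and so are not asserted as always occurring.

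The main obstacle I anticipate is the precise determination of $Z(A_1)$ and the verification that the localization $(A_1)_{trs}$ is Azumaya with the restriction map a bijection — this requires an honest computation of the centre of the quantum Weyl algebra at a root of unity (the formula for $s$ in terms of $h$, and the "norm-type" central element interpolating $t$ and $r$) and checking that the fibres $k(\gp)\otimes_{Z(A_1)}A_1/\gp$ for $\gp\not\ni trs$ are central simple, which is exactly the hypothesis of Theorem \ref{2Aug23}; the degenerate boundary behaviour, by contrast, should reduce cleanly to the already-established skew-(Laurent-)polynomial classifications of Section \ref{qAKHSKEW}.
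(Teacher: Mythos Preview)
Your overall architecture---stratify $\Spec(A_1)$ by which of the central elements $t,r,s$ and the normal element $h$ lie in a prime, handle the generic stratum via the Azumaya/central-simple property of $A_{1,trs}$, and treat the boundary quotients separately---matches the paper's approach and is sound. The decomposition into five pieces and the Hasse diagram reasoning are essentially what the paper does, using its Proposition~\ref{aA12Mar15} repeatedly rather than Theorem~\ref{2Aug23} (the latter concerns primitive ideals and simple modules, not the full prime spectrum, so it is not the right citation for the stratification step).

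There are, however, two concrete gaps in your boundary analysis. First, the quotients $A_1/(r)$, $A_1/(t)$, $A_1/(t,r)$ are \emph{not} among the algebras $\mA,\CA,\CB$ of Section~\ref{qAKHSKEW}, so you cannot simply invoke those classifications. In the paper, the relation $rt=(-1)^{n-1}(s-(q-1)^{-n})$ forces $h^n$ to be a nonzero scalar in each of these quotients, so $K[h]$ collapses to $\CK\simeq K^n$; one then finds explicit matrix units via the primitive idempotents of $\CK$ and shows directly that $(A_1/(r))_t\simeq M_n(K[t^{\pm1}])$, $(A_1/(t))_r\simeq M_n(K[r^{\pm1}])$, and $A_1/(t,r)\simeq M_n(K)$ (Propositions~\ref{A25Jul23} and~\ref{A26Jul23}). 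Your guesses ``a skew polynomial ring over $K[t]$'' and ``$K[h^{\pm1}]$ or a quantum-torus-like quotient'' are not correct descriptions of these algebras, though the prime-spectrum conclusions you draw happen to be right. Second, you do not address why $(r)$ and $(t)$ are themselves prime: this is not automatic and in the paper requires showing that $t+(r)$ and $r+(t)$ are regular elements (Proposition~\ref{A29Jul23}), so that the factor algebras embed in their matrix-algebra localizations and $\{0\}$ pulls back to a prime (Corollary~\ref{a29Jul23}). Without this step the strata $\{(r)\}\sqcup\mR$ and $\{(t)\}\sqcup\mT$ are incomplete.
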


{\bf The automorphism groups of the algebras  $A_1$, $\mA$, $\CA$, and $\CB$.} In Section \ref{AUTGROUPS},   explicit descriptions of the automorphism groups of the algebras  $\mA$, $A_1$, $\CA$, and $\CB$ are given. The key idea is to use classifications of prime ideals and the fact that every algebra automorphism induces a {\em topological}  automorphism of the its prime spectrum equipped with  Zariski topology. `Peculiarities' of the prime spectra put strong restrictions  on the type of  automorphisms as the induced topological automorphisms must respect the `peculiarities'  and this leads to computing the automorphism groups. \\

{\bf Prime factor algebras and their quotient rings  for the algebras  $A_1$, $\mA$, $\CA$, and $\CB$.} Using the classifications of prime ideals, prime factor algebras and their quotient rings (of fractions) are described for the algebras  $A_1$, $\mA$, $\CA$, and $\CB$ (Section 
Section \ref{qAKHSKEW} and   Section \ref{QUANTWEYL}). The quotient rings are either  finite field extensions or central simple finite dimensional algebras $\CE$ from Section \ref{ALGCE}. Results of Section \ref{ALGCE} are used to determine the type of each quotient ring.


\section{The algebras $\CE = (E(s), \s , a)$
}\label{ALGCE}

At the beginning of the section,  we recall some  results on central simple finite dimensional algebras that are used in the paper. 
 The aim of the section is to introduce and study a class of finite dimensional  algebras $\CE = (E(s), \s , a)$. The problem of classification of maximal ideals, completely prime ideals and simple modules for the algebras $\mA$, $A_1$, $\CA$ and $\CB$ are reduced to the same questions but for the algebras $\CE$, see Section \ref{qAKHSKEW}.

Recall that a  root of unity  $q\in K\backslash \{ 1\}$ is called a {\em primitive $n^{th}$ root of unity} if the natural number $n$ is the least one such that $q^n=1$, or equivalently  the elements $q, q^2, \ldots q^{n-1}, q^n =1$ are distinct. Suppose that  $p=\char(K)$. If $q$ is a primitive $n^{th}$ root of unity then either $p=0$ or otherwise $p \neq 0, p \not| n$ and necessarily $n \geqslant 3$.
 If $q$ is a primitive $n^{th}$ root of unity then the polynomial $x^n-1$ has simple roots (since $(x^n-1)'=nx^{n-1} \neq 0$ and $\gcd(x^n-1, nx^{n-1})=1$) and

\begin{equation}\label{xnix}
x^n-1=\prod_{i=0}^{n-1}(x-q^i). 
\end{equation}
More generally, for all $\mu \in K$,  
\begin{align}
\prod_{i=0}^{n-1}(x-q^i \mu)=x^n-\mu^n.    \label{xnix1}  
\end{align}
Indeed, the equality is obvious if $\mu=0$. If $\mu \neq 0$ then 
\begin{align*}
\prod_{i=0}^{n-1}(x-q^i \mu)=\mu^n \prod_{i=0}^{n-1}\bigg(\frac{x}{\mu}-q^i\bigg)\stackrel{(\ref{xnix})}{=} \mu^n \bigg(\bigg(\frac{x}{\mu}\bigg)^n-1 \bigg)=x^n-\mu^n.
\end{align*}
If $q$ is a primitive $n^{th}$ root of unity then, by (\ref{xnix}), 
\begin{align}
\xi_n:=\prod_{i=1}^{n-1}q^i=q^{\frac{n(n-1)}{2}}=(-1)^{n-1}. \label{xnix2} 
\end{align}

{\bf The algebras $\CE =(E, \s , a)$.}
Let $F$ be a field that contains the primitive $n$'th root of unity $q$,  $F[h]$ be a polynomial algebra in a variable $h$,  and $s\in F$. Let us consider the factor algebra 
\begin{equation}\label{qEs}
E = E(s):=F[h]/(h^n-s)=\bigoplus_{i=0}^{n-1}Fh^i.
\end{equation}
The algebra $E$ is a finite dimensional commutative algebra with $\dim_F(E)=n$. 
The algebra $E$ admits the $F$-automorphism $\s : h\mapsto qh$ of order $n$.  Let $E[x;\s ]$ be a skew polynomial algebra. Then the element $x^n$ belongs to the centre of the algebra $E[x;\s ]$.  Let $\bF$ be the algebraic closure of the field $F$. If $\l \in \bF$  is a root of the polynomial $h^n-s$  then $s=\l^n $ and 

\begin{equation}\label{qEs1}
h^n-s=h^n-\l^n = \prod_{i=0}^{n-1}(h-q^i\l),
\end{equation}
and so the polynomial $h^n-s$ of degree $n$ has $n$ distinct roots $\{ q^i\l \, | \, i=0,1, \ldots , n-1\}$ provided $s\neq 0$. Let $s_1(x_1, \ldots , x_n)=x_1+\cdots +x_n, \ldots , s_n(x_1, \ldots , x_n)=x_1\cdots x_n$ be the elementary symmetric functions in $n$ variables. By (\ref{qEs1}), 
\begin{equation}\label{siqi}
s_i(1,q,\ldots , q^{n-1})=0\;\; {\rm for}\;\; i=1, \ldots , n-1, \;\; {\rm and}\;\; s_n(1,q,\ldots , q^{n-1})=q^{\frac{n(n-1)}{2}}=(-1)^{n-1}.
\end{equation}
For each element $a\in F$, the element $x^n-a$ belongs to the centre of the algebra $E[x;\s ]$. Hence,  the factor algebra 
\begin{equation}\label{qEs2}
\CE =(E, \s , a):=E[x;\s ]/(x^n-a)=\bigoplus_{j=0}^{n-1}Ex^j
\end{equation}
 is a finite dimension $F$-algebra of dimension $n^2$ which is generated over the field $F$ by the elements $h$ and $x$ subject to the defining relations:
\begin{equation}\label{qEs3}
xh=\s (h)x, \;\; h^n=s, \;\; {\rm and}\;\; x^n=a.
\end{equation}
If $s\neq 0$ the the direct sum in (\ref{qEs2}) is a direct sum of eigenspaces $Ex^i$ of the inner automorphism $\o_h $ of the algebra $(E, \s , a)$  since $\o_h(x^i)=q^{-i}x^i$ for all $i$, and the set of eigenvalues of $\o_h$ is $\Ev (\o_h)=\{ 1,q^{-1}, \ldots , q^{-n+1}\}$. 

There is an algebra isomorphism
\begin{equation}\label{qEs6}
\xi : (F[h]/(h^n-s), \s , a) \ra (F[h]/(h^n-a), \s' , s), \;\; x\mapsto h, \;\; h\mapsto x
\end{equation}
where $\s'\in \Aut_F(F[x]/(x^n-a))$ and $\s'(x)=q^{-1}x$.

{\bf Central simple finite dimensional algebras.}  For a field $K$, we denote by $\fCK$ the class of all  central simple finite dimensional  $K$-algebras. Let  $\fCdK$ the class of all  central simple finite dimensional  division $K$-algebras. Clearly,  $\fCdK\subseteq \fCK $. A 
$K$-algebra belongs to the class $\fCK$ iff $A\simeq M_n(D)$ for some $D\in \fCdK$.

Let us recall some basic facts and definitions on central simple algebras and cyclic algebras, see the book \cite{Pierce-AssAlg} for details.  For each algebra $\CA\in \fCK$, $\dim_K(\CA )=n^2$ for some natural number $n\geq 1$ which is called the {\em degree} of $\CA $ and is denoted by $\Deg (\CA )$. The algebra $\CA$ is isomorphic to the matrix algebra $M_l(D)$ over a division algebra $D\in \fCK$ which is unique (up to a $K$-isomorphism). Clearly, $\Deg (\CA ) = l\Deg (D)$. The degree $\Deg (D)$ is called the {\em index} of $\CA$ and is denoted by $\Ind (\CA )$. If $L$ is a  subfield of $\CA$ that contains the field $K$ then $[L:K]\leq \Deg (\CA )$. The field $L$ is called a {\em strictly maximal} subfield if $[L:K]= \Deg (\CA )$. For a subset $B$ of an algebra $A$, the subalgebra of $A$, $C_A(B):=\{ a\in A\, | \, ab=ba$ for all $b\in B\}$, is called the {\em centralizer} of $B$ in $A$. 

A Galois field extension $L/K$ is called a {\em cyclic extension} if the Galois group $G(L/K)$ is a cyclic group (i.e. generated by a single element). If the field extension $L/K$ is cyclic,  $\s$ is a generator of the Galois group $G(L/K)$,  $n=|G(L/K)|$ is the order of the group $G(L/K)$, and $l\in L$ then the element of $F$,
\begin{equation}\label{qEs5}
N_{L/K}(l):=\prod_{i=0}^{n-1}\s^i(l),
\end{equation}
is called the {\em norm} of $l$, and the map $N_{L/K}: L\ra K$, $l\mapsto N_{L/K}(l)$ is the {\em norm} of  the field extension $L/K$. The map $N_{L/K}: L^\times \ra K^\times$ is a group homomorphism.

Two algebras $\CA, \CB\in \fCK$ are called {\bf equivalent} if $M_n\t_K \CA\simeq M_m(K)\t_K\CB$ for some natural numbers $n$ and $m$,  and we write $\CA \sim \CB$. The algebras  $\CA, \CB\in \fCK$ are equivalent iff they are Morita equivalent iff $\CA\simeq M_k(D)$ and $\CB\simeq M_l(D)$ for some natural numbers $k$ and $l$,  and some division algebra $D\in \fCK$. \\

{\bf The image $\im (N_{E/F})$ of the norm  $N_{E/F}$.}  If $s\in F^\times$ and the algebra $E(s)$ is a field then then the field extension $E(s)/F$ is a Galois field extension with Galois group $G(E(s)/F)=\langle \s \, | \, \s^n=1\rangle$ where $\s (h)=qh$ and $q$ is a primitive $n$'th root of unity. By (\ref{siqi}), 
\begin{equation}\label{siqi1}
N_{E(s)/F}(h)=q^{\frac{n(n-1)}{2}}h^n=(-1)^{n-1}s.
\end{equation}

Let $e\in E$. If $e=\l \in F$ then $N_{E/F}(\l)=\l^n$. If $e\in E\backslash F$ then $e=\sum_{i=0}^m\mu_ih^i$ for some elements $\mu_i\in F$, $\mu_m\neq 0$,  and $m\in \{ 1, \ldots , n-1\}$. Notice that  $e=\mu_m\prod_{i=1}^m(h-\l_i)$ where $\l_1, \ldots, \l_m\in \bF$ are the roots of the polynomial $e$ (not necessarily distinct). Now,
\begin{equation}\label{qNormEF}
N_{E/F}(e) =(-1)^{m(n-1)}\mu_m^n\prod_{i=1}^m ( s-\l_i^n). 
\end{equation} 
In more detail,
\begin{eqnarray*}
N_{E/F}(e) &=& \mu_m^n\prod_{i=1}^mN_{E/F}(h-\l_i)=\mu_m^n\prod_{i=1}^m \prod_{j=0}^{n-1} (q^jh-\l_i)=\mu_m^n\prod_{i=1}^m (-1)^{n-1}\prod_{j=0}^{n-1} (h-q^{-j}\l_i)\\
&=&(-1)^{m(n-1)}\mu_m^n\prod_{i=1}^m  (h^n-\l_i^n)=(-1)^{m(n-1)}\mu_m^n\prod_{i=1}^m  (s-\l_i^n).\\
\end{eqnarray*}
Proposition \ref{qA21Mar23} describes the image of the norm map $N_{E(s)/F}$.

\begin{proposition}\label{qA21Mar23}
Suppose that the $F$-algebra $E=E(s)$ is a field and  $s\in F^\times $, $\s \in \Aut_F(E)$ where $\s (h)=qh$ and $q$ is a primitive $n$'th root of unity,  and $N_{E/F}:E\ra F$, $e\mapsto \prod_{i=1}^{n-1}\s^i(e)$.  Then
\begin{eqnarray*}
\im (N_{E/F})&=&\bigg\{\l^n, (-1)^{m(n-1)}\mu^n\prod_{i=1}^m (s-\l_i^n) \, \bigg| \, \l\in F, \mu\in F^\times, \l_i\in \bF, s_i(\l_1, \ldots , \l_m)\in F\;\\
&&   {\rm for\; all }\;  i=1, \ldots , m\;\; {\rm where}\;\; m=1, \ldots ,n-1\bigg\}\\
\end{eqnarray*}
where   $\bF$ is the algebraic closure of the field $F$, and  $s_i(x_1, \ldots , x_m):=\sum_{1\leq j_1<\cdots <j_i\leq m}x_{j_1}\cdots x_{j_i}$ are the elementary symmetric functions in $m$ variables $x_1, \ldots , x_m$.
\end{proposition}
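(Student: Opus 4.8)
The plan is to compute $\im(N_{E/F})$ directly by using the multiplicativity of the norm map $N_{E/F}: E^\times \to F^\times$ together with the explicit formula \eqref{qNormEF} already derived in the text. Since $N_{E/F}$ is a group homomorphism $E^\times \to F^\times$, its image is a subgroup of $F^\times$; but more to the point, every element $e \in E$ is either a scalar $\lambda \in F$ — contributing $N_{E/F}(\lambda) = \lambda^n$ — or lies in $E \setminus F$, in which case it can be written as $e = \sum_{i=0}^m \mu_i h^i$ with $\mu_m \neq 0$ and $1 \le m \le n-1$. So the image is exactly the set of values $N_{E/F}(e)$ as $e$ ranges over these two families, and the assertion is just the translation of \eqref{qNormEF} into a description of which data $(\mu, \lambda_1, \dots, \lambda_m)$ can actually occur.

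The key point to pin down is the constraint on the roots $\lambda_1, \dots, \lambda_m \in \bF$. First I would note that writing $e = \mu_m \prod_{i=1}^m (h - \lambda_i)$ forces, by comparing coefficients of $h^i$, that each elementary symmetric function $s_i(\lambda_1, \dots, \lambda_m)$ equals $(-1)^i \mu_{m-i}/\mu_m \in F$; this is precisely the side condition $s_i(\lambda_1,\dots,\lambda_m) \in F$ appearing in the statement. Conversely, given $\mu \in F^\times$ and elements $\lambda_1, \dots, \lambda_m \in \bF$ whose elementary symmetric functions all lie in $F$, the polynomial $\mu \prod_{i=1}^m (h - \lambda_i)$ has all coefficients in $F$ and hence defines an element $e \in E$ of $h$-degree exactly $m \le n-1$, so \eqref{qNormEF} applies and produces the value $(-1)^{m(n-1)} \mu^n \prod_{i=1}^m (s - \lambda_i^n)$. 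Thus the two descriptions — image of $N_{E/F}$, and the displayed set — contain each other. One should also check the extreme case $m = 0$ separately, where the formula degenerates to $\lambda^n$, accounting for the first family in the union; and observe that when $e = \lambda \in F$ viewed inside $E \setminus F$ is vacuous, so there is no overlap issue, only a union.

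The main (mild) obstacle is bookkeeping rather than anything deep: one must be careful that the product formula $N_{E/F}(h - \lambda_i) = (-1)^{n-1}(h^n - \lambda_i^n) = (-1)^{n-1}(s - \lambda_i^n)$ used inside the derivation of \eqref{qNormEF} is valid — this uses $h^n = s$ in $E$ together with \eqref{xnix1} applied with $x = h$, $\mu = \lambda_i$ — and that the sign $(-1)^{m(n-1)}$ is tracked correctly when pulling the $n$-fold product of $(-1)^{n-1}$'s out over the $m$ factors. Since \eqref{qNormEF} is already established in the excerpt, essentially all that remains is to state the equivalence between "$e \in E \setminus F$ of $h$-degree $m$" and "data $(\mu_m, \lambda_1, \dots, \lambda_m)$ with symmetric functions in $F$", which is the standard dictionary between a monic-up-to-scalar polynomial over $F$ and its multiset of roots in $\bF$. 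I expect the proof to be short: invoke \eqref{qNormEF}, note the forward inclusion is immediate, and for the reverse inclusion produce the element $e$ from the given root data and apply \eqref{qNormEF} again.
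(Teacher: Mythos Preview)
Your proposal is correct and follows essentially the same approach as the paper: split into the cases $e \in F$ and $e \in E \setminus F$, observe that the condition ``coefficients $\mu_i \in F$'' is equivalent to ``$s_i(\lambda_1,\dots,\lambda_m) \in F$ for all $i$'', and then invoke \eqref{qNormEF}. The paper's proof is more terse (it leaves the reverse inclusion implicit), but the argument is the same.
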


\begin{proof} Let $e\in E$. If $e=\l \in F$ then $N_{E/F}(\l)=\l^n$. If $e\in E\backslash F$ then $e=\sum_{i=0}^m\mu_ih^i=\mu_m\prod_{i=1}^m(h-\l_i)$ for some elements $\mu_i\in F$, $\mu_m\neq 0$,  and $m\in \{ 1, \ldots , n-1\}$  where $\l_1, \ldots, \l_m\in \bF$ are the roots of the polynomial $e$ (not necessarily distinct). Clearly, $$F\ni \mu_i=(-1)^{m-i}\mu_m s_{m-i}(\l_1, \ldots , \l_m)\;\; {\rm  for}\;\;i=0,1,  \ldots , m$$ where $s_0:=1$. Now, the result follows from (\ref{qNormEF}).
\end{proof} 

A finite dimensional central simple $K$-algebra $\CA$  is called a {\bf cyclic algebra} if there is a strictly maximal subfield $L$ of $\CA$ such that the field extension $L/K$ is cyclic. For a $K$-algebra $A$ and a subset $G\subseteq \Aut_K (A)$, the set   $A^G:= \{ a\in A\, | \, g(a)=a$ for all $g\in G\}$ is a subalgebra of $A$, the {\em fixed algebra} of $G$. Proposition \ref{Pra-15.1} is a description of  cyclic algebras. 

\begin{proposition}\label{Pra-15.1}
(\cite[Proposition a, Section 15.1]{Pierce-AssAlg}.) 
If $\CA$ is a central simple finite dimensional $K$-algebra then $\CA$ is cyclic if and only if
\begin{equation}\label{Es4}
\CA = \bigoplus_{i=0}^{n-1}Lx^i, \;\;xl=\s (l) x \;\;  {\it for \; all}\;\; l\in L,\;\; {\it  and}\;\; x^n=a,
\end{equation}
 for some strictly maximal cyclic subfield $L/K$ such that the Galois group $G(L/K)=\langle \s \rangle$ is a cyclic group of order $n$, $x\in \CA^\times$, and $a\in K^\times$. 
\end{proposition}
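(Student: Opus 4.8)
The plan is to handle the two implications separately, noting that the reverse one is essentially a dimension count while the content sits in the forward direction. For the \emph{reverse} implication, suppose $\CA\in\fCK$ has the shape (\ref{Es4}) with $L/K$ cyclic of degree $n$ and $x\in\CA^\times$. Since $x$, hence each $x^i$, is a unit, left multiplication by $x^i$ is a $K$-linear bijection of $\CA$, so $\dim_K Lx^i=[L:K]$; because the displayed sum is direct this forces $\dim_K\CA=[L:K]^2$, hence $\Deg(\CA)=[L:K]$. Thus $L$ is a strictly maximal subfield of $\CA$, and since it is cyclic over $K$ by hypothesis it witnesses that $\CA$ is a cyclic algebra.

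For the \emph{forward} implication, assume $\CA$ is cyclic and fix a strictly maximal subfield $L$ with $[L:K]=\Deg(\CA)=:n$ and $G(L/K)=\langle\s\rangle$ of order $n$. First I would apply the Skolem--Noether theorem to the two $K$-algebra embeddings $L\hookrightarrow\CA$ given by the inclusion and by $l\mapsto\s(l)$ (these coincide on the centre $K$): it produces a unit $x\in\CA^\times$ with $xlx^{-1}=\s(l)$, i.e. $xl=\s(l)x$ for all $l\in L$. Next, since $\s^n=\id$, the element $x^n$ centralizes $L$; as $L$ is a maximal subfield of the central simple algebra $\CA$, the double centralizer theorem gives $C_\CA(L)=L$, so $x^n\in L$. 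But $x^n$ also commutes with $x$, hence is fixed by $\s$, so $x^n\in L^{\langle\s\rangle}=K$; being a unit, $a:=x^n\in K^\times$.

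It then remains to prove $\CA=\bigoplus_{i=0}^{n-1}Lx^i$, and since $\dim_K\CA=n^2=n\cdot[L:K]$ it suffices to show $1,x,\dots,x^{n-1}$ are left $L$-linearly independent. For this I would run the Dedekind--Artin independence-of-characters argument inside $\CA$: from a shortest nontrivial relation $\sum_{i\in S}l_ix^i=0$ with all $l_i\neq0$ (necessarily $|S|\ge2$), choose $j<k$ in $S$ and $l\in L$ with $\s^j(l)\neq\s^k(l)$; right-multiplying the relation by $l$ and using $x^il=\s^i(l)x^i$, then subtracting $\s^j(l)$ times the original relation, yields a nonzero relation supported on $S\setminus\{j\}$, contradicting minimality. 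Hence the sum is direct and, by the dimension equality, equals $\CA$, which is exactly (\ref{Es4}). The only non-formal ingredients are the two classical structure theorems --- Skolem--Noether, to realize the Galois automorphism $\s$ as an inner automorphism of $\CA$, and the double centralizer theorem, to force $x^n$ first into $L$ and then into $K$; I expect these (rather than any computation) to be the real substance, everything else being linear algebra.
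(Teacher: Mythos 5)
The paper does not prove this proposition but simply cites it from Pierce's book (Proposition a, Section 15.1), so there is no in-paper argument to compare against. Your proof is correct and is the standard textbook argument: Skolem--Noether realizes $\s$ as conjugation by a unit $x$, the double centralizer theorem forces $x^n\in C_\CA(L)=L$ and then $\s$-invariance of $x^n$ forces $x^n\in L^{\langle\s\rangle}=K^\times$, and the Dedekind--Artin independence argument plus a dimension count yields the direct-sum decomposition; the converse is the dimension count identifying $L$ as strictly maximal. All steps are sound.
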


The cyclic algebra $\CA$ is denoted by $(L, \s , a)$ (in \cite{Pierce-AssAlg}, the cyclic algebra $\CA$ is denoted by $(L, \s^{-1} , a)$). For a ring $R$ and a natural number $n\geq 1$, let $R^{[n]}$ be the image of the map $R\ra R$, $r\mapsto r^n$, i.e.  $R^{[n]}:=\{ r^n\, | \, r\in R\}$. Notice that $R=R^{[1]}\supseteq R^{[2]}\supseteq \cdots $. 

For an element $\l\in \bF$ such that $\l^\nu \in F$ for some natural number $\nu\geq 1$, the set $\{i\in \Z\, | \, \l^i\in F\}$ is a subgroup of $\Z$, and so 

\begin{equation}\label{nFl}
\{i\in \Z\, | \, \l^i\in F\}=\Z n_F(\l) \;\; {\rm for\; a \;unique\; natural \; number}\;\;n_F(\l)\geq 1.
\end{equation}
So, if $\l^i\in F$ for some nonzero integer $i$ then $ n_F(\l)\mid i$.

In general, the algebras $(E, \s , a)$ considered above (see (\ref{qEs2})) are not cyclic algebras or simple algebras.  Theorem \ref{qA2Sep20}.(3a) is a criterion for the algebra $(E, \s , a)$ to be a simple algebra. Theorem \ref{qA2Sep20}.(4) is a criterion for the algebra $(E, \s , a)$ to be a cyclic algebra. Theorem \ref{qA2Sep20}.(5) is a criterion for the cyclic algebra $(E, \s , a)$ to be isomorphic to the matrix algebra $M_n(F)$.

\begin{theorem}\label{qA2Sep20}
Let $F$, $E=E(s)$,  and $\CE=(E, \s, a)$ be as above, see (\ref{qEs2}).
\begin{enumerate}
\item The polynomial $h^n-s\in F[h]$ is an irreducible polynomial over $F$ if and only if $s\not\in \bigcup_{m\mid n, m\neq 1}F^{[m]}$.

\item Suppose that the polynomial $h^n-s\in F[h]$ is an irreducible polynomial over $F$. Then the field extension $E/F$ is a Galois field extension with Galois  group $G(E/F)=\langle \s \, | \, \s^n=1\rangle$  which is a cyclic group of order $n$ where $\s (h) = qh$, and $[E:F]=n$. 
\item 
\begin{enumerate}
\item The algebra $\CE$ is a simple algebra if and only if $s\neq 0$ and $a\neq 0$. If  $s\neq 0$ and $a\neq 0$ then the  algebra $\CE$ is a central simple algebra of degree $n$ (i.e.  $Z(\CE)=F$ and $\dim_F\, (\CE)=n^2$).
\item $\rad (\CE)=\begin{cases}
(x)& \text{if $s\neq 0$ and $a=0$},\\
(h)& \text{if $s= 0$ and $a\neq 0$},\\
(x,h)& \text{if $s=0$ and $a=0$}, \\
\end{cases}$
and 

$\CE/\rad (\CE)=\begin{cases}
F[h]/(h^n-s)& \text{if $s\neq 0$ and $a=0$},\\
F[x]/(x^n-a)& \text{if $s= 0$ and $a\neq 0$},\\
F& \text{if $s=0$ and $a=0$}.\\
\end{cases}$
\end{enumerate}
\item The algebra $\CE$ is a cyclic algebra iff $E$ is a field and $a\neq 0$.


\item Suppose that  the algebra  $E$ is a field and $a\neq 0$. Then the  algebra $\CE$ is  isomorphic to the matrix algebra $M_n(F)$ iff  $a=N_{E/F}(b$ for some  element $b\in E^\times$.



\end{enumerate}
\end{theorem}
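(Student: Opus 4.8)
Overall, the plan is to prove the five statements in order, each following from its predecessors together with the facts on central simple and cyclic algebras recalled just before the statement. For part (1) I would work over the algebraic closure $\bF$: fixing $\l\in\bF$ with $\l^n=s$ gives $h^n-s=\prod_{i=0}^{n-1}(h-q^i\l)$ (cf.\ (\ref{qEs1})). If $s=t^m$ with $1<m\mid n$, then with $q_m:=q^{n/m}\in F$ (a primitive $m$th root of unity) the identity (\ref{xnix1}) applied to $q_m$ gives $h^n-s=\prod_{i=0}^{m-1}\bigl(h^{n/m}-q_m^i t\bigr)$, a nontrivial factorization over $F$. Conversely, if $h^n-s$ is reducible over $F$, a monic factor of it equals $\prod_{i\in S}(h-q^i\l)$ for some $\emptyset\neq S\subsetneq\{0,\dots,n-1\}$, and its constant term lying in $F$ forces $\l^{|S|}\in F$ (as $q\in F$), hence $\l^{d}\in F$ with $d:=\gcd(|S|,n)\leq n-1$, and therefore $s=(\l^d)^{n/d}\in F^{[n/d]}$ with $1<n/d\mid n$ when $d>1$, or $s=\l^n\in F^{[n]}$ when $d=1$. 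This is the asserted equivalence (it is the Vahlen--Capelli criterion, the clause about $-4F^{[4]}$ for $4\mid n$ being vacuous here since $\mu_n\subseteq F$). Part (2) is then routine: if $h^n-s$ is irreducible, $E=F(\l)$ has degree $n$ and contains all roots $q^i\l$, so $E/F$ is Galois, and $\tau\mapsto$(the exponent $i$ with $\tau(\l)=q^i\l$) identifies $G(E/F)$ with $\Z/n\Z$ carrying $\sigma$ to a generator.

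For part (3) I would first compute the radical. When $a=0$, the ideal $(x)=\bigoplus_{j\geq1}Ex^j$ is nilpotent (use $xE=Ex$ and $x^n=0$) with $\CE/(x)\simeq F[h]/(h^n-s)$; symmetrically, when $s=0$, $(h)$ is nilpotent with $\CE/(h)\simeq F[x]/(x^n-a)$; and when $s=a=0$, $(x,h)$ is nilpotent with $\CE/(x,h)\simeq F$. Since $F[h]/(h^n-s)$ is a product of fields for $s\neq0$ and has radical $(h)$ for $s=0$ (and likewise for $F[x]/(x^n-a)$), all three cases of (3b) and the corresponding $\CE/\rad(\CE)$ follow; in particular $\rad(\CE)\neq0$ whenever $s=0$ or $a=0$, which gives the ``only if'' part of (3a). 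For the ``if'' part assume $s\neq0$ and $a\neq0$. Then $h^n-s$ is squarefree, so $E=\prod_{j=1}^r E_j$ is a finite product of fields, and $\sigma$ permutes these factors in a single $r$-cycle: on the set of $F$-algebra homomorphisms $E\to\bF$ (which corresponds to $\{q^i\l\}$ via $\phi\mapsto\phi(h)$), $\sigma$ acts as the $n$-cycle $\phi\mapsto\phi\circ\sigma$, and this commutes with the $G(\bF/F)$-action whose orbits index the $E_j$; hence the only $\sigma$-stable ideals of $E$ are $0$ and $E$. Now $h$ is a unit of $\CE$ (as $s\neq0$) and the inner automorphism $\omega_h$ has eigenspace decomposition $\CE=\bigoplus_j Ex^j$ with pairwise distinct eigenvalues $q^{-j}$, so every two-sided ideal $I$ decomposes as $I=\bigoplus_j(I\cap Ex^j)$. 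If $0\neq ux^j\in I$, then multiplying by $x^{n-j}$ (legitimate since $x$ is a unit, $a\neq0$) and using that $a\in F^\times$ is a unit of $E$ produces a nonzero element of $I\cap E$; but $I\cap E$ is a nonzero $\sigma$-stable ideal of $E$ (conjugation by $x$ restricts to $\sigma$ on $E$), so $I\cap E=E$ and $I=\CE$. Thus $\CE$ is simple. Finally, $z\in Z(\CE)$ must commute with $h$, forcing $z\in E$, and with $x$, forcing $z\in E^\sigma$; and $E^\sigma=F$ because a $\sigma$-fixed element of $\prod E_j$ has all components equal and fixed by the generator $\sigma^r$ of $G(E_1/F)$ (one checks $\sigma^r|_{E_1}$ has order $[E_1:F]=n/r$, so each $E_1/F$ is itself cyclic). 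Hence $Z(\CE)=F$, $\dim_F\CE=n^2$, and $\CE$ is central simple of degree $n$.

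Parts (4) and (5) are then the structure theory of cyclic algebras. If $E$ is a field and $a\neq0$, then $h^n-s$ is irreducible, so by (2) $E/F$ is cyclic Galois of degree $n$ with $G(E/F)=\langle\sigma\rangle$, and by (3a) $\CE$ is central simple of degree $n$; thus $E$ is a strictly maximal cyclic subfield and the defining relations (\ref{qEs3}) are precisely those of (\ref{Es4}) with $L=E$ and $x^n=a\in F^\times$, so Proposition \ref{Pra-15.1} exhibits $\CE=(E,\sigma,a)$ as a cyclic algebra; conversely, for the presentation $(E,\sigma,a)$ to be a cyclic algebra the subfield $E$ must be a strictly maximal subfield, hence a field, and $a\in F^\times$, i.e.\ $a\neq0$. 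This gives (4). For (5), with $E$ a field and $a\neq0$: if $a=N_{E/F}(b)$ for some $b\in E^\times$, then $y:=b^{-1}x$ satisfies $ye=\sigma(e)y$ for $e\in E$ and $y^n=N_{E/F}(b^{-1})\,x^n=a^{-1}a=1$, so $\CE\simeq(E,\sigma,1)$, i.e.\ the skew group algebra $E\rtimes G(E/F)\simeq\End_F(E)\simeq M_n(F)$ (the classical splitting of a cyclic algebra with trivial factor, \cite{Pierce-AssAlg}); conversely, if $\CE\simeq M_n(F)=\End_F(V)$ with $\dim_F V=n$, then restricting the action to the subfield $E$ makes $V$ an $E$-module of $F$-dimension $n=\dim_F E$, hence $V\simeq E$ as an $E$-module, under which $x$ acts as a $\sigma$-semilinear bijection $e\mapsto c\,\sigma(e)$ with $c\in E^\times$, whence $a=x^n=N_{E/F}(c)\in N_{E/F}(E^\times)$.

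I expect the technical heart to be part (3a) in the generic case $s\neq0$, $a\neq0$ with $E$ \emph{not} a field: getting the description of $(E,\sigma)$ right (a finite product of fields on which $\sigma$ acts transitively, hence with no nontrivial $\sigma$-stable ideals) so that the reduction from a two-sided ideal of $\CE$ to a nonzero $\sigma$-stable ideal of $E$ goes through, and correspondingly pinning down $E^\sigma=F$. Everything else is either elementary (parts (1), (2), (3b)) or a direct appeal to the facts on cyclic algebras recalled above (parts (4), (5)).
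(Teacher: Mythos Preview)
Your arguments for parts (1), (2), (3b), and (5) track the paper's closely; for (5) the paper simply cites \cite[Lemma 15.1]{Pierce-AssAlg} whereas you spell out the standard argument, which is fine.

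For (3a) your route is correct but more elaborate than the paper's. You use only the inner automorphism $\o_h$ to grade $\CE=\bigoplus_j Ex^j$, reduce a nonzero two-sided ideal to a nonzero $\s$-stable ideal of $E$, and then analyse $E$ as a product of fields on which $\s$ permutes the factors transitively. The paper instead observes that when $s\neq 0$ and $a\neq 0$ both $h$ and $x$ are units, so \emph{both} $\o_h$ and $\o_x$ are inner; the decomposition $\CE=\bigoplus_{i,j=0}^{n-1} Fh^ix^j$ is then a simultaneous eigenspace decomposition with pairwise distinct eigenvalue pairs $(q^{-j},q^i)$, so any nonzero ideal contains some nonzero $z_{ij}h^ix^j$ and hence a unit. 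The centre falls out the same way: $C_\CE(h)=E$ and then $E^{\o_x}=E^\s=F$, the last equality being immediate because $\s$ acts on the monomial basis $\{h^i\}_{0\le i\le n-1}$ with distinct eigenvalues $q^i$ --- your detour through the factor decomposition of $E$ is not needed.

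For (4), your converse (``for the presentation $(E,\s,a)$ to be a cyclic algebra the subfield $E$ must be a strictly maximal subfield'') is not justified: the definition only says $\CE$ contains \emph{some} strictly maximal cyclic subfield, not that the given $E$ is one. The paper's own argument here is equally terse (``by statement 3''), which only yields $s\neq 0$ and $a\neq 0$, not that $E$ is a field. Read literally the implication actually fails: with $F=\mathbb{Q}(i)$, $n=2$, $s=1$, $a=2$ one has $E\simeq F\times F$ while $\CE\simeq M_2(F)$ is cyclic via the subfield $F(\sqrt 2)$. So either interpret (4) as asserting that the \emph{presented data} $(E,\s,a)$ form standard cyclic-algebra data (which is tautologically equivalent to ``$E$ a field and $a\neq 0$''), or regard the $(\Rightarrow)$ direction as a slip shared with the paper; only $(\Leftarrow)$ is used downstream, and that direction you prove correctly.
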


\begin{proof}    1. It suffices  to show
that the polynomial $h^n-s$ is {\em reducible} over $F$ if and only if $s\in \bigcup_{m\mid n, m\neq 1}F^{[m]}$.

$(\Rightarrow )$ Suppose that the polynomial  
$h^n-s$ is reducible. The implication holds for $s=0$. So, we may assume that $s\neq 0$. Then $h^n-s=fg$ for some nonscalar polynomials $f,g\in F[h]$.  By (\ref{qEs1}), there is a disjoint union 
$\{0,1, \ldots , n-1\}=S\coprod T$ of non-empty sets  $S$ and $T$ such that $f=\prod_{i\in S}(h-q^i\l)$ and $g=\prod_{j\in T}(h-q^j\l )$ where $\l =s^{\frac{1}{n}} \in \bF$ is a root of the polynomial $h^n-s$, see (\ref{qEs1}). Let $l=|S|$. Then $1\leq l\leq n-1$ and 
$$f=h^l+\sum_{k=1}^l\mu_k\l^kh^{l-k}\in F[h]$$ 
for some elements $\mu_k \in F$ not all of them are equal to zero since $s\neq 0$. Notice that $\mu_l=(-1)^lq^{\sum_{i\in S}i}\in F$ (since $q\in F$), and so $\l^l\in F$. Let $u=n_F(\l)$. By (\ref{nFl}), $\l^u\in F$ and  $u\mid n$ (since $\l^n=s\in F$).  Since $\l^l\in F$ and $l<n$,  $u\neq n$.  Then $$s=\l^n=\Big(\l^u\big)^m\in F^{[m]}\;\; {\rm  where}\;\;m=\frac{n}{u}.$$ Clearly $m\mid n$ and $m\neq 1$. 

$(\Leftarrow )$ If $s\in \bigcup_{m\mid n, m\neq 1}F^{[m]}$ then $ s\in F^{[m]}$ for some $m$ as above, i.e. $s =\mu^m$ for some $\mu\in F$,  and so  the polynomial $x^n-s=x^n-\mu^m$ is a reducible polynomial since it is divisible by the polynomial $ x^{\frac{n}{m}}-\mu\in F[x]$.

2. By (\ref{qEs1}), the field extension $E/F$ is Galois and $G(E/F)=[E:F]=n$. It follows from  $$\s (h^n-\l )=q^nh^n-\l= h^n-\l$$ that  $\s \in G(E/F)$. Furthermore, $G(E/F)=\langle \s \rangle$ since the order of the automorphism $\s$ is $n$ ($q$ is a primitive $n$'th root of unity).

3.(b) Suppose that $s=0$ and $a=0$. Then the elements $h$ and $x$ are normal nilpotent elements of the algebra $\CE$. Therefore, $(h,x)\subseteq \rad (\CE)$. Since $\CE/(h,x)\simeq \mA/ (h,x)\simeq F$, $\rad (\CE ) = (h,x)$. 

In view of the isomorphism $\xi$ (see (\ref{qEs6})), it suffices to consider the case when $s\neq 0$ and $a=0$ (as the case $s= 0$ and $a\neq 0$ can be reduced to this one via $\xi$). So, suppose that $s\neq 0$ and $a=0$.

The element $x$ is a normal and nilpotent element of the algebra $(E,\s ,0)$. Hence, the nilpotent ideal $(x)$ belongs to the radical of the algebra $(E, \s , 0)$ (which is the largest nilpotent ideal). Since the factor algebra $(E, \s ,0) / (x) \simeq E$ is a semisimple algebra (which is either a field or a direct product of fields), the ideal $(x)$ is the radical of $(E,\s ,0)$.

(a) In view of the statement (b), it suffices to show that if $s\neq 0$ and $a\neq 0$ then the algebra $\CE=(E, \s , a)$ is a central simple algebra (since $\dim_F (\CE)=n^2$, i.e. the degree of the algebra $\CE$ is $n$).  

(i) {\em The algebra $\CE$ is simple}: Given a nonzero element $z=\sum_{i,j=0}^{n-1}z_{ij}h^ix^j$ of the algebra $\CE$  where not all elements $z_{ij}\in F$ are equal to zero. We have to show that $(z)=\CE$. Each summand $z_{ij}'=z_{ij}h^ix^j$ of $z$ is a common eigenvector for the inner automorphisms $\o_h$ and $\o_x$ of the algebra $\CE$:
$$ \o_h(z_{ij}')=q^{-j}z_{ij}'\;\; {\rm and}\;\; \o_x(z_{ij}')=q^iz_{ij}'.$$
Since the elements $(q^{-j},q^i)$, $ 0\leq i,j\leq n-1$, are distinct and the elements $h$ and $x$ are units, all $z_{ij}'\in (z)$. Hence, all $z_{ij}\in (z)$, and so $(z)=\CE$. 

(ii) {\em The centre of the algebra $\CE$ is $F$}: By (\ref{qEs2}), the centralizer $C(h)$ of the element $h$ in $\CE$ is equal to $\CE^{\o_h}= E$. Now, the set of elements of $E$ that commute with the element $x$ is $E^{\o_x}=E^\s=F$, and the statement (ii) follows.

4. $(\Rightarrow )$ Suppose that the algebra $(E, \s ,a)$ is a cyclic algebra. Then necessarily  $E$ is a field and $a\neq 0$ (by statement 3).

$(\Leftarrow)$ Suppose that $E$ is a field and $a\neq 0$.
Since the algebra $E$ is a field, $s\neq 0$.  Since $s\neq 0$ and  $a\neq 0$, the algebra $\CE$ is a central simple algebra of degree $n$, by statement $3.(a)$. Since 
$$[E:F]=n= {\rm  the\; degree\; of}\;\; \CE,$$
 the field $E$ 
 is  a strongly maximal subfield of the central simple algebra $\CE$. By statement 2, the field extension $E/F$ is cyclic, and so the algebra $\CE$ is a cyclic algebra.

 5. By statement 4,  the algebra $\CE$ is a cyclic algebra. Then statement 5 follows from   \cite[Lemma 15.1]{Pierce-AssAlg}.    \end{proof}   
 
 By Theorem \ref{qA2Sep20}.(1), the algebra  $E$ is not a field and $s\neq 0$ iff $0\neq s\in \bigcup_{m\mid n, m\neq 1}F^{[m]}$. Let 
\begin{equation}\label{msmin}
m(s):=m(s;n):=\max\{m \, |  \, m\geq 1,   s\in F^{[m]},  m\mid n \}=\max\{m \, |  \, m\geq 1,  s^\frac{1}{m}\in F,  m\mid n\}
\end{equation}
where  $s^\frac{1}{m}$ means an $m$'th root of $s$ and 
\begin{equation}\label{msamin}
m(s,a):=m(s,a; n):=\max\bigg\{m \, |  \, m\geq 1,   
a\in F^{[m]}, m\mid \frac{n}{m(s)} \bigg\}=\max\bigg\{m \, |  \, m\geq 1,   a^\frac{1}{m}\in F,   m\mid \frac{n}{m(s)}  \bigg\}
\end{equation}
By Theorem \ref{qA2Sep20}.(1), the algebra  $E$ is  a field iff $m(s)=1$.

\begin{lemma}\label{a28May23}
Suppose that $s\neq 0$. 
\begin{enumerate}
\item  $m(s)=\lcm \{m \, |  \,  m\geq 1,   s\in F^{[m]},  m\mid n \}$. In particular,  $m'\mid m(s)$ for all $m'\geq 1$ such  that  $ m'\mid n$   and   $s\in  F^{[m']}$;  and $m(s;\frac{n}{m(s;n)})=1$. So, the number $m(s)$ is the largest divisor of $n$  such that $s^\frac{1}{m(s)}\in F$.

\item  The field extension $E'/F$, where $E':=F[h]/(h^\frac{n}{m(s)}-s^\frac{1}{m(s)})$,  is a cyclic  field extension with Galois  group $G(E'/F)=\langle \s':=\s^{m(s)} \, | \, {\s'}^\frac{n}{m(s)}=1\rangle$  which is a cyclic group of order $\frac{n}{m(s)}$ where $\s' (h) =\s^{m(s)}(h)= q^{m(s)}h$, and $[E':F]=\frac{n}{m(s)}$. 

\item  The factor algebras  $F[h]/\Big(h^\frac{n}{m(s)}-q^\frac{in}{m(s)}  s^\frac{1}{m(s)}\Big)$, where $i=0,1, \ldots , m(s)-1$,  are $F$-isomorphic cyclic field extensions with Galois groups equal to  $G(E'/F)=\langle \s':=\s^{m(s)} \, | \, {\s'}^\frac{n}{m(s)}=1\rangle$. 
\end{enumerate}
\end{lemma}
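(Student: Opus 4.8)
The proof should reduce parts (2) and (3) to Theorem~\ref{qA2Sep20}, so the real work is the arithmetic statement~(1). The plan for (1) is to show that the set $\mathcal{S}:=\{m\geq 1\mid m\mid n,\ s\in F^{[m]}\}$ is closed under least common multiples. Since $\mathcal{S}$ is finite (each of its elements divides $n$) and contains $1$, closure under $\lcm$ forces $\mathcal{S}$ to have a largest element, equal to the $\lcm$ of all its members and divisible by every member; this largest element is $m(s)$ by definition, which gives both the displayed formula and the divisibility assertion $m'\mid m(s)$. To prove closure, fix $m_1,m_2\in\mathcal{S}$, set $m:=\lcm(m_1,m_2)$ (so $m\mid n$), and fix a root $\l\in\bF$ of $h^m-s$. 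Then $\l^{m/m_1}$ is an $m_1$-th root of $s$ and $\l^{m/m_2}$ an $m_2$-th root of $s$; because $m_1\mid n$, $m_2\mid n$ and $F$ contains the primitive $n$-th root of unity $q$, all $m_1$-th and $m_2$-th roots of unity lie in $F$, so from $s\in F^{[m_1]}\cap F^{[m_2]}$ we get $\l^{m/m_1},\l^{m/m_2}\in F^\times$. Since $\gcd(m/m_1,m/m_2)=1$, a B\'ezout combination of these two elements (with possibly negative exponents, harmless as $\l\neq 0$) equals $\l$, so $\l\in F$ and $s=\l^m\in F^{[m]}$, i.e. $m\in\mathcal{S}$.

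The content of the last clause of~(1) that is actually used below is the following: writing $t:=s^{1/m(s)}\in F$, one has $m(t;\,n/m(s))=1$, equivalently (by Theorem~\ref{qA2Sep20}.(1), with $n$ and $s$ replaced by $n/m(s)$ and $t$) that $h^{n/m(s)}-t$ is irreducible over $F$. This follows from the divisibility assertion just proved: if $t\in F^{[m_0]}$ for some $m_0>1$ with $m_0\mid n/m(s)$, then $s=t^{m(s)}\in F^{[m(s)m_0]}$ with $m(s)m_0\mid n$, so $m(s)m_0\in\mathcal{S}$ and hence $m(s)m_0\mid m(s)$, which is impossible.

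For~(2) I would apply Theorem~\ref{qA2Sep20}.(1)--(2) with $(n,s,q)$ replaced by $(n/m(s),\,t,\,q^{m(s)})$, noting that $q^{m(s)}\in F$ is a primitive $(n/m(s))$-th root of unity. The previous paragraph gives irreducibility of $h^{n/m(s)}-t$, so $E'=F[h]/(h^{n/m(s)}-t)$ is a field with $[E':F]=n/m(s)$, and Theorem~\ref{qA2Sep20}.(2) then makes $E'/F$ Galois with cyclic Galois group generated by the $F$-automorphism $h\mapsto q^{m(s)}h$ of order $n/m(s)$; this is precisely the map induced by $\s^{m(s)}$, which descends to $E'$ because $q^{m(s)\cdot(n/m(s))}=q^n=1$. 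For~(3) the same substitution works with $t$ replaced by $q^{in/m(s)}t$ for $i=0,\dots,m(s)-1$: one first checks $q^{in/m(s)}t\notin\bigcup_{m\mid n/m(s),\,m\neq 1}F^{[m]}$ — if $q^{in/m(s)}t=c^m$ with $c\in F$ and $m\mid n/m(s)$, $m>1$, then $m\,m(s)\mid n$, so the root of unity $q^{-in/(m\,m(s))}$ lies in $F$, hence $q^{-in/m(s)}=(q^{-in/(m\,m(s))})^m\in F^{[m]}$ and $t=q^{-in/m(s)}c^m\in F^{[m]}$, contradicting the second paragraph. Thus each $F[h]/(h^{n/m(s)}-q^{in/m(s)}t)$ is a cyclic field extension of $F$ with Galois group $\langle h\mapsto q^{m(s)}h\rangle$, and the $F$-algebra map $h\mapsto q^{-i}h$ gives an isomorphism $E'\cong F[h]/(h^{n/m(s)}-q^{in/m(s)}t)$ (well-defined since $(q^{-i}h)^{n/m(s)}=q^{-in/m(s)}h^{n/m(s)}$, and commuting with $h\mapsto q^{m(s)}h$), which also identifies the Galois groups.

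The main obstacle is the $\lcm$-closure of $\mathcal{S}$ in part~(1): everything there hinges on the fact that the ambiguity in choosing an $m_i$-th root of $s$ is exactly an $m_i$-th root of unity, and that these lie in $F$ precisely because $m_i\mid n$ and $q$ is a primitive $n$-th root of unity — this is where the standing hypothesis on $q$ enters, and the statement would fail without it. Once~(1) is in place, parts~(2) and~(3) are essentially bookkeeping: tracking which data plays the role of $(n,s,q)$ in Theorem~\ref{qA2Sep20} and checking that the relevant power of $\s$ descends to the relevant quotient.
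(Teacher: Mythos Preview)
Your proof is correct and follows the paper's plan: closure of $\mathcal{S}$ under $\lcm$ via a B\'ezout argument for~(1), then Theorem~\ref{qA2Sep20} with the shifted parameters $(n/m(s),\,s^{1/m(s)},\,q^{m(s)})$ for~(2) and the isomorphisms induced by $\s^{-i}$ for~(3). One minor correction: the paper's B\'ezout step stays entirely in $F$ --- if $\alpha^m=\beta^{m'}=s$ with $\alpha,\beta\in F$ and $g=\gcd(m,m')=bm+b'm'$, $l=mm'/g$, then $(\alpha^{b'}\beta^b)^l=s^{(b'm'+bm)/g}=s$ --- so $\lcm$-closure of $\mathcal{S}$ does \emph{not} require the root-of-unity hypothesis, contrary to your final paragraph; that hypothesis enters only when invoking Theorem~\ref{qA2Sep20} in~(2)--(3).
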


\begin{proof}   1. To prove the first equality it suffices to show that if $s\in  F^{[m]}$ and $s\in  F^{[m' ]}$ for some divisors  $m$ and $ m'$ of  $n$  then $s\in  F^{[l]}$ where $l:=\lcm (m,m')$ (since clearly $l\mid n$). Notice that $g:=\gcd (m,m')=bm+b'm'$ for some $b,b'\in \Z$ and $lg=mm'$. The inclusions $s\in  F^{[m]}$ and $s\in  F^{[m' ]}$ are equivalent 
to the inclusions $s^\frac{1}{m}\in F$ and $s^\frac{1}{m'}\in F$. 
Then  
$$s^\frac{1}{l}=s^\frac{g}{mm'}=s^\frac{bm+b'm'}{mm'}=s^\frac{b}{m'}s^\frac{b'}{m}\in F, $$
and so $s\in  F^{[l]}$, as required. The equality $m(s;\frac{n}{m(s;n)})=1$ follows from the first equality.

2.  By the assumption $q\in F\subseteq E'$, and so the field  $F$ contains the $\frac{n}{m(s)}$'s root of unity $q^{m(s)}$.  By statement 1,  $m(s;\frac{n}{m(s;n)})=1$, and so the algebra $E'$ is a field,  by Theorem \ref{qA2Sep20}.(1). Now, statement 2 follows from Theorem \ref{qA2Sep20}.(2). 

3. The automorphism $\s\in \Aut_F(F[h])$, where $\s (h)= qh$, induces  $F$-isomorphisms 
$$\s^{-i}:E'\ra F[h]/\Big(h^\frac{n}{m(s)}-q^\frac{in}{m(s)}  s^\frac{1}{m(s)}\Big)\;\; {\rm for}\;\; i=1, \ldots , m(s)-1.$$
By Theorem \ref{qA2Sep20}.(1) and statement 2, the $F$-algebra $E'$ is a cyclic field extension of $F$ with Galois group $G(E'/F)$ described in statement 2, and statement 3 follows.  \end{proof}

{\bf The case where either $s=0$ or $a=0$: Classification of prime ideals and simple modules of  the algebra $\CE$.}  Suppose that $A$ is a finite dimensional $F$-algebra. Then its radical $\rad (A)$ is a nilpotent ideal. Then the factor algebra $\bA :=A/\rad (A)$ is a semisimple finite dimensional algebra. Then all prime ideals of the algebra $A$ contain the radical $\rad (A)$ and all simple $A$-modules are annihilated by the radical $\rad (A)$. Hence, the map 

\begin{equation}\label{SpAr}
\Spec (A)\ra \Spec (\bA), \;\; \gp\mapsto \gp /\rad (A)
\end{equation}
is a bijection and 
\begin{equation}\label{SpAr1}
\hA = \widehat{\bA},
\end{equation}
by the restriction of scalars.

\begin{lemma}\label{a25Jun23}
Let $\CE =(E,\s , a)$ where $E=E(s)$. 
\begin{enumerate}
\item Suppose that $s\neq 0$ and $a=0$. Then 
$$\CE /\rad (\CE)=E=
\begin{cases}
E \; {\rm is \; a \; field} & \text{if } m(s)=n,\\
\prod_{i=0}^{m(s)-1} F[h]/\Big(h^\frac{n}{m(s)}-q^\frac{in}{m(s)}s^\frac{1}{m(s)} \; {\rm is \; a \; direct \; product\; of \;fields}& \text{if }m(s)\neq n,\\
\end{cases}
$$ 
$$\Spec (\CE)=
\begin{cases}
\{ (x) \} & \text{if } m(s)=n,\\
\Big\{ \Big(x, h^\frac{n}{m(s)}-q^\frac{in}{m(s)}s^\frac{1}{m(s)} \Big) \, \Big| \, i=0, \ldots , m(s)-1\Big\} & \text{if }m(s)\neq n,\\
\end{cases}
$$ 

$$\widehat{\CE}=
\begin{cases}
\{E \} & \text{if } m(s)=n,\\
\Big\{ F[h]/\Big(h^\frac{n}{m(s)}-q^\frac{in}{m(s)}s^\frac{1}{m(s)} \Big)  \, \Big| \, i=0, \ldots , m(s)-1\Big\}& \text{if }m(s)\neq n.\\
\end{cases}
$$   

\item Suppose that $s=0$ and $a\neq 0$. Then $$\CE /\rad (\CE)=F[x]/(x^n-a)=
\begin{cases}
F[x]/(x^n-a)\; {\rm is \; a \; field} & \text{if } m(s)=n,\\
\prod_{i=0}^{m(s)-1} F[x]/\Big(x^\frac{n}{m(s)}-q^\frac{in}{m(s)}a^\frac{1}{m(s)} \; {\rm is \; a \; direct \; product\; of \;fields}& \text{if }m(s)\neq n,\\
\end{cases}
$$ 
$$\Spec (\CE)=
\begin{cases}
\{ (h) \} & \text{if } m(s)=n,\\
\Big\{ \Big(h, x^\frac{n}{m(s)}-q^\frac{in}{m(s)}a^\frac{1}{m(s)} \Big) \, \Big| \, i=0, \ldots , m(s)-1\Big\} & \text{if }m(s)\neq n,\\
\end{cases}
$$ 

$$\widehat{\CE}=
\begin{cases}
\{F[x]/(x^n-a) \} & \text{if } m(s)=n,\\
\Big\{ F[x]/\Big(x^\frac{n}{m(s)}-q^\frac{in}{m(s)}a^\frac{1}{m(s)} \Big)  \, \Big| \, i=0, \ldots , m(s)-1\Big\}& \text{if }m(s)\neq n.\\
\end{cases}
$$   
\item Suppose that $s=0$ and $a=0$. Then $\CE/\rad (\CE) =F$, $\Spec (\CE)=\{ 0\}$,  and $\widehat{\CE}=\{ F\}$.

\item Suppose that either $s=0$ or $a=0$. Then each simple $\CE$-module $M$ is a field, and so the endomorphism algebra $\End_\CE (M)\simeq M$ is a field.
\end{enumerate}
\end{lemma}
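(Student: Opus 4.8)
Since the algebra $\CE$ is finite dimensional over $F$, its radical $\rad(\CE)$ is nilpotent and the factor algebra $\bA := \CE/\rad(\CE)$ is semisimple. By the reductions \eqref{SpAr} and \eqref{SpAr1}, prime ideals and simple modules of $\CE$ coincide with those of $\bA$, so the plan is to identify $\bA$ explicitly in each of the three cases $s\neq 0$, $a=0$; $s=0$, $a\neq 0$; $s=0$, $a=0$. Part of this is already done: Theorem \ref{qA2Sep20}.(3b) records $\rad(\CE)$ and $\CE/\rad(\CE)$ as $F[h]/(h^n-s)$, $F[x]/(x^n-a)$, or $F$ respectively. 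The remaining task for statements (1) and (2) is to decompose the commutative semisimple algebra $E(s) = F[h]/(h^n-s)$ (resp.\ $F[x]/(x^n-a)$) into a product of fields, which amounts to factoring the polynomial $h^n-s$ over $F$.

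For statement (1), when $s\neq 0$ the factorization of $h^n-s$ is governed by $m(s)$: by Lemma \ref{a28May23}.(1), $m(s)$ is the largest divisor $m$ of $n$ with $s^{1/m}\in F$, so $h^n - s = \prod_{i=0}^{m(s)-1}\bigl(h^{n/m(s)} - q^{in/m(s)}s^{1/m(s)}\bigr)$ using \eqref{xnix1} applied to $h^{n/m(s)}$ and the element $\mu = s^{1/m(s)}\in F$ (the $m(s)$ roots $q^{in/m(s)}\mu$ of $X^{m(s)} = s$ being the relevant ones), and by Lemma \ref{a28May23}.(3) each factor $F[h]/(h^{n/m(s)} - q^{in/m(s)}s^{1/m(s)})$ is a field — indeed all of them are $F$-isomorphic. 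The case $m(s)=n$ is exactly the case $E(s)$ is a field (Theorem \ref{qA2Sep20}.(1)), giving the top branch. Then $\Spec(\CE)$ and $\widehat{\CE}$ follow: the prime ideals of the product $\prod_i L_i$ of fields pull back along \eqref{SpAr} to $(x, h^{n/m(s)} - q^{in/m(s)}s^{1/m(s)})$ (since $\rad(\CE) = (x)$), and the simple modules are the factor fields $L_i$ themselves. Statement (2) is identical after applying the isomorphism $\xi$ of \eqref{qEs6} which swaps $h\leftrightarrow x$ and $s\leftrightarrow a$; alternatively one repeats the argument verbatim with $x^n - a$ in place of $h^n - s$. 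Statement (3) is immediate from Theorem \ref{qA2Sep20}.(3b): $\CE/\rad(\CE) = F$, so the unique prime is $0$ (i.e.\ $\rad(\CE)$ itself before the quotient) and the unique simple module is $F$.

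For statement (4), observe that in all three subcases the semisimple algebra $\bA = \CE/\rad(\CE)$ is a finite direct product of fields (a single field or $F$ in the degenerate cases), hence every simple $\bA$-module — equivalently every simple $\CE$-module — is one of these fields $L$, and $\End_\CE(M) = \End_{\bA}(L) = L$ since $L$ is a commutative field acting on itself by multiplication. This uses that $\bA$ is \emph{commutative}, which holds precisely because when $s=0$ or $a=0$ one of the two generators becomes nilpotent and central modulo the radical, collapsing the skew relation $xh = qhx$ to a commutative relation on $\bA$.

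I do not expect a serious obstacle here: the content is bookkeeping on top of Theorem \ref{qA2Sep20} and Lemma \ref{a28May23}. The one point requiring a little care is checking that the $m(s)$ listed factors of $h^n - s$ are pairwise \emph{coprime} (so the Chinese Remainder Theorem gives a genuine product decomposition and the listed ideals are pairwise distinct maximal ideals): this follows because $h^n - s$ has $n$ distinct roots in $\bF$ when $s\neq 0$ (as $(h^n-s)' = nh^{n-1}$ is coprime to $h^n - s$, using $p\nmid n$), so no irreducible factor is repeated, and the $m(s)$ displayed factors partition the root set into the cosets of the group of $m(s)$-th roots of unity. Writing endomorphisms on the right, as the paper does, causes no difficulty since the fields $L$ are commutative.
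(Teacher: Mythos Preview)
Your proposal is correct and follows essentially the same route as the paper's proof: reduce to $\bA=\CE/\rad(\CE)$ via Theorem~\ref{qA2Sep20}.(3b), factor $h^n-s$ over $F$ using $m(s)$ (the paper writes this directly as $(h^{n/m(s)})^{m(s)}-(s^{1/m(s)})^{m(s)}$ and cites Theorem~\ref{qA2Sep20}.(1) for the factors being fields, whereas you cite Lemma~\ref{a28May23}.(3) and \eqref{xnix1}, which is the same content), then read off $\Spec(\CE)$ and $\widehat{\CE}$ from \eqref{SpAr}--\eqref{SpAr1}; statement~(2) is obtained by the $h\leftrightarrow x$ swap, and (3)--(4) are immediate. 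Your extra remark on coprimality of the $m(s)$ factors via separability of $h^n-s$ is a useful piece of rigor the paper leaves implicit.
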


\begin{proof}   1. By Theorem \ref{qA2Sep20}.(3b), $\CE /\rad (\CE)=E$. If $m(s)=n$ then $E$ is a field, by Theorem \ref{qA2Sep20}.(1). Suppose that $m(s)\neq n$. Then $m(s)\mid n$ and 
$$ E=F[h]/(h^n-s)=F[h]/\bigg( \Big(h^\frac{n}{m(s)}\Big)^{m(s)}-\Big( s^\frac{1}{m(s)}\Big)^{m(s)}\bigg)\simeq \prod_{i=0}^{m(s)-1} F[h]/\Big(h^\frac{n}{m(s)}-q^\frac{in}{m(s)}s^\frac{1}{m(s)}\Big)$$
is a  direct  product of fields, by Theorem \ref{qA2Sep20}.(1), and so the second equality of statement 1 follows. Now, the descriptions of the sets $\Spec (\CE)$ and $\widehat{\CE}$ follow from (\ref{SpAr}) and (\ref{SpAr1}), respectively.

2. Statement 2 follows from statement 1 by interchanging $h$ and $x$, and $s$ and $a$.

3. By Theorem \ref{qA2Sep20}.(3b), $\CE /\rad (\CE)=F$, and statement 3 follows. 

4. Statement 4 follows from the descriptions of simple $\CE$-modules in statements 1--3.  \end{proof}

{\bf Sufficient conditions for $A\in \fCK$  to be isomorphic to a matrix algebra $M_n(B)$ for some subalgebra  $B\in \fCK$.} Our goal is to prove Corollary \ref{MnK-char1} that provides sufficient conditions for an algebra $A\in \fCK$  to be isomorphic to the matrix algebra $M_n(B)$ for some subalgebra  $B\in \fCK$ of $A$. This result is used in the proof Theorem \ref{NF-qA2Sep20}.(4) where the remaining case ($s\neq 0$ and $a\neq 0$) is treated.

\begin{theorem}\label{MnK-char}
Let $A\in \fCK$ and  $n=\Deg (A)$. Then the following statements are equivalent:
\begin{enumerate}
\item  $A\simeq M_n(K) $. 
\item $1=e_1+\cdots +e_n$ is a sum of orthogonal nonzero idempotents $e_i\in A$.
\end{enumerate}
\end{theorem}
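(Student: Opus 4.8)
The plan is to prove the two implications separately, using the structure theory of central simple algebras recalled earlier in the section. For the implication $(1)\Rightarrow(2)$, suppose $A\simeq M_n(K)$. Then I would transport the standard matrix units: in $M_n(K)$ the diagonal matrix units $E_{11},\dots,E_{nn}$ form a complete set of $n$ orthogonal nonzero idempotents summing to $1$. Pulling these back along the isomorphism $A\xrightarrow{\sim}M_n(K)$ gives orthogonal nonzero idempotents $e_1,\dots,e_n\in A$ with $e_1+\cdots+e_n=1$, which is exactly statement (2). This direction is routine.

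For the implication $(2)\Rightarrow(1)$, the idea is a dimension count combined with Wedderburn's theorem. Since $A\in\fCK$, we have $A\simeq M_l(D)$ for a unique division algebra $D\in\fCK$, and $n=\Deg(A)=l\,\Deg(D)$, so $\dim_K(A)=n^2$. Now suppose $1=e_1+\cdots+e_n$ is a decomposition into $n$ orthogonal nonzero idempotents. Each $e_iAe_i$ is a nonzero algebra with identity $e_i$, hence $\dim_K(e_iAe_i)\geq 1$. Decomposing $A=\bigoplus_{i,j}e_iAe_j$ as a direct sum of the Peirce components, I would first note that each $e_iAe_j$ is nonzero: since $A\simeq M_l(D)$ is simple, for any nonzero idempotents $e_i,e_j$ the two-sided ideal generated by $e_j$ is all of $A$, so $e_iAe_j\neq 0$ (equivalently, $e_iAe_j\otimes_{e_jAe_j}e_jAe_i\to e_iAe_i$ is surjective onto a nonzero ring, forcing $e_iAe_j\neq 0$). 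Hence $\dim_K(e_iAe_j)\geq 1$ for all $n^2$ pairs $(i,j)$, giving $\dim_K(A)\geq n^2$, with equality since $\dim_K(A)=n^2$. Therefore every Peirce component $e_iAe_j$ is exactly $1$-dimensional over $K$; in particular each $e_i$ is a primitive idempotent and $e_iAe_i=Ke_i$. Choosing $0\neq u_{ij}\in e_iAe_j$, the products $u_{ij}u_{jk}$ lie in the one-dimensional space $e_iAe_k$ and are nonzero (again by simplicity / nondegeneracy of multiplication between Peirce components), so after rescaling the $u_{ij}$ suitably one arranges $u_{ij}u_{kl}=\delta_{jk}u_{il}$ and $u_{ii}=e_i$; that is, the $u_{ij}$ form a full set of $n\times n$ matrix units in $A$. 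Standard theory then yields an isomorphism $A\simeq M_n(C_A(\{u_{ij}\}))$ with the centralizer of the matrix units being a $K$-algebra of dimension $n^2/n^2=1$, hence equal to $K$; thus $A\simeq M_n(K)$.

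The main obstacle is the careful verification of the normalization step: one must show the scalars relating $u_{ij}u_{jk}$ to the chosen generator of $e_iAe_k$ can be chosen coherently so that genuine matrix-unit relations hold (this is where one fixes a reference row $u_{1j}$ and sets $u_{i1}$ so that $u_{i1}u_{1i}=e_i$, then defines $u_{ij}:=u_{i1}u_{1j}$ and checks the relations). The nonvanishing of all the relevant products is where I rely on the simplicity of $A$; concretely, $e_iAe_j\cdot e_jAe_i$ must be a nonzero subspace of the division-algebra-like corner $e_iAe_i=Ke_i$, which follows because otherwise the span of the $e_k$ together with the nonzero off-diagonal pieces would generate a proper ideal. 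One could alternatively bypass the explicit matrix-unit construction by invoking the well-known fact that a semiperfect (here finite-dimensional) algebra whose identity is a sum of $n$ primitive idempotents all generating isomorphic indecomposable projectives is a full $n\times n$ matrix algebra over $\End$ of that projective, but I expect the direct Peirce-decomposition argument above to be the cleanest self-contained route.
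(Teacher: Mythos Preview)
Your proposal is correct and follows essentially the same route as the paper: the forward direction uses the diagonal matrix units, and for $(2)\Rightarrow(1)$ the paper invokes Proposition~\ref{MnK-char2}, whose proof is precisely the Peirce-decomposition dimension count ($e_iAe_j\neq 0$ by simplicity, hence each Peirce component is $1$-dimensional since $\dim_K A=n^2$) followed by the explicit construction of matrix units via a fixed reference row $E_{1i}$ and column $E_{i1}$, exactly as you outline. Your nonvanishing argument for $e_iAe_j$ via the two-sided ideal generated by $e_j$ is equivalent to the paper's use of $\ann_A(Ae_j)=0$.
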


\begin{proof} $(1\Rightarrow 2)$ Suppose that $A=M_n(K)$. Then $A=\bigoplus_{i,j=1}KE_{ij}$ where $E_{ij}$ are the matrix units, and so   $1=E_{11}+\cdots +E_{nn}$  is a sum of orthogonal nonzero idempotents $E_{ii}\in A$.

$(2\Rightarrow 1)$ The implication follows from Proposition \ref{MnK-char2}.   \end{proof}   

Proposition \ref{MnK-char2}  presents explicit matrix units for the matrix algebra $A$ in Theorem \ref{MnK-char}.

\begin{proposition}\label{MnK-char2}
Let $A\in \fCK$ and $n=\Deg (A)$. Suppose that $1=e_1+\cdots +e_n$ is a sum of orthogonal nonzero idempotents $e_i\in A$. Then $A\simeq M_n(K)$ and  there is a unique set of matrix units $E_{ij}\in A$ such that 
$A=\bigoplus_{i,j=1}^nKE_{ij}$,  $E_{ij}\in e_iAe_j$ for all $i,j=1, \ldots , n$, and an explicit expression for them is  given in statement 4 below.
\begin{enumerate}
\item For all $i,j=1, \ldots , n$,  $\dim_K(e_iAe_j)=1$.
\item For all $i,j,k,l=1, \ldots , n$,  $e_iAe_j\cdot e_kAe_l=\d_{jk}e_iAe_l$ where $\d_{jk}$ is the Kronecker delta.
\item For each $i=1, \ldots , n$, there are unique elements $E_{1i}\in e_1Ae_i$ and $E_{i1}\in e_iAe_1$ such that $e_i=E_{i1}e_1$ and $e_1=E_{1i}e_i$.
\item The elements $E_{ij}:=E_{i1}E_{1j}\in e_iAe_j$ are the  matrix units.
\end{enumerate}
\end{proposition}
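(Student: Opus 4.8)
The plan is to realize $A$ as $M_n(K)$ by constructing matrix units directly out of the given orthogonal decomposition $1 = e_1 + \cdots + e_n$, proceeding through the four numbered items in order since each feeds the next. The crucial input is the degree constraint: $\dim_K(A) = n^2 = \Deg(A)^2$, and $A$ is central simple, so $A$ has no nontrivial two-sided ideals and $Z(A) = K$.

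\emph{Step 1 (item 1).} First I would establish that each corner $e_iAe_j$ is $1$-dimensional over $K$. The decomposition $1 = \sum_i e_i$ into orthogonal idempotents gives the Peirce decomposition $A = \bigoplus_{i,j=1}^n e_iAe_j$, a direct sum of $n^2$ summands. Each $e_iAe_i$ is an algebra with identity $e_i$; since $A$ is central simple, a standard argument (e.g.\ $e_iAe_i \cong \End_A(Ae_i)^{\mathrm{op}}$, or counting: the $Ae_i$ are nonzero left ideals whose sum is $A$, hence each contains a minimal left ideal and in fact must \emph{be} minimal once we know all $n^2$ Peirce summands are nonzero) forces each $e_i$ to be a primitive idempotent and each corner $e_iAe_j \neq 0$. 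Then $n^2 = \dim_K A = \sum_{i,j}\dim_K(e_iAe_j) \geq n^2$ with all summands positive, so every $\dim_K(e_iAe_j) = 1$. The cleanest route: since $A$ is simple artinian, $A \cong M_m(D)$ with $m\Deg(D) = n$; the number of terms in a maximal orthogonal idempotent decomposition of $M_m(D)$ is $m$, so $1 = e_1 + \cdots + e_n$ with $n$ terms forces $m \geq n$, hence $m = n$, $D = K$, $A \cong M_n(K)$, and each $e_i$ is conjugate to a matrix unit $E_{ii}$, giving $\dim_K(e_iAe_j) = 1$. Actually this already proves $A \simeq M_n(K)$ outright, which we need anyway for Theorem \ref{MnK-char}; I would state this observation first and then use the Peirce corners to pin down \emph{canonical} matrix units inside the abstract isomorphism.

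\emph{Step 2 (item 2).} Next, the multiplication rule $e_iAe_j \cdot e_kAe_l = \delta_{jk}\, e_iAe_l$. The inclusion $\subseteq$ is immediate from $e_je_k = \delta_{jk}e_j$ and $e_j^2 = e_j$. For $j = k$ I must show the product is \emph{all} of $e_iAe_l$, not just contained in it. Since each of $e_iAe_j$, $e_jAe_l$, $e_iAe_l$ is $1$-dimensional, it suffices to exhibit one pair with nonzero product; equivalently, show $e_iAe_j \cdot e_jAe_l \neq 0$. If it were zero for some triple $(i,j,l)$, then the span of all products mapping \emph{into} $e_iAe_l$ would be deficient, and chasing this through the Peirce calculus one finds a proper nonzero two-sided ideal (for instance, $\sum_{k} e_kAe_i \cdot A$ restricted appropriately), contradicting simplicity. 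Concretely: the left ideal $Ae_j$ is minimal (Step 1), so $Ae_j \cong Ae_i$ as left modules for all $i$ (all simple left modules over a simple artinian ring are isomorphic); pick an isomorphism, it is right multiplication by some element of $e_jAe_i$, and its inverse is right multiplication by an element of $e_iAe_j$, whose composite is the identity on $Ae_i$, i.e.\ equals $e_i$. This simultaneously yields Step 3.

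\emph{Step 3 (item 3).} From the minimality of the $Ae_i$ and the fact that all are isomorphic: fix the index $1$. There exist $u_i \in e_iAe_1$ and $v_i \in e_1Ae_i$ with $u_iv_i = e_i$ on the nose — wait, more carefully, with $v_iu_i = e_1$ and $u_iv_i = e_i$. Uniqueness: if $u_i, u_i' \in e_iAe_1$ both satisfy $u_i e_1 = e_i$ — here I should be careful about exactly which normalization the statement wants. The statement asks for unique $E_{1i}\in e_1Ae_i$, $E_{i1}\in e_iAe_1$ with $e_i = E_{i1}E_{1i}$ (reading "$e_i = E_{i1}e_1$" as shorthand forced once $E_{i1}\in e_iAe_1$). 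Set $E_{11} = e_1$ and for $i > 1$ choose any nonzero $E_{i1}\in e_iAe_1$ (it spans the $1$-dimensional space) and then define $E_{1i}$ to be the unique element of $e_1Ae_i$ with $E_{1i}E_{i1} = e_1$ — such exists and is unique because left multiplication $e_1Ae_i \to e_1Ae_1 = Ke_1$ by $\cdot E_{i1}$ is a nonzero $K$-linear map between $1$-dimensional spaces, hence an isomorphism. Then automatically $E_{i1}E_{1i} \in e_iAe_i = Ke_i$ is an idempotent (square it using $E_{1i}E_{i1} = e_1$), nonzero, so equals $e_i$. For genuine uniqueness of the \emph{pair} one normalizes one of them; I would phrase item 3 as: given the choice of $E_{i1}$ (up to scalar), $E_{1i}$ is then forced, and the diagonal $E_{ii} := e_i$. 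The paper's phrasing suggests a specific normalization is intended, which I would make explicit.

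\emph{Step 4 (item 4).} Finally, define $E_{ij} := E_{i1}E_{1j}$. Then $E_{ij} \in e_iAe_1 \cdot e_1Ae_j \subseteq e_iAe_j$, and $E_{ij}E_{kl} = E_{i1}E_{1j}E_{k1}E_{1l} = E_{i1}(E_{1j}E_{k1})E_{1l} = \delta_{jk}E_{i1}e_1E_{1l} = \delta_{jk}E_{i1}E_{1l} = \delta_{jk}E_{il}$, using $E_{1j}E_{k1}\in e_1Ae_1 = Ke_1$ and the normalization $E_{1j}E_{j1} = e_1$ (so $E_{1j}E_{k1} = \delta_{jk}e_1$). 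Also $\sum_i E_{ii} = \sum_i e_i = 1$. Hence the $E_{ij}$ satisfy the matrix-unit relations; since they lie in distinct $1$-dimensional Peirce corners they are $K$-linearly independent, and there are $n^2$ of them in an $n^2$-dimensional space, so $A = \bigoplus_{i,j} KE_{ij}$ and $a \mapsto (a_{ij})$ where $aE_{jj}\cdots$ gives the isomorphism $A \cong M_n(K)$. Uniqueness of the full system $\{E_{ij}\}$ with $E_{ij}\in e_iAe_j$ follows once the normalization in Step 3 is fixed: every $E_{ij}$ is determined as $E_{i1}E_{1j}$.

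\emph{Main obstacle.} The genuine content is entirely in Step 1 — that $n$ orthogonal idempotents summing to $1$ in a central simple algebra of degree $n$ force $A\cong M_n(K)$ with each corner $1$-dimensional; everything after is bookkeeping with the Peirce decomposition. The one subtlety I would flag is the precise uniqueness claim in item 3: it only holds after fixing a normalization (e.g.\ prescribing $E_{i1}$ or requiring $E_{1i}E_{i1} = e_1$), and I would state that normalization explicitly rather than claim naive uniqueness of each $E_{i1}$ separately, since $e_iAe_1$ being $1$-dimensional means $E_{i1}$ is only unique up to a scalar in $K^\times$ absent a constraint.
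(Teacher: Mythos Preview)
Your approach is the paper's: Peirce-decompose, force each corner to be one-dimensional, then build matrix units as $E_{ij}:=E_{i1}E_{1j}$. Two remarks on the execution.

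For item~1 the paper avoids Wedderburn entirely: simplicity gives $\ann_A(Ae_j)=0$, hence each $e_iAe_j\neq 0$ (otherwise the nonzero element $e_i$ would lie in that annihilator), and then $\dim_K A = n^2$ with $n^2$ nonzero Peirce summands forces each to be one-dimensional. The same annihilator trick handles the nonvanishing in item~2: if $0\neq u\in e_iAe_j$ had $u\cdot e_jAe_l = 0$, then $u\cdot Ae_l = u\cdot e_jAe_l = 0$ (the other summands $u\cdot e_kAe_l$ vanish since $ue_k=0$), so $u\in\ann_A(Ae_l)=0$. This is shorter than routing through module isomorphisms $Ae_i\cong Ae_j$, though your route works too and, as you note, already delivers $A\cong M_n(K)$.

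Your caution about item~3 is warranted. The paper's literal conditions $e_i=E_{i1}e_1$ and $e_1=E_{1i}e_i$ reduce to $e_i=E_{i1}$ and $e_1=E_{1i}$ (since $E_{i1}e_1=E_{i1}$ for $E_{i1}\in e_iAe_1$), which is impossible for $i\neq 1$; and under the natural reading $E_{i1}E_{1i}=e_i$, $E_{1i}E_{i1}=e_1$, the pair is only determined up to the $K^\times$-scaling you note. Your fix --- choose any nonzero $E_{i1}\in e_iAe_1$, then take the unique $E_{1i}\in e_1Ae_i$ with $E_{1i}E_{i1}=e_1$, and verify $E_{i1}E_{1i}=e_i$ as a nonzero idempotent in $Ke_i$ --- is the right formulation, after which item~4 goes through exactly as you (and the paper) compute.
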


\begin{proof} The sum $1=e_1+\cdots +e_n$ is a sum of orthogonal nonzero idempotents $e_i\in A$.
 Hence, we have a direct sum of vector spaces $A=\bigoplus_{i,j=1}^n e_iAe_j$. 
 
 (i) {\em For all} $j=1, \ldots , n$, $\ann_A(Ae_j)=0$:  Since $A$ is a simple algebra, the annihilator of the left nonzero $A$-module $Ae_j$ is equal to zero.

(ii) {\em For all} $i,j=1, \ldots , n$, $e_iAe_j\neq 0$: Since $e_i\neq 0$, the statement (ii) follows from the statement (i).

(iii) {\em For all} $i,j=1, \ldots , n$,  $\dim_K(e_iAe_j)=1$: Notice that $\dim_K(A)=n^2$ and $A=\bigoplus_{i,j=1}^n e_iAe_j$ is a sum of $n^2$ nonzero vector spaces, and the statement (iii) follows from the statement (ii).

(iv) {\em For all} $i,j,k,l=1, \ldots , n$,  $e_iAe_j\cdot e_kAe_l=\d_{jk}e_iAe_l$: The statement (iv) follows from the statements (i) and (iii). 

(v)  {\em For each $i=1, \ldots , n$, there are unique elements $E_{1i}\in e_1Ae_i$ and $E_{i1}\in e_iAe_1$ such that $e_1=E_{1i}e_i$ and $e_i=E_{i1}e_1$; and} $E_{11}=e_1$:  Existence of the elements $E_{1i}$ and $E_{i1}$ follows from the statement (iv), and the uniqueness follows from the statement (iii). Since $e_1e_1=e_1\in e_1Ae_1$, $E_{11}=e_1$ (by the uniqueness of the element $E_{11}=e_1$).

(vi) {\em For all} $i,j=1, \ldots , n$, $E_{1i}E_{i1}=E_{11}$: By the statement (v), $e_1=E_{1i}e_i=E_{1i}E_{i1}e_1$. Hence, $E_{1i}E_{i1}=e_1$, by the uniqueness of the element $E_{11}$.

(vii) {\em The elements $E_{ij}:=E_{i1}E_{1j}\in e_iAe_j$ are the  matrix units}: For all $i,j,k,l=1, \ldots , n$,
$$E_{ij}E_{kl}=\d_{jk}E_{ij}E_{jl}=\d_{jk}E_{i1}E_{1j}E_{j1}E_{1l}=\d_{jk}E_{i1} e_1 E_{1l}=\d_{jk}E_{i1}E_{1l}=\d_{jk}E_{il}.$$

(viii) {\em The  matrix units $E_{ij}\in e_iAe_j$ are unique}: The elements $E_{i1}$, $i=1, \ldots , n$,  are unique.  For all $i,j=1, \ldots , n$, $E_{ij}E_{j1}=E_{i1}$. By the statement (iii), the elements $E_{ij}$ are unique.
\end{proof}

Corollary \ref{MnK-char3} is a criterion for a central simple algebra to be isomorphic to a matrix algebra which is given in terms of nilpotent elements. 

\begin{corollary}\label{MnK-char3}
Let $A\in \fCK$ and  $n=\Deg (A)$. Suppose that the  field $K$ contains a primitive $n$'th root of unity, say $q\in K$. Then the following statements are equivalent:
\begin{enumerate}
\item  $A\simeq M_n(K) $. 
\item  There is an  element  $a\in$ such that $a^n=0$ but $a^{n-1}\neq 0$. 
\end{enumerate}
\end{corollary}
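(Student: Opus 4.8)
The plan is to deduce both implications from Theorem \ref{MnK-char} by converting the data "a sum of $n$ orthogonal nonzero idempotents" into the data "an element $a$ with $a^n=0\neq a^{n-1}$" and back. The bridge in both directions is the presence of the primitive $n$'th root of unity $q\in K$, which lets us diagonalize the action of conjugation by a suitable unit.

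For the implication $(1\Rightarrow 2)$: if $A\simeq M_n(K)$, take $a$ to be the standard nilpotent Jordan block, i.e. under a fixed isomorphism $A\cong \bigoplus_{i,j}KE_{ij}$ set $a:=\sum_{i=1}^{n-1}E_{i,i+1}$. Then $a^{k}=\sum_{i=1}^{n-k}E_{i,i+k}$, so $a^{n-1}=E_{1n}\neq 0$ while $a^n=0$. This is immediate and needs no root of unity.

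For the implication $(2\Rightarrow 1)$: suppose $a\in A$ satisfies $a^n=0$, $a^{n-1}\neq 0$. First I would observe that, since $\Deg(A)=n$, the reduced degree forces the minimal polynomial of any element of $A$ to have degree at most $n$ (an element of $A$ generates a commutative subalgebra, whose dimension over $K$ is bounded by $n$ when $A\simeq M_l(D)$ with $ld=n$; more directly, a strictly maximal subfield has degree $n$, and $K[a]$ embeds in $A$ so $\dim_K K[a]\le n$). Hence the minimal polynomial of $a$ is exactly $t^n$, i.e. $a$ is a "regular nilpotent." The key step is then to produce, out of $a$, a semisimple element $h$ with conjugation $\omega_h$ acting on $a$ by the scalar $q$: concretely I want $h\in A^\times$ with $h a h^{-1}=q a$ (or $q^{-1}a$). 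Given such an $h$, the element $h$ has $n$ distinct eigenvalues among powers of $q$ scaled by a common factor, because the subalgebra generated by $a$ and $h$ is a homomorphic image of the algebra $\CE=(E(s),\sigma,a')$ of this section (relations $ha=q^{-1}ah$, $a^n=0$); by Theorem \ref{qA2Sep20}.(3b) and Lemma \ref{a25Jun23}, that quotient is $M_n(K)$ precisely in the regular-nilpotent situation, or one can argue directly: the generalized eigenspace decomposition of $A$ under $\operatorname{ad}(h)$ together with $a^{n-1}\neq 0$ forces $\langle a,h\rangle\simeq M_n(K)\subseteq A$, and then $\dim_K A=n^2$ gives equality.

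The main obstacle is constructing the diagonalizing unit $h$ — equivalently, realizing $a$ as the upper-shift matrix inside a genuine copy of $M_n(K)$. The cleanest route I foresee: since the minimal polynomial of $a$ is $t^n$ and $A$ is simple of degree $n$, the left $A$-module structure forces $A\cong M_n(D)$ with $a$ acting as a regular nilpotent; but a regular nilpotent over a division ring $D$ with $D\ne K$ cannot have minimal polynomial of degree $n=l\cdot\Deg(D)$ of the purely separable/split shape $t^n$ unless $\Deg(D)=1$ — one checks the rational canonical form over $D$ is incompatible with a degree-$n$ nilpotent block when $l<n$. This pins down $D=K$, i.e. $A\simeq M_n(K)$, finishing $(2\Rightarrow 1)$ without even invoking Theorem \ref{MnK-char}. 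Whichever of the two arguments is chosen (the explicit "build $h$" argument or this canonical-form argument), the crux is the same: translating "$a$ has a size-$n$ nilpotent Jordan block" into "$A$ has no nontrivial division part." I would present the canonical-form version as primary and remark that it also yields the idempotents $e_i=E_{ii}$ of Theorem \ref{MnK-char} for free.
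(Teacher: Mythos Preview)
Your $(1\Rightarrow 2)$ is correct and identical to the paper's.

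For $(2\Rightarrow 1)$ your route diverges from the paper's. The paper attempts to feed $a$ directly into Proposition~\ref{MnK-char2}: it asserts that $K\langle a\rangle\simeq K[x]/(x^n-1)\simeq\prod_{i=0}^{n-1}K[x]/(x-q^i)\simeq K^n$, reads off $n$ orthogonal nonzero idempotents summing to $1$, and concludes. As written this is in error: since $a^n=0$ and $a^{n-1}\neq 0$, one has $K\langle a\rangle\simeq K[x]/(x^n)$, a local ring with no nontrivial idempotents, so the idempotent decomposition is not available and the paper's argument for this implication does not go through as stated.

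Your canonical-form argument, by contrast, is correct and complete. Writing $A\simeq M_m(D)$ with $d=\Deg(D)$ and $n=md$, the element $a$ acts $D$-linearly on the simple module $U\simeq D^m$; a nilpotent endomorphism of an $m$-dimensional module over a division ring has index at most $m$ (the kernel filtration $\ker a^0\subsetneq\ker a\subsetneq\cdots$ is strictly increasing until it hits $U$), so $a^m=0$ in $A$ by faithfulness, whence $m\geq n$. Combined with $n=md$ this forces $d=1$ and $A\simeq M_n(K)$. Note that this argument never uses the primitive $n$'th root of unity $q$, so you have actually proved the corollary without that hypothesis; the root of unity is only relevant to the paper's intended idempotent approach.

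Your first route---building a unit $h$ with $hah^{-1}=qa$---has exactly the obstacle you flag: producing such an $h$ in an arbitrary degree-$n$ central simple algebra is essentially equivalent to what you want to prove, and I do not see a way to close it short of already knowing $A\simeq M_n(K)$. Present the canonical-form argument as your proof and drop the first route.
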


\begin{proof} $(1\Rightarrow 2)$ Suppose that $A=M_n(K)$. Then $A=\bigoplus_{i,j=1}KE_{ij}$ where $E_{ij}$ are the matrix units, and so   the element $a:=E_{12}+\cdots +E_{n-1,n}$ satisfies the conditions of statement 2.

$(2\Rightarrow 1)$ Let $q\in K$ be  a primitive $n$'th root of unity. Suppose that $a$ is as in statement 2. Then the algebra $A$ contains the  subalgebra $K\langle a\rangle \simeq K[x]/(x^n-1)\simeq \prod_{i=0}^{n-1}K[x]/(x-q^i)\simeq K^n$.  Let $1=e_1+\cdots +e_n$ be the corresponding   sum of primitive orthogonal (nonzero) idempotents $e_i\in K\langle a\rangle$. By Proposition \ref{MnK-char2}, $A\simeq M_n(K)$.
\end{proof}

Corollary \ref{MnK-char1}  presents sufficient conditions  for an algebra $A\in \fCK$  to be isomorphic to the matrix algebra $M_n(B)$ for some subalgebra  $B\in \fCK$. 

\begin{corollary}\label{MnK-char1}
Let $A, B\in \fCK$, $n=\Deg (A)$,  $B$ is a subalgebra of $A$, $C=C_A(B)$, and $n=\frac{\Deg (A)}{\Deg (B)}$. Suppose that $1=e_1+\cdots +e_n$ is a sum of orthogonal nonzero idempotents $e_i\in C$. Then $C\simeq M_n(K)$ and $A=B\t C\simeq M_n(B)$.
\end{corollary}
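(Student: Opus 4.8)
The plan is to reduce the statement to Theorem \ref{MnK-char} applied to the centralizer $C=C_A(B)$, using the Centralizer Theorem for central simple algebras to supply the structural decomposition $A=B\t C$.

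The first step is to pin down the structure of $C$. Since $A,B\in\fCK$ and $B$ is a unital $K$-subalgebra of $A$, $B$ is a central simple subalgebra of the central simple $K$-algebra $A$, so the Centralizer Theorem (see \cite{Pierce-AssAlg}) applies: $C=C_A(B)$ is again a central simple finite dimensional $K$-algebra, $C_A(C)=B$, and the multiplication map $B\t_K C\ra A$, $b\t c\mapsto bc$, is a $K$-algebra isomorphism. Comparing $K$-dimensions (equivalently, degrees) yields $\Deg(A)=\Deg(B)\Deg(C)$, hence $\Deg(C)=\Deg(A)/\Deg(B)=n$.

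The second step applies Theorem \ref{MnK-char} to the central simple algebra $C$, whose degree equals $n$. By hypothesis $1=e_1+\cdots+e_n$ is a sum of orthogonal nonzero idempotents $e_i\in C$, i.e.\ a sum of exactly $\Deg(C)$ such idempotents, so condition (2) of Theorem \ref{MnK-char} holds for $C$, and therefore $C\simeq M_n(K)$. Combining the two steps, $A\simeq B\t_K C\simeq B\t_K M_n(K)\simeq M_n(B)$ (here $A=B\t C$ means the identification via the multiplication map of the first step), which is the assertion.

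The degree count and the identification $B\t_K M_n(K)\simeq M_n(B)$ are routine. The one point that needs care — the place I expect the real content to sit — is the invocation of the Centralizer Theorem: one must check that its hypotheses are genuinely met, namely that $B$ is a \emph{central} simple subalgebra of the \emph{central} simple algebra $A$ (both encoded in $A,B\in\fCK$ together with $1_A\in B\subseteq A$), so that the strong conclusions $C\in\fCK$, $\Deg(A)=\Deg(B)\Deg(C)$, and $A\cong B\t_K C$ become available; once that is in hand, the rest is immediate from Theorem \ref{MnK-char}.
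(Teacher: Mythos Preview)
Your proof is correct and follows essentially the same approach as the paper: invoke the Centralizer Theorem (the paper cites \cite[Theorem 12.7.(iv)]{Pierce-AssAlg}) to get $A=B\t C$ with $\Deg(C)=n$, then apply Theorem \ref{MnK-char} to conclude $C\simeq M_n(K)$.
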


\begin{proof} By \cite[Theorem 12.7.(iv)]{Pierce-AssAlg}, $A=B\t C$ and $\Deg (A)=\Deg (B)\Deg (C)$, and so $\Deg (C)=n$.  Then, by Theorem \ref{MnK-char}, $C\simeq M_n(K)$, and the corollary follows.
\end{proof}

Corollary \ref{MnK-char3}  presents another sufficient conditions  for an algebra $A\in \fCK$  to be isomorphic to the matrix algebra $M_n(B)$ for some subalgebra  $B\in \fCK$  which is given in terms of nilpotent elements. 

\begin{corollary}\label{MnK-char4}

Let $A, B\in \fCK$, $n=\Deg (A)$,  $B$ is a subalgebra of $A$, $C=C_A(B)$, and $n=\frac{\Deg (A)}{\Deg (B)}$. Suppose that the  field $K$ contains a primitive $n$'th root of unity and   there is an element $a\in C$ such that $a^n=0$ and $a^{n-1}\neq 0$. Then $C\simeq M_n(K)$ and $A=B\t C\simeq M_n(B)$.

\end{corollary}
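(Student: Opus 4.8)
The plan is to mirror the proof of Corollary \ref{MnK-char1}, but to invoke the nilpotency criterion of Corollary \ref{MnK-char3} in place of the idempotent criterion of Theorem \ref{MnK-char}. First I would apply \cite[Theorem 12.7.(iv)]{Pierce-AssAlg} to the simple subalgebra $B$ of the central simple algebra $A$: this gives that the centralizer $C=C_A(B)$ is again a central simple $K$-algebra, that $A=B\t C$, and that $\Deg(A)=\Deg(B)\Deg(C)$. Combined with the hypothesis $n=\frac{\Deg(A)}{\Deg(B)}$, this yields $C\in\fCK$ with $\Deg(C)=n$.

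Next, since $C$ is a subalgebra of $A$ with the same multiplication, the element $a\in C$ still satisfies $a^n=0$ and $a^{n-1}\neq 0$ when its powers are computed inside $C$; and by hypothesis $K$ contains a primitive $n$'th root of unity. Hence Corollary \ref{MnK-char3} applies verbatim to the algebra $C$ and gives $C\simeq M_n(K)$. Finally, $A=B\t C\simeq B\t_K M_n(K)\simeq M_n(B)$, which completes the argument.

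The only step requiring attention is the invocation of \cite[Theorem 12.7.(iv)]{Pierce-AssAlg}: one must check that its hypotheses are met, namely that $B$ is a simple subalgebra of $A$ whose centre lies in $Z(A)$, which holds because $B\in\fCK$ has centre $K=Z(A)$. After that, there is no real obstacle: the corollary is a routine composition of the double-centralizer theorem with Corollary \ref{MnK-char3}, exactly parallel to how Corollary \ref{MnK-char1} composed it with Theorem \ref{MnK-char}.
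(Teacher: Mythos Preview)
Your proposal is correct and takes essentially the same approach as the paper. The only cosmetic difference is the order of citations: the paper first extracts the orthogonal idempotents $e_1,\dots,e_n$ from $K\langle a\rangle\subseteq C$ and then invokes Corollary~\ref{MnK-char1} (which itself calls \cite[Theorem 12.7.(iv)]{Pierce-AssAlg}), whereas you first apply \cite[Theorem 12.7.(iv)]{Pierce-AssAlg} directly to get $C\in\fCK$ with $\Deg(C)=n$ and then cite Corollary~\ref{MnK-char3} as a black box; the ingredients and the logic are identical.
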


\begin{proof}   Notice that  $C\supseteq K\langle a\rangle \simeq K[x]/(x^n-1)\simeq \prod_{i=0}^{n-1}K[x]/(x-q^i)\simeq K^n$.   Let $1=e_1+\cdots +e_n$ be the corresponding   sum of primitive orthogonal (nonzero) idempotents $e_i\in K\langle a\rangle\subseteq C$. Now the result follows from  Corollary  \ref{MnK-char1}.
 \end{proof}

{\bf The case: $s\neq 0$ and   $a\neq 0$.} Let $\CE =(E, \s, a)$. In view of Theorem \ref{qA2Sep20},    it remains to consider the case when  $s\neq 0$ and  $a\neq 0$ as far the structure of the algebra $\CE$ is concerned, see Theorem \ref{NF-qA2Sep20}. By Theorem \ref{qA2Sep20}.(3), the algebra $\CE$ is a central simple $F$-algebra of degree $n$.

 The algebra $E= F[h]/(h^n-s)$ contains the algebra  
$$\CF :=F[h^{\frac{n}{m(s)}}]/(h^n-s)=F[h^{\frac{n}{m(s)}}]/\Big( \Big( h^{\frac{n}{m(s)}}\Big)^{m(s)}-\Big(s^\frac{1}{m(s)}\Big)^{m(s)}\Big).$$

The field $F$ contains the primitive $m(s)$'s root of unity $q^\frac{n}{m(s)}$. Hence,  the algebra $\CF$ is a direct product of $m(s)$ copies of the field $F$,  
 $$\CF\simeq  \prod_{i=0}^{m(s)-1} F[h^{\frac{n}{m(s)}}]/\Big( h^\frac{n}{m(s)} -q^\frac{in}{m(s)}s^\frac{1}{m(s)}\Big)\simeq F^{m(s)}.$$
 Let $1=e_0+\cdots +e_{m(s)-1}$ be the corresponding sum of orthogonal nonzero idempotents,  i.e. $e_ie_i=\d_{ij}e_i$ for all $i,j=0, \ldots, m(s)-1$ where $\d_{ij}$ is the Kronecker delta. Notice  that 
\begin{equation}\label{eiCF}
e_i= \frac{\prod_{j\neq i} \Big(h^{\frac{n}{m(s)}}- q^\frac{jn}{m(s)}s^\frac{1}{m(s)}\Big)}{\prod_{j\neq i}\Big(q^\frac{in}{m(s)}s^\frac{1}{m(s)}- q^\frac{jn}{m(s)}s^\frac{1}{m(s)}\Big)}  =\frac{q^\frac{in}{m(s)}\prod_{j\neq i} \Big(h^{\frac{n}{m(s)}}- q^\frac{jn}{m(s)}s^\frac{1}{m(s)}\Big)}{s^{\frac{m(s)-1}{m(s)}} \prod_{\nu =1}^{m(s)-1}\Big( 1-q^\frac{\nu n}{m(s)}\Big)} \;\; {\rm for }\;\; i=0, \ldots ,m(s)-1.
\end{equation}
Clearly,
\begin{equation}\label{eiCF1}
\s  (e_i)=e_{i-1}\;\; {\rm for}\;\; i=1, \ldots , m(s)-1 \;\; {\rm and}\;\; \s (e_0)=e_{m(s)-1}.
\end{equation}
Since $\CF\subseteq E$, the algebra 
\begin{equation}\label{eiCF2}
E= \prod_{i=0}^{m(s)-1}Ee_i, \;\; {\rm where}\;\; Ee_i=F[h]/\Big( h^\frac{n}{m(s)} -q^\frac{in}{m(s)}s^\frac{1}{m(s)}\Big),
\end{equation}
is a direct product of cyclic field extensions (Lemma \ref{a28May23}.(3)). If $i=0$ then $Ee_0=E'$. By (\ref{eiCF1}),  the automorphism $\s\in G(E/F)$ cyclicly permutes the fields $Ee_i$, 
\begin{equation}\label{eiCF3}
\s  (Ee_i)=Ee_{i-1}\;\; {\rm for}\;\; i=1, \ldots , m(s)-1 \;\; {\rm and}\;\; \s (Ee_0)=Ee_{m(s)-1}.
\end{equation}
For all $i=0,\ldots , m(s)-1$,
\begin{equation}\label{eiCF4}
\s^{m(s)}  (Ee_i)=Ee_i\;\; {\rm and}\;\; \s^{m(s)}(he_i)=q^{m(s)}he_i.
\end{equation}
By Lemma \ref{a28May23}.(3), the restriction of the automorphism $\s^{m(s)}\in G(E/F)$ to $Ee_i$ is a generator of the Galois group $G(Ee_i/Fe_i)$. 
 By (\ref{eiCF4}), the field extension $E/F$ contains a field extension 
 $E''/F$ where
\begin{equation}\label{eiCF5}
E'':=\bigg\{ (\l, \s^{-1}(\l), \ldots , \s^{-m(s)+1}(\l))\in E=\prod_{i=0}^{m(s)-1}Ee_i\, \bigg| \, \l \in Ee_0\bigg\}.
\end{equation}
The field extension $Ee_0/Fe_0\simeq E'/F$ is isomorphic to the field extension  $E''/F$ via $F$-isomorphism
\begin{equation}\label{eiCF6}
\phi_0: Ee_0\ra E'', \;\; \l \mapsto (\l, \s^{-1}(\l), \ldots , \s^{-m(s)+1}(\l)).
\end{equation}
In particular, $e_0\mapsto e_0+\cdots +e_{m(s)-1}=1$.

\begin{theorem}\label{NF-qA2Sep20}
Let   $\CE =(E, \s, a)$. Suppose that $s\neq 0$ and  $a\neq 0$.  
\begin{enumerate}
\item The algebra  
$$\CE' :=(E'=Ee_0, \s', ae_0):=E'[X; \s']/\Big(X^\frac{n}{m(s)}-ae_0\Big)=\bigoplus_{k=0}^{\frac{n}{m(s)}-1}E'X^k$$
is a cyclic  $F$-algebra  
with $\Deg (\CE')=\frac{n}{m(s)}$ and $0\neq ae_0\in F^\times e_0$.  In particular, the centre of the algebra $\CE'$ is $F$  and the field $E'$  is  a strongly maximal subfield of the cyclic algebra $\CE'$ (i.e. $[E':F]=\frac{n}{m(s)}$).

\item The  algebra $\CE'$ is  isomorphic to the matrix algebra $M_{\frac{n}{m(s)}}(F)$ iff  $a=N_{E'/F}(b)$ for some  element $b\in {E'}^\times$.

\item The monomorphism $\phi_0 : Ee_0\ra E\subseteq \CE$ can be extended to a monomorphism $\phi_0 : \CE'\ra \CE$ by the rule $X\mapsto x^{m(s)}$, 
$$\phi_0(\CE') =E''[x^{m(s)}; \s']/\Big(\Big(x^{m(s)}\Big)^\frac{n}{m(s)}-a\Big)=E''[x^{m(s)}; \s']/(x^n-a)=\bigoplus_{k=0}^{\frac{n}{m(s)}-1}E''x^{km(s)}.$$ 
We identify, the algebra $\CE'$ with its image in $\CE$. Then  
 $\CE\simeq M_{m(s)}(\CE')$ and $C_{\CE}(\CE')\simeq M_{m(s)}(F)$.

\item  $\CE=\bigoplus_{i=0}^{m(s)-1}\bigg( \prod_{j=0}^{m(s)-1}\CE'e_j\bigg)x^i$ where the algebras $\CE'e_j=Ee_j[x^{m(s)}; \s']/(x^n-ae_j)$, $j=0, \ldots , m(s)-1$, are isomorphic to the cyclic algebra $\CE'$ and the inner automorphism $\o_x$ of $\CE$ cyclicly permutes them $\o_x(\CE'e_j)=\CE'e_{j-1}$.

\end{enumerate}
\end{theorem}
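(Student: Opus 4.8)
The strategy is to reduce all four parts to results already established: Theorem \ref{qA2Sep20} together with Lemma \ref{a28May23} handle the cyclic-algebra structure, and Corollary \ref{MnK-char1} produces the matrix decomposition. For Part 1, observe that $\CE'=(E'=Ee_0,\s',ae_0)$ is exactly an algebra of the type studied in Theorem \ref{qA2Sep20}, only with $n,q,s,a$ replaced by $n/m(s),\,q^{m(s)},\,s^{1/m(s)},\,ae_0$ (and $q^{m(s)}\in F$ is a primitive $(n/m(s))$-th root of unity); under the identification $F=Fe_0\subseteq E'$ the parameter $ae_0$ is just $a\neq 0$. By Lemma \ref{a28May23}.(2) the ring $E'$ is a field and $E'/F$ is cyclic with $G(E'/F)=\langle\s'\rangle$ of order $[E':F]=n/m(s)$, so Theorem \ref{qA2Sep20}.(4) shows $\CE'$ is cyclic, hence a central simple $F$-algebra of degree $n/m(s)$ with strictly maximal subfield $E'$ and centre $F$. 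Part 2 is then Theorem \ref{qA2Sep20}.(5) applied verbatim to $\CE'$, with $N_{E'/F}$ read as $F$-valued via $F=Fe_0$.

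For Part 3 the first step is to extend $\phi_0$ to an embedding of $F$-algebras $\CE'\hookrightarrow\CE$ by $X\mapsto x^{m(s)}$. Writing $\phi_0(\lambda)=\sum_{i=0}^{m(s)-1}\s^{-i}(\lambda)$ for $\lambda\in E'=Ee_0$ (so $\s^{-i}(\lambda)\in Ee_i$ by (\ref{eiCF3}), (\ref{eiCF6})) and using that $\s^{m(s)}$ fixes every $e_i$ while all powers of $\s$ commute, one gets the key compatibility $\s^{m(s)}\circ\phi_0=\phi_0\circ\s'$ on $E'$. Combined with $\phi_0(1_{E'})=\sum_i e_i=1_\CE$, with $\phi_0(ae_0)=a\sum_i e_i=a=x^n$ in $\CE$, and with the identity $x^{m(s)}w=\s^{m(s)}(w)x^{m(s)}$ valid in $\CE$ for $w\in E$, this shows that $\ell\mapsto\phi_0(\ell)$ ($\ell\in E'$) and $X\mapsto x^{m(s)}$ respect the defining relations of $\CE'=E'[X;\s']/(X^{n/m(s)}-ae_0)$, hence define an $F$-algebra homomorphism $\CE'\to\CE$; it is unital, so nonzero, so injective because $\CE'$ is simple. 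Identify $\CE'$ with its image $\bigoplus_k E''x^{km(s)}=E''[x^{m(s)};\s']/(x^n-a)$. Next, each $e_j$ lies in $C:=C_\CE(\CE')$: it commutes with $E\supseteq E''$ (as $E$ is commutative) and with $x^{m(s)}$ (since $x^{m(s)}e_j=\s^{m(s)}(e_j)x^{m(s)}=e_jx^{m(s)}$ by (\ref{eiCF4})), and $E''$ and $x^{m(s)}$ generate $\CE'$. Thus $\CE'$ is a central simple subalgebra of the central simple algebra $\CE$ and $1=e_0+\cdots+e_{m(s)-1}$ is a sum of $m(s)=\Deg(\CE)/\Deg(\CE')$ orthogonal nonzero idempotents of $C$, so Corollary \ref{MnK-char1} gives immediately $C\simeq M_{m(s)}(F)$ and $\CE=\CE'\t C\simeq M_{m(s)}(\CE')$.

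Part 4 is then unwinding: since $\CE=\bigoplus_{l=0}^{n-1}Ex^l$ and $E=\bigoplus_{j}Ee_j$, writing $l=km(s)+i$ with $0\le i<m(s)$, $0\le k<n/m(s)$ gives the direct-sum decomposition $\CE=\bigoplus_{i}\bigl(\bigoplus_{j}\bigoplus_{k}Ee_jx^{km(s)}\bigr)x^i$. For fixed $j$ one computes $\CE'e_j:=\phi_0(\CE')e_j=\bigoplus_k E''e_jx^{km(s)}=\bigoplus_k Ee_jx^{km(s)}=Ee_j[x^{m(s)};\s']/(x^n-ae_j)$, using $E''e_j=Ee_j$ and $\s^{m(s)}(Ee_j)=Ee_j$; this is isomorphic to $\CE'$, either because $c\mapsto ce_j$ is an injective homomorphism out of the simple algebra $\CE'$, or because by Lemma \ref{a28May23}.(3) and Theorem \ref{qA2Sep20}.(4) the extension $Ee_j/Fe_j$ is cyclic and $F$-isomorphic to $E'/F$ with the same parameter $a$. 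The $\CE'e_j$ have orthogonal identities $e_j$ and $\CE'e_j\cdot\CE'e_{j'}=0$ for $j\neq j'$, so $\bigoplus_j\CE'e_j=\prod_j\CE'e_j$. Finally $\o_x(\CE'e_j)=\bigoplus_k\s(Ee_j)x^{km(s)}=\bigoplus_k Ee_{j-1}x^{km(s)}=\CE'e_{j-1}$ by (\ref{eiCF1})/(\ref{eiCF3}), so $\o_x$ permutes the $\CE'e_j$ cyclically.

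The step I expect to need the most care is the construction of the embedding $\phi_0:\CE'\hookrightarrow\CE$ in Part 3: one must get the compatibility $\s^{m(s)}\phi_0=\phi_0\s'$ exactly right — the point being that $\s$ cyclically permutes the factor fields $Ee_0,\dots,Ee_{m(s)-1}$ while $\s^{m(s)}$ restricts on each of them to a generator of $G(Ee_i/Fe_i)$ — and must check that $\phi_0(\CE')$ really is a central simple subalgebra of $\CE$, so that the hypotheses of Corollary \ref{MnK-char1} (and of the double-centralizer theorem it invokes) are in force. The remaining verifications are routine relation-checks and dimension counts.
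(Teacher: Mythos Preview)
Your proposal is correct and follows essentially the same route as the paper: Parts 1 and 2 reduce to Lemma \ref{a28May23}.(2) and Theorem \ref{qA2Sep20}.(3--5), Part 3 extends $\phi_0$ via $X\mapsto x^{m(s)}$ (injectivity from simplicity of $\CE'$), shows $e_0,\dots,e_{m(s)-1}\in C_\CE(\CE')$, and applies Corollary \ref{MnK-char1}, and Part 4 is the direct-sum unwinding plus the $\o_x$-permutation from (\ref{eiCF3}). Your verification of the compatibility $\s^{m(s)}\circ\phi_0=\phi_0\circ\s'$ and of $e_j\in C_\CE(\CE')$ is in fact more explicit than the paper's, which simply asserts that the extension respects the defining relations and that the idempotents lie in the centralizer.
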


\begin{proof}   1. By Lemma \ref{a28May23}.(2) and Theorem \ref{qA2Sep20}.(3,4), the algebra $\CE'$ is  a cyclic  $F$-algebra   with $\Deg (\CE')=\frac{n}{m(s)}$. 

2. Statement 2 follows from statement 1 and  Theorem  \ref{qA2Sep20}.(5).

3. The monomorphism $\phi_0 : Ee_0\ra E\subseteq \CE$ can be extended to a monomorphism $\phi_0 : \CE'\ra \CE$ by the rule $X\mapsto x^{m(s)}$ since the algebra $\CE'$ is a simple algebra,  $\im (\CE')\neq 0$, and the extension respects the defining relations of the algebra $\CE'$ ($X^\frac{n}{m(s)} -ae_0\mapsto \Big(x^{m(s)}\Big)^\frac{n}{m(s)} -a1=x^n-a=0$). Then the equalities in statement 3 follow. 

Recall that $\CE, \CE'\in \fC (F)$ (Theorem \ref{qA2Sep20}.(3) and  statement 1). Then, 
by \cite[Theorem 12.7.(iv)]{Pierce-AssAlg}, 
$$\CE=\CE'\t_F C\;\; {\rm  where}\;\;C=C_\CE (\CE')\;\; {\rm  is \; the\; centralizer\; of\; the\; algebra}\;\;\CE'\;\; {\rm  in}\;\;\CE.$$
 It follows from the equality  $\Deg (\CE)=\Deg (\CE')\Deg (C)$  that  $\Deg (C)=\frac{\Deg (\CE)}{\Deg (\CE')}=\frac{nm(s)}{n}=m(s)$ (Theorem \ref{qA2Sep20}.(3) and the statement 1).  
By (\ref{eiCF1}),  the $m(s)=\Deg (C)$ idempotents $e_0, \ldots , e_{m(s)-1}$ belong to the centralizer $C$. Now,   by Corollary \ref{MnK-char1}, $$C\simeq M_{m(s)}(F)\;\; {\rm and}\;\;  \CE= \CE'\t_F C=\CE'\t_FM_{m(s)}(F)\simeq M_{m(s)}(\CE').$$ 

4. $\CE = \bigoplus_{i=0}^{n-1}Ex^i= \bigoplus_{i=0}^{m(s)-1} \bigg( \bigoplus_{k=0}^{\frac{n}{m(s)}-1}\prod_{j=0}^{m(s)-1}Ee_j \cdot x^{km(s)}\bigg) x^i= \bigoplus_{i=0}^{m(s)-1}\bigg( \prod_{j=0}^{m(s)-1}\CE'e_j\bigg)x^i$. 
  Clearly, $\CE'e_j:=\phi_0(\CE')e_j =Ee_j[x^{m(s)}; \s']/(x^n-ae_j)$, $j=0, \ldots , m(s)-1$. By statement 1, $\CE'e_0\simeq \CE'$. By (\ref{eiCF3}), 
  $$\o_x(\CE'e_j)=\s (Ee_j)[x^{m(s)}; \s']/(x^n-a\s(e_j))=Ee_{j-1}[x^{m(s)}; \s']/(x^n-ae_{j-1})=\CE_j'e_{j-1}.$$ Hence, 
the algebras $\CE'e_j$ are isomorphic to the cyclic algebra $\CE'e_0\simeq \CE'$.
  \end{proof}  

By interchanging the roles of $h$ and $X$, the cyclic algebra $\CE'=(E',\s',a)$ can be written as a cyclic algebra 

\begin{equation}\label{CEpXh}
\CE'=(\tilde{E} ,\tau , s^\frac{1}{m(s)})=\tilde{E}[h; \tau]/\Big(h^\frac{n}{m(s)}-s^\frac{1}{m(s)}\Big)=\bigoplus_{k=0}^{\frac{n}{m(s)}-1}\tilde{E}h^k
\end{equation}
where $\tilde{E}:= F[X]/(X^\frac{n}{m(s)}-a)$ is an $F$-algebra which is not necessarily a field and $\tau (X)=q^{-m(s)}X$. Our next goal is to write down analogues of Lemma \ref{a28May23} and Theorem \ref{NF-qA2Sep20} but for the cyclic algebra $\CE'$ 
in (\ref{CEpXh}), see Lemma \ref{Xa28May23} and Corollary \ref{Xa27Jun23}, respectively. 

\begin{lemma}\label{Xa28May23}
\begin{enumerate}

\item  The field extension $\tilde{E}'/F$, where $\tilde{E}':=F[X]/\Big(X^\frac{n}{m(s)m(s,a)}-a^\frac{1}{m(s,a)}\Big)$,  is a cyclic  field extension with Galois  group $G(\tilde{E}'/F)=\langle \tau':=\tau^{m(s,a)} \, | \, {\tau'}^\frac{n}{m(s)m(s,a)}=1\rangle$  which is a cyclic group of order $\frac{n}{m(s)m(s,a)}$ where $\tau' (X) =\tau^{m(s,a)}(X)= q^{-m(s)m(s,a)}X$, and $[\tilde{E}':F]=\frac{n}{m(s)m(s,a)}$. 

\item  The factor algebras  $F[X]/\Big(X^\frac{n}{m(s)m(s,a)}-q^\frac{in}{m(s)m(s,a)}  a^\frac{1}{m(s,a)}\Big)$, where $i=0,1, \ldots , m(s)m(s,a)-1$,  are $F$-isomorphic cyclic field extensions with Galois groups equal to  $G(\tilde{E}'/F)=\langle \tau':=\tau^{m(s,a)} \, | \, {\tau'}^\frac{n}{m(s)m(s,a)}=1\rangle$. 
\end{enumerate}
\end{lemma}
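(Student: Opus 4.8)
The plan is to deduce Lemma \ref{Xa28May23} from Lemma \ref{a28May23}, applied not to $\CE$ but to the cyclic algebra $\CE'$ in the guise (\ref{CEpXh}). Concretely I would use the substitution of data $h\leftrightarrow X$, $n\leftrightarrow N:=\frac{n}{m(s)}$, $s\leftrightarrow a$, primitive root of unity $q\leftrightarrow q^{-m(s)}$, and $\s\leftrightarrow\tau$ with $\tau(X)=q^{-m(s)}X$. First I would check this substitution is legitimate. By the definition (\ref{msmin}) of $m(s)$ we have $m(s)\mid n$, so $N$ is a positive integer; $q^{-m(s)}\in F$ has multiplicative order $n/\gcd(n,m(s))=N$, i.e.\ it is a primitive $N$'th root of unity in $F$; and $\bigl(F[X]/(X^N-a),\ \tau\bigr)$ is literally an instance of the set-up (\ref{qEs})--(\ref{qEs6}) for the data $(F[X],N,a)$, because $\tau(X^N-a)=q^{-Nm(s)}X^N-a=q^{-n}X^N-a=X^N-a$, so $\tau$ is a well-defined $F$-automorphism of order $N$. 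The one bookkeeping identity to nail down is that the invariant $m(a;N)$ of (\ref{msmin}), attached to the data $(a,N)$, equals $m(s,a)$: indeed by (\ref{msamin}), $m(s,a)=\max\{m\geq1\mid a\in F^{[m]},\ m\mid \tfrac{n}{m(s)}\}=\max\{m\geq1\mid a\in F^{[m]},\ m\mid N\}=m(a;N)$. Hence $N/m(s,a)=\tfrac{n}{m(s)m(s,a)}$ and $\tau^{m(s,a)}(X)=q^{-m(s)m(s,a)}X$, matching the exponents in the statement.

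For part 1 I would then simply invoke Lemma \ref{a28May23}.(2) for the data $(F[X],N,a,\tau)$; this is admissible because its sole hypothesis ``$s\neq0$'' reads here as $a\neq0$, the standing assumption of this subsection. It yields that $\tilde E'=F[X]/\bigl(X^{N/m(s,a)}-a^{1/m(s,a)}\bigr)=F[X]/\bigl(X^{\,n/(m(s)m(s,a))}-a^{1/m(s,a)}\bigr)$ is a cyclic field extension of $F$ whose Galois group is cyclic of order $N/m(s,a)=\tfrac{n}{m(s)m(s,a)}$, generated by $\tau^{m(s,a)}$ which acts by $X\mapsto q^{-m(s)m(s,a)}X$, and that $[\tilde E':F]=\tfrac{n}{m(s)m(s,a)}$. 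This is exactly statement 1 (the proof of Lemma \ref{a28May23}.(2) uses only Theorem \ref{qA2Sep20}.(1),(2), which apply equally to the renamed data).

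For part 2 one cannot transfer Lemma \ref{a28May23}.(3) verbatim, since that would produce only the $m(s,a)$ direct factors of $F[X]/(X^N-a)$, whereas we need $m(s)m(s,a)$ pairwise $F$-isomorphic copies; the surplus copies come from the scaling automorphisms of $F[X]$. Writing $M:=\tfrac{n}{m(s)m(s,a)}$, for each $i$ the automorphism $\psi_i\in\Aut_F(F[X])$, $\psi_i(X)=q^{-i}X$ (well defined since $q\in F$), carries the ideal $\bigl(X^M-a^{1/m(s,a)}\bigr)$ onto $\bigl(X^M-q^{iM}a^{1/m(s,a)}\bigr)$, because $\psi_i\bigl(X^M-a^{1/m(s,a)}\bigr)=q^{-iM}X^M-a^{1/m(s,a)}=q^{-iM}\bigl(X^M-q^{iM}a^{1/m(s,a)}\bigr)$ and $q^{iM}=q^{\,in/(m(s)m(s,a))}$. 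Hence $\psi_i$ induces an $F$-algebra isomorphism from $\tilde E'$ onto $F[X]/\bigl(X^{\,n/(m(s)m(s,a))}-q^{\,in/(m(s)m(s,a))}a^{1/m(s,a)}\bigr)$ for every $i=0,1,\ldots,m(s)m(s,a)-1$, so each of these algebras is $F$-isomorphic to the cyclic field $\tilde E'$ of part 1, hence is a cyclic field extension; and since $\tau^{m(s,a)}$ is itself a scaling automorphism and scalings of $F[X]$ commute, $\psi_i$ conjugates the generator $\tau^{m(s,a)}$ of $G(\tilde E'/F)$ to the scaling $X\mapsto q^{-m(s)m(s,a)}X$ on the target, which is the claimed generator of its Galois group. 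This proves statement 2.

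The only genuinely delicate point I anticipate is the exponent bookkeeping -- keeping straight that the primitive root of unity governing $\tau$ is $q^{-m(s)}$, that dividing the modulus by $m(s,a)$ yields the exponent $\tfrac{n}{m(s)m(s,a)}$, and in particular that part 2 has $m(s)m(s,a)$ isomorphic copies, the extra factor $m(s)$ being produced precisely by the scalings $\psi_i$. Everything else is a mechanical re-reading of Lemma \ref{a28May23} (and of Theorem \ref{qA2Sep20} used inside its proof) for the renamed data.
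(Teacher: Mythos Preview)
Your proposal is correct and follows essentially the same approach as the paper: apply Lemma~\ref{a28May23} to the data $(F[X],\,N=n/m(s),\,a,\,\tau)$ in place of $(F[h],\,n,\,s,\,\sigma)$. The paper's one-line proof simply says ``By Lemma~\ref{a28May23}.(1), $m(s;\tfrac{n}{m(s;n)})=1$, and the corollary follows from Lemma~\ref{a28May23}(2,3),'' leaving the substitution implicit.

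You are more explicit than the paper on one point worth noting. For part~2 you correctly observe that a literal transfer of Lemma~\ref{a28May23}.(3) produces only the $m(s,a)$ direct factors of $\tilde E=F[X]/(X^{N}-a)$, whereas the statement of Lemma~\ref{Xa28May23}.(2) lists $m(s)m(s,a)$ isomorphic copies indexed by powers of the finer root $q^{n/(m(s)m(s,a))}$. Your remedy---the scalings $\psi_i\colon X\mapsto q^{-i}X$---is exactly the same device used inside the proof of Lemma~\ref{a28May23}.(3), only now with the full root $q$ rather than $q^{-m(s)}$, which is what generates the extra factor of $m(s)$. So your argument is a faithful (and more careful) unpacking of what the paper intends.
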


\begin{proof}   By Lemma \ref{a28May23}.(1), $m(s;\frac{n}{m(s;n)})=1$, and   the corollary follows from Lemma \ref{a28May23}(2,3).   \end{proof}

 The algebra $\tilde{E}:= F[X]/(X^\frac{n}{m(s)}-a)$ contains the algebra  
$$\tilde{\CF} :=F[X^{\frac{n}{m(s)m(s,a)}}]/(X^\frac{n}{m(s)}-a)=F[X^{\frac{n}{m(s)m(s,a)}}]/\bigg( \Big( X^\frac{n}{m(s)m(s,a)}\Big)^{m(s,a)}-\Big(a^\frac{1}{m(s,a)}\Big)^{m(s,a)}\bigg).$$

The field $F$ contains the primitive $m(s,a)$'s root of unity $q^\frac{n}{m(s,a)}$ (since $m(s)m(s,a)\mid n$). Hence,  the algebra $\tilde{\CF}$ is a direct product of $m(s,a)$ copies of the field $F$,  
 $$\tilde{\CF}\simeq  \prod_{i=0}^{m(s,a)-1} F[X^{\frac{n}{m(s)m(s,a)}}]/\Big( X^{\frac{n}{m(s)m(s,a)}} -q^\frac{in}{m(s,a)}a^\frac{1}{m(s,a)}\Big)\simeq F^{m(s,a)}.$$
 Let $1=\tilde{e}_0+\cdots +\tilde{e}_{m(s,a)-1}$ be the corresponding sum of orthogonal nonzero idempotents. Notice  that 
\begin{equation}\label{XeiCF}
\tilde{e}_i= \frac{\prod_{j\neq i} \Big(X^{\frac{n}{m(s)m(s,a)}} -q^\frac{jn}{m(s,a)}a^\frac{1}{m(s,a)}\Big)}{\prod_{j\neq i}\Big(q^\frac{in}{m(s,a)}a^\frac{1}{m(s,a)}- q^\frac{jn}{m(s,a)}a^\frac{1}{m(s,a)}\Big)} \;\; {\rm for }\;\; i=0, \ldots ,m(s,a)-1.
\end{equation}
It follows from the equality $\tau (X)=-q^{m(s)}X$ that  
\begin{equation}\label{XeiCF1}
\tau (\tilde{e}_{m(s,a)-1})=\tilde{e}_0 \;\; {\rm and}\;\;  \tau  (\tilde{e}_i)=\tilde{e}_{i+1}\;\; {\rm for}\;\; i\in \{ 0, \ldots , m(s,a)-1\}\backslash \{  m(s,a)-1\} .
\end{equation}
Since $\tilde{\CF}\subseteq \tilde{E}$, 
\begin{equation}\label{XeiCF2}
\tilde{E}= \prod_{i=0}^{m(s,a)-1}\tilde{E}\tilde{e}_i, \;\; {\rm where}\;\; \tilde{E}\tilde{e}_i=F[X]/\Big( X^{\frac{n}{m(s)m(s,a)}} -q^\frac{in}{m(s,a)}a^\frac{1}{m(s,a)} \Big),
\end{equation}
is a direct product of cyclic field extensions (Lemma \ref{Xa28May23}.(2)). If $i=0$ then $\tilde{E}\tilde{e}_0=\tilde{E}'$. By (\ref{XeiCF1}),  the automorphism $\tau\in G(\tilde{E}/F)$ cyclicly permutes the fields $\tilde{E}\tilde{e}_i$, 
\begin{equation}\label{XeiCF3}
\tau (\tilde{E}\tilde{e}_{m(s,a)-1})=\tilde{E}\tilde{e}_0 \;\; {\rm and}\;\;  \tau  (\tilde{E}\tilde{e}_i)=\tilde{E}\tilde{e}_{i+1}\;\; {\rm for}\;\; i\in \{ 0, \ldots , m(s,a)-1\}\backslash \{  m(s,a)-1\}.
\end{equation}
For all $i=0,\ldots , m(s,a)-1$,
\begin{equation}\label{XeiCF4}
\tau^{m(s,a)}  (\tilde{E}\tilde{e}_i)=\tilde{E}\tilde{e}_i\;\; {\rm and}\;\; \tau^{m(s,a)}(X\tilde{e}_i)=q^{-m(s)m(s,a)}X\tilde{e}_i.
\end{equation}
By Lemma \ref{Xa28May23}.(2), the restriction of the automorphism $\tau^{m(s,a)}\in G(\tilde{E}/F)$ to the field $\tilde{E}\tilde{e}_i$ is a generator of the Galois group $G(\tilde{E}\tilde{e}_i/F\tilde{e}_i)$. 
 By (\ref{XeiCF4}), the field extension $\tilde{E}/F$ contains a field extension 
 $\tilde{E}''/F$ where
\begin{equation}\label{XeiCF5}
\tilde{E}'':=\bigg\{ (\l, \tau (\l), \ldots , \tau^{m(s,a)-1}(\l))\in \tilde{E}=\prod_{i=0}^{m(s,a)-1}\tilde{E}\tilde{e}_i\, \bigg| \, \l \in \tilde{E}\tilde{e}_0\bigg\}.
\end{equation}
The field extension $\tilde{E}\tilde{e}_0/F\tilde{e}_0\simeq \tilde{E}'/F$ is isomorphic to the field extension  $\tilde{E}''/F$ via $F$-isomorphism
\begin{equation}\label{XeiCF6}
\tilde{\phi}_0: \tilde{E}\tilde{e}_0\ra \tilde{E}'', \;\; \l \mapsto (\l, \tau (\l), \ldots , \tau^{m(s.a)-1}(\l)).
\end{equation}
In particular, $\tilde{e}_0\mapsto \tilde{e}_0+\cdots +\tilde{e}_{m(s,a)-1}=1$.

\begin{definition}
The cyclic algebra $\CE =(E(s), \s, a)$, where $s\neq 0$ and  $a\neq 0$, is called a {\bf reduced cyclic algebra} if the subalgebras $E(s)=F[h]/(h^n-s)$ and $E(a)=F[x]/(x^n-a)$ of $\CE$ are fields. 
\end{definition}

By Theorem \ref{qA2Sep20}.(1),  the cyclic algebra $\CE =(E(s), \s, a)$, where $s\neq 0$ and  $a\neq 0$, is a reduced cyclic algebra iff $m(s)=1$ and $m(a)=1$. Corollary \ref{Xa27Jun23}.(5) shows that 
 $\CE \simeq M_{m(s)m(s,a)}(\tilde{\CE}'')$ for an explicit  {\em reduced} cyclic algebra $\tilde{\CE}''$.

\begin{corollary}\label{Xa27Jun23}
Suppose that $s\neq 0$ and  $a\neq 0$.  Recall that    $\CE' =(E', \s', a)=(\tilde{E}, \tau , s^\frac{1}{m(s)})$, see (\ref{CEpXh}). 
\begin{enumerate}
\item The algebra  
$$\tilde{\CE}'' :=(\tilde{E}', \tau', s^\frac{1}{m(s)}):=\tilde{E}'[H; \tau']/\Big(H^\frac{n}{m(s)m(s,a)}-s^\frac{1}{m(s)}\Big)=\bigoplus_{k=0}^{\frac{n}{m(s)m(s,a)}-1}\tilde{E}'H^k$$
is a reduced cyclic  $F$-algebra  
with $\Deg (\tilde{\CE}'')=\frac{n}{m(s)m(s,a)}$ and $s^\frac{1}{m(s)}\in F^\times $.  In particular, the centre of the algebra $\tilde{\CE}''$ is $F$  and the field $\tilde{E}'$  is  a strongly maximal subfield of the cyclic algebra $\tilde{\CE}''$ (i.e. $[\tilde{E}':F]=\frac{n}{m(s)m(s,a)}$).

\item The  algebra $\tilde{\CE}''$ is  isomorphic to the matrix algebra $M_{\frac{n}{m(s)m(s,a)}}(F)$ iff  $a=N_{\tilde{E}'/F}(b)$ for some  element $b\in (\tilde{E}')^\times$.

\item The monomorphism $\tilde{\phi}_0 : \tilde{E}'\ra \tilde{E}\subseteq \CE'$ can be extended to a monomorphism $\tilde{\phi}_0 : \tilde{\CE}''\ra \CE'$ by the rule $H\mapsto h^{m(s,a)}$,  
$$\tilde{\phi}_0(\tilde{\CE}'') =  \tilde{\CE}''[h^{m(s,a)}; \tau']/\Big(\Big(h^{m(s,a)}\Big)^\frac{n}{m(s)m(s,a)}-s^\frac{1}{m(s)}  \Big)=\tilde{\CE}''[h^{m(s,a)}; \tau']/(h^\frac{n}{m(s)}-s^\frac{1}{m(s)})=\bigoplus_{k=0}^{\frac{n}{m(s)m(s,a)}-1}\tilde{\CE}''h^{km(s,a)}.$$ 
We identify, the algebra $\tilde{\CE}''$ with its image in $\CE'$. Then  
 $\CE'\simeq M_{m(s,a)}(\tilde{\CE}'')$ and $C_{\CE'}(\tilde{\CE}'')\simeq M_{m(s,a)}(F)$.
 
\item  $\CE'=\bigoplus_{i=0}^{m(s,a)-1}\bigg( \prod_{j=0}^{m(s,a)-1}\tilde{\CE}''\tilde{e}_j\bigg)h^i$.

 \item $\CE \simeq M_{m(s)m(s,a)}(\tilde{\CE}'')$. In particular, the algebra $\CE$ is equivalent to the reduced cyclic algebra $\tilde{\CE}''$. 

\end{enumerate}
\end{corollary}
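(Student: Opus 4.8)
The plan is to transplant, essentially verbatim, the analysis of $\CE$ from Lemma \ref{a28May23} and Theorem \ref{NF-qA2Sep20} onto the cyclic algebra $\CE'=(\tilde{E},\tau,s^{1/m(s)})$ of degree $n/m(s)$, under the dictionary $E\leftrightarrow\tilde{E}$, $\s\leftrightarrow\tau$, $m(s)\leftrightarrow m(s,a)$, $E'\leftrightarrow\tilde{E}'$, with the scalar $s^{1/m(s)}\in F^\times$ now playing the role previously played by $a$. All the structural input is already assembled: (\ref{XeiCF})--(\ref{XeiCF6}) is the image of (\ref{eiCF})--(\ref{eiCF6}) under this dictionary, and Lemma \ref{Xa28May23} is the image of Lemma \ref{a28May23}.(2,3). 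So I expect no genuinely new phenomenon; the five statements should fall out by the same bookkeeping as for Theorem \ref{NF-qA2Sep20}, the one addition being the observation that the intermediate algebra $\tilde{\CE}''$ is now \emph{reduced}, after which the matrix isomorphisms simply chain together.

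For statements 1 and 2, I would argue exactly as in Theorem \ref{NF-qA2Sep20}.(1,2). Lemma \ref{Xa28May23}.(1) already gives that $\tilde{E}'$ is a field and $G(\tilde{E}'/F)=\langle\tau'\rangle$ is cyclic of order $n/(m(s)m(s,a))=[\tilde{E}':F]$; since $s^{1/m(s)}\in F^\times$, Theorem \ref{qA2Sep20}.(3a,4) then makes $\tilde{\CE}''$ a central simple cyclic $F$-algebra of degree $n/(m(s)m(s,a))$ with strictly maximal subfield $\tilde{E}'$, and statement 2 follows from Theorem \ref{qA2Sep20}.(5). For the claim that $\tilde{\CE}''$ is reduced I would check that its two constituent commutative subalgebras are fields: $\tilde{E}'=F[X]/(X^{n/(m(s)m(s,a))}-a^{1/m(s,a)})$ is a field by Lemma \ref{Xa28May23}.(1), and $F[H]/(H^{n/(m(s)m(s,a))}-s^{1/m(s)})$ is a field because, by Lemma \ref{a28May23}.(1), $s^{1/m(s)}$ is not a $d$-th power in $F$ for any divisor $d>1$ of $n/m(s)$, hence a fortiori for any divisor $d>1$ of $n/(m(s)m(s,a))$, so that polynomial is irreducible over $F$ by Theorem \ref{qA2Sep20}.(1).

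For statements 3--5 I would copy the proof of Theorem \ref{NF-qA2Sep20}.(3,4) with the roles of $h$ and $X$ swapped. First, $\tilde{\phi}_0$ extends to a monomorphism $\tilde{\CE}''\ra\CE'$ via $H\mapsto h^{m(s,a)}$: this is well defined since $\tilde{\CE}''$ is simple with nonzero image and $(h^{m(s,a)})^{n/(m(s)m(s,a))}=h^{n/m(s)}=s^{1/m(s)}$, while $h^{m(s,a)}$ normalizes $\tilde{E}$ and acts on it as $\tau^{m(s,a)}=\tau'$ (from $hX=q^{-m(s)}Xh$). Identifying $\tilde{\CE}''$ with its image and using $\CE',\tilde{\CE}''\in\fC(F)$, \cite[Theorem 12.7.(iv)]{Pierce-AssAlg} gives $\CE'=\tilde{\CE}''\t_F C$ with $C=C_{\CE'}(\tilde{\CE}'')$ of degree $\Deg(\CE')/\Deg(\tilde{\CE}'')=m(s,a)$; since $q^n=1$, the element $h^{m(s,a)}$ commutes with $X^{n/(m(s)m(s,a))}$, so the $m(s,a)$ orthogonal nonzero idempotents $\tilde{e}_0,\dots,\tilde{e}_{m(s,a)-1}$ (polynomials in $X^{n/(m(s)m(s,a))}$) lie in $C$, and Corollary \ref{MnK-char1} then yields $C\simeq M_{m(s,a)}(F)$ and $\CE'\simeq M_{m(s,a)}(\tilde{\CE}'')$, which is statement 3. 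Statement 4 follows by regrouping the $\tilde{E}$-basis $\{h^i\}_{0\le i<n/m(s)}$ of $\CE'$ into blocks of length $m(s,a)$ and writing $\tilde{E}=\prod_j\tilde{E}\tilde{e}_j$. Statement 5 is then immediate: Theorem \ref{NF-qA2Sep20}.(3) gives $\CE\simeq M_{m(s)}(\CE')$, so with statement 3 we obtain $\CE\simeq M_{m(s)}(M_{m(s,a)}(\tilde{\CE}''))\simeq M_{m(s)m(s,a)}(\tilde{\CE}'')$, and $\CE\sim\tilde{\CE}''$ since $\tilde{\CE}''$ is reduced cyclic by statement 1.

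I do not expect a serious obstacle. The only mildly delicate point is the verification that $\tilde{\CE}''$ is reduced --- concretely, that $F[H]/(H^{n/(m(s)m(s,a))}-s^{1/m(s)})$ is genuinely a field --- which relies on the maximality (least common multiple) property of $m(s)$ recorded in Lemma \ref{a28May23}.(1) and on keeping careful track of which roots of unity and which radicals occur in each quotient after the two successive reductions, first by $m(s)$ and then by $m(s,a)$.
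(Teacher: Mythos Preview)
Your proposal is correct and follows exactly the paper's approach: the paper's proof simply says that statements 1--4 are Theorem \ref{NF-qA2Sep20} applied to $\CE'=(\tilde E,\tau,s^{1/m(s)})$ rather than $\CE=(E,\s,a)$, and that statement 5 follows by chaining $\CE\simeq M_{m(s)}(\CE')$ with $\CE'\simeq M_{m(s,a)}(\tilde\CE'')$. You supply more detail than the paper does, in particular the explicit verification that $\tilde\CE''$ is reduced (the irreducibility of $H^{n/(m(s)m(s,a))}-s^{1/m(s)}$ via Lemma \ref{a28May23}.(1)), which the paper asserts in statement 1 but does not spell out; this is a useful addition and your argument for it is sound.
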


\begin{proof}   1--4. Statements 1--4 of the  corollary are precisely  Theorem \ref{NF-qA2Sep20} but for the algebra $\CE'=(\tilde{E}, \tau , s^\frac{1}{m(s)})$ rather than $\CE =(E,\s , a)$.

5. By Theorem \ref{NF-qA2Sep20}.(3) and statement 4, $\CE \simeq M_{m(s)}(\CE')$ and $\CE' \simeq M_{m(s,a)}(\tilde{\CE}'')$. Hence, $\CE \simeq M_{m(s)}( M_{m(s,a)}(\tilde{\CE}''))\simeq M_{m(s)m(s,a)}(\tilde{\CE}'')$.    \end{proof}

Applying Corollary \ref{Xa27Jun23}.(5) to the algebra $\CE$ where the roles $x$ and $h$ are reversed we see that $\CE \simeq M_{m(a)m(a,s)}(\tilde{\CE}''')$ for an explicit {\em reduced} cyclic algebra $\tilde{\CE}'''$. Hence, 
\begin{equation}\label{CEMms}
\CE \simeq M_{m(s)m(s,a)}(\tilde{\CE}'') \simeq M_{m(a)m(a,s)}(\tilde{\CE}''')
\end{equation}
and so the reduced cyclic algebras $\tilde{\CE}''$ and $\tilde{\CE}'''$ are equivalent.

 Corollary \ref{a17Jul23}.(2) gives a sufficient condition for $\CE\simeq M_m(F)$. Proposition \ref{c17Jun23} is a criterion for $\CE\simeq M_m(F)$.

\begin{corollary}\label{a17Jul23}
Let   $\CE =(E, \s, a)$. Suppose that $s\neq 0$ and  $a\neq 0$.  Then

\begin{enumerate}
\item $d\mid \gcd\Big(\frac{n}{m(s)m(s,a)}, \frac{n}{m(a)m(a,s)}\Big)$.

\item If $\gcd\Big(\frac{n}{m(s)m(s,a)}, \frac{n}{m(a)m(a,s)}\Big)=1$ then $\CE\simeq M_m(F)$ for some $m\geq 1$.
\end{enumerate}

\end{corollary}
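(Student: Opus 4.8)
The plan is to deduce both statements directly from the two Brauer equivalences recorded in (\ref{CEMms}), using only the elementary fact (among the basic facts on central simple algebras recalled above) that two algebras in $\fC (F)$ are equivalent if and only if their division algebra parts agree up to $F$-isomorphism, together with the uniqueness of that division algebra part. No new machinery (matrix norms, explicit simple modules, etc.) is needed here; it is pure bookkeeping with degrees and indices.

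First I would fix notation. Since $s\neq 0$ and $a\neq 0$, the algebra $\CE$ is central simple of degree $n$ by Theorem \ref{qA2Sep20}.(3), so $\CE\simeq M_l(D)$ for a division algebra $D\in \fC (F)$, unique up to $F$-isomorphism, and by definition $d=\Deg (D)$, with $n=\Deg (\CE)=l\, d$. By Corollary \ref{Xa27Jun23}.(5), $\CE\simeq M_{m(s)m(s,a)}(\tilde{\CE}'')$, where $\tilde{\CE}''$ is a reduced cyclic $F$-algebra, hence a member of $\fC (F)$ by Corollary \ref{Xa27Jun23}.(1), with $\Deg (\tilde{\CE}'')=\frac{n}{m(s)m(s,a)}$. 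Writing $\tilde{\CE}''\simeq M_{l''}(D'')$ with $D''$ a division algebra, the equivalence $\CE\sim \tilde{\CE}''$ forces $D''\simeq D$, so $d=\Deg (D)$ divides $l''\Deg (D'')=\Deg (\tilde{\CE}'')=\frac{n}{m(s)m(s,a)}$.

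Next I would run the identical argument for the second equivalence in (\ref{CEMms}): applying Corollary \ref{Xa27Jun23}.(5) to $\CE$ with the roles of $x$ and $h$ interchanged gives $\CE\simeq M_{m(a)m(a,s)}(\tilde{\CE}''')$ with $\tilde{\CE}'''$ a reduced cyclic $F$-algebra of degree $\frac{n}{m(a)m(a,s)}$, whence $d\mid \frac{n}{m(a)m(a,s)}$ by the same reasoning. Combining the two divisibilities yields $d\mid \gcd\Big(\frac{n}{m(s)m(s,a)},\frac{n}{m(a)m(a,s)}\Big)$, which is statement (1). For statement (2): if this gcd equals $1$ then $d\mid 1$, so $d=1$, i.e. $D\simeq F$; hence $\CE\simeq M_l(F)$, and comparing dimensions forces $l=n$, so in fact $\CE\simeq M_n(F)$.

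I do not expect a genuine obstacle here. The only point deserving a moment's care is that $\tilde{\CE}''$ and $\tilde{\CE}'''$ must be known to lie in $\fC (F)$ before the division algebra part can be invoked as a well-defined invariant and its uniqueness under equivalence used — and this is precisely what Corollary \ref{Xa27Jun23}.(1) supplies (a reduced cyclic algebra is central simple finite dimensional). Everything else is arithmetic of degrees.
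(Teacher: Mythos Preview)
Your proof is correct and follows essentially the same route as the paper: both arguments read off from (\ref{CEMms}) that $\tilde{\CE}''$ and $\tilde{\CE}'''$ share the division part $D$ of $\CE$, then extract the divisibility $d\mid \frac{n}{m(s)m(s,a)}$ and $d\mid \frac{n}{m(a)m(a,s)}$ from the degree equalities; part (2) is then immediate from $d=1$. Your write-up is in fact slightly cleaner in its citations (invoking Theorem \ref{qA2Sep20}.(3a) and Corollary \ref{Xa27Jun23}.(1) explicitly for central simplicity), and you correctly note that the conclusion in (2) can be sharpened to $\CE\simeq M_n(F)$.
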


\begin{proof}   1. It follows from (\ref{CEMms}) and the inclusions $\tilde{\CE}'',\tilde{\CE}'''\in \fC (F)$ that $\tilde{\CE}''\simeq M_k(D)$ and $\tilde{\CE}'''\simeq M_l(D)$ for some natural numbers $k$ and $l$. Now, by (\ref{CEMms}),
$$\CE \simeq M_{m(s)m(s,a)k}(D) \simeq M_{m(a)m(a,s)l}(D),\;\; {\rm and \; so} $$
$$n=\Deg (\CE)=\Deg(M_{m(s)m(s,a)k}(D) )=
m(s)m(s,a)kd= \Deg(M_{m(a)m(a,s)l}(D))=m(a)m(a,s)ld$$
and statement 1 follows.

2. By Theorem \ref{NF-qA2Sep20}.(1),   $\CE \simeq M_m(D)$ for some $m\geq 1$.   By statement 1, $\Deg (D)=d=1$, i.e. $D=F$, and so $\CE\simeq M_m(F)$.    \end{proof}  

{\bf The $\CE$-module structure on a unique simple left $\CE$-module $U$ of $\CE$.} Let   $\CE =(E, \s, a)$. Suppose that $s\neq 0$, $a\neq 0$,   and the algebra  $E$ is a field. By  Theorem \ref{qA2Sep20}.(3a), $\CE\simeq M_m(D)$ for some central division algebra $D$ and $m\geq 1$. Clearly, 
$$n=\Deg (\CE)=\Deg ( M_m(D))=m\Deg (D).$$
Let $U$ be a unique simple (left) $\CE$-module (up to isomorphism) and ${}_{E}U\simeq E^d$ for some natural number $d\geq 1$. It follows from $\CE\simeq M_m(D)$ that  ${}_\CE\CE\simeq U^m$ and  ${}_DU\simeq D^m$. Since ${}_EU\simeq   E^d$, ${}_E\CE\simeq E^{dm}$, and so
$ n^2=[\CE:F]=[\CE:E][E:F]=dmn$.  Hence  
\begin{equation}\label{n=md=dD}
n=md\;\; {\rm  and}\;\; d=\Deg (D)
\end{equation}
(since $n=m\Deg (D)$). Let 
$U=\bigoplus_{i=1}^dEe_i$ where $e=(e_1, \ldots , e_d)^t$ is an $E$-basis for the left $E$-module $U$ (where $t$ stands for `transpose'). Every element $\vec{u}=\sum_{i=1}^du_ie_i=ue$ of $U$ is a unique sum where the row-vector $u=(u_1, \ldots , u_d)\in L^d$ is  the coordinates of $\vec{u}$ in  the basis $e$. Using the defining relations of the algebra $\CE= E[x; \s]/(x^n-a)$, we see that the left $\CE$-module structure on the $E$-module $U=E^d$ is given by a matrix $\mX =(x_{ij})\in M_d(E)$ such that for $i=1, \ldots , d$,  $\l \in E$, and $\vec{u}\in U$, 
$$ xe_i=\sum_{j=1}^d x_{ij}e_j, \;\;
x\l \vec{u} =\s (\l) x\vec{u}, \;\; {\rm and}\;\; x^n=a.$$
The last two equalities can be written in the matrix form as follows:
\begin{equation}\label{xu=usxe}
x\l\vec{u}=\s (\l) u^\s\mX e\;\; {\rm and}\;\; \mX^{\s^{n-1}}\cdots  \mX^\s \mX=a
\end{equation}
where $u^\s :=(\s (u_1), \ldots , \s (u_d))$ and $\mX^{\s^k}:=(\s^k(x_{ij}))$. Since $a\neq 0$, i.e. $a\in F^\times$,  $\mX\in \GL_{d'}(E)$.

\begin{definition}
 Let   $\CE =(E, \s, a)$. Suppose that $s\neq 0$, $a\neq 0$,   and the algebra  $E$ is a field. By  Theorem \ref{qA2Sep20}.(3a), $\CE\simeq M_m(D)$ for some central division algebra $D$ and $m\geq 1$. The natural number $m(\CE):=m$  is called the {\bf matrix size} of $\CE$,  $D(\CE):=D$ is called the {\bf division algebra} of $\CE$, the unique simple $\CE$-module $U(\CE):=U={}_EE^d$, where $d=\Deg (D(\CE))$,  is called the {\bf simple $\CE$-module}, and the matrix $\mX\in\GL_d(E)$ is called the {\bf $\CE$-module structure matrix} for $U(\CE)$.
\end{definition}

\begin{definition}
For each natural number $i\geq 1$, the map 
\begin{equation}\label{xu=usxe4}
N_{M_d(E)}^{\s , i} : M_d(E)\ra M_d(E), \;\; \mY\mapsto \mY^{\s^{i-1}}\cdots  \mY^\s \mY
\end{equation}
is called the {\bf matrix $(\s, i)$-norm} of $\mY$. 
  For each matrix $\L\in \GL_d(E)$, the map 
\begin{equation}\label{xu=usxe1}
c^\s_\L: M_d(E)\ra M_d(E), \;\; \mY\mapsto \L^\s \mY\L^{-1}
\end{equation}
 is called a $\s$-{\bf conjugation}. If $c^\s_\L (\mY)=\mY$ then the matrix $\mY$ is called $c_\L^\s$-{\bf invariant}. 
\end{definition} 
  Notice that 
\begin{equation}\label{xu=usxe2}
\det (c^\s_\L (\mY ))=\frac{\s(\det(\L))}{\det(\L)}\det(\mY)\;\; {\rm and}\;\;\det (c^\s_{\L^{\s^{n-1}}\cdots \L^\s\L }(\mY))=\det(\mY).
\end{equation} 
The second equality follows from the first and the fact that $\s^n=1$. 

Proposition \ref{A21Jun23}.(1) shows that there is a direct connection between the norms  $N_{M_d(E)}^{\s, n}$ and $N_{E/F}$. Proposition \ref{A21Jun23}.(2) is about a `$\s$-multiplicative property' of the maps $c_\L^{\s^i}$ and  Proposition \ref{A21Jun23}.(3) gives a method of producing $c_\L^{\s^i}$-invariant matrices from $c_\L^{\s}$-invariant ones.

\begin{proposition}\label{A21Jun23}

\begin{enumerate}
\item $\det \, N_{M_d(E)}^{\s, n}=N_{E/F}\, \det$,  i.e. for all matrices $\mY \in M_d(E)$, 
 $$\det \Big(\mY^{\s^{n-1}}\cdots  \mY^\s \mY \Big)=\det(\mY)^{\s^{n-1}}\cdots  \det(\mY )^\s \det (\mY ).$$
\item  $c_\L^{\s^i}\Big( \mY_{i-1}^{\s^{i-1}}\cdots  \mY_1^\s \mY_0\Big)= c_\L^{\s}(\mY_{i-1})^{\s^{i-1}}\cdots  c_\L^{\s}(\mY_1)^\s c_\L^{\s}(\mY_0)$ for all $i\geq 1$, $\L \in \GL_d(E)$, and $\mY_0, \ldots \mY_{i-1}\in M_d(E)$.
\item  $c_\L^{\s^i}\Big( \mY_{i-1}^{\s^{i-1}}\cdots  \mY_1^\s \mY_0\Big)= \mY_{i-1}^{\s^{i-1}}\cdots  \mY_1^\s \mY_0$ for all $i\geq 1$, $\L \in \GL_d(E)$, and $\mY_0, \ldots \mY_{i-1}\in M_d(E)$ such that $c_\L^\s (Y_j)=Y_j$ for $j=0, \ldots , i-1$.
\end{enumerate}
\end{proposition}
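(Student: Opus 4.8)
The plan is to treat the three parts separately; each rests only on two elementary facts, namely that the determinant $M_d(E)\ra E$ is multiplicative and that for every $k\geq 0$ the entrywise application $\mY\mapsto\mY^{\s^k}$ is a ring homomorphism $M_d(E)\ra M_d(E)$ (it respects multiplication because $\s^k\colon E\ra E$ is a ring homomorphism, and it sends inverses to inverses since $\s^k$ is bijective), so that it commutes with matrix multiplication, with taking inverses, and with the determinant. This is the routine part; there is no serious obstacle, only the bookkeeping of the $\s$-twists.

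For part (1): since $\det$ is a polynomial in the matrix entries with integer coefficients and each $\s^k$ is a field automorphism of $E$, we have $\det(\mY^{\s^k})=\s^k(\det\mY)$. Using multiplicativity of $\det$,
$$\det\Big(\mY^{\s^{n-1}}\cdots\mY^\s\mY\Big)=\det(\mY^{\s^{n-1}})\cdots\det(\mY^\s)\det(\mY)=\s^{n-1}(\det\mY)\cdots\s(\det\mY)\,\det(\mY),$$
and the right-hand side is $N_{E/F}(\det\mY)$ by the definition of the norm in (\ref{qEs5}) applied with $L=E$ and $K=F$. Hence $\det\circ N^{\s,n}_{M_d(E)}=N_{E/F}\circ\det$.

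For part (2): I would first record the compatibility of $\s$-twisting with the $\s$-conjugation (\ref{xu=usxe1}): applying the ring homomorphism $(\cdot)^{\s^j}$ to $c^\s_\L(\mY_j)=\L^\s\mY_j\L^{-1}$ gives
$$c^\s_\L(\mY_j)^{\s^j}=(\L^\s)^{\s^j}\,\mY_j^{\s^j}\,(\L^{-1})^{\s^j}=\L^{\s^{j+1}}\,\mY_j^{\s^j}\,(\L^{\s^j})^{-1}.$$
Forming the product over $j=i-1,\dots,0$ (highest index on the left), the interior factors telescope, since between the factor indexed $j$ and the one indexed $j-1$ one meets $(\L^{\s^j})^{-1}\L^{\s^j}=I$; what survives is $\L^{\s^i}$ on the far left, $\L^{-1}$ on the far right, and $\mY_{i-1}^{\s^{i-1}}\cdots\mY_1^\s\mY_0$ in between, so that
$$c^\s_\L(\mY_{i-1})^{\s^{i-1}}\cdots c^\s_\L(\mY_1)^\s c^\s_\L(\mY_0)=\L^{\s^i}\Big(\mY_{i-1}^{\s^{i-1}}\cdots\mY_1^\s\mY_0\Big)\L^{-1}=c_\L^{\s^i}\Big(\mY_{i-1}^{\s^{i-1}}\cdots\mY_1^\s\mY_0\Big),$$
where $c_\L^{\s^i}$ denotes the $\s^i$-conjugation, i.e. the map (\ref{xu=usxe1}) with $\s$ replaced by $\s^i$. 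A short induction on $i$ (splitting off the leftmost factor $\mY_{i-1}^{\s^{i-1}}$) gives an equivalent proof.

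Finally, part (3) is the specialization of part (2) in which $c^\s_\L(\mY_j)=\mY_j$ for $j=0,\dots,i-1$: substituting these equalities into the identity just displayed collapses the left-hand side of part (2) to $\mY_{i-1}^{\s^{i-1}}\cdots\mY_1^\s\mY_0$, which is therefore fixed by $c_\L^{\s^i}$, as claimed. The one point requiring attention is keeping the exponents straight — in the factor indexed $j$ the matrix $\L$ appears as $\L^{\s^{j+1}}$ on the left and as $\L^{\s^j}$ on the right, which is precisely what makes consecutive factors cancel — and noting that, unlike in (\ref{xu=usxe2}), the relation $\s^n=1$ is never invoked in the Proposition itself.
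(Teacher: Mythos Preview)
Your proof is correct and follows essentially the same approach as the paper: part (1) uses multiplicativity of $\det$ together with $\det(\mY^{\s^k})=\s^k(\det\mY)$, part (2) is the same telescoping computation (the paper starts from the left-hand side and inserts identities $(\L^{-1})^{\s^j}\L^{\s^j}$, you start from the right-hand side and cancel them, which amounts to the same thing), and part (3) is the immediate specialization. Your additional remarks --- that $\det$ commutes with $(\cdot)^{\s^k}$ because it is an integer-coefficient polynomial in the entries, and that $\s^n=1$ is never used --- are correct and make the argument slightly more explicit than the paper's.
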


\begin{proof}   1. $\det \Big(\mY^{\s^{n-1}}\cdots  \mY^\s \mY \Big)= \det (\mY^{\s^{n-1}})\cdots  \det (\mY^\s ) \det (\mY )   =\det(\mY)^{\s^{n-1}}\cdots  \det(\mY )^\s \det (\mY )$. 

2. \begin{eqnarray*}
c_\L^{\s^i}\Big( \mY_{i-1}^{\s^{i-1}}\cdots  \mY_1^\s \mY_0\Big)
&=& \L^{\s^i} \mY_{i-1}^{\s^{i-1}} (\L^{-1})^{\s^{i-1}}\cdot \L^{\s^{i-1}} \mY_{i-2}^{\s^{i-2}} \Big( \L^{-1}\Big)^{\s^{i-2}}   \cdots   \L^{\s}\mY_0\L^{-1}\\
&=& \Big(\L^\s \mY_{i-1} \L^{-1}\Big)^{\s^{i-1}}\cdot \Big(\L^\s \mY_{i-2} \L^{-1}\Big)^{\s^{i-2}}    \cdots   \L^{\s}\mY_0\L^{-1}\\
&=& c_\L^{\s}(\mY_{i-1})^{\s^{i-1}}\cdots  c_\L^{\s}(\mY_1)^\s c_\L^{\s}(\mY_0).
\end{eqnarray*}
3. Statement 3 follows from statement 2.  \end{proof}

We write endomorphisms on the {\em opposite side} of the scalars  in order not to deal with the opposite rings. So, in our situation we  write endomorphisms on the right.  But it is customary to write an {\em individual} endomorphism on the {\em left}. So, for each matrix $\L \in M_d(E)$, the map $$ \L : E^d\ra E^d, \;\; \vec{u}=ue\mapsto \vec{u}\L :=u\L e$$
is an element of $\End_E(E^d)$. If $\L \in \GL_d(E)$ and $e':=\L e$ is another choice of the $E$-basis for the left $E$-module $U=E^d$ then by 
 the first equality in (\ref{xu=usxe}), $x\vec{u}=u^\s \mX e$, the module structure on $U$ (that is determined by the matrix $\mX$ in the basis $e$)  is determined by the matrix 
\begin{equation}\label{xu=usxe3}
\mX':=\L^\s \mX \L^{-1}. 
\end{equation} 
 in the basis $e'$ (since $x\vec{u}=u^\s \mX e=u^\s  (\L^\s)^{-1}\cdot \L^\s \mX \L^{-1}\cdot \L e=(u\L^{-1})^\s \cdot \L^\s \mX \L^{-1}\cdot \L e={u'}^\s \mX'e'$ where the row-vector $u'=u\L^{-1}$ is the coordinates of the element $\vec{u}$ in the basis $e'$). 
 
 As a result,  we have proven statement 1 of Lemma \ref{a14Jun23}.

\begin{lemma}\label{a14Jun23}
Let   $\CE =(E, \s, a)$. Suppose that $s\neq 0$, $a\neq 0$,   and the algebra  $E$ is a field. By  Theorem \ref{qA2Sep20}.(3a), $\CE\simeq M_m(D)$ for some central division algebra $D$ and $m\geq 1$. Let $U$ be a unique simple (left) $\CE$-module and ${}_{E}U\simeq E^d$ for some natural number $d\geq 1$. Then  
\begin{enumerate}
\item $ n=dm$, $d=\Deg (D)$,  and the left $\CE$-module structure on the left 
 $E$-module $U=E^d$ is given by a matrix $\mX\in M_d(E)$ that satisfies (\ref{xu=usxe}), and vice versa. Furthermore,  $\mX\in \GL_{d}(E)$. 
\item The matrix $\mX$ is unique up to $\s$-conjugation.
\end{enumerate}
\end{lemma}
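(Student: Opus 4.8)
The plan is to note that statement~1 has already been obtained in the paragraphs preceding the lemma, and to concentrate the work on statement~2.

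For statement~1: by Theorem~\ref{qA2Sep20}.(3a) we have $\CE\simeq M_m(D)$ with $D$ a central division $F$-algebra. Comparing $F$-dimensions through the tower $F\subseteq E\subseteq\CE$ gives $n^2=[\CE:F]=[\CE:E]\,[E:F]=(dm)\cdot n$, hence $n=dm$, while $n=\Deg(\CE)=\Deg(M_m(D))=m\,\Deg(D)$ forces $d=\Deg(D)$; this is (\ref{n=md=dD}). Fixing an $E$-basis $e=(e_1,\dots,e_d)^t$ of $U$, the $\s$-semilinear endomorphism $x$ of $U$ is encoded by $\mX=(x_{ij})$ via $xe_i=\sum_j x_{ij}e_j$; the relations $x\l=\s(\l)x$ and $x^n=a$ of $\CE$ become exactly (\ref{xu=usxe}), and conversely any $\mX$ obeying (\ref{xu=usxe}) makes $E^d$ into an $\CE$-module, which, being a nonzero $\CE$-module whose $F$-dimension $dn$ equals that of $U$, must be isomorphic to $U$. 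Since $a\in F^\times$, the identity $\mX^{\s^{n-1}}\cdots\mX^\s\mX=a\,I_d$ shows $\mX\in\GL_d(E)$.

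For statement~2 I would show that the $\CE$-module structure matrices for $U$ form exactly one $\s$-conjugacy orbit, in two steps. First, every $\s$-conjugate of a structure matrix is again a structure matrix: replacing the basis $e$ by $e'=\L e$ for $\L\in\GL_d(E)$ describes the same $\CE$-module $U$, but now through the matrix $\L^\s\mX\L^{-1}=c^\s_\L(\mX)$; this is precisely the change-of-basis computation recorded in (\ref{xu=usxe3}). Second, any structure matrix $\mathbf{Y}\in\GL_d(E)$ for $U$ is $\s$-conjugate to $\mX$. Let $U_{\mX}$ and $U_{\mathbf{Y}}$ denote $E^d$ equipped with the respective $x$-actions given by $\mX$ and $\mathbf{Y}$; both are isomorphic to the unique simple $\CE$-module $U$, so there is an $\CE$-module isomorphism $\psi\colon U_{\mX}\to U_{\mathbf{Y}}$. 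Since $E\subseteq\CE$, the map $\psi$ is left $E$-linear, hence of the form $\vec{u}=ue\mapsto u\mathbf{M}e$ for a unique $\mathbf{M}\in\GL_d(E)$; and since $\CE$ is generated over $F$ by $E$ together with $x$, the only remaining requirement for $\psi$ to be $\CE$-linear is $\psi(x\vec{u})=x\psi(\vec{u})$. Writing this out in coordinates by means of the first equality of (\ref{xu=usxe}) gives $u^\s\mX\mathbf{M}=u^\s\mathbf{M}^\s\mathbf{Y}$ for every row vector $u$, i.e. $\mathbf{M}^\s\mathbf{Y}=\mX\mathbf{M}$, so $\mathbf{Y}=(\mathbf{M}^{-1})^\s\mX\mathbf{M}=\L^\s\mX\L^{-1}$ with $\L:=\mathbf{M}^{-1}$. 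Combining the two steps yields statement~2.

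I do not anticipate a genuine obstacle; the care required is essentially bookkeeping with conventions. One must track the row-vector/right-action formalism so that $\s$, $\mX$ and the transition matrix act on the correct sides and in the correct order, and one must use both that $\psi$ is automatically $E$-linear and that $\CE$ is generated over $F$ by $E$ and $x$ in order to reduce the $\CE$-linearity of $\psi$ to the single identity $\psi x=x\psi$. Invertibility of $\mathbf{M}$ is immediate since $\psi$ is an isomorphism, and invertibility of $\mX$ was already noted above.
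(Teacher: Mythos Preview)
Your proposal is correct and follows essentially the same approach as the paper: statement~1 is already established in the discussion preceding the lemma (the equalities (\ref{n=md=dD}) and (\ref{xu=usxe})), and statement~2 is deduced from the uniqueness of the simple $\CE$-module together with the change-of-basis formula (\ref{xu=usxe3}). The paper's own proof of statement~2 is a single line invoking exactly these two ingredients, while you have spelled out the bookkeeping with the isomorphism $\psi$ and the matrix $\mathbf{M}$; your expanded version is a faithful unpacking of that line.
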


\begin{proof}   
2. Statement 2 follows from the uniqueness of the simple $\CE$-module and (\ref{xu=usxe3}).    \end{proof}  

{\bf Explicit descriptions of the division algebra $D$, its degree $\Deg (D)$, and $m$ for the algebra $\CE\simeq M_m(D)$.} For each natural number $d\geq     1$, the automorphism $\s \in G(E/F)$ is extended  to an automorphism of the matrix algebra $M_d(E)$ by the rule: For all matrices $\mY =(y_{ij})\in M_d(E)$, $\mY^\s :=\s (\mY):=(\s (y_{ij}))$. Each matrix $\mX\in GL_d(E)$, determines the inner automorphism $\o_\mX$ of the algebra $M_d(E)$: For all matrices $\mY \in M_d(E)$, $\o_\mX (\mY ) :=\mX\mY \mX^{-1}$. In particular, the fixed algebra of the  automorphism  $\o_{\mX^{-1}}\s\in \Aut_F(M_d(E))$, $M_d(E)^{\o_{\mX^{-1}}\s }=\{ \mY\, | \, \mY^{\s \o_{\mX^{-1}}}=\mY\}$, is isomorphic to the division   ring $D$ in Lemma \ref{a14Jun23} (Theorem \ref{12Jun23}.(2)). By the Skolem-Noether Theorem, the map 
 $$  \GL_d(E)/E^\times \ra \Inn (M_d(E)), \;\; E^\times \mY\mapsto \o_{E^\times \mY} :=\o_{\mY}$$
  is a group isomorphism. Similarly, for the central division algebra $D$ (Lemma \ref{a14Jun23}), $$\Aut_K(D)=\Inn (D)\simeq D^\times / F^\times.$$
  Theorem \ref{12Jun23} describes the algebras $\CE \simeq M_m(D)$ and $D$, i.e. the numbers $m$ and $d=\Deg (D)$ are described, the division ring $D$ is realized as an explicit  subalgebra of the matrix algebra $M_d(E)$   where $d=\Deg (D)$, and the automorphism group $\Aut_F(D)$ is represented as a factor group of two explicit subgroups of $\Inn (M_d(E))$.

\begin{theorem}\label{12Jun23}
Let   $\CE =(E, \s, a)$. Suppose that $s\neq 0$, $a\neq 0$,   and the algebra  $E=E(s)=F[h]/(h^n-s)$ is a field. 
By  Theorem \ref{qA2Sep20}.(3a), $\CE\simeq M_m(D)$ for some central division algebra $D$, $m\geq 1$, and   $n=m\Deg (D)$. Let $U$ be a unique simple (left) $\CE$-module and ${}_{E(s)}U\simeq E(s)^d$ where  $d=\Deg (D)$ (Lemma \ref{a14Jun23}.(1)). Then 
\begin{enumerate}

\item $d=\min \{ d'\geq 1\, | \, d'\mid n$, there is a matrix $\mX\in M_{d'}(E(s))$ such that $ \mX^{\s^{n-1}}\cdots  \mX^\s \mX=a\}$ ($\mX\in \GL_{d'}(E(s))$ since $a\neq 0$) and $m=\frac{n}{d}$.  

\item The division algebra $D=\{\L \in M_d(E(s))\, | \, \L^\s = \mX \L \mX^{-1}\}$ is an  $F$-subalgebra of the matrix algebra $M_d(E(s))$ which is isomorphic to the fixed algebra $M_d(E(s))^{\o_{\mX^{-1}}\s }$ of the automorphism $\o_{\mX^{-1}}\s\in \Aut_F(M_d(E(s)))$.

\item  The automorphism group $\Aut_F(D)$ of the division algebra $D$ is isomorphic the factor group $\CN (D)/\CC (D)$ where $\CN (D)=\{ \o_{E^\times \mY}\, | \, E^\times \mY\in \GL_d(E)/E^\times, \o_{E^\times \mY}(D)=D\}$ and $\CC (D)=\{ \o_{E^\times \mY}\, | \, E^\times \mY\in \GL_d(E)/E^\times, \o_{E^\times \mY}(\L)=\L$ for all $\L \in D\}$ are subgroups of $\Inn (M_d(E))$.

\end{enumerate}
\end{theorem}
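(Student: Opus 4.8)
The plan is to derive all three statements from Lemma \ref{a14Jun23} --- which supplies the module structure matrix $\mX\in\GL_d(E(s))$ --- together with a dimension count, an explicit computation of $\End_\CE(U)$, and the Skolem--Noether theorem. Throughout I would keep the paper's convention that module endomorphisms are written on the right, so that $\End_\CE(U)$ comes out isomorphic to $D$ and not to its opposite ring. For statement 1, Lemma \ref{a14Jun23}.(1) already puts $d$ into the indicated set: $d\mid n$, $m=n/d$, and $\mX$ satisfies $\mX^{\s^{n-1}}\cdots\mX^\s\mX=a$. To see that $d$ is the minimum I would take any $d'\geq1$ and any $\mX'$ with $\mX'^{\s^{n-1}}\cdots\mX'^\s\mX'=a$ (automatically $\mX'\in\GL_{d'}(E(s))$ as $a\neq0$) and run the converse construction recorded in Lemma \ref{a14Jun23}.(1): let $h$ act on $E(s)^{d'}$ by left multiplication and $x$ act by $x\cdot ue=u^\s\mX'e$; the relations $xh=\s(h)x$, $h^n=s$, $x^n=a$ are checked exactly as in the paragraph preceding Lemma \ref{a14Jun23} (the last one using $\s^n=1$). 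As $\CE\simeq M_m(D)$ is simple artinian, this module (being finite-dimensional over $F$) equals $U^k$ for some $k\geq1$, so $d'n=\dim_F(E(s)^{d'})=k\dim_F(U)=kdn$ and $d'=kd\geq d$. Hence $d=\min\{\,\cdots\,\}$ and $m=n/d$.

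For statement 2, I would realize $U$ as the left $E(s)$-module $E(s)^d$ carrying the $\CE$-action of (\ref{xu=usxe}). A right-written left-$E(s)$-module endomorphism of $E(s)^d$ is precisely $\vec{u}=ue\mapsto\vec{u}\L:=u\L e$ for a unique $\L\in M_d(E(s))$, and $\L\mapsto(\vec{u}\mapsto\vec{u}\L)$ is an $F$-algebra isomorphism of $M_d(E(s))$ onto $\End_{E(s)}(E(s)^d)$. Such a $\L$ is $\CE$-linear iff it commutes with the action of $x$, and comparing $x((ue)\L)=u^\s\L^\s\mX e$ with $(x(ue))\L=u^\s\mX\L e$ shows this amounts to $\L^\s\mX=\mX\L$, i.e. $\L^\s=\mX\L\mX^{-1}$. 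Thus $D\simeq\End_\CE(U)=\{\L\in M_d(E(s))\mid\L^\s=\mX\L\mX^{-1}\}$, which is visibly an $F$-subalgebra of $M_d(E(s))$ (it contains $F\cdot I$ and is closed under sums and products); and since $\o_{\mX^{-1}}\s$ sends $\mY$ to $\mX^{-1}\mY^\s\mX$, this set is exactly the fixed algebra $M_d(E(s))^{\o_{\mX^{-1}}\s}$.

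For statement 3, $D$ is central simple over $F$, so Skolem--Noether gives $\Aut_F(D)=\Inn(D)$. I would then introduce the restriction homomorphism $\rho\colon\CN(D)\ra\Aut_F(D)$, $\o_{E^\times\mY}\mapsto\o_{E^\times\mY}|_D$, which is well defined because elements of $\CN(D)$ preserve $D$ setwise and whose kernel is $\CC(D)$ by definition. Surjectivity is the point: given $\o\in\Inn(D)$, it is conjugation by some $\delta\in D^\times\subseteq\GL_d(E(s))$, and the inner automorphism $\o_{E^\times\delta}$ of $M_d(E(s))$ preserves $D$ (since $\delta$ and its conjugates lie in the subalgebra $D$), so $\o_{E^\times\delta}\in\CN(D)$ and $\rho(\o_{E^\times\delta})=\o$. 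The first isomorphism theorem then gives $\Aut_F(D)\simeq\CN(D)/\CC(D)$.

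The main obstacle is not a hard argument but consistent bookkeeping: fixing the left/right conventions so that $\End_\CE(U)$ is $D$ rather than $D^{\mathrm{o}}$, and obtaining the defining relation of $D\subseteq M_d(E(s))$ in precisely the form $\L^\s=\mX\L\mX^{-1}$ (rather than a transposed or inverted variant); with that settled, each step is routine given Theorem \ref{qA2Sep20}, Lemma \ref{a14Jun23}, a dimension count, and Skolem--Noether.
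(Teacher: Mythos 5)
Your proposal is correct and follows essentially the same route as the paper's proof: Lemma \ref{a14Jun23} supplies the structure matrix $\mX$, the minimality in statement 1 comes from writing any $\CE$-module built from an $\mX'$ as a direct sum of copies of $U$ and counting $F$-dimensions, statement 2 is the direct computation of $\End_\CE(U)$ inside $M_d(E(s))$ with endomorphisms written on the right, and statement 3 is Skolem--Noether plus the observation that conjugation by $\delta\in D^\times\subseteq\GL_d(E(s))$ both preserves $D$ and restricts to the given inner automorphism. You have filled in the details that the paper leaves terse (in particular the dimension count in statement 1 and the surjectivity of the restriction map in statement 3), but the underlying argument is the same.
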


\begin{proof}   
 1. Any simple finitely generated $\CE$-module is a finite direct sum of copies of the simple $\CE$-module $U$. Now, statement 1 follows from Lemma \ref{a14Jun23}.

2. Recall that ${}_EU\simeq   E^d$. Then $D\simeq \End_\CE (U)\subseteq \End_E (U)=\End_E(E^d)\simeq M_d(E)$.  The division algebra $D$ is a subalgebra of the matrix algebra $M_d(E)$. We write endomorphisms on the right. So, for each matrix $\L \in M_d(E)$, the map $$ \L : E^d\ra E^d, \;\; \vec{u}=ue\mapsto \vec{u}\L :=u\L e$$
is an element of $\End_E(E^d)$. A matrix $\L \in M_d(D)$ belongs to $D$ iff $x (\vec{u}\L )= (x\vec{u})\L$ for all elements $\vec{u}\in U$ iff 
$u^\s \L^\s \mX e= u^\s \mX \L e$ for all elements $\vec{u}\in U$ iff $\L^\s \mX =  \mX \L $ iff 
$\L^\s = \mX \L \mX^{-1}$ since $\mX\in \GL_d(E)$ (Lemma \ref{a14Jun23}), and statement 2 follows. 

3. Recall that $\Aut_F(D)=\Inn (D)\simeq D^\times / F^\times$ and $ \Aut_E(M_d(E))=\Inn (M_d(E))\simeq \GL_d(E)/E^\times$. Since $D$ is an  $F$-subalgebra  of $M_d(E)$, $D^\times \subseteq M_d(E)^\times$,  and so 
$$\Aut_F(D)=\Inn (D)\subseteq \Inn (M_d(K))=\Aut_E(M_d(E)).$$
Now, statement 3 follows from statement 2. 
  \end{proof}   

Theorem \ref{12Jun23}.(2) gives an efficient  way of finding an $F$-basis of the division ring $D$ by solving the system of $F$-linear equations given by the equality $ \L^\s\mX -\mX \L=0$ if the matrix $\mX$ is known. \\

{\bf Criterion for the algebra $\CE$ to be a division algebra where $s\neq 0$ and $a\neq 0$.} 
 By interchanging the roles of $h$ and $x$, the algebra $\CE =(E, \s, a)$ can be written also as
\begin{equation}\label{CE=Eataus}
\CE =(E(a), \tau, s)\;\; {\rm where}\;\;E(a)=F[x]/(x^n-a)\;\; {\rm and}\;\; \tau (x)=q^{-1}x.
\end{equation}
\begin{corollary}\label{b17Jun23}
 Suppose that $s\neq 0$ and  $a\neq 0$. Recall that $E(s)=F[h]/(h^n-s)$ and  $E(a)=F[x]/(x^n-a)$.  Then the following statements are equivalent:  
 
\begin{enumerate}
\item The algebra $\CE =(E, \s, a)$ is a division algebra. 

\item The algebra    $E(s)$ is a  field and $n=\min \{ d'\geq 1\, | \, d'\mid n$, there is a matrix $\mX\in M_{d'}(E(s))$ such that $ \mX^{\s^{n-1}}\cdots  \mX^\s \mX=a\}$.

 \item The algebra    $E(a)$ is a field and $n=\min \{ d'\geq 1\, | \, d'\mid n$, there is a matrix $\mY\in M_{d'}(E(a))$ such that $ \mY^{\tau^{n-1}}\cdots  \mY^\tau \mY=s\}$.  

\end{enumerate}
\end{corollary}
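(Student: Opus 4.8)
The plan is to reduce the whole statement to Theorem~\ref{12Jun23}.(1), which, for a \emph{field} $E(s)$, identifies $\Deg(D)$ (where $\CE\simeq M_m(D)$) with the least divisor $d'$ of $n$ for which the equation $\mX^{\s^{n-1}}\cdots\mX^\s\mX=a$ is solvable in $M_{d'}(E(s))$, and gives $m=n/\Deg(D)$. Since $s\neq 0$ and $a\neq 0$, Theorem~\ref{qA2Sep20}.(3a) tells us that $\CE$ is central simple of degree $n$, so $\CE$ is a division algebra precisely when $m=1$, i.e.\ when $\Deg(D)=n$.

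First I would prove $(1)\Leftrightarrow(2)$. For $(1)\Rightarrow(2)$, assume $\CE$ is a division algebra. Then the commutative subalgebra $E(s)=F[h]/(h^n-s)$ of $\CE$ is a finite-dimensional $F$-algebra with no zero divisors, hence a field: multiplication by a nonzero element is an injective $F$-linear self-map of a finite-dimensional space, so it is bijective. Now Theorem~\ref{12Jun23}.(1) applies, and $m=1$ forces the minimum appearing in (2) to equal $n$. For $(2)\Rightarrow(1)$, the hypothesis that $E(s)$ is a field makes Theorem~\ref{12Jun23}.(1) available; the stated value $n$ of that minimum gives $\Deg(D)=n$, hence $m=1$ and $\CE=M_1(D)=D$ is a division algebra.

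For $(1)\Leftrightarrow(3)$ I would use the symmetric presentation $\CE=(E(a),\tau,s)$ of~(\ref{CE=Eataus}), where $E(a)=F[x]/(x^n-a)$ and $\tau(x)=q^{-1}x$. Since $q^{-1}$ is again a primitive $n$'th root of unity and $s\neq 0$, $a\neq 0$, the running hypotheses of Section~\ref{ALGCE} hold for this presentation, so the equivalence $(1)\Leftrightarrow(2)$ already established applies verbatim with $(h,\s,s)$ and $(x,a)$ interchanged; the resulting statement is exactly (3).

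The argument is essentially bookkeeping once Theorem~\ref{12Jun23} is granted. The only two points that need care are: the elementary fact that a finite-dimensional commutative domain over a field is a field (this is what upgrades the hypothesis ``$\CE$ is a division algebra'' to ``$E(s)$, resp.\ $E(a)$, is a field'', which in turn is what makes Theorem~\ref{12Jun23}.(1) applicable); and checking that the interchanged presentation~(\ref{CE=Eataus}) genuinely satisfies the same hypotheses, so that the symmetry step is legitimate. I do not expect a real obstacle beyond this.
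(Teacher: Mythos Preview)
Your proposal is correct and follows essentially the same approach as the paper's own proof: both reduce $(1)\Leftrightarrow(2)$ to Theorem~\ref{12Jun23}.(1) via the observation that $\CE$ being a division algebra forces the commutative subalgebra $E(s)$ to be a field, and then use $m=1\Leftrightarrow d=n$; both then obtain $(1)\Leftrightarrow(3)$ from the symmetric presentation~(\ref{CE=Eataus}). Your write-up is slightly more explicit about why a finite-dimensional commutative subalgebra of a division algebra is a field and about checking the standing hypotheses for the interchanged presentation, but the argument is the same.
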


\begin{proof}   It suffices to show that the first two statements are equivalent since then equivalence of statements 1 and 3 follows by (\ref{CE=Eataus}).

$(1\Leftrightarrow 2)$ We keep the notation of Theorem \ref{12Jun23}. If the algebra $\CE$ is a division algebra then its commutative subalgebra $E(s)$ is a field. So, we may assume that the algebra $E(s)$ is a field. Then, by Theorem \ref{12Jun23}.(1), the algebra $\CE\simeq M_m(D$ is a division algebra iff $m=1$ iff $d=n$ (since $n=\Deg (\CE)=\Deg (M_m(D))=m\Deg (D)=md$) iff the second condition of statement 2 holds. 
   \end{proof}  

{\bf Sufficient condition for $\CE$ to be a division algebra.} Suppose that $\mX$ be as in Theorem \ref{12Jun23}.(1), that is $\mX\in M_{d'}(E(s))$ and $ \mX^{\s^{n-1}}\cdots  \mX^\s \mX=a$. Then $\det (\mX)\in E(s)$ and 
\begin{equation}\label{NEFmX}
a^{d'}=N_{E(s)/F}(\det (\mX)).
\end{equation}
In more detail, 
$$
a^{d'}=\det(a)=\det(\mX^{\s^{n-1}}\cdots  \mX^\s \mX) =\det(\mX)^{\s^{n-1}}\cdots  \det(\mX)^\s \det (\mX )=N_{E(s)/F}(\det (\mX)).
$$
Corollary \ref{a12Jun23} provides  a sufficient condition for the algebra $\CE$ in Theorem \ref{12Jun23} to be a division algebra.

\begin{corollary}\label{a12Jun23}
Let $\CE =(E, \s, a)$. Suppose that $s\neq 0$, $a\neq 0$,   and the algebra $E=E(s)=F[h]/(h^n-s)$ is a field. If $a^{d'}\not\in N_{E(s)/F}(E(s))$ for all divisors $d'$ of $n$ such that $d'\neq n$ then the algebra $\CE$ is a division algebra. 
\end{corollary}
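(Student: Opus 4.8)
The plan is to prove this by contraposition, using the structural result of Theorem \ref{12Jun23}.(1) together with the norm identity (\ref{NEFmX}). Suppose the algebra $\CE$ is \emph{not} a division algebra. Since $E(s)$ is assumed to be a field, Corollary \ref{b17Jun23} tells us that the algebra fails to be a division algebra exactly when $n$ is \emph{not} the minimal divisor $d'$ of $n$ for which there exists a matrix $\mX\in M_{d'}(E(s))$ with $\mX^{\s^{n-1}}\cdots\mX^\s\mX=a$. Equivalently, by Theorem \ref{12Jun23}.(1), the degree $d=\Deg(D)$ of the division part satisfies $d\neq n$, and there is a matrix $\mX\in M_d(E(s))=\GL_d(E(s))$ realizing the $\CE$-module structure, i.e. $\mX^{\s^{n-1}}\cdots\mX^\s\mX=a$, with $d\mid n$ and $d<n$.

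The key step is then to apply the norm identity. Taking determinants of the equation $\mX^{\s^{n-1}}\cdots\mX^\s\mX=a\cdot I_d$ (the right-hand side being the scalar matrix $a$ times the identity), we get on the left $\det(\mX)^{\s^{n-1}}\cdots\det(\mX)^\s\det(\mX)=N_{E(s)/F}(\det(\mX))$ by Proposition \ref{A21Jun23}.(1) (or directly by multiplicativity of $\det$ and the definition (\ref{qEs5}) of the norm), and on the right $\det(a\cdot I_d)=a^d$. Hence $a^d=N_{E(s)/F}(\det(\mX))\in N_{E(s)/F}(E(s))$, with $d$ a divisor of $n$ strictly less than $n$. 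This directly contradicts the hypothesis that $a^{d'}\not\in N_{E(s)/F}(E(s))$ for every divisor $d'$ of $n$ with $d'\neq n$. Therefore $\CE$ must be a division algebra.

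I do not anticipate a serious obstacle here; the corollary is essentially a repackaging of (\ref{NEFmX}) combined with the minimal-$d'$ characterisation already proved. The one point to be careful about is bookkeeping: one must note that $d=\Deg(D)$ is indeed a divisor of $n$ (which follows from $n=m\Deg(D)$ in Theorem \ref{qA2Sep20}.(3a) / Lemma \ref{a14Jun23}.(1)) and that $d<n$ precisely encodes the failure of $\CE$ to be a division algebra, so that the hypothesis applies to $d'=d$. No further computation is needed beyond invoking (\ref{NEFmX}).

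\begin{proof}
We prove the contrapositive. Suppose the algebra $\CE$ is not a division algebra. Since $E=E(s)$ is a field, by Theorem \ref{12Jun23}.(1) we have $\CE\simeq M_m(D)$ with $m=\frac{n}{d}$ and
$$d=\min\{\, d'\geq 1\mid d'\mid n,\ \exists\, \mX\in M_{d'}(E(s))\text{ with }\mX^{\s^{n-1}}\cdots\mX^\s\mX=a\,\},$$
and $n=md$. As $\CE$ is not a division algebra, $m\geq 2$, so $d$ is a divisor of $n$ with $d\neq n$. Fix a matrix $\mX\in M_d(E(s))$ with $\mX^{\s^{n-1}}\cdots\mX^\s\mX=a\cdot I_d$; since $a\neq 0$, $\mX\in\GL_d(E(s))$. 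Taking determinants and using Proposition \ref{A21Jun23}.(1),
$$a^d=\det(a\cdot I_d)=\det\!\Big(\mX^{\s^{n-1}}\cdots\mX^\s\mX\Big)=\det(\mX)^{\s^{n-1}}\cdots\det(\mX)^\s\det(\mX)=N_{E(s)/F}(\det(\mX)).$$
Thus $a^d\in N_{E(s)/F}(E(s))$ for the divisor $d$ of $n$ with $d\neq n$, contradicting the hypothesis. Hence $\CE$ is a division algebra.
\end{proof}
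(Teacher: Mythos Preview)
Your proof is correct and follows essentially the same approach as the paper's own proof: both use Theorem \ref{12Jun23}.(1) to reduce the question of $\CE$ being a division algebra to the (non)existence of a matrix $\mX\in M_{d'}(E(s))$ with $\mX^{\s^{n-1}}\cdots\mX^\s\mX=a$ for some proper divisor $d'$ of $n$, and then invoke the determinant/norm identity (\ref{NEFmX}) to obtain the contradiction. The paper states the argument directly rather than by contraposition, and cites (\ref{NEFmX}) instead of redoing the determinant computation, but the logic is identical.
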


\begin{proof}   We keep the notation of Theorem \ref{12Jun23}. By Theorem \ref{12Jun23}.(1), the algebra $\CE$ is a division algebra iff $d=n$ iff for every  divisor $d'$ of $n$ such that $d'\neq n$ there is no matrix $\mX\in M_{d'}(E(s))$ such that $ \mX^{\s^{n-1}}\cdots  \mX^\s \mX=a$. Now,  by (\ref{NEFmX}), if $a^{d'}\not\in N_{E(s)/F}(E(s))$ for all divisors $d'$ of $n$ such that $d'\neq n$ then for every  divisor $d'$ of $n$ such that $d'\neq n$ there is no matrix $\mX\in M_{d'}(E(s))$ such that $ \mX^{\s^{n-1}}\cdots  \mX^\s \mX=a$, and so the algebra $\CE$ is a division algebra.   \end{proof}   

{\bf Criterion for $\CE\simeq M_n(F)$.} 

\begin{proposition}\label{c17Jun23}
Let $\CE =(E, \s, a)$. Suppose that $s\neq 0$ and  $a\neq 0$. We keep the notation of Corollary \ref{Xa27Jun23}.  Then the following statements are equivalent:  
 
\begin{enumerate}
\item  $\CE \simeq M_n(F)$. 

\item The algebra $\tilde{\CE}''\simeq M_\frac{n}{m(s)m(s,a)}(F)$.

\item The element $s^\frac{1}{m(s)}=N_{\tilde{E}'/F}(b)$ for some element $b\in \Big(\tilde{E}'\Big)^\times$.

\end{enumerate}
\end{proposition}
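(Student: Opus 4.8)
The plan is to deduce the proposition from the structural description of $\CE$ obtained in Corollary \ref{Xa27Jun23}, together with the classical criterion for a cyclic algebra to split. I would prove $(2)\Leftrightarrow(3)$ first and then $(1)\Leftrightarrow(2)$.

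For $(2)\Leftrightarrow(3)$ I would use Corollary \ref{Xa27Jun23}.(1): the algebra $\tilde{\CE}''=(\tilde{E}',\tau',s^\frac{1}{m(s)})$ is a (reduced) cyclic $F$-algebra of degree $\frac{n}{m(s)m(s,a)}$ whose strictly maximal subfield is $\tilde{E}'$, with cyclic Galois group $G(\tilde{E}'/F)=\langle\tau'\rangle$. The cyclic-splitting criterion Theorem \ref{qA2Sep20}.(5) (equivalently \cite[Lemma 15.1]{Pierce-AssAlg}), applied to this cyclic algebra whose structure constant is $s^\frac{1}{m(s)}$, then gives $\tilde{\CE}''\simeq M_{\frac{n}{m(s)m(s,a)}}(F)$ if and only if $s^\frac{1}{m(s)}\in N_{\tilde{E}'/F}\big((\tilde{E}')^\times\big)$, which is exactly statement (3); this is Corollary \ref{Xa27Jun23}.(2).

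For $(1)\Leftrightarrow(2)$ I would invoke Corollary \ref{Xa27Jun23}.(5), which gives $\CE\simeq M_{m(s)m(s,a)}(\tilde{\CE}'')$, so that $\CE$ and $\tilde{\CE}''$ are equivalent central simple $F$-algebras. Writing $\tilde{\CE}''\simeq M_k(\Delta)$ with $\Delta\in\fC (F)$ the division algebra of $\tilde{\CE}''$, one gets $\CE\simeq M_{m(s)m(s,a)k}(\Delta)$, whence by uniqueness of the Wedderburn decomposition $\Delta$ is also the division algebra of $\CE$. Since $\Deg(\CE)=n$ by Theorem \ref{qA2Sep20}.(3), we have $\CE\simeq M_n(F)$ if and only if $\Delta=F$; likewise $\Delta=F$ if and only if $\tilde{\CE}''\simeq M_k(F)$ with necessarily $k=\Deg(\tilde{\CE}'')=\frac{n}{m(s)m(s,a)}$, which is statement (2). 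Equivalently, one can argue that "being split" depends only on the Brauer class and that a split algebra in $\fC (F)$ of degree $d$ is isomorphic to $M_d(F)$.

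The argument is essentially bookkeeping once Corollary \ref{Xa27Jun23} is available, so I do not expect a genuine obstacle; the points requiring some care are that the norm condition is imposed on the structure constant $s^\frac{1}{m(s)}$ of $\tilde{\CE}''$ (and not on $a$), keeping the distinction between degree and matrix size under $M_p(M_q(F))\simeq M_{pq}(F)$, and the standard appeal to Wedderburn uniqueness to pass from "$\CE$ equivalent to a matrix algebra over $F$" to "$\CE\simeq M_n(F)$".
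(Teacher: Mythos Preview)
Your proposal is correct and follows essentially the same approach as the paper: the paper also uses Corollary \ref{Xa27Jun23}.(5) to reduce $(1)\Leftrightarrow(2)$ to the splitting of $\tilde{\CE}''$, and then appeals to the cyclic-splitting criterion \cite[Lemma 15.1]{Pierce-AssAlg} (applied to $\tilde{\CE}''=(\tilde{E}',\tau',s^{1/m(s)})$) for $(2)\Leftrightarrow(3)$. Your explicit Wedderburn-uniqueness argument for $(1)\Leftrightarrow(2)$ is slightly more detailed than the paper's one-line deduction, and your care in noting that the norm condition falls on $s^{1/m(s)}$ rather than on $a$ is well placed --- indeed, Corollary \ref{Xa27Jun23}.(2) as stated in the paper has ``$a$'' where it should read ``$s^{1/m(s)}$'', so it is safer (as you do, and as the paper's own proof does) to derive the condition directly from \cite[Lemma 15.1]{Pierce-AssAlg} or Theorem \ref{qA2Sep20}.(5).
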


\begin{proof}    $(1\Leftrightarrow 2)$ By Corollary  \ref{Xa27Jun23}.(5), $\CE \simeq M_{m(s)m(s,a)}(\tilde{\CE}'')$. So, $\CE\simeq M_n(F)$ iff $\tilde{\CE}''\simeq M_\frac{n}{m(s)m(s,a)}(F)$.

 $(2\Leftrightarrow 3)$ By Corollary  \ref{Xa27Jun23}.(1), the algebra $\tilde{\CE}''=(\tilde{E}',\tau', s^\frac{1}{m(s)})$ is a (reduced) cyclic algebra. By \cite[Lemma 15.1]{Pierce-AssAlg},  $\tilde{\CE}''\simeq M_\frac{n}{m(s)m(s,a)}(F)$ iff  the element $s^\frac{1}{m(s)}=N_{\tilde{E}'/F}(b)$ for some element $b\in \Big(\tilde{E}'\Big)^\times$.
   \end{proof}  
 
 {\bf The centralizer $C_{\Inn (M_d(E))}(\o_{\mX^{-1}}\s)$ and its connection with $\Aut_F(D)$.} We keep the assumption of Theorem \ref{12Jun23}. Notice that $\Inn (M_d(E))$ is a subgroup of $\Aut_F(M_d(E))$. Let   $C_{\Inn (M_d(E))}(\o_{\mX^{-1}}\s):=\{ \tau \in \Inn(M_d(E))\: | \, \o_{\mX^{-1}}\s \tau=\tau\o_{\mX^{-1}}\s\}$. By Theorem \ref{12Jun23}.(2),  there is a  group homomorphism 
$$\res_D : C_{\Inn (M_d(E))}(\o_{\mX^{-1}}\s)\ra \Aut_F(D),\;\; \tau \mapsto \res_D (\tau ), $$  where $\res_D (\tau )$ is the restriction of the map $\tau $ to $D$,  and the image $\im (\res_D)$ is isomorphic to the factor group 
$C_{\Inn (M_d(E))}(\o_{\mX^{-1}}\s)/\Big(C_{\Inn (M_d(E))}(\o_{\mX^{-1}}\s)\cap \CC (D) \Big)$, by Theorem \ref{12Jun23}.(3).
Corollary \ref{b12Jun23} shows that, in general, the group $\im (\res_D)$ is a subgroup of $\Aut_F(D)$. 

Clearly, $\o_{\mX^{-1}}\s\in \Aut_F(M_d(E)) $ and 
 $\Aut_E(M_d(E))$ is a subgroup of  the group $\Aut_F(M_d(E))$. The set $$\CC_{\Aut_F(M_d(E))}(D):=\{ \g\in \Aut_F(M_d(E)) \, | \, \g (\L ) = \L\;\; {\rm  for \; all}\;\;\L \in D\}$$ is a proper subgroup of  $\Aut_F(M_d(E))$. Corollary \ref{b12Jun23} is a description of the group $\CN (D)$ in terms of the group commutator and  the group $\CC_{\Aut_F(M_d(E))}(D)$. 

\begin{corollary}\label{b12Jun23}
Let   $\CE =(E, \s, a)$. Suppose that $s\neq 0$, $a\neq 0$,   and the algebra  $E=E(s)=F[h]/(h^n-s)$ is a field.  Then 
$\CN (D)=\{ \tau \in \Inn (M_d(E))\, | \,  [ \o_{\mX}^{-1}\s, \tau]\in \CC_{\Aut_F(M_d(E))}(D)\}$ where  $[ \o_{\mX^{-1}}\s, \tau]:= \o_{\mX^{-1}}\s \tau \Big( \o_{\mX^{-1}}\s \Big)^{-1}\tau^{-1} $ is the group commutators of the elements $\o_{\mX^{-1}}\s$ and $\tau$, and  $C_{\Inn (M_d(E))}(\o_{\mX^{-1}}\s)$ $=\{ \tau \in \Inn (M_d(E))\, | \, [ \o_{\mX}^{-1}\s, \tau]=1\} \subseteq \CN (D)$.
\end{corollary}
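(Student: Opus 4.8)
The plan is to reduce the whole statement to the single identification $D=\Fix(\theta)$, where $\theta:=\o_{\mX^{-1}}\s\in\Aut_F(M_d(E))$. This is exactly Theorem \ref{12Jun23}.(2): the condition $\L^\s=\mX\L\mX^{-1}$ defining $D$ is equivalent to $\theta(\L)=\mX^{-1}\L^\s\mX=\L$, so $D$ is literally the fixed subalgebra $M_d(E)^\theta$. Two elementary facts will be used repeatedly: $\theta$, and hence $\theta^{-1}$, fixes every element of $D$ pointwise; and $D$ is finite dimensional over $F$, being a central simple $F$-algebra with $\dim_F D=d^2$. Throughout, write a generic element of $\Inn(M_d(E))$ as $\tau=\o_{\mY}$ with $\mY\in\GL_d(E)$, so that $\tau^{-1}=\o_{\mY^{-1}}$ is again inner, and recall from Theorem \ref{12Jun23}.(3) that $\CN(D)$ is precisely the set of inner automorphisms of $M_d(E)$ with $\tau(D)=D$.

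First I would establish the inclusion $\CN(D)\subseteq\{\tau\mid[\theta,\tau]\in\CC_{\Aut_F(M_d(E))}(D)\}$. If $\tau(D)=D$ then $\tau^{-1}(D)=D$ as well, so for $\L\in D$ we have $\tau^{-1}(\L)\in D=\Fix(\theta^{-1})$, hence $\theta^{-1}\tau^{-1}(\L)=\tau^{-1}(\L)$, hence $\tau\theta^{-1}\tau^{-1}(\L)=\L$, and finally $[\theta,\tau](\L)=\theta\tau\theta^{-1}\tau^{-1}(\L)=\theta(\L)=\L$ because $\L\in D=\Fix(\theta)$. Thus $[\theta,\tau]$ fixes $D$ pointwise, i.e. $[\theta,\tau]\in\CC_{\Aut_F(M_d(E))}(D)$.

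Then I would prove the reverse inclusion. Suppose $[\theta,\tau]\in\CC_{\Aut_F(M_d(E))}(D)$, i.e. $\theta\tau\theta^{-1}\tau^{-1}(\L)=\L$ for all $\L\in D$. Applying $\theta^{-1}$ and using $\theta^{-1}(\L)=\L$ gives $\tau\theta^{-1}\tau^{-1}(\L)=\L$, hence $\theta^{-1}\tau^{-1}(\L)=\tau^{-1}(\L)$, so $\tau^{-1}(\L)\in\Fix(\theta^{-1})=\Fix(\theta)=D$. Therefore $\tau^{-1}(D)\subseteq D$; since $\tau^{-1}$ is injective and $F$-linear and $\dim_F D<\infty$, this forces $\tau^{-1}(D)=D$, whence $\tau(D)=D$ and $\tau\in\CN(D)$. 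Combining the two inclusions yields the asserted description of $\CN(D)$.

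Finally, the statement about $C_{\Inn(M_d(E))}(\theta)$ is purely formal: an element $\tau\in\Inn(M_d(E))$ commutes with $\theta$ inside $\Aut_F(M_d(E))$ iff $[\theta,\tau]=\theta\tau\theta^{-1}\tau^{-1}=1$; since the identity automorphism fixes $D$ pointwise, $1\in\CC_{\Aut_F(M_d(E))}(D)$, so any such $\tau$ satisfies $[\theta,\tau]\in\CC_{\Aut_F(M_d(E))}(D)$ and hence lies in $\CN(D)$ by the description just proved, giving $C_{\Inn(M_d(E))}(\theta)\subseteq\CN(D)$. I do not anticipate a genuine obstacle here; the only point deserving care is the passage from $\tau^{-1}(D)\subseteq D$ to $\tau^{-1}(D)=D$ in the reverse inclusion, which rests on nothing more than finite dimensionality of the central simple algebra $D$ together with injectivity of $\tau^{-1}$.
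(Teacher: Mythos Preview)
Your proof is correct and follows essentially the same approach as the paper: both identify $D=\Fix(\theta)$ with $\theta=\o_{\mX^{-1}}\s$ via Theorem~\ref{12Jun23}.(2) and verify the two inclusions by unwinding the commutator. Your handling of the reverse inclusion---deducing $\tau^{-1}(D)\subseteq D$ and then invoking finite dimensionality to get equality---is in fact more explicit than the paper's compressed step, which asserts the passage from $[\theta,\tau](\L)=\L$ to $\theta\tau(\L)=\tau\theta(\L)$ as an ``equivalence'' without spelling out the intermediate reasoning.
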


\begin{proof}   Let $\CN':=\{ \tau \in \Inn (M_d(E))\, | \, [ \o_{\mX}^{-1}\s, \tau]\in \CC_{\Aut_F(M_d(E))}(D)\}$. By the definition of the set $\CN =\CN (D)$, $ \CN \subseteq \CN'$: If $\tau \in \CN $ then $\tau (D)=D$, and so
$ [ \o_{\mX}^{-1}\s, \tau] (\L)=\L$ for all $\L \in D$, i.e. $[ \o_{\mX}^{-1}\s, \tau]\in \CC_{\Aut_F(M_d(E))}(D)$, as required.
 Conversely, let $\tau'\in \CN'$. Then for all $\L \in D$, $\L =  [ \o_{\mX}^{-1}\s, \tau](\L)$ or, equivalently,
 $$\o_{\mX}^{-1}\s  \tau (\L) = \tau \o_{\mX}^{-1}\s (\L)=\tau (\L),$$
since $\o_{\mX}^{-1}\s (\L)=\L$ i.e. $\tau \in \CN$, by Theorem \ref{12Jun23}.(2,3). Therefore, $\CN = \CN'$.  

By the definition of the set $C_{\Inn (M_d(E))}(\o_{\mX^{-1}}\s)$, it is equal to $\{ \tau \in \Inn (M_d(E))\, | \, [ \o_{\mX}^{-1}\s, \tau]=1\}$, and so $C_{\Inn (M_d(E))}(\o_{\mX^{-1}}\s)\subseteq \CN (D)$.    \end{proof}

{\bf Tensor product decomposition $\CE =\bigotimes_{i=1}^d \CE_i$ where $\Deg(\CE)=n=\prod_{i=1}^dp_i^{n_i}$ and $\Deg (\CE_i)=p_i^{n_i}$.}
Let $F$, $E=E(s)$,  and $ \CE =(E, \s, a)$ be as above, see (\ref{qEs2}).  Suppose that $s\neq 0$ and $a\neq 0$. 
 Let  the natural number $n=\prod_{i=1}^dp_i^{n_i}$ be a unique product of primes $p_i$ with multiplicities $n_i\geq 1$.
  For each $i=1, \ldots , d$, let $n_i':=\frac{n}{p_i^{n_i}}$, i.e. $n=n_i'p_i^{n_i}$, and $\CE_i$ be the $F$-subalgebra of $\CE$ generated by the elements $h^{n_i'}$ and $x^{n_i'}$. It follows from the equality $\CE=\bigoplus_{i,j=0}^{n-1}Fh^ix^j$  that 
\begin{equation}\label{CEib}
\CE_i:=\bigoplus_{k,j=0}^{n_i'-1}F\Big(h^{n_i'}\Big)^k \Big(x^{n_i'}\Big)^j\simeq E_i[x^{n_i'}, \s_i]/\Big(\Big(x^{n_i'}\Big)^{p_i^{n_i}}-a\Big), \; E_i:=F[h^{n_i'}]/\Big(\Big(h^{n_i'}\Big)^{p_i^{n_i}}-s\Big), \; \s_i(h^{n_i'}):=q^{(n_i')^2}h^{n_i'}.
\end{equation}
Notice that $\Big(x^{n_i'}\Big)^{p_i^{n_i}}-a=x^n-a$, $\Big(h^{n_i'}\Big)^{p_i^{n_i}}-s=h^n-s$, and $x^{n_i'}h^{n_i'}=q^{(n_i')^2}h^{n_i'}x^{n_i'}=\s_i(h^{n_i'})x^{n_i'}$.

\begin{proposition}\label{A31May23}
Let $\CE =(E, \s, a)$, $s\neq 0$,  $a\neq 0$, and $\Deg(\CE)=n=\prod_{i=1}^dp_i^{n_i}$ be as above, see (\ref{qEs2}). 

\begin{enumerate}
\item The polynomial $t^{p_i^{n_i}}-s\in F[t]$ is an irreducible polynomial over $F$ if and only if $s\not\in \bigcup_{j=1}^{n_i}F^{[p_i^j]}$.

\item Suppose that the polynomial $t^{p_i^{n_i}}-s\in F[t]$ is an irreducible polynomial over $F$. Then the field extension $E_i/F$ is a Galois field extension with Galois  group $G(E_i/F)=\langle \s_i \, | \, \s^{p_i^{n_i}}=1\rangle$  which is a cyclic group of order $p_i^{n_i}$ where $\s_i(h^{n_i'}):=q^{(n_i')^2}h^{n_i'}$, and $[E_i:F]=p_i^{n_i}$. 

\item  The  algebra $\CE_i$ is a central simple $F$-algebra with $\Deg (\CE_i)= p_i^{n_i}$.

\item The algebra $\CE =\bigotimes_{i=1}^d \CE_i$ is a tensor product of its  subalgebras $\CE_i$.

\end{enumerate}
\end{proposition}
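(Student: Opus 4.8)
The plan is to read off statements (1)--(3) as specializations of Theorem \ref{qA2Sep20} to the subalgebra $\CE_i$, and then to prove (4) by exhibiting an explicit multiplication isomorphism $\bigotimes_i\CE_i\ra\CE$. Fix $i$ and write $H:=h^{n_i'}$, $X:=x^{n_i'}$. Since $n_i'=\prod_{k\neq i}p_k^{n_k}$, we have $p_i\nmid n_i'$, hence $\gcd(n_i',p_i^{n_i})=1$, so that $q^{n_i'}$ is a primitive $p_i^{n_i}$'th root of unity in $F$ and so is $\omega_i:=q^{(n_i')^2}=(q^{n_i'})^{n_i'}$ (a power of $q^{n_i'}$ by an exponent coprime to its order). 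By (\ref{CEib}), $\CE_i$ in the generators $H,X$ has presentation $XH=\omega_iHX$, $H^{p_i^{n_i}}=s$, $X^{p_i^{n_i}}=a$; that is, $\CE_i$ is an algebra of the form $(E(s),\s_i,a)$ of (\ref{qEs2}) with $n$ and $q$ replaced by $p_i^{n_i}$ and $\omega_i$, with $E_i=F[H]/(H^{p_i^{n_i}}-s)$ and $\s_i(H)=\omega_iH$; note $\char F$ does not divide $p_i^{n_i}$ since it does not divide $n$. The divisors $m\neq 1$ of $p_i^{n_i}$ are exactly $p_i,p_i^2,\ldots,p_i^{n_i}$, so $\bigcup_{m\mid p_i^{n_i},\,m\neq 1}F^{[m]}=\bigcup_{j=1}^{n_i}F^{[p_i^j]}$. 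Under these identifications, statement (1) is Theorem \ref{qA2Sep20}.(1), statement (2) is Theorem \ref{qA2Sep20}.(2) (with $\s_i^k(H)=\omega_i^kH$, so $\s_i$ has order $p_i^{n_i}$), and statement (3) is Theorem \ref{qA2Sep20}.(3a), applicable because $s\neq 0$ and $a\neq 0$; in particular $\dim_F\CE_i=(p_i^{n_i})^2$.

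For statement (4), the first step is to check that the subalgebras $\CE_1,\ldots,\CE_d$ of $\CE$ commute pairwise. Powers of $h$ commute among themselves and powers of $x$ commute among themselves, so it suffices to see that $x^{n_i'}$ and $h^{n_j'}$ commute for $i\neq j$: from $xh=qhx$ one gets $x^{n_i'}h^{n_j'}=q^{n_i'n_j'}h^{n_j'}x^{n_i'}$, and $q^{n_i'n_j'}=1$ because $n\mid n_i'n_j'$, since $n_i'n_j'=n^2/(p_i^{n_i}p_j^{n_j})$ and the coprime divisors $p_i^{n_i},p_j^{n_j}$ of $n$ have product dividing $n$. Consequently the images of $\CE_1,\ldots,\CE_d$ in $\CE$ commute elementwise, and so the $F$-linear map
\begin{equation*}
\mu:\ \CE_1\t_F\cdots\t_F\CE_d\ra \CE,\qquad a_1\t\cdots\t a_d\mapsto a_1\cdots a_d ,
\end{equation*}
is a homomorphism of $F$-algebras.

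The final step is to show $\mu$ is bijective. Since the $p_i$-adic valuation of $n_i'$ is $0$ for every $i$, we have $\gcd(n_1',\ldots,n_d')=1$, so $\sum_{i=1}^dc_in_i'=1$ for some $c_i\in\Z$. In $\CE$, $h$ and $x$ are units (as $h^n=s\in F^\times$, $x^n=a\in F^\times$), and in fact $h^{n_i'}$ and $x^{n_i'}$ are already units in $\CE_i$, with inverses $s^{-1}(h^{n_i'})^{p_i^{n_i}-1}$ and $a^{-1}(x^{n_i'})^{p_i^{n_i}-1}$. Hence $h=\prod_{i=1}^d(h^{n_i'})^{c_i}$ and $x=\prod_{i=1}^d(x^{n_i'})^{c_i}$ lie in $\im(\mu)$; as $h$ and $x$ generate $\CE$ and $\im(\mu)$ is a subalgebra, $\mu$ is surjective. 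By statement (3), $\dim_F\big(\CE_1\t_F\cdots\t_F\CE_d\big)=\prod_{i=1}^d(p_i^{n_i})^2=n^2=\dim_F\CE$, and a surjective $F$-linear map between finite-dimensional $F$-spaces of the same dimension is an isomorphism; so $\mu$ is an isomorphism of $F$-algebras, proving (4). (Alternatively, one may invoke that a tensor product of central simple $F$-algebras is central simple with multiplicative degree, \cite{Pierce-AssAlg}, to get injectivity of $\mu$ from simplicity of the source.)

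Everything here is routine; the two points that need a little care are the identification of $\CE_i$ with an algebra $(E(s),\s_i,a)$ over the root of unity $\omega_i$ so that Theorem \ref{qA2Sep20} applies verbatim, and the pairwise commutativity of the $\CE_i$ inside $\CE$, which rests on the elementary fact $p_i^{n_i}p_j^{n_j}\mid n$ for $i\neq j$. I do not anticipate a genuine obstacle.
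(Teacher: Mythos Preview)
Your proof is correct. For parts (1)--(3) you do exactly what the paper does: recognize $\CE_i$ as an algebra of type (\ref{qEs2}) with parameters $(p_i^{n_i},\omega_i)$ in place of $(n,q)$ and invoke Theorem \ref{qA2Sep20}; you are in fact more careful than the paper in verifying that $\omega_i=q^{(n_i')^2}$ is a primitive $p_i^{n_i}$'th root of unity.

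For part (4) the two arguments diverge. The paper proceeds by induction on $d$: assuming $\CE(n/p_d^{n_d})=\bigotimes_{i=1}^{d-1}\CE_i$, it applies the centralizer theorem \cite[Theorem 12.7]{Pierce-AssAlg} to the central simple subalgebra $\CE(n/p_d^{n_d})\subseteq\CE$, then identifies the centralizer with $\CE_d$ by an inclusion-plus-dimension argument. Your route is more direct and avoids the centralizer theorem entirely: after checking pairwise commutativity (the same computation $q^{n_i'n_j'}=1$ appears in both proofs), you write down the multiplication map $\mu$, prove surjectivity by the B\'ezout trick $h=\prod_i(h^{n_i'})^{c_i}$, and finish by a dimension count. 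Your approach is more elementary and self-contained; the paper's approach has the minor advantage of illustrating the general principle $\CE=B\otimes C_\CE(B)$ that is reused elsewhere in the section (e.g.\ Corollary \ref{a1Jun23}).
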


\begin{proof}   1. Statement 1 is a particular case of Theorem Theorem \ref{qA2Sep20}.(1).

2. Statement 1 is a particular case of Theorem Theorem \ref{qA2Sep20}.(2).

3. By the assumption, $s\neq 0$ and $a\neq 0$. Then, by Theorem \ref{qA2Sep20}.(3a), the algebra $\CE_i$ is a central simple algebra with $\Deg (\CE_i)=p_i^{n_i}$.

4. For all $i\neq j$, the elements $x_i^{n_i'}$ and $h_j^{n_j'}$ commute:
$$x_i^{n_i'}h_j^{n_j'}=q^{n_i'n_j'}h_j^{n_j'}x_i^{n_i'}=h_j^{n_j'}x_i^{n_i'}$$
since $n\mid n_i'n_j'$ and $q^n=1$. So, $\alpha_i\alpha_j=\alpha_j\alpha_i$ for all elements $\alpha_i\in \CE_i$ and $\alpha_j\in \CE_j$.  To prove statement 4 we use induction on $d\geq 1$. The initial case $d=1$ is obvious. Suppose that $d>1$ and the result is true for all $d'<d$. Let $\CE (n)=\CE$. By induction on $d$, $\CE \Big(\frac{n}{p_d^{n_d}}\Big)=\bigotimes_{i=1}^{d-1} \CE_i$. By statement 3, the algebras $\CE_i$ are central simple algebras, i.e. 
the algebra $\CE \Big(\frac{n}{p_d^{n_d}}\Big)$ is a central simple algebra of degree $$\Deg\Big(\CE \Big(\frac{n}{p_d^{n_d}}\Big)\Big)=\frac{n}{p_d^{n_d}}.$$
The central simple algebra $\CE \Big(\frac{n}{p_d^{n_d}}\Big)$ is a subalgebra of the central simple algebra $\CE$. Hence,
$$\CE =\CE\Big(\frac{n}{p_d^{n_d}}\Big)\t_FC_{\CE}\Big(\CE_\frac{n}{p_d^{n_d}}\Big)\;\; {\rm and}\;\;\dim_F \Big(C_{\CE}\Big(\CE_\frac{n}{p_d^{n_d}}\Big)\Big)=\dim_F(\CE)\cdot \Big( \dim_F\Big(\CE_\frac{n}{p_d^{n_d}}\Big)\Big)^{-1}= p_d^{n_d}.$$ 
Since $\CE_d\subseteq  C_{\CE}\Big(\CE_\frac{n}{p_d^{n_d}}\Big)$ and $\dim_F(\CE_d)= p_d^{n_d}=\dim_F \Big(C_{\CE}\Big(\CE_\frac{n}{p_d^{n_d}}\Big)\Big)$, we must have  $\CE_d= C_{\CE}\Big(\CE_\frac{n}{p_d^{n_d}}\Big)$. Therefore, $\CE=\bigotimes_{i=1}^d\CE_i$, as required.   \end{proof}   

Corollary \ref{a1Jun23} describes the centralizers of the subalgebras  $\bigotimes_{i\in I}\CE_i$ of $\CE$ where $I\subseteq \{ 1, \ldots ,  d\}$.

\begin{corollary}\label{a1Jun23}
Let $\CE =(E, \s, a)$, $s\neq 0$,  $a\neq 0$, and $\Deg(\CE)=n=\prod_{i=1}^dp_i^{n_i}$ be as in Theorem \ref{A31May23}. In particular, $\CE=\bigotimes_{i=1}^d\CE_i$ (Theorem \ref{A31May23}.(4)). Then for all subsets $I\subseteq \{ 1, \ldots ,  d\}$, 
$C_{\CE}(\bigotimes_{i\in I}\CE_i)=\bigotimes_{j\in CI}\CE_j$ and $\CE =\Big(\bigotimes_{i\in I}\CE_i\Big)\t C_{\CE}(\bigotimes_{i\in I}\CE_i)$   where $CI=\{ 1, \ldots , d\}\backslash I$. 
\end{corollary}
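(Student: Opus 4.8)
The plan is to combine the double centralizer theorem for central simple algebras, in the form of \cite[Theorem 12.7.(iv)]{Pierce-AssAlg} (already used above), with a count of $F$-dimensions.

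First I would record that $B:=\bigotimes_{i\in I}\CE_i$ is a central simple $F$-\emph{subalgebra} of $\CE$. By Proposition \ref{A31May23}.(3) each $\CE_i$ is a central simple $F$-algebra of degree $p_i^{n_i}$, and a finite tensor product over $F$ of central simple $F$-algebras is again central simple with degree equal to the product of the degrees; hence $\Deg (B)=\prod_{i\in I}p_i^{n_i}$ and $\dim_F(B)=\prod_{i\in I}p_i^{2n_i}$. That this abstract tensor product sits inside $\CE$ with exactly this dimension (rather than as a proper quotient) is immediate from the identification $\CE=\bigotimes_{i=1}^d\CE_i$ of Proposition \ref{A31May23}.(4): the subalgebra of $\CE$ generated by the $\CE_i$ with $i\in I$ is canonically $\bigotimes_{i\in I}\CE_i$.

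Next I would apply \cite[Theorem 12.7.(iv)]{Pierce-AssAlg} to the central simple subalgebra $B$ of the central simple algebra $\CE$ (central simple by Theorem \ref{qA2Sep20}.(3a), since $s\neq 0$ and $a\neq 0$). This yields $\CE=B\t_F C_{\CE}(B)$ and $\Deg (\CE)=\Deg (B)\,\Deg (C_{\CE}(B))$; since $\Deg (\CE)=n=\prod_{i=1}^dp_i^{n_i}$, it follows that $\Deg\big(C_{\CE}(B)\big)=\prod_{j\in CI}p_j^{n_j}$ and hence $\dim_F\big(C_{\CE}(B)\big)=\prod_{j\in CI}p_j^{2n_j}$. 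Then I would establish the inclusion $\bigotimes_{j\in CI}\CE_j\subseteq C_{\CE}(B)$, which is precisely the commutation already recorded in the proof of Proposition \ref{A31May23}.(4): for $i\neq j$ the generators $h^{n_i'},x^{n_i'}$ of $\CE_i$ commute with the generators $h^{n_j'},x^{n_j'}$ of $\CE_j$ (because $n\mid n_i'n_j'$ and $q^n=1$), so $\alpha_i\alpha_j=\alpha_j\alpha_i$ for all $\alpha_i\in\CE_i$ and $\alpha_j\in\CE_j$; letting $i$ range over $I$ shows that the subalgebra generated by the $\CE_j$ with $j\in CI$ centralizes $B$. Since $\dim_F\big(\bigotimes_{j\in CI}\CE_j\big)=\prod_{j\in CI}p_j^{2n_j}=\dim_F\big(C_{\CE}(B)\big)$, the inclusion must be an equality, i.e. $C_{\CE}(B)=\bigotimes_{j\in CI}\CE_j$; substituting this into $\CE=B\t_F C_{\CE}(B)$ gives the remaining assertion $\CE=\big(\bigotimes_{i\in I}\CE_i\big)\t C_{\CE}\big(\bigotimes_{i\in I}\CE_i\big)$.

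I do not expect a genuine obstacle here: the argument is a routine synthesis of the double centralizer theorem and a dimension count, and the extreme cases $I=\emptyset$ and $I=\{1,\ldots,d\}$ are covered automatically (with the usual convention that an empty tensor product equals $F$). The only point that needs a moment's care is the very first step — identifying $\bigotimes_{i\in I}\CE_i$ as a central simple subalgebra of $\CE$ of the predicted dimension — and this is supplied by the decomposition $\CE=\bigotimes_{i=1}^d\CE_i$ of Proposition \ref{A31May23}.(4).
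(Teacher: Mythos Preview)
Your proof is correct and takes essentially the same approach as the paper: invoke the double centralizer result \cite[Theorem 12.7.(iv)]{Pierce-AssAlg} to get $\CE=B\t_F C_\CE(B)$, observe that $\bigotimes_{j\in CI}\CE_j\subseteq C_\CE(B)$ from the commutation established in Proposition \ref{A31May23}.(4), and finish with a dimension count. The paper's proof is terser---it simply writes $\CE=\big(\bigotimes_{i\in I}\CE_i\big)\t\big(\bigotimes_{j\in CI}\CE_j\big)$ and invokes $A=B\t C_A(B)$---leaving the inclusion-plus-dimension step implicit, so your version is, if anything, more explicit about why the centralizer is exactly $\bigotimes_{j\in CI}\CE_j$ rather than something larger.
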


\begin{proof}   Let $A$ be a central simple algebra and $B$ be its central simple subalgebra. Then $A=B\t C_A(B)$. Now, the corollary follows from the equality (Theorem \ref{A31May23}.(4))
$$\CE=\bigotimes_{i=1}^d\CE_i=\bigg(\bigotimes_{i\in I}\CE_i\bigg)\t\bigg(\bigotimes_{j\in J}\CE_j\bigg).$$
\end{proof}


\section{PLM-rings and Quillen's Lemma}\label{PLMRINGS}

The aim of this section is to introduce a new class of rings, the PLM-rings,  and to prove several results about them that are used in classifying  primitive ideals of the algebras $\mA$, $A_1$, $\CA$, and $\CB$ (Theorem \ref{2Aug23} and Corollary \ref{b2Aug23}). \\

Below is  a proof of Proposition \ref{B30Jul23}.

 \begin{proof}    1. The first class belongs to the class of PLM-algebras since the image of the centre in  the endomorphism algebra of each simple module (which is a division algebra)  is an algebraic commutative domain, i.e. a field. 

2. Quillen's Lemma \cite{Quillen'sLemma} states  that   algebras in statement  2  belong to the class of algebras in statement 1, and so statement 2 follows from statement 1.

3. All algebras $U(\CG)$ are examples of algebras in statement 2, and so statement 3 follows from statement 2. \end{proof}


For each prime ideal $\gp\in \Spec (Z(R))$,  $S_\gp:=Z(R)\backslash \gp\in \Den (Z(R))$ and $S_\gp \in \Den (R)$. For an $R$-module $M$, the localization $S_\gp^{-1}M$ is denoted by $M_\gp$ and is called the {\bf localization of $M$ at the prime ideal} $\gp$. Then $k(\gp):=\Big(Z(R)/\gp\Big)_\gp
$ is the field of fractions of the commutative domain $Z(R)/\gp$. 
Clearly, $R_\gp=Z(R)_\gp\t_{Z(R)}R$ is a ring and  $M_\gp=R_\gp\t_R M\simeq Z(R)_\gp\t_{Z(R)}M$ is an $R_\gp$-module. The map 
\begin{equation}\label{vgpRRp}
\phi_\gp: R/R\gp\ra \Big( R/R\gp\Big)_\gp\simeq k(\gp)\t_{Z(A)}R/\gp, \;\; u\mapsto \frac{u}{1}.
\end{equation}
is a ring homomorphism. For an $R$-module $M$,  $\ann_{Z(R)}(M):=\{z\in Z(R)\, | \, zM=0\}=\ann_R(M)\cap  Z(A)$, and let $\soc_R(M)$ be the socle of the $R$-module $M$.

\begin{lemma}\label{a3Aug23}

\begin{enumerate}
\item For all primes $\gp \in {\rm ZL}(R)$ and nonzero $k(\gp)\t_{Z(R)}R/R\gp$-modules $\CM$, $\ann_{Z(R)}(\CM)=\gp$.
\item ${\rm ZL}(R)=\{ \gp \in \Spec (Z(R))\, | \, 
k(\gp)\t_{Z(R)}R/R\gp\neq \{ 0\}\}$.
\end{enumerate}
\end{lemma}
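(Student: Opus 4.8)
The plan is to prove statement (1) directly from the definitions and then derive statement (2) as a formal consequence. For statement (1), fix $\gp \in {\rm ZL}(R)$, so $\gp = P \cap Z(R)$ for some $P \in \Prim(R)$, and let $\CM$ be a nonzero $k(\gp)\t_{Z(R)}R/R\gp$-module. First I would observe that $k(\gp)\t_{Z(R)}R/R\gp \simeq R_\gp/R_\gp\gp$ and that every element of $S_\gp = Z(R)\backslash \gp$ acts invertibly on this ring, hence on $\CM$; consequently no element of $S_\gp$ annihilates the nonzero module $\CM$, which gives $\ann_{Z(R)}(\CM)\subseteq \gp$. For the reverse inclusion, note that $\gp$ kills $R/R\gp$ by construction (since $\gp \subseteq R\gp$), hence kills $R_\gp/R_\gp\gp$ and therefore every module over it, so $\gp \subseteq \ann_{Z(R)}(\CM)$. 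Combining, $\ann_{Z(R)}(\CM)=\gp$.

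For statement (2), I would argue both inclusions. If $\gp \in {\rm ZL}(R)$, then taking a simple module $M$ over $R$ with $\ann_R(M)\cap Z(R)=\gp$, one checks that $M_\gp = k(\gp)\t_{Z(R)}M$ is a nonzero module over $k(\gp)\t_{Z(R)}R/R\gp$ (nonzero because $\gp$ consists of non-units acting on the simple module $M$, so localization at $S_\gp$ does not kill $M$), which forces $k(\gp)\t_{Z(R)}R/R\gp \neq \{0\}$. Conversely, if $k(\gp)\t_{Z(R)}R/R\gp \neq \{0\}$, this ring has a simple module $\CM$, which pulls back along the ring homomorphism $\phi_\gp$ of (\ref{vgpRRp}) to an $R$-module; by statement (1), $\ann_{Z(R)}(\CM)=\gp$, and refining $\CM$ to a simple $R$-submodule (or using that $\ann_R$ of a simple subquotient still meets $Z(R)$ in a prime containing $\gp$, combined with the fact that $\gp$ acts as zero) one produces a primitive ideal of $R$ restricting to $\gp$, i.e. $\gp \in {\rm PL}(R)\subseteq {\rm ZL}(R)$.

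The main obstacle I anticipate is the last step of statement (2): passing from an arbitrary nonzero module $\CM$ over $k(\gp)\t_{Z(R)}R/R\gp$ to an actual \emph{primitive} ideal of $R$ whose restriction to $Z(R)$ is exactly $\gp$. One must ensure that in choosing a simple $R$-subquotient of $\CM$ (viewed as an $R$-module), the intersection of its annihilator with $Z(R)$ does not jump to a strictly larger prime. This is where statement (1) does the real work: since $\ann_{Z(R)}(\CM)=\gp$ and $\gp$ annihilates \emph{every} subquotient as well, while no element outside $\gp$ can annihilate a nonzero subquotient (elements of $S_\gp$ act invertibly), the annihilator of any nonzero $R$-subquotient meets $Z(R)$ in a prime ideal squeezed between $\gp$ and $\gp$, hence equal to $\gp$. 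So the key is to apply statement (1) not just to $\CM$ but to its simple subquotients, which are again modules over the same ring $k(\gp)\t_{Z(R)}R/R\gp$.
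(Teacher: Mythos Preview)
Your proof of statement~(1) is fine, but you have conflated ${\rm ZL}(R)$ with ${\rm PL}(R)$: by definition ${\rm ZL}(R)$ is the image of $\Spec(R)\to\Spec(Z(R))$, so $\gp\in{\rm ZL}(R)$ means $\gp=P\cap Z(R)$ for some $P\in\Spec(R)$, not $P\in\Prim(R)$. This is harmless in~(1) since you never use $P$, but it derails both halves of your argument for~(2).

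For the forward inclusion you assume a simple $R$-module $M$ with $\ann_R(M)\cap Z(R)=\gp$, i.e.\ you assume $\gp\in{\rm PL}(R)$; you only know $\gp\in{\rm ZL}(R)$. The right move is to take $P\in\Spec(R)$ with $P\cap Z(R)=\gp$; then $R\gp\subseteq P$ and $P\cap S_\gp=\emptyset$, so $P_\gp$ is a proper ideal of $R_\gp$ containing $R_\gp\gp$, hence $k(\gp)\otimes_{Z(R)}R/R\gp\simeq R_\gp/R_\gp\gp\neq 0$.

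For the reverse inclusion you try to produce a \emph{primitive} ideal restricting to $\gp$, i.e.\ to prove $\gp\in{\rm PL}(R)$. That is strictly stronger than what is claimed and is false in general: take $R=Z(R)=k[t]$ and $\gp=0$. Then $k(\gp)\otimes R/R\gp=k(t)\neq 0$ and $0\in{\rm ZL}(R)$, yet every simple $k[t]$-module has nonzero annihilator, so $0\notin{\rm PL}(R)$. In this example your procedure fails explicitly: with $\CM=k(t)$, the cyclic $R$-submodule $R\cdot 1=k[t]$ has simple quotient $k[t]/(t)$, whose $Z(R)$-annihilator is $(t)\neq 0$. The annihilator \emph{does} jump, because an $R$-quotient of an $R$-submodule of $\CM$ need not be a $k(\gp)\otimes R/R\gp$-module: elements of $S_\gp$ act bijectively on $\CM$, but only injectively on $R$-submodules, and not at all controllably on their quotients. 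So your final paragraph's claim that ``simple subquotients are again modules over the same ring'' is incorrect.

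The fix is to stay with prime ideals. If $T:=k(\gp)\otimes_{Z(R)}R/R\gp\neq 0$, pick any prime ideal $Q$ of $T$ and let $P\subseteq R$ be its preimage under $R\to T$. Then $\gp\subseteq R\gp\subseteq P$, while $S_\gp$ maps to units of $T$ and hence misses $Q$, so $P\cap Z(R)=\gp$ and $\gp\in{\rm ZL}(R)$. This is exactly the observation behind~(1)---that $Z(R)$ acts on $T$ through $k(\gp)$---applied to a prime of $T$ rather than to a module, and is what the paper means by ``statement~2 follows from statement~1''.
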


\begin{proof} 1. Statement 1 follow from the definition of the ring $k(\gp)\t_{Z(R)}R/R\gp$.

2. Statement 2 follows from statement 1.
\end{proof}

\begin{lemma}\label{b1Aug23}
Suppose that  $M$ is a simple $R$-module  and $\gp =\ann_{Z(R)}(M)$. Then $\ann_{Z(R)}(m)=\gp$ for all nonzero elements $m\in M$.
\end{lemma}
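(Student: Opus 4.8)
The plan is to exploit the fact that $M$ is simple together with the observation that $Z(R)$ is central, so every annihilator of an element generates an $R$-submodule condition. First I would fix a nonzero element $m \in M$ and set $\gq := \ann_{Z(R)}(m)$. Since $z m = 0$ forces $z (r m) = r(z m) = 0$ for every $r \in R$ (here $z \in Z(R)$ commutes with $r$), the set $\{x \in M \mid \gq x = 0\}$ is an $R$-submodule of $M$; it is nonzero because it contains $m$, hence by simplicity it equals $M$. Therefore $\gq M = 0$, i.e. $\gq \subseteq \ann_{Z(R)}(M) = \gp$. The reverse inclusion $\gp \subseteq \gq$ is immediate: if $z \in \gp$ then $z M = 0$, so in particular $z m = 0$, giving $z \in \gq$. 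Combining the two inclusions yields $\ann_{Z(R)}(m) = \gp$, as required.

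I expect there is essentially no obstacle here — the only point that needs a word of care is that the centrality of $Z(R)$ is exactly what makes $\{x \in M \mid \gq x = 0\}$ an $R$-submodule rather than merely a $Z(R)$-submodule, and that is what lets simplicity of $M$ be applied. The statement is really the module-theoretic counterpart of the fact that in a simple module over a ring with centre $Z$, the central annihilator is uniform across all nonzero elements, and the proof is a one-line submodule argument in each direction.

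I will present it in that order: define $\gq = \ann_{Z(R)}(m)$, check the torsion submodule $\{x \in M \mid \gq x = 0\}$ is an $R$-submodule using centrality, invoke simplicity to get $\gq M = 0$ hence $\gq \subseteq \gp$, and then note the trivial reverse inclusion $\gp \subseteq \gq$.

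\begin{proof}
Let $m \in M$ be a nonzero element and put $\gq := \ann_{Z(R)}(m) = \{ z \in Z(R) \, | \, zm = 0\}$. Clearly $\gp = \ann_{Z(R)}(M) \subseteq \gq$ since $zM = 0$ implies $zm = 0$. For the reverse inclusion, consider the set $N := \{ x \in M \, | \, \gq x = 0\}$. For $x \in N$, $z \in \gq$, and $r \in R$ we have $z(rx) = r(zx) = 0$ because $z$ is central, so $rx \in N$; since $N$ is obviously closed under addition, $N$ is an $R$-submodule of $M$. As $m \in N$, the submodule $N$ is nonzero, so $N = M$ by simplicity of $M$. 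Hence $\gq M = 0$, i.e. $\gq \subseteq \ann_{Z(R)}(M) = \gp$. Therefore $\ann_{Z(R)}(m) = \gq = \gp$.
\end{proof}
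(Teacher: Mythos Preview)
Your proof is correct and is essentially the same argument as the paper's, just phrased more explicitly: the paper writes the one-line chain $\ann_{Z(R)}(m)=\ann_{Z(R)}(Rm)=\ann_{Z(R)}(M)=\gp$ using $M=Rm$, which encodes exactly your submodule-and-centrality observation.
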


\begin{proof}
The lemma follows at once from the equality $M=Rm$ for all nonzero elements $m\in M$:  $\ann_{Z(R)}(m)=\ann_{Z(R)}(Rm)=\ann_{Z(R)}(M)=\gp$.
\end{proof}

\begin{lemma}\label{a1Aug23}
Suppose that $\gp \in {\rm ZL}(R)$.
\begin{enumerate}
\item The map $\phi_\gp $ is an injection iff $\tor_{S_\gp}(R/R\gp)=0$ iff $sr\in R\gp$, for some elements $s\in S_\gp$ and $r\in R$, implies $r\in R\gp$.
\item $\gp \in {\rm PL}(R)$  iff  there is a simple $k(\gp)\t_{Z(A)}R/\gp$-module $\CM$ with 
$\soc_R(\CM)\neq 0$.
\end{enumerate}
\end{lemma}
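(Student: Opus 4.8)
I would treat the two assertions separately; both reduce to standard facts about central localizations once the right modules are identified, and Lemma \ref{b1Aug23} is the one nontrivial input.

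\emph{Part 1.} The map $\phi_\gp$ is nothing but the canonical localization map $N\ra S_\gp^{-1}N$ for the $R$-module $N=R/R\gp$ and the \emph{central} multiplicative set $S_\gp=Z(R)\backslash\gp$. Since $S_\gp$ is central, the kernel of this map is exactly the $S_\gp$-torsion submodule $\tor_{S_\gp}(R/R\gp)=\{\bar r\in R/R\gp\mid s\bar r=0\ \text{for some}\ s\in S_\gp\}$, which yields the first equivalence. For the second equivalence I would simply note that, for $s\in S_\gp\subseteq Z(R)$ and $r\in R$, one has $s\bar r=\overline{sr}$, so $s\bar r=0$ in $R/R\gp$ amounts to $sr\in R\gp$, while $\bar r=0$ amounts to $r\in R\gp$; hence $\tor_{S_\gp}(R/R\gp)=0$ is precisely the implication stated. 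This part is purely formal.

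\emph{Part 2, reformulation.} By the definitions of primitive ideal and of the primitive restriction map, $\gp\in{\rm PL}(R)$ iff there is a simple $R$-module $M$ with $\ann_{Z(R)}(M)=\gp$. So I would reduce to showing: such an $M$ exists iff the ring $\CR_\gp:=k(\gp)\t_{Z(R)}R/R\gp\ (\simeq(R/R\gp)_\gp)$ admits a simple module $\CM$ with $\soc_R(\CM)\neq 0$. The guiding observation is that on any $\CR_\gp$-module every element of $\gp$ acts as $0$, while every element of $S_\gp$ acts invertibly (as a unit of $k(\gp)$); note also $\CR_\gp\neq 0$ since $\gp\in{\rm ZL}(R)$, by Lemma \ref{a3Aug23}(2).

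\emph{Part 2, forward implication.} Given such an $M$: then $\gp M=0$, so $M$ is an $R/R\gp$-module, and Lemma \ref{b1Aug23} gives $\ann_{Z(R)}(m)=\gp$ for every nonzero $m\in M$, so $M$ is $S_\gp$-torsion-free. Hence $M$ embeds in $M_\gp$, which is a nonzero $\CR_\gp$-module that is \emph{cyclic} over $\CR_\gp$ (generated by $m_0/1$ for any $0\neq m_0\in M$, since $M=Rm_0$). I would then choose a maximal $\CR_\gp$-submodule $\CN\subsetneq M_\gp$ and set $\CM:=M_\gp/\CN$, a simple $\CR_\gp$-module; the image of $M$ under $M\hookrightarrow M_\gp\twoheadrightarrow\CM$ is $\cong M/(M\cap\CN)$, and it is nonzero because $M\subseteq\CN$ would force $m_0/1\in\CN$ and hence $M_\gp\subseteq\CN$, contradicting properness of $\CN$. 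As $M$ is simple, this image is a nonzero simple $R$-submodule of $\CM$, so $\soc_R(\CM)\neq 0$.

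\emph{Part 2, converse.} Given a simple $\CR_\gp$-module $\CM$ with $\soc_R(\CM)\neq 0$, I would take a simple $R$-submodule $M\subseteq\soc_R(\CM)$. By the guiding observation, $\gp$ annihilates $\CM$ and $S_\gp$ acts invertibly on $\CM$, hence on $M$; therefore $\gp\subseteq\ann_{Z(R)}(M)$ and $\ann_{Z(R)}(M)\cap S_\gp=\emptyset$, which forces $\ann_{Z(R)}(M)=\gp$. Thus $M$ is a simple $R$-module with $\ann_{Z(R)}(M)=\gp$, i.e.\ $\gp\in{\rm PL}(R)$.

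\emph{Main obstacle.} I do not expect a genuine obstacle; the content is a careful bookkeeping of which central elements act invertibly and which act as zero. The one place needing care is the non-triviality of the image of $M$ in $\CM$ in the forward implication, which is exactly where cyclicity of $M_\gp$ over $\CR_\gp$ is used (a proper submodule must omit a generator), and where Lemma \ref{b1Aug23} is indispensable, since that is what makes $M$ $S_\gp$-torsion-free and hence embed into its localization.
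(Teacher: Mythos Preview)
Your proof is correct and follows essentially the same strategy as the paper: Part~1 is identical (both read off $\ker(\phi_\gp)=\tor_{S_\gp}(R/R\gp)$), and in Part~2 both directions hinge on Lemma~\ref{b1Aug23} to identify $\ann_{Z(R)}$ pointwise and hence control $S_\gp$-torsion.

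The one difference worth noting is in the forward direction of Part~2. You pass from $M_\gp$ to a simple quotient $\CM=M_\gp/\CN$ via a maximal submodule, then verify the image of $M$ survives. The paper instead observes that $M_\gp$ is \emph{already} simple as a $k(\gp)\otimes_{Z(R)}R/R\gp$-module: any nonzero $\CR_\gp$-submodule $N\subseteq M_\gp$ contains some $m/s$ with $m\neq 0$, hence contains $m=s\cdot(m/s)$, hence contains $M$ (by simplicity of $M$), hence equals $M_\gp$. This lets the paper take $\CM=M_\gp$ directly, with $M\subseteq\soc_R(M_\gp)$ immediate. Your route is perfectly valid (cyclicity guarantees a maximal submodule via Zorn), just slightly longer. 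Conversely, in the reverse direction your choice of a single simple $R$-submodule of $\soc_R(\CM)$ is a touch cleaner than the paper's argument that the entire socle $\soc_R(\CM)$ is itself simple (which the paper deduces from $\soc_R(\CM)_\gp=\CM$ together with the fact that localization respects direct sums). Net effect: both proofs are equivalent in substance.
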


\begin{proof} 1. Statement 1 follows from the equality
$\ker (\phi_\gp)=\tor_{S_\gp}(R/R\gp)$.

2. $(\Rightarrow)$ Suppose that  $\gp \in {\rm PL}(R)$. Then there is a simple $R$-module $M$  such that $\gp=\ann_{Z(R)}(M)$. The module $R$-module $M$ is simple.  Hence, $\ann_{Z(R)}(m)=\ann_{Z(R)}(M)=\gp$ for all nonzero elements $m\in M$ (Lemma \ref{b1Aug23}). Then the $k(\gp)\t_{Z(A)}R/\gp$-module $M_\gp$ is a nonzero module and a  simple module with $\soc_R(M_\gp)=M$  (since the simple  $R$-module $M$ is an essential $R$-submodule of $M_\gp$). 

$(\Leftarrow )$ Suppose that $\CM$ is a simple 
 $k(\gp)\t_{Z(A)}R/\gp$-module $\CM$ with 
$M:=\soc_R(\CM)\neq 0$. The $R$-module $\CM$ is $S_\gp$-torsion free and so is its submodule $M$. 
 Then the $R$-module $M$ is simple (since the $k(\gp)\t_{Z(A)}R/\gp$-module $M_\gp =\CM$ is simple and  localizations  respect the direct sums). 
 Since $\CM$ is an $k(\gp)$-module, $\ann_{Z(R)}(\CM)=\gp$. Since $\CM$ is a simple  $k(\gp)\t_{Z(A)}R/\gp$-module, $\ann_{Z(R)}(m)=\gp$  for all nonzero elements $m\in \CM$ (Lemma \ref{b1Aug23}),  $ \gp=\ann_{Z(R)}(M)\in {\rm PL}(R)$.
 \end{proof}

 Corollary \ref{A30Jul23} is a criterion for a ring to be a PLM-ring.

\begin{corollary}\label{A30Jul23}
The ring $R$ is a PLM-ring  iff $\soc_R(\CM)=0$ for all simple $k(\gp)\t_{Z(R)}R/R\gp$-modules $\CM$ and all $\gp \in {\rm ZL}(R)\backslash \Max (Z(R))$.
\end{corollary}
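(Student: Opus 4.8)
The plan is to deduce this immediately from Lemma \ref{a1Aug23}.(2) together with the definition of a PLM-ring. Recall the decomposition $\Spec(Z(R))\supseteq {\rm ZL}(R)\supseteq {\rm PL}(R)$, where ${\rm PL}(R)$ is the image of the primitive restriction map. By definition, $R$ is a PLM-ring precisely when ${\rm PL}(R)\subseteq \Max(Z(R))$, equivalently when ${\rm PL}(R)\cap \big({\rm ZL}(R)\backslash \Max(Z(R))\big)=\emptyset$ (since ${\rm PL}(R)\subseteq {\rm ZL}(R)$ always). So the statement to prove is: ${\rm PL}(R)$ contains no element of ${\rm ZL}(R)\backslash\Max(Z(R))$ if and only if, for every $\gp\in{\rm ZL}(R)\backslash\Max(Z(R))$ and every simple $k(\gp)\t_{Z(R)}R/R\gp$-module $\CM$, one has $\soc_R(\CM)=0$.

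First I would fix $\gp\in {\rm ZL}(R)\backslash\Max(Z(R))$ and apply Lemma \ref{a1Aug23}.(2) verbatim: $\gp\in{\rm PL}(R)$ if and only if there exists a simple $k(\gp)\t_{Z(R)}R/R\gp$-module $\CM$ with $\soc_R(\CM)\neq 0$. Negating both sides: $\gp\notin{\rm PL}(R)$ if and only if $\soc_R(\CM)=0$ for \emph{all} simple $k(\gp)\t_{Z(R)}R/R\gp$-modules $\CM$. (Here I am using that $k(\gp)\t_{Z(A)}R/\gp$ and $k(\gp)\t_{Z(R)}R/R\gp$ are the same ring under the standing conventions, as in (\ref{vgpRRp}) — just a matter of notation, since $Z(R)=Z(A)$ in this section and $R/R\gp\cong R/\gp\cdot R$.) Then I would quantify over $\gp$: ${\rm PL}(R)\cap\big({\rm ZL}(R)\backslash\Max(Z(R))\big)=\emptyset$ if and only if for every such $\gp$ the negated condition holds, which is exactly the asserted condition ``$\soc_R(\CM)=0$ for all simple $k(\gp)\t_{Z(R)}R/R\gp$-modules $\CM$ and all $\gp\in{\rm ZL}(R)\backslash\Max(Z(R))$.''

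Finally I would connect this to the PLM property: since ${\rm PL}(R)\subseteq {\rm ZL}(R)\subseteq \Spec(Z(R))$, the inclusion ${\rm PL}(R)\subseteq\Max(Z(R))$ is equivalent to ${\rm PL}(R)\subseteq {\rm ZL}(R)\cap\Max(Z(R))$, i.e. ${\rm PL}(R)$ meets ${\rm ZL}(R)\backslash\Max(Z(R))$ trivially. Combining with the previous paragraph gives the corollary. There is no real obstacle here — the content is entirely carried by Lemma \ref{a1Aug23}.(2); the only thing requiring a line of care is that $\soc_R(\CM)$ is always $0$ when $\CM=\{0\}$, so the ``vacuous'' case $k(\gp)\t_{Z(R)}R/R\gp=\{0\}$ (which by Lemma \ref{a3Aug23}.(2) does not occur for $\gp\in{\rm ZL}(R)$ anyway) causes no trouble, and the quantifier ``for all simple modules $\CM$'' is also satisfied vacuously there.
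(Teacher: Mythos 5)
Your argument is correct and follows exactly the same route as the paper: Corollary \ref{A30Jul23} is obtained by negating Lemma \ref{a1Aug23}.(2) and quantifying over $\gp\in{\rm ZL}(R)\backslash\Max(Z(R))$, using only the definition of a PLM-ring. The extra remarks about the vacuous case and the $R/\gp$ versus $R/R\gp$ notation are harmless clarifications and do not change the substance.
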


\begin{proof} The corollary follows from Lemma \ref{a1Aug23}.(2).
\end{proof}

 Theorem \ref{31Jul23}.(2) is a criterion for a  prime idea $\gp \in {\rm ZL}(R)$ to belong to the set ${\rm PL}(R)$.

\begin{theorem}\label{31Jul23}
Suppose that  $\gp \in {\rm ZL}(R)$,  the map $\phi_\gp$ is an injection,  and the ring $k(\gp)\t_{Z(A)}R/\gp$ is a simple  ring. Then
\begin{enumerate}
\item   $\Spec (R,\gp)=\{ R\gp\}$.


\item In addition, suppose that  the ring $k(\gp)\t_{Z(A)}R/\gp$ is an artinian ring.  Then the following statements are equivalent:

\begin{enumerate}
\item $\gp \in {\rm PL}(R)$.
\item    The map $\phi_\gp$ is a bijection.
\item  $\soc_R(R/R\gp)\neq 0$.
\end{enumerate}

\end{enumerate}
\end{theorem}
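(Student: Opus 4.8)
The plan is to work throughout with the localization map $\phi_\gp : R/R\gp \ra (R/R\gp)_\gp \simeq k(\gp)\t_{Z(A)}R/\gp$ and to transfer ideal-theoretic and socle-theoretic information across it. For part (1), first I would note that prime ideals of $R$ containing $R\gp$ and having contraction exactly $\gp$ to $Z(R)$ correspond, under $\phi_\gp$ (which is injective by hypothesis), to prime ideals of the localization $k(\gp)\t_{Z(A)}R/\gp$ whose contraction to $k(\gp)$ is the zero ideal; but since that ring is assumed \emph{simple}, its only proper ideal is $0$. Concretely: if $P\in \Spec(R,\gp)$, then $P\supseteq R\gp$ since $\gp = P\cap Z(R)$ generates $R\gp\subseteq P$, and the elements of $S_\gp = Z(R)\backslash\gp$ are regular modulo $P$ (as $P\cap Z(R)=\gp$), so $P$ extends to a proper ideal $P_\gp$ of $k(\gp)\t_{Z(A)}R/\gp$; simplicity forces $P_\gp = 0$, and then injectivity of $\phi_\gp$ gives $P = \phi_\gp^{-1}(0) = R\gp$. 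Hence $\Spec(R,\gp)=\{R\gp\}$. (One should also check $R\gp$ is genuinely prime: an ideal strictly between $R\gp$ and $R$ would, after localization, give a proper nonzero ideal of the simple ring, a contradiction, so $R/R\gp$ embeds in a simple ring and $R\gp$ is prime.)

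For part (2), assume in addition that $k(\gp)\t_{Z(A)}R/\gp$ is artinian, hence (being simple artinian) isomorphic to a matrix ring over a division ring; in particular it has a unique simple module $\CM$ up to isomorphism, and $\soc$ of any nonzero module over it is the whole module. I would prove the cycle $(a)\Rightarrow(c)\Rightarrow(b)\Rightarrow(a)$, or argue each equivalence through Lemma \ref{a1Aug23}.(2). For $(a)\Rightarrow(c)$: if $\gp\in{\rm PL}(R)$, Lemma \ref{a1Aug23}.(2) produces a simple $k(\gp)\t_{Z(A)}R/\gp$-module with nonzero $R$-socle; but by simple-artinian-ness that module is (a localization of) $R/R\gp$ up to the relevant identifications — more carefully, since $R/R\gp$ maps into the simple artinian ring and that ring, as a left module over itself, is a finite direct sum of copies of $\CM$, a nonzero $R$-socle inside $\CM$ pulls back to a nonzero $R$-socle inside $R/R\gp$. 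For $(c)\Rightarrow(b)$: if $\soc_R(R/R\gp)\neq 0$, pick a simple $R$-submodule $N\subseteq R/R\gp$; since $\phi_\gp$ is injective, $R/R\gp$ is $S_\gp$-torsion-free, so $N$ is $S_\gp$-torsion-free and $N_\gp$ is a nonzero $k(\gp)\t_{Z(A)}R/\gp$-submodule of $(R/R\gp)_\gp$; by artinian-ness $(R/R\gp)_\gp$ has finite length, and I would argue that the natural inclusion $R/R\gp \hookrightarrow (R/R\gp)_\gp$ is an equality by a length/essential-extension count — the socle $N$ already "sees" a full simple summand, and then use that $R/R\gp$ is an essential $R$-submodule of its localization together with the finite length to force surjectivity of $\phi_\gp$. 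For $(b)\Rightarrow(a)$: if $\phi_\gp$ is a bijection then $R/R\gp \simeq k(\gp)\t_{Z(A)}R/\gp$ is simple artinian, so it has a simple module $M$ which is then a simple $R/R\gp$-module, hence a simple $R$-module, with $\ann_{Z(R)}(M)=\gp$ because $M$ is a $k(\gp)$-vector space; thus $\gp\in{\rm PL}(R)$.

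The main obstacle I expect is the step $(c)\Rightarrow(b)$, i.e. upgrading "nonzero $R$-socle" to "the localization map is onto". The delicate point is that a priori $R/R\gp$ could sit as a proper essential $R$-submodule of the finite-length module $(R/R\gp)_\gp$; I need the simple artinian structure to rule this out. The clean way is: $(R/R\gp)_\gp$ is a semisimple module over $k(\gp)\t_{Z(A)}R/\gp$ (matrix ring over a division ring), say $(R/R\gp)_\gp \simeq \CM^{\oplus t}$; restricting scalars to $R$, a nonzero $R$-socle in $R/R\gp$ must be $S_\gp$-torsion-free and therefore, after localizing, account for a full copy of $\CM$, and since localization is exact and $R/R\gp$ is $S_\gp$-torsion-free the image $\phi_\gp(R/R\gp)$ is an $R_\gp$-submodule of $\CM^{\oplus t}$ that already contains $\CM$; but $\phi_\gp(R/R\gp)$ is in fact a left ideal of the ring $k(\gp)\t_{Z(A)}R/\gp$ (it is the image of a ring homomorphism, hence a subring containing $1$), and the only left ideal containing a simple summand and the identity is the whole ring. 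Wrapping up: $\phi_\gp(R/R\gp) = k(\gp)\t_{Z(A)}R/\gp$, so $\phi_\gp$ is surjective, and being injective by hypothesis it is a bijection, giving $(b)$. I would spell this last paragraph out carefully since it is where all the hypotheses (injectivity of $\phi_\gp$, simplicity, artinian-ness) are used simultaneously.
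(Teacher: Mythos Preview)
Your argument for part (1) is essentially the paper's. For part (2) you chose the cycle $(a)\Rightarrow(c)\Rightarrow(b)\Rightarrow(a)$, whereas the paper does $(a)\Rightarrow(b)\Rightarrow(c)\Rightarrow(a)$; your $(b)\Rightarrow(a)$ is fine, but there is a genuine gap in your $(c)\Rightarrow(b)$, and your $(a)\Rightarrow(c)$ is too vague at the ``pulls back'' step.

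The error in $(c)\Rightarrow(b)$: you assert that $\phi_\gp(R/R\gp)$ is an $R_\gp$-submodule (equivalently, a left ideal) of $k(\gp)\t_{Z(A)}R/\gp$. It is not --- it is only a subring containing $1$. In fact, the image being an $R_\gp$-submodule is \emph{equivalent} to $\phi_\gp$ being surjective: for $a\in R/R\gp$ and $s\in S_\gp$, having $s^{-1}\phi_\gp(a)\in\phi_\gp(R/R\gp)$ means $a\in s\cdot(R/R\gp)$, so closure under $S_\gp^{-1}$ says exactly that each $s$ acts surjectively on $R/R\gp$. So your final paragraph assumes what is to be proved.

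The missing idea, which the paper uses, is this: if $N$ is a simple $R$-submodule of $R/R\gp$, then $N$ is $S_\gp$-torsion-free (since $R/R\gp$ is), so for each $s\in S_\gp$ the map $s\cdot:N\to N$ is injective; but $sN$ is a nonzero $R$-submodule of the simple module $N$, hence $sN=N$. Thus every $s\in S_\gp$ acts \emph{bijectively} on $N$, i.e.\ $N=N_\gp$. Now $N$ is already a $k(\gp)\t_{Z(A)}R/\gp$-module, and it is simple over that ring (any submodule is an $R$-submodule), so $N\simeq\CM$. Hence $\CM$ itself is $R$-simple. Since $R/R\gp$ is essential in $\CM^{\oplus t}$ and each $\CM_i$ is $R$-simple, the nonzero intersection $R/R\gp\cap\CM_i$ must equal $\CM_i$, giving $\CM^{\oplus t}\subseteq R/R\gp$, i.e.\ $\phi_\gp$ is a bijection. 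This argument works equally well starting from $(a)$ (take for $N$ a simple $R$-module $M$ with $\ann_{Z(R)}(M)=\gp$; Lemma~\ref{b1Aug23} gives $M=M_\gp$) or from $(c)$; the paper places it in $(a)\Rightarrow(b)$ and then handles $(c)\Rightarrow(a)$ by the one-line observation that any simple $R$-submodule of $R/R\gp$ has central annihilator exactly $\gp$.
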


\begin{proof} By Lemma \ref{a1Aug23}, the homomorphism $\phi_\gp$ is an injection  iff $\tor_{S_\gp}(R/R\gp)=0$. In particular, the ring $R/R\gp$ can be seen as a subring of $\Big(R/R\gp\Big)_\gp$.

1. Suppose that $\gp \in {\rm ZL}(R)$ and  $P\in \Spec (R,\gp)$. 
 Then  $P\cap Z(R)=\gp$, and so $R\gp \subseteq P$ and $Z(R)_\gp\neq \gp_\gp = \Big( P\cap Z(R)\Big)_\gp= P_\gp\cap Z(R)_\gp$. Therefore, $$P_\gp \neq R_\gp.$$ Suppose that $R\gp \neq P$, we seek a contradiction. Since $0\neq P/R\gp\subseteq R/R\gp\subseteq  \Big(R/R\gp\Big)_\gp$ and  $\tor_{S_\gp}(R/R\gp)=0$, 
 $$ 0\neq \Big(P/R\gp \Big)_\gp\simeq  P_\gp/R_\gp \gp\subseteq R_\gp/R_\gp \gp\simeq \Big(R/R\gp\Big)_\gp, $$
i.e. $ P_\gp/R_\gp \gp$ is a  nonzero ideal of the simple ring $\Big(R/R\gp\Big)_\gp$. Hence, 
$ P_\gp/R_\gp \gp=R_\gp /R_\gp\gp$, and so $P_\gp=R_\gp$, a contradiction. Therefore, $P=R\gp$, as required.

2. $(a)\Rightarrow (b)$  Suppose that $\gp \in {\rm PL}(R)$. The ring $k(\gp)\t_{Z(A)}R/\gp$ is a simple artinian ring.  Let $\CM$ be a unique (up to isomorphism) simple $k(\gp)\t_{Z(A)}R/\gp$-module. Then $k(\gp)\t_{Z(A)}R/\gp$-module $k(\gp)\t_{Z(A)}R/\gp$ is isomorphic to $\CM^n$ for some natural number $n\geq 1$. Let $k(\gp)\t_{Z(A)}R/\gp=\bigoplus_{i=1}^n\CM_i$ be a direct sum of simple $k(\gp)\t_{Z(A)}R/\gp$-submodules and each of them is isomorphic to $\CM$.
Since the map $\phi_\gp$ is an injection, the $R$-submodule $R/R\gp$ of $k(\gp)\t_{Z(A)}R/\gp$ is an essential submodule. Then for each $i=1, \ldots , n$, the intersection $M_i:=R/R\gp\cap \CM_i$ is a simple $R$-module and the direct sum $N:=\bigoplus_{i=1}^nM_i$ is an essential $R$-submodule of  $k(\gp)\t_{Z(A)}R/\gp$. Since $N\subseteq R/R\gp$, the $R$-module $ N$ is also an essential $R$-submodule of the $R$-modules $R/R\gp$.  Hence, $N=\soc_R(R/R\gp)\neq 0$.

For all $i,j=1, \ldots , n$, the simple $k(\gp)\t_{Z(A)}R/\gp$-modules $\Big(M_i\Big)_\gp\simeq \CM_i$ and $\Big(M_j\Big)_\gp\simeq \CM_j$ are isomorphic. Hence, their $R$-socles are  isomorphic as $R$-modules, i.e. $$M_i=\soc_R(\CM_i)\simeq \soc_R(\CM_j)=M_j.$$
By Lemma \ref{b1Aug23}, for every nonzero element of  $k(\gp)\t_{Z(A)}R/\gp\simeq \CM^n$, $\ann_{Z(R)}(m)=\ann_{Z(R)}(\CM)=\gp$. Therefore, $\ann_{Z(R)}(M_i)=\gp$ for all $i=1,\ldots , n$. The $R$-modules $M_i$ are simple with  $\ann_{Z(R)}(M_i)=\gp$. Hence, every element $s\in S_\gp$ acts as bijection on $M_i$, i.e. 
$$M_i=\Big(M_i\Big)_\gp=\CM_i\;\; (i=1, \ldots , n)\;\;{\rm and}\;\; R/R\gp = k(\gp)\t_{Z(A)}R/\gp,$$
i.e. the map $\phi_\gp$ is the identity map.

$(b)\Rightarrow (c)$ Suppose that the map $\phi_\gp$ is a bijection, i.e. $R/R\gp = k(\gp)\t_{Z(A)}R/\gp$. Then $$\soc_R(R/R\gp)=\soc_R(k(\gp)\t_{Z(A)}R/\gp)=\soc_{k(\gp)\t_{Z(A)}R/\gp}(k(\gp)\t_{Z(A)}R/\gp)=k(\gp)\t_{Z(A)}R/\gp\neq 0$$
since the ring $k(\gp)\t_{Z(A)}R/\gp$ is a simple artinian ring.

$(c)\Rightarrow (a)$ Suppose that  $\soc_R(R/R\gp)\neq 0$. Clearly, the socle $\soc_R(R/R\gp)$ is a nonzero ideal of the algebra $R/R\gp$. Since the map $\phi_\gp$ is a monomorphism, 
$R/R\gp\subseteq k(\gp)\t_{Z(A)}R/\gp$ and $\tor_{S_\gp}(R/R\gp)=0$ (Lemma \ref{a1Aug23}.(1)). Then $\soc_R(R/R\gp)_\gp$ is a nonzro ideal of the simple algebra $k(\gp)\t_{Z(A)}R/\gp$. Therefore,
$$k(\gp)\t_{Z(A)}R/\gp=\soc_R(R/R\gp)_\gp.$$
By Lemma \ref{b1Aug23}, $\gp=\ann_{Z(R)}(k(\gp)\t_{Z(A)}R/\gp)=\ann_{Z(R)}(M)$ for any simple $R$-submodule $M$ of $k(\gp)\t_{Z(A)}R/\gp$, i.e. $\gp \in {\rm PL}(R)$.
\end{proof} 

For a class of rings, Corollary \ref{c1Aug23} describe the sets ${\rm PL}(R)$ and $\Prim (R)$.

\begin{corollary}\label{c1Aug23}
Suppose that for each  $\gp \in {\rm ZL}(R)$,  the map $\phi_\gp$ is an injection   and the ring $k(\gp)\t_{Z(A)}R/\gp$ is a simple artinian ring. Then

\begin{enumerate}
\item ${\rm PL}(R)=\{ \gp \in {\rm ZL}(R)\, | \, $
 the map $\phi_\gp$ is a bijection$\}= \{ \gp \in {\rm ZL}(R)\, | \, \soc_R(R/R\gp)\neq 0\}$.
\item   For all $\gp\in {\rm PL}(R)$, $\Prim (R,\gp)=\{ R\gp\}$.
\item $\Prim (R)=\{ R\gp\, | \, \gp\in {\rm PL}(R)\}$.

\end{enumerate}
\end{corollary}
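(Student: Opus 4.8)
The plan is to deduce Corollary~\ref{c1Aug23} directly from Theorem~\ref{31Jul23} together with the description of $\Prim(R)$ in terms of the central and primitive loci. The hypotheses say precisely that every $\gp\in{\rm ZL}(R)$ satisfies the running assumptions of Theorem~\ref{31Jul23}.(2): the map $\phi_\gp$ is injective and the ring $k(\gp)\t_{Z(A)}R/\gp$ is simple artinian. Hence for each such $\gp$ the three conditions (a)--(c) of Theorem~\ref{31Jul23}.(2) are equivalent, namely $\gp\in{\rm PL}(R)$ iff $\phi_\gp$ is a bijection iff $\soc_R(R/R\gp)\neq 0$. Taking the union over all $\gp\in{\rm ZL}(R)$ gives statement~1 of the corollary verbatim, since by definition ${\rm PL}(R)\subseteq{\rm ZL}(R)$.

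For statement~2, fix $\gp\in{\rm PL}(R)$. By statement~1 the map $\phi_\gp$ is an injection, so Theorem~\ref{31Jul23}.(1) applies and yields $\Spec(R,\gp)=\{R\gp\}$. Since $\Prim(R,\gp)\subseteq\Spec(R,\gp)$ and $\gp\in{\rm PL}(R)$ forces $\Prim(R,\gp)\neq\varnothing$, we get $\Prim(R,\gp)=\{R\gp\}$. (Here one uses that $R\gp$ is genuinely primitive: since $\gp\in{\rm PL}(R)$, there is a simple $R$-module $M$ with $\ann_{Z(R)}(M)=\gp$, and the proof of Theorem~\ref{31Jul23}.(2), $(a)\Rightarrow(b)$, shows $R/R\gp=k(\gp)\t_{Z(A)}R/\gp$, which is simple artinian, so $R\gp$ is the annihilator of its unique simple module.)

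For statement~3, recall the disjoint-union decomposition from the introduction, $\Prim(R)=\coprod_{\gp\in{\rm PL}(R)}\Prim(R,\gp)$. Substituting the result of statement~2 into each fibre gives $\Prim(R)=\{R\gp\mid\gp\in{\rm PL}(R)\}$, as claimed.

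I do not anticipate a genuine obstacle here: the corollary is essentially a packaging of Theorem~\ref{31Jul23} with the partition of $\Prim(R)$ over ${\rm PL}(R)$. The one point requiring a little care is making explicit, in the proof of statement~2, that when $\gp\in{\rm PL}(R)$ the ideal $R\gp$ is itself primitive (so that the fibre $\Prim(R,\gp)$ is nonempty and equals $\{R\gp\}$ rather than being empty); this is exactly what the equivalence $(a)\Leftrightarrow(b)$ in Theorem~\ref{31Jul23}.(2) delivers, since $\phi_\gp$ being a bijection identifies $R/R\gp$ with the simple artinian ring $k(\gp)\t_{Z(A)}R/\gp$, whose unique simple module has annihilator $R\gp$ in $R$.
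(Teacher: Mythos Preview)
Your proof is correct and follows essentially the same approach as the paper: statement~1 from Theorem~\ref{31Jul23}.(2), statement~2 from Theorem~\ref{31Jul23}.(1), and statement~3 from statement~2 via the partition of $\Prim(R)$ over ${\rm PL}(R)$. Your parenthetical in statement~2 justifying that $R\gp$ is primitive is correct but unnecessary: once $\gp\in{\rm PL}(R)$, by definition $\Prim(R,\gp)\neq\varnothing$, and since $\Prim(R,\gp)\subseteq\Spec(R,\gp)=\{R\gp\}$, the conclusion $\Prim(R,\gp)=\{R\gp\}$ follows immediately.
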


\begin{proof} 1. Statement 1 follows from Theorem \ref{31Jul23}.(2).

2. Statement 2 follows from Theorem \ref{31Jul23}.(1). 

3. Statement 3 follows from statement 2.
\end{proof}

 Theorem \ref{A1Aug23} is a criterion for a ring to be a PLM-ring which is given in terms of the homomorphisms $\phi_\gp$.

\begin{theorem}\label{A1Aug23}
Suppose that for every $\gp \in {\rm ZL}(R)\backslash \Max (Z(R))$,  the map $\phi_\gp$ is an injection and the ring $k(\gp)\t_{Z(A)}R/\gp$ is a simple artinian ring. Then the following statements are equivalent:

\begin{enumerate}
\item The ring $R$ is PLM-ring.
\item    The map $\phi_\gp$ is not a surjection  for all $\gp \in {\rm ZL}(R)\backslash \Max (Z(R))$.
\item  $\soc_R(R/R\gp)=0$ for all $\gp \in {\rm ZL}(R)\backslash \Max (Z(R))$.
\end{enumerate}
\end{theorem}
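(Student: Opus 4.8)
The plan is to prove Theorem \ref{A1Aug23} by a direct combination of the machinery already assembled in Lemma \ref{a1Aug23}, Corollary \ref{A30Jul23}, and Theorem \ref{31Jul23}. The hypothesis is exactly the one required to apply those results to every $\gp$ in the ``dangerous'' range ${\rm ZL}(R)\backslash \Max (Z(R))$: for each such $\gp$ the map $\phi_\gp$ is injective and $k(\gp)\t_{Z(A)}R/\gp$ is simple artinian. So the three conditions (1)--(3) are really three phrasings of the same dichotomy, and the proof is a short cycle of implications.

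First I would show $(1)\Leftrightarrow(3)$. This is essentially Corollary \ref{A30Jul23}, which states that $R$ is a PLM-ring iff $\soc_R(\CM)=0$ for all simple $k(\gp)\t_{Z(R)}R/R\gp$-modules $\CM$ and all $\gp \in {\rm ZL}(R)\backslash \Max (Z(R))$. Under the present hypothesis each such ring $k(\gp)\t_{Z(R)}R/R\gp$ is simple artinian, hence a finite direct sum of copies of its unique simple module $\CM$, so $\soc_R(\CM)=0$ for that $\CM$ if and only if $\soc_R(k(\gp)\t_{Z(R)}R/R\gp)=0$; and by injectivity of $\phi_\gp$ the latter $R$-module contains $R/R\gp$ as an essential submodule (this is the point used repeatedly in the proof of Theorem \ref{31Jul23}), so $\soc_R(k(\gp)\t_{Z(R)}R/R\gp)=0$ iff $\soc_R(R/R\gp)=0$. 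Hence $(1)\Leftrightarrow(3)$.

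Next I would handle $(2)\Leftrightarrow(3)$ using Theorem \ref{31Jul23}.(2): for a fixed $\gp \in {\rm ZL}(R)$ with $\phi_\gp$ injective and $k(\gp)\t_{Z(A)}R/\gp$ simple artinian, the three conditions ``$\gp\in{\rm PL}(R)$'', ``$\phi_\gp$ is a bijection'', ``$\soc_R(R/R\gp)\neq 0$'' are equivalent. In particular ``$\phi_\gp$ is a bijection'' $\Leftrightarrow$ ``$\soc_R(R/R\gp)\neq 0$'', and since $\phi_\gp$ is always injective here, ``$\phi_\gp$ is a bijection'' is the same as ``$\phi_\gp$ is surjective''. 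Negating and quantifying over all $\gp \in {\rm ZL}(R)\backslash \Max (Z(R))$ gives exactly $(2)\Leftrightarrow(3)$. Combining with the previous paragraph closes the cycle $(1)\Leftrightarrow(2)\Leftrightarrow(3)$.

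The main subtlety — and the only place where a little care is needed rather than pure citation — is the reduction in the first paragraph from ``$\soc_R(\CM)=0$ for the simple module $\CM$'' to ``$\soc_R(R/R\gp)=0$''. One must note that localization at $S_\gp$ commutes with direct sums and with taking socles of $S_\gp$-torsion-free modules, that $R/R\gp$ is $S_\gp$-torsion-free precisely because $\phi_\gp$ is injective (Lemma \ref{a1Aug23}.(1)), and that the $R$-socle of $k(\gp)\t_{Z(R)}R/R\gp\simeq\CM^n$ is $\soc_R(\CM)^n$, so it vanishes iff $\soc_R(\CM)$ does, and its intersection with the essential submodule $R/R\gp$ is $\soc_R(R/R\gp)$. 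I expect this bookkeeping about essential extensions and socles — already carried out inside the proof of Theorem \ref{31Jul23} — to be the only nontrivial ingredient; everything else is formal.
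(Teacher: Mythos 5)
Your proof is correct, and the main engine is the same as the paper's: Theorem~\ref{31Jul23}.(2). The paper's entire proof is a one-line citation of that theorem, because for each fixed $\gp$ in the ``dangerous'' range its three conditions ``$\gp\in{\rm PL}(R)$'', ``$\phi_\gp$ is a bijection'', ``$\soc_R(R/R\gp)\neq 0$'' line up exactly with the negations of (1), (2), (3) once you quantify over $\gp\in{\rm ZL}(R)\backslash\Max(Z(R))$ and note that injectivity turns ``not a bijection'' into ``not a surjection''. The one place your route is longer than necessary is $(1)\Leftrightarrow(3)$: you go through Corollary~\ref{A30Jul23} (which is phrased in terms of $\soc_R(\CM)$ for the simple $k(\gp)\t_{Z(R)}R/R\gp$-module $\CM$) and then do the socle bookkeeping to translate $\soc_R(\CM)=0$ into $\soc_R(R/R\gp)=0$ via the essential-extension argument. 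That bookkeeping is correct — the socle of $\CM^n$ is $\soc_R(\CM)^n$, and $R/R\gp$ is essential in its localization by $S_\gp$-torsion-freeness (Lemma~\ref{a1Aug23}.(1)) — but it is exactly the work already packaged inside the $(a)\Leftrightarrow(c)$ equivalence of Theorem~\ref{31Jul23}.(2), so citing that directly, as you do for $(2)\Leftrightarrow(3)$, would have closed the whole cycle in one step.
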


\begin{proof} The theorem follows from Theorem \ref{31Jul23}.(2).
\end{proof}
For a ring $T$,  we denote by $T-{\rm Mod}$ the category of left $T$-modules. 
Suppose that  $S$ is a subring of $ T$. Then the ring $T$ is called a {\bf faithfully flat extension} of $S$ if the functor $T\t_S -: S-{\rm Mod}\ra T-{\rm Mod}$, $M\mapsto T\t_SM$ is an exact functor.

\begin{lemma}\label{a2Aug23}
\begin{enumerate}
\item If  the ring $R$ is a faithfully flat extension of the centre $Z(R)$ 
 then $ \Max (Z(R))\subseteq {\rm PL}(R)$.
\item If  the ring $R$ is a free $Z(R)$-module 
 then $ \Max (Z(R))\subseteq {\rm PL}(R)$.
\end{enumerate}

\end{lemma}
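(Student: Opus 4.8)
The plan is to prove statement (1) first and then deduce statement (2), since a free module is in particular faithfully flat. Fix a maximal ideal $\gm \in \Max (Z(R))$. We must exhibit a simple left $R$-module $M$ with $\ann_{Z(R)}(M) = \gm$; equivalently, by the definition of ${\rm PL}(R)$, we must show that $\gm$ lies in the image of the primitive restriction map. First I would observe that $\gm \in {\rm ZL}(R)$: indeed, by Lemma \ref{a3Aug23}.(2) it suffices to check that $k(\gm)\t_{Z(R)}R/R\gm \neq 0$, and since $k(\gm) = Z(R)/\gm$ (as $\gm$ is maximal) this ring is just $R/R\gm$, which is nonzero because $1 \notin R\gm$. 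For the last point I would use faithful flatness: if $R\gm = R$ then applying $R\t_{Z(R)} -$ to the proper inclusion $\gm \subsetneq Z(R)$ would give $R\gm = R\t_{Z(R)}\gm \hookrightarrow R$ a proper $R$-submodule, contradiction. (Alternatively, if $R$ is free over $Z(R)$ with basis $\{b_\lambda\}$, then $R\gm = \bigoplus_\lambda \gm b_\lambda \subsetneq \bigoplus_\lambda Z(R) b_\lambda = R$ directly.)

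Next I would locate a simple $R$-module over $\gm$ by a maximality argument. Consider the nonzero ring $\bar R := R/R\gm$; it is a $Z(R)/\gm = k(\gm)$-algebra, hence a $k(\gm)$-vector space. The key finiteness input I would want is that $\bar R$ is a \emph{finitely generated} $k(\gm)$-module — this holds when $R$ is a finitely generated $Z(R)$-module, which in the applications ($\mA, A_1, \CA, \CB$) is false in general, so instead I would argue more carefully: it is enough to know that $\bar R$ has a maximal left ideal, which is automatic for any nonzero unital ring by Zorn's lemma. So pick a maximal left ideal $\gn$ of $\bar R$ and set $M := \bar R/\gn$, a simple left $\bar R$-module, hence a simple left $R$-module annihilated by $R\gm$. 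Then $\gm \subseteq \ann_{Z(R)}(M)$, and since $\ann_{Z(R)}(M)$ is a proper ideal of $Z(R)$ containing the maximal ideal $\gm$, we get $\ann_{Z(R)}(M) = \gm$. Therefore $\gm \in {\rm PL}(R)$, proving (1).

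For statement (2), a free $Z(R)$-module is faithfully flat over $Z(R)$: flatness is standard (a direct sum of copies of $Z(R)$ is flat), and faithfulness follows because for any nonzero $Z(R)$-module $N$ and any basis $\{b_\lambda\}$ of $R$ one has $R\t_{Z(R)}N \cong \bigoplus_\lambda N \neq 0$. Hence (2) is immediate from (1).

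The only delicate point in the argument is making sure the annihilator computation $\ann_{Z(R)}(M) = \gm$ is airtight — that is, that the containment $\gm \subseteq \ann_{Z(R)}(M)$ cannot be strict. This is where maximality of $\gm$ in $Z(R)$ does all the work: $\ann_{Z(R)}(M)$ is a proper ideal (it cannot be all of $Z(R)$ since $1\cdot M \neq 0$), so being a proper ideal containing a maximal ideal it must equal that maximal ideal. No finiteness or Noetherian hypothesis on $R$ is needed, which is exactly why the lemma is stated at this level of generality. I would present the free-module case explicitly inline rather than invoking faithful flatness as a black box, since the reader of this section may not want to unwind that equivalence.
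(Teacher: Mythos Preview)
Your proof is correct and follows essentially the same route as the paper: use faithful flatness to ensure $R\gm \neq R$, pick a maximal left ideal of the nonzero unital ring $R/R\gm$ via Zorn, and conclude $\ann_{Z(R)}(M) = \gm$ by maximality of $\gm$. The detour through ${\rm ZL}(R)$ via Lemma \ref{a3Aug23}.(2) is correct but unnecessary for the conclusion---the paper goes straight to ${\rm PL}(R)$---and your phrasing of the faithful-flatness step would be cleaner stated as $R/R\gm \cong R\t_{Z(R)}(Z(R)/\gm) \neq 0$.
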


\begin{proof} 1. Since the ring $R$ is a faithfully flat extension of the centre $Z(R)$,  $R\gm\neq R$ for all $\gm \in \Max(Z(R))$. Since the factor ring  $R(\gm):=R/R\gm$ is a unital ring, there is a left maximal ideal, say $I$, of the ring $R(\gm )$, and so the $R(\gm )$-module $M=R(\gm )/I$ is simple with $\gm=\ann_{Z(R)}(M)\in {\rm PL}(R)$. Therefore, $ \Max (Z(R))\subseteq {\rm PL}(R)$.

2. If  the ring $R$ is a free $Z(R)$-module 
 then the ring $R$ is faithfully flat over $Z(R)$, and statement 2 follows from statement 1.
\end{proof}

For a class of rings that contains the quantum Weyl algebra $A_1$ (Proposition \ref{30Jul23}), Corollary \ref{b31Jul23} gives sufficient conditions for a ring to be a  PLM-ring.

\begin{corollary}\label{b31Jul23}
Suppose that the ring $R$ is  a free $Z(R)$-module and     the ring $k(\gp)\t_{Z(R)}R/\gp$ is a simple  artinian ring for every $\gp \in {\rm ZL}(R)\backslash \Max (Z(R))$. Then the ring $R$ is PLM-ring with ${\rm PL}(R)=\Max (Z(R))$.

\end{corollary}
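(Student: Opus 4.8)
The plan is to deduce this corollary directly from the results already established, in two short steps. First I would observe that the hypotheses are tailored to invoke Theorem \ref{A1Aug23} (or more conveniently Corollary \ref{A30Jul23}) to show that $R$ is a PLM-ring. Indeed, since $R$ is a free $Z(R)$-module, it is faithfully flat over $Z(R)$, and in particular for every $\gp \in \Spec (Z(R))$ the $R$-module $R/R\gp$ is $S_\gp$-torsion free: if $sr \in R\gp$ with $s \in S_\gp$ and $r \in R$, then working over the localization $Z(R)_\gp$ (flatness is preserved under localization) forces $r \in R\gp$. By Lemma \ref{a1Aug23}.(1) this means the map $\phi_\gp$ is injective for every $\gp \in {\rm ZL}(R)$, and in particular for every $\gp \in {\rm ZL}(R)\backslash \Max(Z(R))$. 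Combined with the hypothesis that $k(\gp)\t_{Z(R)}R/\gp$ is a simple artinian ring for every such $\gp$, the equivalence in Theorem \ref{A1Aug23} applies. To conclude that $R$ is a PLM-ring I then need statement (2) or (3) of that theorem, i.e. that $\phi_\gp$ is not surjective (equivalently $\soc_R(R/R\gp)=0$) for every $\gp \in {\rm ZL}(R)\backslash \Max(Z(R))$.

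The crux, therefore, is to rule out $\soc_R(R/R\gp)\neq 0$ when $\gp$ is a non-maximal prime of $Z(R)$. Suppose for contradiction that $\soc_R(R/R\gp)\neq 0$ for some such $\gp$. By the implication $(c)\Rightarrow(a)$ in Theorem \ref{31Jul23}.(2) (whose hypotheses hold here: $\phi_\gp$ injective and $k(\gp)\t_{Z(R)}R/\gp$ simple artinian), this gives $\gp \in {\rm PL}(R)$, and moreover the argument in that proof shows $R/R\gp = k(\gp)\t_{Z(R)}R/\gp$, i.e. every element of $S_\gp = Z(R)\backslash\gp$ acts invertibly on $R/R\gp$. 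But $R$ is free over $Z(R)$, say $R = \bigoplus_{\alpha} Z(R)f_\alpha$ with $f_\alpha$ a basis containing $f_{\alpha_0}=1$; then $R/R\gp = \bigoplus_\alpha (Z(R)/\gp)\bar f_\alpha$ is a free $Z(R)/\gp$-module, and its localization at $S_\gp$ is $\bigoplus_\alpha k(\gp)\bar f_\alpha$. The equality $R/R\gp = (R/R\gp)_\gp$ then forces $Z(R)/\gp = k(\gp)$ on the summand indexed by $\alpha_0$, i.e. the domain $Z(R)/\gp$ equals its own field of fractions, hence $Z(R)/\gp$ is a field and $\gp \in \Max(Z(R))$ — contradicting $\gp \in {\rm ZL}(R)\backslash\Max(Z(R))$. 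This establishes condition (3) of Theorem \ref{A1Aug23}, so $R$ is a PLM-ring.

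It remains to identify ${\rm PL}(R)$ with $\Max(Z(R))$. The inclusion ${\rm PL}(R)\subseteq \Max(Z(R))$ is precisely the PLM-ring property just proved. For the reverse inclusion, since $R$ is a free $Z(R)$-module, Lemma \ref{a2Aug23}.(2) gives $\Max(Z(R))\subseteq {\rm PL}(R)$ directly. Hence ${\rm PL}(R) = \Max(Z(R))$, completing the proof.

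I expect the main obstacle to be the careful bookkeeping in the contradiction argument of the middle paragraph — specifically, extracting a genuine contradiction from $\soc_R(R/R\gp)\neq 0$ at a non-maximal $\gp$. One must be slightly careful that freeness of $R$ over $Z(R)$ is used correctly (it is what guarantees $R/R\gp$ is $S_\gp$-torsion free and embeds in its localization, and it is what prevents $Z(R)/\gp$ from collapsing to its fraction field unless $\gp$ is already maximal). Everything else is a direct citation of Theorems \ref{31Jul23}, \ref{A1Aug23} and Lemma \ref{a2Aug23}; no new computation is needed beyond tracking localizations of free modules.
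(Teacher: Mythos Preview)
Your proof is correct and follows essentially the same strategy as the paper: both use freeness of $R$ over $Z(R)$ to verify the hypotheses of Theorem~\ref{A1Aug23} and then invoke Lemma~\ref{a2Aug23}.(2) for the inclusion $\Max(Z(R))\subseteq {\rm PL}(R)$. The only difference is cosmetic: the paper simply asserts that freeness makes $\phi_\gp$ an injection that is not a surjection for non-maximal $\gp$ (condition~(2) of Theorem~\ref{A1Aug23}), whereas you go through the equivalent condition~(3) via a contradiction argument using Theorem~\ref{31Jul23}.(2), in effect supplying the details the paper omits. One small quibble: you need not assume $1$ belongs to the free basis --- any nonzero summand $(Z(R)/\gp)\bar f_\alpha$ already forces $Z(R)/\gp = k(\gp)$.
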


\begin{proof} Since the ring $R$ is a free $Z(R)$-module, the map $\phi_\gp$ is an injection which is not a surjection for all 
 $\gp \in {\rm ZL}(R)\backslash \Max (Z(R))$. Now,  by Theorem \ref{A1Aug23}, the ring $R$ is PLM-ring.  By Lemma \ref{a2Aug23}.(2),  ${\rm PL}(R)=\Max (Z(R))$.
\end{proof}

Below is a proof of Theorem \ref{2Aug23}.

\begin{proof} 1. Statement 1 follows from Corollary \ref{b31Jul23}.

2. Since  the ring $R$ is a free $Z(R)$-module,   the map $\phi_\gp$ is an injection for every $\gp \in {\rm ZL}(R)$. Now,  statement 2 follows from Corollary \ref{c1Aug23}.(2).

3. By statements 1 and  2, $\Prim(R)=\{ R\gm\, | \, \gm\in \Max (Z(R))\}$. The inclusion  $\Max (R)\subseteq \Prim(R)$ holds (for all unital rings). 
The reverse inclusion follows from the fact for each maximal ideal $\gm \in \Max (Z(R))$, the ring
$R/R\gm=\Big( R/R\gm\Big)_\gm=k(\gm)\t_{Z(R)}R/\gm$ is a simple  artinian ring, i.e. the ideal $R\gm$ is a maximal ideal of the ring $R$.

4. For each maximal ideal $\gm \in \Max (Z(R))$,
 the ring $R/R\gm=\Big(R/R\gm \Big)_\gm=k(\gm)t_{Z(R)}R/R\gm$ is a simple  artinian ring. Now, statement 4 follows from the equality  $\Prim(R)=\Max (R)=\{ R\gm\, | \, \gm\in \Max (Z(R))\}$ (statement 3).

5. Statement 5 follows from statements 3 and 4.

6. Statement 6 is obvious (for every simple artinian ring $\L =M_n(D)$, where $n$ is a natural number and $D$ is a division ring,  and its unique simple module $V$, $\End_\L (V)\simeq D$). 
\end{proof}

\begin{corollary}\label{b2Aug23}
Suppose that the ring $R$ is a free $Z(R)$-module and  for every $\gp \in \Spec (Z(R))$,   the ring $k(\gp)\t_{Z(R)}R/\gp$ is a simple  artinian ring. Then  then ${\rm ZL}(R)=\Spec (Z(R))$ and Theorem \ref{2Aug23} holds.
\end{corollary}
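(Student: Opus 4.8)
The plan is to deduce the corollary directly from Theorem~\ref{2Aug23}, the only work being to promote the hypothesis into the equality ${\rm ZL}(R)=\Spec(Z(R))$ so that the two theorems have the same range of validity.

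First I would record that, by the standing convention, a simple artinian ring is nonzero (it has the form $M_n(D)$ with $n\geq 1$ and $D$ a division ring). Hence the hypothesis forces $k(\gp)\t_{Z(R)}R/R\gp\neq\{0\}$ for \emph{every} $\gp\in\Spec(Z(R))$, not merely for $\gp\in{\rm ZL}(R)$. By Lemma~\ref{a3Aug23}.(2), which characterizes the central locus as ${\rm ZL}(R)=\{\gp\in\Spec(Z(R))\,|\,k(\gp)\t_{Z(R)}R/R\gp\neq\{0\}\}$, this immediately yields ${\rm ZL}(R)=\Spec(Z(R))$. If one prefers to avoid invoking the artinian part of the hypothesis at this stage, the same nonvanishing follows from freeness alone: $R$ free over $Z(R)$ makes $R/R\gp$ a nonzero free $Z(R)/\gp$-module, hence $k(\gp)\t_{Z(R)}R/R\gp$ a nonzero free $k(\gp)$-module.

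Second, with ${\rm ZL}(R)=\Spec(Z(R))$ in hand, the assumptions of the present corollary state exactly that $R$ is a free $Z(R)$-module and that $k(\gp)\t_{Z(R)}R/\gp$ is simple artinian for all $\gp\in{\rm ZL}(R)$ — that is, precisely the hypotheses of Theorem~\ref{2Aug23}. Applying that theorem then delivers all of its conclusions (1)--(6), which is the assertion ``Theorem~\ref{2Aug23} holds''.

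I do not expect a genuine obstacle: the corollary is in essence the remark that imposing the simple-artinian condition over \emph{all} of $\Spec(Z(R))$, rather than only over ${\rm ZL}(R)$, collapses the distinction between the two sets, after which Theorem~\ref{2Aug23} applies verbatim. The only points requiring care are the harmless identification of the notations $R/\gp$ and $R/R\gp$ for the same quotient, and the convention that simple (artinian) rings are nonzero.
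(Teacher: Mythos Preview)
Your proposal is correct and follows essentially the same approach as the paper: use the nonvanishing of $k(\gp)\otimes_{Z(R)}R/R\gp$ (from the simple-artinian hypothesis) together with Lemma~\ref{a3Aug23}.(2) to obtain ${\rm ZL}(R)=\Spec(Z(R))$, and then invoke Theorem~\ref{2Aug23}. The paper's proof additionally remarks that freeness makes each $\phi_\gp$ injective, but this is already implicit in the hypotheses of Theorem~\ref{2Aug23}; your alternative freeness argument for nonvanishing is a harmless embellishment.
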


\begin{proof} For every $\gp \in \Spec (Z(R))$,   the ring $k(\gp)\t_{Z(R)}R/\gp$ is a simple  artinian ring. Hence, ${\rm ZL}(R)=\Spec (Z(R))$ (Lemma \ref{a3Aug23}.(2)). 
 The ring $R$ is a free $Z(R)$-module, and so  the maps $\phi_\gp$ are injections. Therefore, Theorem \ref{2Aug23} holds for the ring $R$.
\end{proof}


\section{Classifications of  prime ideals and simple modules  of the  algebras $\mA$, $\CA$,  and $\CB$}\label{qAKHSKEW}

 The aim of the section is to classify the sets of prime, completely prime,  maximal and primitive ideals  and simple modules of the algebras  $\mA$, $\CA$,    and $\CB$. 
 Using the classification of primitive ideals we obtain a classification of simple modules for the algebras  $\mA$, $\CA$,    and $\CB$.

For an algebra $R$, let $\Spec\,(R)$ be the set of its prime ideals. The set $(\Spec\,(R), \subseteq)$ is a partially ordered set (poset) with respect to inclusion of prime ideals. A prime ideal $\gp$ of an algebra $R$ is called a \emph{completely prime ideal} if $R/\gp$ is a domain. We denote by  $\Spec_c(R)$ the set of completely prime ideals of $R$ which  is called the \emph{completely prime spectrum} of $R$. The annihilator of a simple $R$-module is a prime ideal of $R$. That kind of prime ideals are called {\em primitive}. Let  ${\rm Prim} (R)$  be the  set of all primitive ideals of $R$. By the  definition, the poset $(\Spec\,(R), \subseteq)$ is determined by the containment information on prime ideals of $R$. In general, it is difficult to say whether one prime ideal contains or does not contain another prime ideal. Proposition \ref{aA12Mar15}.(3) is a general result about containments of primes. 
It partitions the prime spectrum of a ring into two disjoint subsets such that elements of one are not contained in elements of the other. \\

{\bf The Zariski topology on the prime spectrum of a ring.} 
Let $R$ be a ring. For an ideal $I$ of $R$, let $V(I):=\{ \gp \in \Spec (R)\, | \, I\subseteq \gp\}$ and $W(I):= \Spec (G)\backslash V(I)$. The sets $W(I)$ are open sets in the {\em Zariski topology} on the prime spectrum $\Spec (R)$ of the ring $R$. This topology is also called the {\em Stone topology} or the {\em hull-kernel topology}. Clearly, the sets $V(I)$ are precisely all the closed sets in the Zariski topology. If the ring $R$ is a Noetherian ring then the set $\min (I)$ of minimal primes of every ideal $I$ is a finite set, and so $V(I)=\bigcup_{\gp \in \min (I)}V(\gp)$. So, the Zariski topology is determined by  containments  of primes, i.e. the Zariski topology is the topology of the partially ordered set (poset) $(\Spec (R), \subseteq )$ where  the closed sets are (by definition) finite intersection of the sets $V(\gp)$ where $\gp\in \Spec (R)$. The algebras $\mA$, $A_1(q)$, $\CA$, and $\CB$  are Noetherian algebras, and so the Zariski topology on their prime spectra  is determined by containments  of primes.\\

 Each element $r \in R$ determines two maps from $R$ to $R$,  $r \cdot: s \mapsto rs$ and $\cdot r: s \mapsto sr$, where $s \in R.$  An element $r \in R$ is called a {\em regular element} of $R$ if $\ker (r\cdot )=0$ and  $\ker (\cdot r)=0$, i.e. the element $r$ is a non-zero-divisor.  An element $a \in R$ is called a {\em normal element} if $Ra=aR$. Each regular normal element $r\in R$ determines an automorpmism  $\o_r$ of $R$ which is given by the rule $\o_r(s)r=rs$ for all elements $s\in R$. 
 We denote by  $(r)$  the ideal of $R$ generated by the element $r$.
 
 A 
multiplicative subset $S$ of $R$   is called  a {\em
left Ore set} if it satisfies the {\em left Ore condition}: for
each $r\in R$ and
 $s\in S$, $$ Sr\cap Rs\neq \emptyset .$$
Let $\Ore_l(R)$ be the set of all left Ore sets of $R$.
  For  $S\in \Ore_l(R)$, $\ass_l (S) :=\{ r\in
R\, | \, sr=0 \;\; {\rm for\;  some}\;\; s\in S\}$  is an ideal of
the ring $R$. 


A left Ore set $S$ is called a {\em left denominator set} of the
ring $R$ if $rs=0$ for some elements $ r\in R$ and $s\in S$ implies
$tr=0$ for some element $t\in S$, i.e., $r\in \ass_l (S)$. Let
$\Den_l(R)$ be the set of all left denominator sets of $R$. For
$S\in \Den_l(R)$, let $$S^{-1}R=\{ s^{-1}r\, | \, s\in S, r\in R\}$$
be the {\em left localization} of the ring $R$ at $S$ (the {\em
left quotient ring} of $R$ at $S$). Let us stress that in Ore's method of localization one can localize {\em precisely} at left denominator sets.
 In a similar way, right Ore and right denominator sets are defined. 
Let $\Ore_r(R)$ and 
$\Den_r(R)$ be the set of all right  Ore and  right   denominator sets of $R$, respectively.  For $S\in \Ore_r(R)$, the set  $\ass_r(S):=\{ r\in R\, | \, rs=0$ for some $s\in S\}$ is an ideal of $R$. For
$S\in \Den_r(R)$,  
$$RS^{-1}=\{ rs^{-1}\, | \, s\in S, r\in R\}$$ is the {\em right localization} of the ring $R$ at $S$. 

Let $R$ be a ring and $\CC_R$ be the set of all regular elements  of the ring $R$. The set $\CC_R$ is a multiplicative set, i.e. a multiplicative monoid. If the set $\CC_R$ is a left (resp., right) {\em Ore set} then the localization of the ring $R$, $Q_l(R):=\CC_R^{-1}R$ (resp., $Q_r(R):=R\CC_R^{-1}$) is called the {\em left quotient ring} (resp., the {\em right quotient ring}) of $R$. If the set $\CC_R$ is an {\em Ore set}, i.e. a left and right Ore set, then the rings $Q_l(R)$ and $Q_r(R)$ are isomorphic and  the ring $Q(R):=Q_l(R)\simeq Q_r(R)$ is called the {\em quotient ring} of $R$.

Given ring homomorphisms $\nu_A: R\ra A$ and $\nu_B :R\ra B$. A ring homomorphism $f:A\ra B$ is called an $R$-{\em homomorphism} if $\nu_B= f\nu_A$.  A left and right Ore set is called an {\em Ore set}. Similarly,   a left and right denominator set is called a {\em denominator set}.   Let $\Ore (R)$ and 
$\Den (R)$ be the set of all   Ore and    denominator sets of $R$, respectively. For
$S\in \Den (R)$, $$S^{-1}R\simeq RS^{-1}$$ (an $R$-isomorphism)
 is  the {\em  localization} of the ring $R$ at $S$, and $\ass (S):=\ass_l(S) = \ass_r(S)$. 

\begin{proposition} \label{aA12Mar15} 
(\cite[Proposition 2.7]{Bav-Lu-BL-qAge}.) 
Let $R$ be a Noetherian ring and $s$ be an element of $R$ such that $S_s := \{s^i \,|\, i\in \N \}$ is a left denominator set  of the ring $R$ and $(s^i)= (s)^i$ for all $i\geqslant 1$ (e.g., $s$ is a normal element such that $\ker(\cdot s_{R}) \subseteq \ker(s_R \cdot)$). Then
$\Spec\,(R)= \Spec(R, s) \, \sqcup \, \Spec_s(R) $ where $\Spec(R,s):= \{ \gp \in \Spec\,(R)\,|\, s \in \gp \}$,  $\Spec_s(R)= \{\gq \in \Spec\,(R)\, |\, s \notin \gq \}$ and
\begin{enumerate}
\item  the map $\Spec\,(R, s) \rightarrow \Spec\,(R/(s)),\,\, \gp \mapsto \gp/(s)$, is a bijection with the inverse $\gq \mapsto \pi^{-1}(\gq)$ where $\pi: R \rightarrow R/(s), r \mapsto r+(s),$
\item the map $\Spec_s(R) \rightarrow \Spec\,(R_s), \,\, \gp \mapsto S_s^{-1}\gp,$ is a bijection with the inverse $\gq \mapsto \sigma^{-1}(\gq)$ where $\sigma: R \rightarrow R_s:= S_s^{-1}R, \,r \mapsto \frac{r}{1}$. 
\item For all $\gp \in \Spec\,(R, s)$ and $\gq \in \Spec_s(R), \gp \not\subseteq \gq.$
\end{enumerate}
\end{proposition}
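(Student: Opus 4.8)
The plan is to use the classical Ore localization machinery together with the hypothesis $(s^i) = (s)^i$, which guarantees that the ideal-theoretic behaviour at powers of $s$ is well-controlled. First I would recall the standard dichotomy: since $S_s = \{s^i \mid i \in \N\}$ is a left denominator set, every prime $\gp$ either contains $s$ (equivalently $\gp \cap S_s \neq \emptyset$, using primeness) or is disjoint from $S_s$. This is precisely the partition $\Spec(R) = \Spec(R,s) \sqcup \Spec_s(R)$, and there is nothing to prove beyond noting that a prime containing some power $s^i$ must contain $s$.

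For part (1), the map $\gp \mapsto \gp/(s)$ is the usual order-preserving bijection between primes of $R$ containing the ideal $(s)$ and primes of the factor ring $R/(s)$, valid for any two-sided ideal; the inverse is $\gq \mapsto \pi^{-1}(\gq)$. The only subtlety is that $\Spec(R,s)$, defined as the primes containing the \emph{element} $s$, coincides with the primes containing the \emph{ideal} $(s)$ — but a prime ideal containing $s$ automatically contains the two-sided ideal it generates. So this part is essentially formal. For part (2), I would invoke the standard correspondence theorem for Ore localization: for $S \in \Den_l(R)$, the map $\gp \mapsto S_s^{-1}\gp$ is a bijection from $\{\gp \in \Spec(R) \mid \gp \cap S_s = \emptyset\}$ onto $\Spec(S_s^{-1}R)$, with inverse $\gq \mapsto \sigma^{-1}(\gq)$. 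Here one uses that $R$ is Noetherian (so $\ass(S_s)$ and all relevant ideals are well-behaved) and that $\{\gp \mid \gp \cap S_s = \emptyset\} = \Spec_s(R)$. Again the hypothesis $(s^i)=(s)^i$ is what one needs to know that $S_s$ is genuinely a denominator set in the parenthetical special case (normal $s$ with $\ker(\cdot s) \subseteq \ker(s\cdot)$), but if $S_s \in \Den_l(R)$ is assumed outright then this is immediate.

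Part (3) is the substantive point. Suppose $\gp \in \Spec(R,s)$ and $\gq \in \Spec_s(R)$ with $\gp \subseteq \gq$; I seek a contradiction. Since $s \in \gp \subseteq \gq$, we would have $s \in \gq$, contradicting $s \notin \gq$. I expect this to be almost immediate once the definitions are unwound — the real content is just that containment of primes forces containment of the distinguished element $s$. The main obstacle, such as it is, lies in part (2): one must be careful that the localization $R_s$ is formed at a genuine \emph{left} denominator set and that the bijection respects the two-sided prime structure (a prime of $R_s$ pulls back to a prime of $R$, and conversely), which is where Noetherianity and the condition $(s^i) = (s)^i$ do their work. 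I would cite the standard reference for the Ore localization correspondence rather than reprove it, and the whole argument reduces to bookkeeping.
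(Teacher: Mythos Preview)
The paper does not prove this proposition; it is quoted verbatim as \cite[Proposition 2.7]{Bav-Lu-BL-qAge} and used as a black box throughout Sections 4 and 5. So there is no ``paper's own proof'' to compare against.

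That said, your sketch is correct and is the standard argument. The partition $\Spec(R) = \Spec(R,s) \sqcup \Spec_s(R)$ is immediate from primeness (a prime meets $S_s$ iff it contains $s$); part (1) is the universal correspondence for factor rings; part (2) is the standard prime correspondence for Ore localization at a left denominator set of a Noetherian ring; and part (3) is, as you correctly observe, a one-line contradiction: $\gp \subseteq \gq$ and $s \in \gp$ force $s \in \gq$. Your instinct that the hypothesis $(s^i) = (s)^i$ is doing its work only in justifying the parenthetical example (normal $s$ with the kernel condition yields a genuine left denominator set) is right --- once $S_s \in \Den_l(R)$ is assumed outright, that hypothesis is not needed for (1)--(3) as stated.
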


{\bf Classifications of prime ideals and simple modules of the algebra $\CB $.} 
 The centre of the algebra $\CB$  is equal to
\begin{equation}\label{qCAZmA}
Z(\CB )=K[s^{\pm 1},t^{\pm 1}]=K[h^{\pm n}, x^{\pm n}] \;\; {\rm where} \;\; s:=h^n\;\; {\rm and}\;\;  t:=x^n.
\end{equation}
Clearly, $ Z(\CB)\supseteq Z:=K[s^{\pm 1},t^{\pm 1}]$ (since $xs=q^nsx=sx$ and $ht=q^{-n}th=th$).  The algebra $\CB$ is a Noetherian domain of Gelfand-Kirillov dimension $2$ and 
\begin{equation}\label{qCAmASum}
 \CB =\bigoplus_{i,j=0}^{n-1}Z h^ix^j.
\end{equation}

Since $\o_h(x^j)=q^{-j}x^j$ for all $j\geq 0$ and the elements $1, q^{-1}, \ldots , q^{-n+1}$ are distinct elements of the field $K$, the algebra 
\begin{equation}\label{qCAmASum1}
\CB =\bigoplus_{j=0}^{n-1}\Bigg(\bigoplus_{i=0}^{n-1}Z h^i\Bigg)x^j
\end{equation}
is a direct sum of eigenspaces $\Big(\bigoplus_{i=0}^{n-1}Z h^i\Big)x^j$, $0\leq j\leq n-1$, of the inner automorphism  $\o_h $ of the algebra $\CB$,  and the set of eigenvalues of $\o_h$ is $\Ev_\CB (\o_h)=\{ 1, q^{-1}, \ldots , q^{-n+1}\}$. 

Similarly, $\o_x(h^j)=q^jh^j$ for all $j\geq 0$ and the elements $1, q, \ldots , q^{n-1}$ are distinct elements of the field $K$, the algebra 
\begin{equation}\label{qCBxh}
\CB =\bigoplus_{i=0}^{n-1}\Bigg(\bigoplus_{j=0}^{n-1}Zx^j \Bigg)h^i
\end{equation}
is a direct sum of eigenspaces $\Big(\bigoplus_{j=0}^{n-1}Z(\CB)  x^j\Big) h^i$, $0\leq i\leq n-1$,  of the inner automorphism  $\o_x $ of the algebra $\CB$, and the set of eigenvalues of $\o_x$ is $\Ev_\CB (\o_x)=\{ 1, q, \ldots , q^{n-1}\}$. 

So, the vector spaces   $Zh^ix^j$ in  (\ref{qCAmASum}) are common eigenspaces for the pair of inner automorphisms $\o_h$ and $\o_x$  of $\CB$  that correspond to distinct pairs $(q^{-j}, q^i)$ of eigenvalues, and (\ref{qCAZmA}) follows.

 Since $\CB= \mA_{xh}=\mA_{st}$, the map
\begin{equation}\label{qCA1Sum4}
\Spec_{st} (Z(\mA ))=\Spec (Z(\mA ))\backslash \Big(V(s)\cup V(t)\Big)\ra \Spec (Z(\CB)), \;\;\gr \mapsto \gr':= Z(\CB )\gr 
\end{equation}
is a bijection with  inverse $\gr' \mapsto \gr :=Z(\mA )\cap \gr'$. Clearly,  $k(\gr'):=Q( Z(\CB )/\gr')=Q( Z(\mA )/\gr)=k(\gr )$. 
For an algebra $A$, let $\CI (A)$ be the set of its ideals. 

Theorem \ref{qCAc28Aug20}.(1) and Theorem \ref{qCAc28Aug20}.(2) are  explicit descriptions of all the ideals and all prime ideals of the algebra $\CB$. Theorem \ref{qCAc28Aug20}.(4) shows that the quotient rings of prime factor algebras of  the algebra $\CB$ are central simple $n^2$-dimensional algebras.

\begin{theorem}\label{qCAc28Aug20}

\begin{enumerate}
\item The map $\CI (Z(\CB))\ra \CI (\CB)$, $\ga \mapsto \CB\ga$ is a bijection with  inverse $ I\mapsto Z(\CB)\cap I$. In particular, $\CI (\CB)=\{ \CB\ga \, | \, \ga \in \CI (Z(\CB))\}$ and every nonzero ideal of $\CB$ meets the centre of $\CB$.
\item   $\Spec (\CB )=\{\CB \gr' \, | \,   \gr' \in \Spec\, Z(\CB )\}$ where $Z(\CB )=\Spec \,K[s^{\pm 1},t^{\pm 1}]$,   $s=h^n$ and $ t=x^n$. 
\item   $\Max (\CB ) = \{ \CB \gm\,|\, \gm \in \Max (Z(\CB )) \}$.

\item For each  prime ideal $\gr' \in \Spec\, Z(\CB )$, the quotient ring  of the algebra $\CB / \CB \gr'$, $$Q(\CB /\CB \gr')=\bigoplus_{i,j=0}^{n-1}k(\gr' )h^ix^j\simeq k(\gr' )\t_{Z(\CB )}\CB,$$ is a central simple $n^2$-dimensional algebra over the field $k(\gr' )$ of fractions of the commutative domain $Z(\CB )/\gr'$. The algebra $Q(\CB /\CB \gr')$ is isomorphic to the algebra $\CE (\gr'):=(E(s,\gr'), \s, a)$ where $E(s,\gr'):=k(\gr' )[h]/(h^n-s)\simeq k(\gr' )[h,h^{-1}]/(h^n-s)$ and  $\s (h)=qh$.

\item The quotient ring  of the algebra $\CB$, $$Q(\CB )=\bigoplus_{i,j=0}^{n-1}K(s,t)h^ix^j\simeq K(s,t)\t_{Z(\CB )}\CB,$$ is a central simple $n^2$-dimensional division algebra over the field $K(s,t)$ of rational functions in two variables. The algebra $Q(\CB )$  is isomorphic to the cyclic algebra $\CE (0):=(E(s,0), \s, a)$ where $E(s,0):=K(s,t)[h]/(h^n-s)\simeq K(s,t)[h,h^{-1}]/(h^n-s)$ and  $\s (h)=qh$.
\end{enumerate}
\end{theorem}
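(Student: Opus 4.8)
The plan is to prove Theorem \ref{qCAc28Aug20} one part at a time, bootstrapping each statement from the previous ones and from the structure results already established for the algebras $\CE$.

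\textbf{Step 1 (ideals of $\CB$ versus ideals of $Z(\CB)$).} First I would prove statement (1). The inclusion $Z(\CB)\cap \CB\ga \supseteq \ga$ is trivial; the content is that $Z(\CB)\cap \CB\ga =\ga$ and that every ideal of $\CB$ is extended from the centre. The key tool is the eigenspace decomposition (\ref{qCAmASum}) and the fact that the vector spaces $Zh^ix^j$ are the common eigenspaces for the commuting pair of inner automorphisms $\o_h,\o_x$ with \emph{pairwise distinct} eigenvalue pairs $(q^{-j},q^i)$. Given an ideal $I$ and an element $u=\sum_{i,j}z_{ij}h^ix^j\in I$ with $z_{ij}\in Z$, applying the Vandermonde argument to the operators $\o_h,\o_x$ (exactly as in the proof of Theorem \ref{qA2Sep20}.(3a), step (i)) forces each homogeneous component $z_{ij}h^ix^j$ to lie in $I$; since $h,x$ are units in $\CB$, each $z_{ij}\in I\cap Z(\CB)$. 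Hence $I=\CB(I\cap Z(\CB))$ and the two maps are mutually inverse bijections. I should be slightly careful that $Z(\CB)=Z=K[s^{\pm1},t^{\pm1}]$ is genuinely the whole centre; this is exactly (\ref{qCAZmA}), which follows from the same eigenspace computation (an element is central iff all its eigenvalue pairs are $(1,1)$, i.e.\ $i\equiv j\equiv 0 \pmod n$).

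\textbf{Steps 2--3 (prime and maximal ideals).} Statement (2) is immediate from (1): the bijection $\ga\mapsto \CB\ga$ between $\CI(Z(\CB))$ and $\CI(\CB)$ restricts to a bijection on prime ideals because $\CB$ is a free (hence faithfully flat) $Z(\CB)$-module of rank $n^2$ by (\ref{qCAmASum}), so $\CB\gr'$ is proper whenever $\gr'$ is, and $\CB/\CB\gr'\simeq k(\gr')\t_{Z(\CB)}\CB$ is, after Step 4, simple — in particular prime; conversely the contraction of a prime is prime. The cleanest route is to invoke Corollary \ref{b2Aug23}/Theorem \ref{2Aug23} once Step 4 establishes that $k(\gr')\t_{Z(\CB)}\CB$ is a simple artinian ring for every $\gr'\in\Spec Z(\CB)$: then ${\rm ZL}(\CB)=\Spec(Z(\CB))$, $\Spec(\CB,\gr')=\{\CB\gr'\}$, and $\Spec(\CB)=\{\CB\gr'\}$. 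Statement (3) then follows because $R/R\gm$ is simple artinian exactly when $\gm$ is maximal, giving $\Max(\CB)=\{\CB\gm\mid\gm\in\Max(Z(\CB))\}$.

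\textbf{Step 4 (the quotient rings), the main step.} This is where the work is. Fix $\gr'\in\Spec Z(\CB)$, put $k=k(\gr')=Q(Z(\CB)/\gr')$. Tensoring the decomposition (\ref{qCAmASum}) with $k$ over $Z(\CB)$ gives $k\t_{Z(\CB)}\CB=\bigoplus_{i,j=0}^{n-1}k\,h^ix^j$, an $n^2$-dimensional $k$-algebra generated by $h,x$ with $xh=qhx$, $h^n=s$, $x^n=a$ where $s,a$ are the images of $h^n,x^n$ in $k$ — but note $s,a\in k^\times$ since $h,x$ are units in $\CB$. That is precisely the algebra $\CE(\gr')=(E(s,\gr'),\s,a)$ of Section \ref{ALGCE} with base field $k$, and by Theorem \ref{qA2Sep20}.(3a) it is a central simple $k$-algebra of degree $n$ (both $s\neq0$ and $a\neq0$). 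Being finite-dimensional central simple, it is simple artinian, which retroactively validates Steps 2--3. It remains to identify $k\t_{Z(\CB)}\CB$ with the quotient ring $Q(\CB/\CB\gr')$: since $\CB/\CB\gr'$ is a Noetherian prime ring which is a finitely generated module over its centre $Z(\CB)/\gr'$, its regular elements are exactly the ones regular modulo the localization at $S_{\gr'}=Z(\CB)\setminus\gr'$, and localizing at the centre already produces a simple artinian ring, so $Q(\CB/\CB\gr')=S_{\gr'}^{-1}(\CB/\CB\gr')=k\t_{Z(\CB)}\CB$; alternatively one can just observe that $k\t_{Z(\CB)}\CB$ is a finite-dimensional algebra over the field $k$ containing $\CB/\CB\gr'$ with the same total ring of fractions. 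The hardest bookkeeping point here is making the identification $k\t_{Z(\CB)}\CB\simeq\CE(\gr')$ precise including the Laurent-vs-polynomial presentation $k(\gr')[h]/(h^n-s)\simeq k(\gr')[h,h^{-1}]/(h^n-s)$, which holds because $s\in k(\gr')^\times$ makes $h$ automatically invertible in the polynomial quotient.

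\textbf{Step 5 (the generic quotient ring, statement (5)).} This is the special case $\gr'=0$ (the zero ideal is prime since $\CB$ is a domain), so $k(0)=Q(Z(\CB))=K(s,t)$, the field of rational functions in two variables, and everything in Step 4 applies to give $Q(\CB)=\bigoplus_{i,j=0}^{n-1}K(s,t)h^ix^j\simeq K(s,t)\t_{Z(\CB)}\CB\simeq\CE(0)=(E(s,0),\s,a)$ with $E(s,0)=K(s,t)[h]/(h^n-s)$, a central simple $n^2$-dimensional $K(s,t)$-algebra. The only extra claim is that $Q(\CB)$ is a \emph{division} algebra and that $\CE(0)$ is a \emph{cyclic} algebra. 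For cyclicity, by Theorem \ref{qA2Sep20}.(4) it suffices that $E(s,0)$ is a field and $a\neq 0$; $a=t\neq0$ is clear, and $E(s,0)=K(s,t)[h]/(h^n-s)$ is a field because $h^n-s$ is irreducible over $K(s,t)$ — here I would use Theorem \ref{qA2Sep20}.(1), i.e.\ check $s\notin\bigcup_{m\mid n,\,m\neq1}K(s,t)^{[m]}$, which is true since $s$ is a free polynomial variable and hence not an $m$-th power in $K(s,t)=K(s)(t)$ for any $m>1$ (a short valuation/degree argument in the variable $s$). That $Q(\CB)$ is a division ring then follows because $\CB$ is a Noetherian domain, so its Goldie quotient ring is a division ring; combined with $Q(\CB)\simeq M_d(D)$ from central simplicity we get $d=1$, i.e.\ $Q(\CB)=D$ is a division algebra. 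I expect Step 4 to be the crux — once the tensor identification with $\CE(\gr')$ and the application of Theorem \ref{qA2Sep20}.(3a) are in place, Steps 1--3 and 5 are short consequences.
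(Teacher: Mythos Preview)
Your argument for (1) is exactly the paper's: common eigenspaces for $(\o_h,\o_x)$ plus invertibility of $h,x$. For (2)--(4) the paper is far terser than you --- it simply writes ``Statements 2--4 follow from statement 1'' --- and the intended direct route is that the lattice bijection of (1) preserves products of ideals (since $(\CB\ga)(\CB\gb)=\CB(\ga\gb)$ for central $\ga,\gb$), hence sends primes to primes and maximals to maximals, while (4) comes from tensoring the free decomposition (\ref{qCAmASum}) with $k(\gr')$ and invoking Theorem \ref{qA2Sep20}.(3a). Your detour through the PLM machinery (Theorem \ref{2Aug23}, Corollary \ref{b2Aug23}) for (2)--(3) is valid but heavier than necessary here; note also that your line ``$\CB/\CB\gr'\simeq k(\gr')\t_{Z(\CB)}\CB$'' is false for non-maximal $\gr'$ (the right-hand side is a proper localization of the left), though you immediately abandon it for the PLM route anyway. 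For (5), the paper just says ``particular case of statement 4'' and does not separately justify the words \emph{division} and \emph{cyclic}; your additional arguments --- irreducibility of $h^n-s$ over $K(s,t)$ via Theorem \ref{qA2Sep20}.(1), cyclicity via Theorem \ref{qA2Sep20}.(4), and division via $\CB$ being a Noetherian domain so that its Goldie quotient ring has no zero divisors --- fill a real gap in the paper's proof, so keep them.
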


\begin{proof}   In view of (\ref{qCAmASum}),  it suffices to show that for each ideal $\gb$ of $\CB$, $\gb = \CB  \ga$ where $\ga = Z(\CB )\cap \gb$. 

(i) $\gb = \bigoplus_{i,j=0}^{n-1}\Big(\gb \cap \ Z (\CB) h^ix^j\Big)$: The sum $\CB =  \bigoplus_{i,j=0}^{n-1}Z (\CB) h^ix^j$ is a direct sum of common eigenspaces for the commuting pair $(\o_h , \o_x)$ of inner automorphisms of the algebra $\CB$ since for all elements $z\in Z (\CB)$,
$$ \o_h(zh^ix^j)=q^{-j}zh^ix^j\;\; {\rm and}\;\;  \o_x(zh^ix^j)=q^izh^ix^j. $$
Now, the statement (i) follows from the fact that the elements $\{ (q^{-j}, 	q^i)\, | \, i,j=0, \ldots , n-1\}$ are distinct and the elements $x$ and $h$ are units of the algebra $\CB$.

(ii) $\gb \cap \ Z (\CB) h^ix^j=\ga h^ix^j$: The statement (ii) follows from the inclusions  $$h^{-i}(\gb \cap  Z(\CB) h^ix^j)x^{-j}\subseteq \ga\;\; {\rm and}\;\;  \ga h^ix^j\subseteq  \gb \cap  Z(\CB)h^i x^j.$$
2--4. Statements 2--4 follow from statement 1. 

5. Statement 5 is a particular case of  statement 4.   \end{proof}

  Corollary \ref{aqAc28Aug20}.(1) is a classification of primitive ideals of the algebra $\CB$. Every primitive  ideal of the algebra $\CB$ is a maximal ideal (and vice versa) and co-finite (Corollary \ref{aqAc28Aug20}.(2)). There is a one-to-one correspondence between the set of primitive ideals of the algebra $\CB$ and the set of isomorphism classes of simple $\CB$-modules (Corollary \ref{aqAc28Aug20}.(4)). Every simple $\CB$-module is finite dimensional over the field $K$.  
  For an algebra $A$, we denote by $\hA$ the set of isomorphism classes of simple (left) $A$-modules. 

\begin{corollary}\label{aqAc28Aug20}

\begin{enumerate}
\item  $\Prim (\CB )=\Max (\CB ) = \{ \CB \gm\,|\, \gm \in \Max (Z(\CB )) \}$.

\item For each maximal ideal $\gm \in \Prim (\CB )=\Max (\CB )$, the factor algebra 
 $$\CB / \CB \gm =Q(\CB /\CB \gm)=\bigoplus_{i,j=0}^{n-1}k(\gm )h^ix^j\simeq k(\gm )\t_{Z(\CB )}\CB,$$ is a central simple $n^2$-dimensional algebra over the field $k(\gm )=Z(\CB )/\gm$. The algebra $\CB /\CB \gm$ is isomorphic to the algebra $\CE (\gm )=(E(s,\gm), \s, a)$ where $E(s,\gm):=k(\gm )[h]/(h^n-s)\simeq k(\gm )[h,h^{-1}]/(h^n-s)$ and  $\s (h)=qh$. Let $U(\gm):=U(\CE(\gm ))$ be a unique simple  $\CE (\gm )$-module (described in (\ref{xu=usxe}) and Theorem \ref{12Jun23}.(1,2)). 

\item $\widehat{\CB }=\{ U(\gm)\, |\, \gm \in \Max (Z(\CB)) \}$. All simple $\CB$-modules are finite dimensional.

\item The map $\widehat{\CB} \ra {\rm Prim} (\CB ) =\Max (\CB )$, $U\mapsto \ann_{\CB} (U)$ is a bijection with  inverse $\CB \gm \mapsto U(\gm)$. 

\item For $\gm \in \Max (Z(\CB))$, $\End_{\CB}(U(\gm))\simeq D(\gm)$ where $\CE (\gm)\simeq M_{n(\gm)}(D(\gm ))$ for some natural number $n(\gm)$ and a division ring $D(\gm)$, see Theorem \ref{12Jun23}. We write endomorphism on the right. 
\end{enumerate}
\end{corollary}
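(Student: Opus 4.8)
The plan is to derive all five statements by applying the PLM-ring machinery of Section~\ref{PLMRINGS} to the ring $R=\CB$, combined with the structural description of prime factor algebras in Theorem~\ref{qCAc28Aug20} and the description of the simple module of a central simple algebra $\CE$ in Lemma~\ref{a14Jun23} and Theorem~\ref{12Jun23}.

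First I would verify the hypotheses of Corollary~\ref{b2Aug23} for $\CB$. By (\ref{qCAmASum}), $\CB=\bigoplus_{i,j=0}^{n-1}Z(\CB)h^ix^j$ is a free $Z(\CB)$-module of rank $n^2$. For every prime ideal $\gp\in\Spec(Z(\CB))$, the ring $k(\gp)\t_{Z(\CB)}\CB/\gp\simeq Q(\CB/\CB\gp)$ is, by Theorem~\ref{qCAc28Aug20}.(4), a central simple $n^2$-dimensional algebra over the field $k(\gp)$, hence a simple artinian ring. Therefore Corollary~\ref{b2Aug23} applies: ${\rm ZL}(\CB)=\Spec(Z(\CB))$ and the conclusions of Theorem~\ref{2Aug23} hold for $R=\CB$.

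Now statements (1), (3), (4) and (5) are read off from Theorem~\ref{2Aug23}. Part~(1) is Theorem~\ref{2Aug23}.(3), giving $\Prim(\CB)=\Max(\CB)=\{\CB\gm\mid\gm\in\Max(Z(\CB))\}$. For Part~(2): since $Z(\CB)/\gm=k(\gm)$ is already a field, every element of $Z(\CB)\backslash\gm$ maps to a unit of the central simple $k(\gm)$-algebra $\CB/\CB\gm$, so the localization map $\phi_\gm$ is the identity and $\CB/\CB\gm=\big(\CB/\CB\gm\big)_\gm=k(\gm)\t_{Z(\CB)}\CB=Q(\CB/\CB\gm)$; by Theorem~\ref{qCAc28Aug20}.(4) this is isomorphic to $\CE(\gm)=(E(s,\gm),\s,a)$ with $E(s,\gm):=k(\gm)[h]/(h^n-s)$. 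Since $Z(\CB)=K[s^{\pm1},t^{\pm1}]$ is an affine $K$-algebra, $k(\gm)$ is a finite field extension of $K$ (Zariski's lemma), so $\dim_K(\CB/\CB\gm)=n^2[k(\gm):K]<\infty$ and $\CB/\CB\gm$ is co-finite; the unique simple $\CE(\gm)$-module $U(\gm):=U(\CE(\gm))$ is the one constructed in Lemma~\ref{a14Jun23} and Theorem~\ref{12Jun23}.(1,2). Part~(3) then follows from Theorem~\ref{2Aug23}.(4) together with this identification of $k(\gm)\t_{Z(\CB)}\CB/\CB\gm$ with $\CE(\gm)$; finite $K$-dimensionality of each simple module follows from $\dim_{k(\gm)}U(\gm)=nd$ with $d=\Deg(D(\gm))$ (Lemma~\ref{a14Jun23}.(1)) and $[k(\gm):K]<\infty$. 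Part~(4) is Theorem~\ref{2Aug23}.(5), and Part~(5) is Theorem~\ref{2Aug23}.(6) applied to the simple artinian ring $\CE(\gm)\simeq M_{n(\gm)}(D(\gm))$.

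There is no serious obstacle here; the proof is essentially an assembly of results already established. The only points requiring a little care are (i) checking that the quotient ring of every prime factor algebra of $\CB$ is simple artinian — which is exactly Theorem~\ref{qCAc28Aug20}.(4), so that Corollary~\ref{b2Aug23} is applicable — and (ii) the passage from $k(\gm)$-finiteness to $K$-finiteness of the simple $\CB$-modules, which uses that a residue field of the affine algebra $Z(\CB)$ at a maximal ideal is a finite field extension of $K$.
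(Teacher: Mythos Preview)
Your proof is correct, but it takes a different route from the paper's own argument. The paper proves statement~1 directly from the ideal bijection $\CI(Z(\CB))\simeq\CI(\CB)$ of Theorem~\ref{qCAc28Aug20}.(1): since the lattice of two-sided ideals of $\CB$ coincides with that of $Z(\CB)$, maximal ideals correspond to maximal ideals, and together with statement~2 (which the paper also derives straight from Theorem~\ref{qCAc28Aug20}.(4)) one gets $\Prim(\CB)=\Max(\CB)$; statements~3--5 are then deduced from~1 and~2 and Theorem~\ref{12Jun23}. You instead verify the hypotheses of Corollary~\ref{b2Aug23} and invoke Theorem~\ref{2Aug23} wholesale. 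Your approach has the virtue of being exactly the uniform method the paper itself uses for the other three algebras $\mA$, $\CA$, $A_1$ (see Corollary~\ref{aQqAc28Aug20}, Proposition~\ref{A4Aug23}, Proposition~\ref{30Jul23}); the paper can afford a shortcut for $\CB$ only because the full ideal bijection of Theorem~\ref{qCAc28Aug20}.(1) is available there, which is stronger than what is known for the other algebras. Your extra remark on $K$-finiteness via Zariski's lemma is a nice touch that the paper leaves implicit.
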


\begin{proof}   1.  Statement 1 follows from the description of ideals of the algebra $\CB$  (Theorem \ref{qCAc28Aug20}.(1)).

2. Statement 2 follows from Theorem \ref{qCAc28Aug20}.(4).

3 and 4. Statements 3  and 4 follow from statements 1 and  2. 

5. Statement 5 follows from statement 3 and Theorem \ref{12Jun23}.  \end{proof}

{\bf Classification of completely prime ideals of the algebra $\CB$.}  
Recall that a prime ideal $\gp$ of a ring $R$ is called a completely prime ideal if the ring $R/\gp$ is a domain. The set of all completely prime ideals of the ring $R$ is denoted by $\Spec_c(R)$. Proposition  \ref{A20Jul23} is a classification of completely prime ideals of the algebra $\CB$.

\begin{proposition}\label{A20Jul23}
$\Spec_c (\CB )=\{ \{ 0\}, \CB\gr' \, | \, 0\neq \gr' \in \Spec \,Z(\CB)$,  $E(s,\gr')=k(\gr')[h]/(h^n-s)$
is a field and $n=\min\{ d'\geq 1\, | \,  d'\mid n$  there is a matrix $X\in M_{d'}(E(s,\gr'))$ such that $ X^{\s^{n-1}}\cdots X^\s X=a\}\}$ where $\s (h)=qh$.
\end{proposition}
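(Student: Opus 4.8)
The plan is to combine the classification of all prime ideals of $\CB$ (Theorem~\ref{qCAc28Aug20}.(2)) with the explicit description of the quotient rings of prime factor algebras (Theorem~\ref{qCAc28Aug20}.(4)) and the division-algebra criterion for the algebras $\CE$ (Corollary~\ref{b17Jun23}). First I would recall that by Theorem~\ref{qCAc28Aug20}.(2), every prime ideal of $\CB$ has the form $\CB\gr'$ for a unique $\gr'\in\Spec Z(\CB)$, and $\CB\gr'=\{0\}$ precisely when $\gr'=\{0\}$. Since $\CB$ is a domain, $\{0\}$ is a completely prime ideal, which gives the first listed member. So it remains to determine, for each nonzero $\gr'\in\Spec Z(\CB)$, exactly when $\CB/\CB\gr'$ is a domain.

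The key observation is that $\CB/\CB\gr'$ is a domain if and only if it equals its quotient ring $Q(\CB/\CB\gr')$, which by Theorem~\ref{qCAc28Aug20}.(4) is the central simple finite dimensional $k(\gr')$-algebra $\CE(\gr')=(E(s,\gr'),\s,a)$ with $E(s,\gr')=k(\gr')[h]/(h^n-s)$ and $\s(h)=qh$. Indeed, a finite dimensional algebra over a field is a domain if and only if it is a division algebra: a finite dimensional domain is automatically a division algebra (multiplication by any nonzero element is injective, hence bijective by dimension count), and division algebras are domains. Moreover $\CB/\CB\gr'$ injects into $Q(\CB/\CB\gr')=\CE(\gr')$ and has the same $k(\gr')$-dimension $n^2$ (both are free of rank $n^2$ over $k(\gr')$ by the PBW-type decomposition), so $\CB/\CB\gr'\simeq\CE(\gr')$ as $k(\gr')$-algebras. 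Therefore $\CB\gr'$ is completely prime if and only if the algebra $\CE(\gr')$ is a division algebra.

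Next I would invoke Corollary~\ref{b17Jun23}, applied with base field $F=k(\gr')$, which states that $\CE=(E(s),\s,a)$ (with $s\neq0$, $a\neq0$; here $s=h^n$ and $a=x^n$ are automatically units in $\CB$, hence nonzero in every quotient, so this hypothesis is satisfied) is a division algebra if and only if $E(s,\gr')=k(\gr')[h]/(h^n-s)$ is a field and $n=\min\{d'\geq1\mid d'\mid n,\ \exists\, X\in M_{d'}(E(s,\gr'))\text{ with }X^{\s^{n-1}}\cdots X^\s X=a\}$. Substituting this criterion into the description above yields exactly the set displayed in the statement. The identification $E(s,\gr')\simeq k(\gr')[h,h^{-1}]/(h^n-s)$ recorded in Theorem~\ref{qCAc28Aug20}.(4) is harmless since $h$ is a unit modulo $h^n-s$ (as $s\in k(\gr')^\times$).

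I do not expect a serious obstacle here: the proof is essentially an assembly of previously established facts. The one point requiring a small argument is the reduction ``$R/\gp$ is a domain $\iff$ $R/\gp\simeq Q(R/\gp)$ is a division algebra'', i.e. verifying that the finite-dimensional prime factor algebra coincides with its own quotient ring of fractions; this follows because $\CC_{\CB/\CB\gr'}$ consists of regular elements of a finite dimensional $k(\gr')$-algebra, which are automatically units, so the localization map is an isomorphism. Everything else is citation: Theorem~\ref{qCAc28Aug20} for the shape of $\Spec(\CB)$ and the quotient rings, and Corollary~\ref{b17Jun23} for the division-algebra criterion. I would write the proof in three short steps: (1) $\{0\}\in\Spec_c(\CB)$ and reduce to nonzero $\gr'$; (2) $\CB/\CB\gr'$ is a domain iff $\CE(\gr')$ is a division algebra; (3) apply Corollary~\ref{b17Jun23} to rewrite this condition explicitly.
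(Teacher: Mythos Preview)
Your overall strategy matches the paper's: classify primes via Theorem~\ref{qCAc28Aug20}.(2), identify the quotient ring of each prime factor via Theorem~\ref{qCAc28Aug20}.(4), and apply Corollary~\ref{b17Jun23}. However, your middle step contains a genuine error.

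You claim that for every nonzero $\gr'\in\Spec\, Z(\CB)$ the factor algebra $\CB/\CB\gr'$ is finite dimensional over $k(\gr')$ and in fact coincides with $\CE(\gr')$. This is only true when $\gr'$ is \emph{maximal}. When $\gr'$ has height $1$, the ring $Z(\CB)/\gr'$ is a one-dimensional domain with field of fractions $k(\gr')$, and $\CB/\CB\gr'=\bigoplus_{i,j}(Z(\CB)/\gr')h^ix^j$ is a proper subring of $\CE(\gr')=\bigoplus_{i,j}k(\gr')h^ix^j$, not equal to it. Your dimension-count argument and your remark that ``regular elements of a finite dimensional $k(\gr')$-algebra are automatically units'' both fail in this case, so as written your step (2) does not cover the height-$1$ primes that may (and do) appear in $\Spec_c(\CB)$.

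The fix is simple and is what the paper does: replace your equivalence by ``$\CB/\CB\gr'$ is a domain $\Longleftrightarrow$ $Q(\CB/\CB\gr')$ is a domain''. The forward direction holds because $Q(\CB/\CB\gr')$ is a central localization of a domain; the backward direction holds because $\CB/\CB\gr'$ injects into $Q(\CB/\CB\gr')$ (clear from the two direct-sum decompositions). Since $Q(\CB/\CB\gr')\simeq\CE(\gr')$ is simple Artinian, it is a domain if and only if it is a division algebra, and then Corollary~\ref{b17Jun23} applies exactly as you say.
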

 
\begin{proof}    A prime ideal $P$ of the algebra $\CB$ is a completely prime ideal if and only if the algebra $Q(\CB /  P)$ is a domain.
 By Theorem \ref{qCAc28Aug20}.(5), $\{0\}\in \Spec_c (\CB )$.
  
If $P\neq \{ 0\}$ then $P=\CB\gr'$ for some nonzero  prime ideal $\gr'\in \Spec \, Z(\CB)$.
By  Theorem \ref{qCAc28Aug20}.(4),
$$Q(\CB/\CB\gr')\simeq \CE (\gr')=(E(s,\gr'),\s , a)\;\; {\rm where}\;\; E(s,\gr')=k(\gr')[h]/(h^n-s)\;\; {\rm   and}\;\;\s (h) = qh.$$
 Now, the proposition  follows from  Corollary \ref{b17Jun23}.   \end{proof}

{\bf The skew polynomial algebra $\mA = K[h][x;\s ]$, $\s (h)=qh$,  and its prime spectrum.} Recall that  $\mA = K[h][x;\s ]$ is a skew polynomial algebra  where $K[h]$ is a polynomial algebra in the  variable $h$ and  the automorphism $\s$ of $K[h]$ is given by the rule $\s (h) = qh$. The algebra $\mA$ is a Noetheiran domain of Gelfand-Kirillov dimension $2$. The elements $x$ and $h$ are  regular normal elements of the algebra $\mA$ and the corresponding inner  automorphisms $\o_x$ and $\o_h$ of $\mA$ are given by the rule $\o_x:h\mapsto qh$, $x\mapsto x$ and $\o_h: h\mapsto h$, $x\mapsto q^{-1}x$. The automorphisms $\o_x$ and $\o_h$ have order $n$ ($q$ is a primitive $n$'th root of unity). 
Notice that 
\begin{equation}\label{CB=Arxi}
\CB \simeq \mA_{xh}=\mA_{x^nh^n}= \mA_{st}=\mA_{(xh)^n}
\end{equation}
since $(xh)^n=q^{1+2\cdots + (n-1)}h^nx^n=q^{\frac{n(n-1)}{2}}st=(-1)^{n-1}st\in Z(\mA )$, by (\ref{xnix2}). 

The centre of the algebra $\mA$  is equal to
\begin{equation}\label{qZmA}
Z(\mA )=A\cap Z(\CB )=K[s,t]=K[h^n, x^n]\;\; {\rm where} \;\; s=h^n,\;\;   t=x^n
\end{equation}
and $K[s,t]$ is a polynomial algebra in two variables $s$ and $t$:
\begin{eqnarray*}
K[s,t] &\subseteq &Z(\mA)\subseteq \mA\cap Z(\mA_{xh})=\mA \cap  Z(\CB)\stackrel{(\ref{qCAZmA})}{=}\mA \cap K[s^{\pm 1},t^{\pm 1}]\\
 &\stackrel{(\ref{qCAmASum})}{=}&K[s,t]\cap K[s^{\pm 1},t^{\pm 1}]=K[s,t].
\end{eqnarray*}

 The quantum Weyl algebra $A_1$ is a Noetherian domain of Gelfand-Kirillov dimension $2$. The localization $\mA_x$ of the algebra $\mA$ at the powers of the regular normal element $x$ 
 is isomorphic to the localization $A_{1,x}$ of the quantum  Weyl algebra $A_1$ at the powers of the element $x$:
 $$\mA_x=K[h][x, x^{-1}; \s ]\ra A_{1,x}, \;\; x\mapsto x,\;\; h\mapsto y x+\frac{1}{q-1}.$$
By (\ref{qZmA}), 
\begin{equation}\label{qmASum}
 \mA =\bigoplus_{i,j=0}^{n-1}Z(\mA ) h^ix^j.
\end{equation}

Since $\o_h (x^j)=q^{-j}x^j$ for all $j\geq 0$ and the elements $1,q^{-1},  \ldots , q^{-n+1}$ are distinct elements of the field $K$, the algebra 
\begin{equation}\label{qmASum1}
\mA =\bigoplus_{j=0}^{n-1}\Bigg(\bigoplus_{i=0}^{n-1}Z(\mA ) h^i\Bigg)x^j
\end{equation}
is a direct sum of eigenspases $\Big(\bigoplus_{i=0}^{n-1}Z(\mA ) h^i\Big)x^j$, $0\leq j\leq n-1$, of the automorphism $\o_h $ of the algebra $\mA$ and the set of eigenvalues of $\o_h$ is $\Ev_\mA (\o_h)=\{ q^{-j}\, | \, j=0,1, \ldots , n-1\}= \{ q^j\, | \, j=0,1, \ldots , n-1\}$.

Similarly, since $\o_x (h^i)=q^ih^i$ for all $i\geq 0$ and the elements $1,q,  \ldots , q^{n-1}$ are distinct elements of the field $K$, the algebra 
\begin{equation}\label{qmASum1-ox}
\mA =\bigoplus_{i=0}^{n-1}\Bigg(\bigoplus_{j=0}^{n-1}Z(\mA ) x^j\Bigg)h^i
\end{equation}
is a direct sum of eigenspases $\Bigg(\bigoplus_{j=0}^{n-1}Z(\mA ) x^j\Bigg)h^i$, $0\leq i\leq n-1$, of the automorphism $\o_x $ of the algebra $\mA$ and the set of eigenvalues of $\o_x$ is $\Ev_\mA (\o_x)=\{ q^i\, | \, j=0,1, \ldots , n-1\}$. \\

{\bf The prime spectrum of the algebra $\mA$.} The prime spectrum of the algebra $\mA$ is given in Theorem \ref{qAc28Aug20}. In the diagram below, the diagrams $$
  \begin{tikzpicture}
\node (q) at  (0,.5) {$\gq$};
\node (p)  at (0,0)  {$\gp$};
\draw [thick, shorten <=-2pt, shorten >=-2pt] (q) -- (p);
\end{tikzpicture} 
\;\; \;\;\;\; \;\;
\begin{tikzpicture}
\node (h) at  (0,.6) {$\CH$};
\node (g)  at (0,0)  {$\CG$};
\draw [dotted, thick, shorten <=-2pt, shorten >=-2pt] (h) -- (g);
\end{tikzpicture} 
$$
mean  $\gp \subsetneq \gq$ and  obvious inclusions between the sets of primes in sets $\CG$ and $\CH$, respectively. For a prime ideal $\gp$, we denote by ${\rm ht}(\gp )$ its height. The set of maximal ideals of an algebra $R$ is denoted by $\Max (R)$. For a non-empty subset $S$ of $R$, we denote by $V(S)$  the set of all prime ideals of $R$ that contain the set $S$.

\begin{theorem}\label{qAc28Aug20}

\begin{enumerate}
 \item  $\Spec (\mA )=\{\{ 0\}, (x), (h), (x, h)\}  )\, \sqcup \, \mX\, \sqcup \,\mM\, \sqcup \,\mathbb{H}\, \sqcup \,\CN$ where
 \begin{eqnarray*}
\mX &:=&\{ (x,\gq )\, | \,  \gq \in \Spec \,K[h]\backslash \{ 0, (h)\}\}, \\
\mM &:=& 	\{ \mA \gm\,|\, \gm \in \Max (Z(\mA )), s, t\not\in \gm \}, \\
\mathbb{H} &:=& \{ (h,\gq' )\, | \, \gq' \in \Spec \,K[x]\backslash \{ 0, (x)\}\}, \\
\CN &:=& \{\mA\gn \,|\, \gn \in \Spec\, Z(\mA ), {\rm ht} (\gn ) =1, \gn \neq (t), (s) \}, 
\end{eqnarray*}
 and $Z(\mA )=K[s,t]$ is a polynomial algebra in two variables $s=h^n$ and $ t=x^n$,   
\begin{align}
	\begin{tikzpicture}[scale=1.3]
	\node (I) at (-4,2) {\small$\mX$};
	\node (D) at (-4, 1) {\small$(x)$};	
	\node (p) at (0,1) { \small$\CN$};
	\node (m) at (1,2) {\small$\mM $};
	\node (m') at (-1,2) {\small$\{ (x,h)\}$};
	\node (0) at (0,0) {\small$0$};
	\node (I') at (4,2) {\small$\mathbb{H}$};
	\node (D') at (4, 1) {\small$(h)$};	
	\draw [ shorten <=-2pt, shorten >=-2pt] (0)--(D)--(I);
	\draw [ shorten <=-2pt, shorten >=-2pt] (0)--(D')--(I');
	\draw [ shorten <=-2pt, shorten >=-2pt] (D);
	\draw [ shorten <=-2pt, shorten >=-2pt] (0)--(p);
	\draw [semithick, dotted][ shorten <=-2pt, shorten >=-2pt]  (m)--(p)--(I);
	\draw [semithick, dotted][ shorten <=-2pt, shorten >=-2pt]  (p)--(I');
	\draw [shorten <=-2pt, shorten >=-2pt]  (D)--(m')--(D');
	\end{tikzpicture}  \label{qXYZA} 
	\end{align}
	\item  Every nonzero  ideal of the algebra $\mA$ meets the centre  $Z(\mA )$.
\item  $\Max (\mA ) =\mX \, \sqcup \, \{(x, h)\} \, \sqcup \,\mM\, \sqcup \,\mathbb{H}$.

\end{enumerate}
\end{theorem}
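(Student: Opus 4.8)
The plan is to determine $\Spec(\mA)$ by iterating the dévissage of Proposition~\ref{aA12Mar15} along the commuting normal regular elements $x$ and $h$, and then to read off the bottom stratum from the description of $\Spec(\CB)$ already obtained in Theorem~\ref{qCAc28Aug20}; throughout I use that $\mA=\bigoplus_{i,j=0}^{n-1}Z(\mA)h^ix^j$ is a free $Z(\mA)$-module, see (\ref{qmASum}), and that $\mA_x=\CA$ and $\mA_{xh}=\mA_{st}=\CB$. First I would apply Proposition~\ref{aA12Mar15} to $\mA$ with the normal regular element $x$ (the hypotheses hold since $\mA$ is a Noetherian domain and $x$ is normal with $\ker(\cdot x)=0$), obtaining $\Spec(\mA)=\Spec(\mA,x)\sqcup\Spec_x(\mA)$ with $\Spec(\mA,x)$ in bijection with $\Spec(\mA/(x))$ and $\Spec_x(\mA)$ in bijection with $\Spec(\CA)$. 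Since $\mA/(x)\simeq K[h]$, the stratum $\Spec(\mA,x)$ equals $\{(x),(x,h)\}\sqcup\mX$, with $(x),(x,h),(x,\gq)$ the pre-images of $0,(h),\gq$ in $\Spec(K[h])$; these are prime in $\mA$ because $\mA/(x)\simeq K[h]$, $\mA/(x,h)\simeq K$, $\mA/(x,\gq)\simeq K[h]/\gq$.

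Next I would apply Proposition~\ref{aA12Mar15} to $\CA=K[h][x^{\pm 1};\s]$ with the normal regular element $h$, obtaining $\Spec(\CA)=\Spec(\CA,h)\sqcup\Spec_h(\CA)$ with $\Spec(\CA,h)$ in bijection with $\Spec(\CA/(h))=\Spec(K[x^{\pm 1}])$ and $\Spec_h(\CA)$ in bijection with $\Spec(\CA_h)=\Spec(\CB)$. Transporting this back to $\mA$ along the bijection $\Spec_x(\mA)\simeq\Spec(\CA)$, and using that a prime $\gp$ of $\mA$ with $x\notin\gp$ satisfies $h\in\CA\gp$ iff $h\in\gp$, the pre-image of $\Spec(\CA,h)$ is $\{(h)\}\sqcup\mathbb{H}$, with $(h)$ and $(h,\gq')$ corresponding to $0$ and to the nonzero primes $\gq'\in\Spec(K[x])\setminus\{0,(x)\}$; these are prime in $\mA$ since $\mA/(h)\simeq K[x]$ and $\mA/(h,\gq')\simeq K[x]/\gq'$.

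For the remaining stratum I would invoke Theorem~\ref{qCAc28Aug20}.(2): $\Spec(\CB)=\{\CB\gr'\mid\gr'\in\Spec Z(\CB)\}$ with $Z(\CB)=K[s^{\pm 1},t^{\pm 1}]=Z(\mA)_{st}$, so $\Spec Z(\CB)$ is identified with $\Spec(K[s,t])\setminus(V(s)\cup V(t))$. Writing $\gn:=\gr'\cap Z(\mA)$ one has $\CB\gr'=\CB\gn$, and its contraction to $\mA$ is $\mA\gn$, because $\CB\gn\cap\mA=\{a\in\mA\mid(st)^ka\in\mA\gn\text{ for some }k\}=\mA\gn$ since $s,t\notin\gn$ makes the central element $st=h^nx^n$ act as a nonzerodivisor on the free $Z(\mA)/\gn$-module $\mA/\mA\gn$; and $\mA\gn$ is automatically prime, being a contraction under the bijections of Proposition~\ref{aA12Mar15}. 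As $\gn$ runs through $\{0\}$, the height-$1$ primes of $K[s,t]$ (which are automatically distinct from $(s),(t)$, hence avoid $s,t$), and the maximal ideals avoiding $s,t$, this stratum becomes $\{0\}\sqcup\CN\sqcup\mM$. Assembling the three pieces yields the asserted partition of $\Spec(\mA)$; the solid edges of diagram (\ref{qXYZA}) are the inclusions produced by the construction, and the dotted edges record the remaining obvious possibilities.

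Part~(2) follows because every nonzero prime in the list meets $Z(\mA)$: $t=x^n$ lies in $(x)$, hence in every member of $\mX$; $s=h^n$ lies in $(h)$, in $(x,h)$, and in every member of $\mathbb{H}$; and members of $\mM$ and $\CN$ meet $Z(\mA)$ by construction. Since $\mA$ is Noetherian, a nonzero ideal $I$ has finitely many minimal primes, all nonzero, each meeting the domain $Z(\mA)=K[s,t]$ nontrivially, so $\sqrt{I}\cap Z(\mA)\neq 0$ and hence $I\cap Z(\mA)\neq 0$. For part~(3): $\mA/(x,\gq)$, $\mA/(x,h)$, $\mA/(h,\gq')$ are finite field extensions of $K$, so $\mX$, $(x,h)$ and $\mathbb{H}$ consist of maximal ideals; for $\gm\in\Max(Z(\mA))$ with $s,t\notin\gm$ the elements $s,t$ are units modulo $\mA\gm$, whence $\mA/\mA\gm\simeq\CB/\CB\gm'$ is a central simple, hence simple artinian, finite-dimensional $k(\gm)$-algebra by Theorem~\ref{qCAc28Aug20}.(4), giving $\mM\subseteq\Max(\mA)$; finally $\{0\}$, $(x)$, $(h)$ and every member of $\CN$ lie strictly below some prime already listed, so are not maximal, and $\Max(\mA)=\mX\sqcup\{(x,h)\}\sqcup\mM\sqcup\mathbb{H}$. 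The main obstacle is pure bookkeeping: checking that the bijections of Proposition~\ref{aA12Mar15} carry the distinguished ideals to the claimed ones and that the three strata are genuinely disjoint and exhaustive — all forced by the freeness of $\mA$ over its centre and the regularity of $s,t,x,h$ on the relevant factor rings, so no essentially new idea beyond Theorem~\ref{qCAc28Aug20} and Proposition~\ref{aA12Mar15} is needed.
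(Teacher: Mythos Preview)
Your proof is correct and follows essentially the same strategy as the paper: both use Proposition~\ref{aA12Mar15} to peel off the primes containing the normal elements $x$ and $h$, and then invoke Theorem~\ref{qCAc28Aug20} for the stratum over $\CB$. The only organizational difference is that the paper applies Proposition~\ref{aA12Mar15} once with the single normal element $xh$ (using $\mA_{xh}=\CB$ directly and then sorting the primes containing $xh$ according to whether they contain $x$, $h$, or both), whereas you apply it twice in succession, first with $x$ and then with $h$ on $\CA=\mA_x$; the resulting partitions coincide. Your arguments for parts~(2) and~(3) spell out what the paper leaves as ``follows from statement~1'', and are correct as written.
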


\begin{proof}   1. Recall that $\CB=\mA_{xh}=\mA_{st}$ where $st\in Z(\mA)$, the element $xh$ is a normal element of $\mA$,  and the element $st$ is a central  element of $\mA$. By Proposition \ref{aA12Mar15},
$$ \Spec (\mA)=\Spec(\mA, xh)\, \sqcup \,\Spec_{xh}(\mA).$$
Clearly, $\Spec(\mA, xh)=\{ (x), (h), (x,h)\}\, \sqcup \, \mX\, \sqcup \,\mathbb{H}$. By Proposition \ref{aA12Mar15}.(2), Theorem \ref{qCAc28Aug20}.(2),  and (\ref{qCA1Sum4}),
$$ \Spec_{xh}(\mA) = \{ \mA\cap \CB \gp\, | \, \gp \in \Spec \, K[s^{\pm 1},t^{\pm 1}]\} = \{ \mA \gr \, | \, \gr\in \Spec \, K[s,t],  s,t\not\in\gr\}$$
since  $\mA\cap \CB \gp=\mA(Z(\mA)\cap \gp)=\mA \gr$ where $\gr =Z(\mA)\cap \gp$. Now, the first equality of statement 1 follows. The containment information in the diagram follows from Proposition  
\ref{aA12Mar15}.(3). 

2 and 3. Statements 2 and 3 follow from statement 1.   \end{proof}

{\bf The quotient rings of prime factor algebras of $\mA$ and a classification of simple $\mA$-modules.}  Corollary \ref{QqAc28Aug20} describes the quotient rings of prime factor algebras of the algebra $\mA$.

\begin{corollary}\label{QqAc28Aug20}

\begin{enumerate}
\item For each $(x,\gq )\in \mX$, $\mA/(x,\gq )=
Q(\mA/(x,\gq ))=K[h]/\gq$ is a field.

\item For each $(h,\gq' )\in \mathbb{H}$, $\mA/(h,\gq' )=
Q(\mA/(h,\gq' ))=K[x]/\gq'$ is a field. 

\item $\mA/(x, h)=Q(\mA/(x, h))=K$.

\item For each prime ideal $\mA\gr \in \CN \cup \mM$ (see Theorem \ref{qAc28Aug20}.(1)), the quotient ring  of the algebra $\mA / \mA \gr$, $$Q(\mA /\mA \gr)=\bigoplus_{i,j=0}^{n-1}k(\gr )h^ix^j\simeq k(\gr )\t_{Z(\mA )}\mA,$$ is a central simple $n^2$-dimensional algebra over the field $k(\gr )$ of fractions of the commutative domain $Z(\mA )/\gr$.
The algebra $Q(\mA /\mA \gr)$ is isomorphic to the algebra $\CE (\gr )=(E(s,\gr), \s, a)$ where $E(s,\gr):=k(\gr )[h]/(h^n-s)$ and  $\s (h)=qh$. 

\item The quotient ring  of the algebra $\mA$, $$Q(\mA)=Q(\CB)=\bigoplus_{i,j=0}^{n-1}K(s,t)h^ix^j\simeq K(s,t)\t_{Z(\mA )}\mA,$$ is a central simple $n^2$-dimensional algebra over the field $Z(Q(\mA))=K(s,t)$ of rational functions in two variables.  The algebra $Q(\mA )$  is isomorphic to the cyclic algebra $\CE (0):=(E(s,0), \s, a)$ where $E(s,0):=K(s,t)[h]/(h^n-s)$ and  $\s (h)=qh$.

\item $\mA/(x)=K[h]$ and $Q(\mA/(x))=K(h)$.
\item $\mA/(h)=K[x]$ and $Q(\mA/(h))=K(x)$.
\end{enumerate}
\end{corollary}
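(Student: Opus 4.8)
The plan is to deduce all seven statements from the prime spectrum description in Theorem~\ref{qAc28Aug20}, the structure of the algebra $\CB$ in Theorem~\ref{qCAc28Aug20}, and the basic fact that $\CB=\mA_{xh}$ is an Ore localization of $\mA$.

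\emph{Parts (6) and (7)} are immediate from the presentation $\mA=K[h][x;\s]$: killing the regular normal element $x$ collapses the skew polynomial variable, giving $\mA/(x)\cong K[h]$, a commutative polynomial domain, whose quotient ring is $K(h)$; symmetrically $\mA/(h)\cong K[x]$ with quotient ring $K(x)$. \emph{Parts (1)--(3)} follow by a further quotient: for $(x,\gq)\in\mX$ we obtain $\mA/(x,\gq)\cong K[h]/\gq$, and since $\gq$ is a nonzero prime of the principal ideal domain $K[h]$ it is maximal, so $K[h]/\gq$ is a field and hence coincides with its own quotient ring; the case $(h,\gq')\in\mathbb{H}$ is symmetric, and $\mA/(x,h)\cong K$ is trivial.

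The content is in \emph{parts (4) and (5)}. First note that for every $\gr\in\CN\cup\mM$ one has $s,t\notin\gr$: for $\gr\in\mM$ this is part of the definition, while for $\gr\in\CN$ it holds because $\gr$ is a height-one prime of the unique factorization domain $Z(\mA)=K[s,t]$ different from $(s)$ and $(t)$, and $s,t$ are irreducible. Consequently $\mA\gr$ lies in $\Spec_{xh}(\mA)$, and under the bijection of Proposition~\ref{aA12Mar15}.(2) (used in the proof of Theorem~\ref{qAc28Aug20}.(1)) it corresponds to $\CB\gr'$ with $\gr'=Z(\CB)\gr$; moreover $\big(\mA/\mA\gr\big)_{xh}\cong\CB/\CB\gr'$. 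Since a Noetherian prime ring and its Ore localization have the same quotient ring, $Q(\mA/\mA\gr)\cong Q(\CB/\CB\gr')$, and Theorem~\ref{qCAc28Aug20}.(4) (with $k(\gr')=k(\gr)$) identifies the latter with the central simple $n^2$-dimensional algebra $\CE(\gr)=(E(s,\gr),\s,a)$ over $k(\gr)$, proving (4). Part (5) is the case $\gr=\{0\}$: $Q(\mA)=Q(\CB)$ because $\CB=\mA_{xh}$, and Theorem~\ref{qCAc28Aug20}.(5) identifies $Q(\CB)$ with the cyclic algebra $\CE(0)=(E(s,0),\s,a)$ over $K(s,t)$.

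Alternatively, (4) can be proved directly: by (\ref{qmASum}) the factor algebra $\mA/\mA\gr\cong\bigoplus_{i,j=0}^{n-1}\big(Z(\mA)/\gr\big)h^ix^j$ is a finite module over the central domain $Z(\mA)/\gr$, so localizing at the central denominator set $\big(Z(\mA)/\gr\big)\setminus\{0\}$ yields $k(\gr)\t_{Z(\mA)}\mA$, which is isomorphic to $\CE(\gr)$ and is a central simple algebra of degree $n$ by Theorem~\ref{qA2Sep20}.(3a), since the images of $s=h^n$ and $a=t=x^n$ in $k(\gr)$ are nonzero; a central simple artinian ring being its own quotient ring, $Q(\mA/\mA\gr)=\CE(\gr)$. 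The only point requiring care in either route is the bookkeeping that localizing at the powers of the regular normal element $xh$ carries $\mA/\mA\gr$ onto an order in $\CB/\CB\gr'$, i.e. that $\gr$ is $(xh)$-saturated; this is exactly the content of $s,t\notin\gr$ together with Proposition~\ref{aA12Mar15}, so no real difficulty arises. Everything else is formal.
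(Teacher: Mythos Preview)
Your proof is correct and follows essentially the same route as the paper: parts (1)--(3), (6), (7) are read off from the prime spectrum description and the presentation $\mA=K[h][x;\s]$, while (4) is deduced from the equality $Q(\mA/\mA\gr)=Q(\CB/\CB\gr')$ together with Theorem~\ref{qCAc28Aug20}.(4), and (5) is the special case $\gr=0$. Your write-up is more detailed than the paper's (which simply cites the relevant results), and your alternative direct argument via (\ref{qmASum}) and Theorem~\ref{qA2Sep20}.(3a) is a valid variant, but neither constitutes a genuinely different approach.
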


\begin{proof}   1--3, 6 and 7.  Statements 1--3, 6 and 7 follow from Theorem \ref{qAc28Aug20}.(1).

4. Statement 4 follows from Theorem \ref{qCAc28Aug20}.(4), since $Q(\mA/ \mA \gr ) = Q(\CB / \CB \gr)$.

5. Statement 5 is a particular case of  statement 4. \end{proof}

  Corollary \ref{aQqAc28Aug20}.(3) is a classification of primitive ideals of the algebra $\mA$. Every primitive  ideal of the algebra $\mA$ is a maximal ideal (and vice versa) and co-finite (Corollary \ref{aQqAc28Aug20}.(4)). There is a one-to-one correspondence between the set of primitive ideals of the algebra $\mA$ and the set of isomorphism classes of simple $\mA$-modules (Corollary \ref{aQqAc28Aug20}.(5)). Every simple $\mA$-module is finite dimensional over the field $K$.  

\begin{corollary}\label{aQqAc28Aug20}
The algebra $\mA$ satisfies the assumptions of Corollary \ref{b2Aug23}, ${\rm ZL}(\mA )=\Spec (Z(\mA ))$, and Theorem \ref{2Aug23} holds:
 \begin{enumerate}
\item The ring $\mA $ is PLM-ring with ${\rm PL}(\mA )=\Max (Z(\mA ))$.

\item  For every $\gm \in \Max (Z(\mA ))$, $\Prim (\mA ,\gm)=\{ \mA \gm\}$.

\item $\Prim(\mA )=\Max (\mA )=\{ \mA \gm\, | \, \gm\in \Max (Z(\mA ))\}=\mX \, \sqcup \, \{(x, h)\} \, \sqcup \,\mM\, \sqcup \,\mathbb{H}$.

 \item For each maximal ideal $\gm \in\mM$, the factor algebra 
 $$\mA / \mA \gm =Q(\mA /\mA \gm)=\bigoplus_{i,j=0}^{n-1}k(\gm )h^ix^j\simeq k(\gm )\t_{Z(\mA )}\mA,$$ is a central simple $n^2$-dimensional algebra over the field $k(\gm )=Z(\mA )/\gm$. The algebra $\mA /\mA \gm$ is isomorphic to the algebra $\CE (\gm )=(E(s,\gm), \s, a)$ where $E(s,\gm):=k(\gm )[h]/(h^n-s)$ and  $\s (h)=qh$. Let $U(\gm)$ be a unique simple  $\CE (\gm )$-module (described in (\ref{xu=usxe}) and Theorem \ref{12Jun23}.(1,2))). 

\item $\widehat{\mA } =\{ U(\gm )\, | \, \gm \in \Max(Z(\mA ))\}$  where $U(\gm)$ is a unique simple module of the simple artinian ring $k(\gm)\t_{Z(\mA )}\mA /\mA \gm$
$$
U(\gm):=\begin{cases}
K[h]/\gq & \text{if }\gm=(x,\gq)\in \mX,\\
K& \text{if }\gm = (x,h),\\
 & \text{ is described in (\ref{xu=usxe}) and Theorem \ref{12Jun23}.(1,2))  if $\gm \in \mM$},\\
K[x]/\gq' & \text{if }\gm=(h,\gq')\in \mathbb{H}.
\end{cases}
$$
All simple $\mA$-modules are finite dimensional.

\item The map $\widehat{\mA } \ra \Prim (\mA )$, $U(\gm )\mapsto \ann_{\mA }(U(\gm))=\mA /\mA \gm$ is a bijection with inverse $\mA \gm\mapsto U(\gm)$.

\item For $\gm \in \Max (Z(\mA ))$, 
$$
\End_{\mA}(U(\gm)):=\begin{cases}
K[h]/\gq & \text{if }\gm=(x,\gq)\in \mX,\\
K& \text{if }\gm = (x,h),\\
D(\gm)  & \text{ is described in  Theorem \ref{12Jun23}  if $\gm \in \mM$},\\
K[x]/\gq' & \text{if }\gm=(h,\gq')\in \mathbb{H}.
\end{cases}
$$
\end{enumerate}

\end{corollary}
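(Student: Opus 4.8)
The plan is to derive Corollary \ref{aQqAc28Aug20} almost entirely from results already established: the prime-spectrum description in Theorem \ref{qAc28Aug20}, the quotient-ring description in Corollary \ref{QqAc28Aug20}, the general PLM machinery in Theorem \ref{2Aug23} and Corollary \ref{b2Aug23}, and the structure theory of the algebras $\CE$ from Section \ref{ALGCE}. The first step is to verify that $\mA$ satisfies the hypotheses of Corollary \ref{b2Aug23}: namely that $\mA$ is a free $Z(\mA)$-module, and that for every $\gp \in \Spec(Z(\mA))$ the ring $k(\gp)\t_{Z(\mA)}\mA/\gp$ is a simple artinian ring. Freeness is immediate from the decomposition $\mA=\bigoplus_{i,j=0}^{n-1}Z(\mA)h^ix^j$ in (\ref{qmASum}), since $Z(\mA)=K[s,t]$ is a polynomial ring. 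For the simplicity/artinian condition I would split $\Spec(Z(\mA))$ according to whether $s,t$ lie in $\gp$: if $s,t\notin\gp$ (so $\gp$ gives a prime in $\CN\cup\mM$ or $\gp=0$), Corollary \ref{QqAc28Aug20}.(4,5) identifies $k(\gp)\t_{Z(\mA)}\mA/\gp$ with the algebra $\CE(\gp)=(E(s,\gp),\s,a)$, which is central simple $n^2$-dimensional over $k(\gp)$ by Theorem \ref{qA2Sep20}.(3a) (here $s,a$ are nonzero in $k(\gp)$ because $s,t\notin\gp$); if $s\in\gp$ or $t\in\gp$ then Corollary \ref{QqAc28Aug20}.(1,2,3) shows the quotient is a finite field extension $K[h]/\gq$, $K[x]/\gq'$, or $K$, which are fields, hence simple artinian. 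So the hypothesis holds in all cases, giving ${\rm ZL}(\mA)=\Spec(Z(\mA))$ and that Theorem \ref{2Aug23} applies.

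With Theorem \ref{2Aug23} in hand, statements (1)--(3), (6), and (7) of the corollary follow almost formally. Statement (1) is Theorem \ref{2Aug23}.(1) plus Lemma \ref{a2Aug23}.(2) (or directly Corollary \ref{b31Jul23}), giving ${\rm PL}(\mA)=\Max(Z(\mA))$. Statement (2) is Theorem \ref{2Aug23}.(2). For statement (3), $\Prim(\mA)=\Max(\mA)=\{\mA\gm\mid\gm\in\Max(Z(\mA))\}$ is Theorem \ref{2Aug23}.(3); the extra identification of this set with $\mX\sqcup\{(x,h)\}\sqcup\mM\sqcup\mathbb{H}$ is exactly Theorem \ref{qAc28Aug20}.(3), so one just has to quote both. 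Statement (6) — the bijection $\widehat{\mA}\ra\Prim(\mA)$, $U(\gm)\mapsto\ann_{\mA}(U(\gm))=\mA/\mA\gm$ — is Theorem \ref{2Aug23}.(5). Statement (7) is Theorem \ref{2Aug23}.(6) combined with the explicit identification of the simple artinian rings in the four cases. The remaining work is the bookkeeping in statements (4) and (5).

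For statement (4), the point is that for $\gm\in\mM$ the maximal ideal does not contain $s$ or $t$, so Corollary \ref{QqAc28Aug20}.(4) applies verbatim: $\mA/\mA\gm=Q(\mA/\mA\gm)\simeq\CE(\gm)=(E(s,\gm),\s,a)$ (the equality $\mA/\mA\gm=Q(\mA/\mA\gm)$ because the factor is already simple artinian, hence its own quotient ring), and $U(\gm)$ is the unique simple module of $\CE(\gm)$ supplied by Theorem \ref{12Jun23}.(1,2) and (\ref{xu=usxe}). For statement (5), I would just assemble the four cases: by Theorem \ref{2Aug23}.(4), $\widehat{\mA}=\{U(\gm)\mid\gm\in\Max(Z(\mA))\}$ where $U(\gm)$ is the unique simple module of $k(\gm)\t_{Z(\mA)}\mA/\mA\gm$, and then one reads off from Corollary \ref{QqAc28Aug20} and Theorem \ref{qAc28Aug20} which ring this is: $K[h]/\gq$ for $\gm=(x,\gq)\in\mX$, $K$ for $\gm=(x,h)$, the module from Theorem \ref{12Jun23} for $\gm\in\mM$, and $K[x]/\gq'$ for $\gm=(h,\gq')\in\mathbb{H}$. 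Finite-dimensionality over $K$ follows because in each case the quotient ring is a finite-dimensional $K$-algebra (finite field extension of $K$, or $n^2$-dimensional over $k(\gm)$ which is itself finite over $K$ since $\gm\in\Max(K[s,t])$). Finally, statement (7) computes $\End_{\mA}(U(\gm))$: in the field cases the endomorphism ring of the simple module over itself is the field itself, and in the $\mM$ case it is $D(\gm)$ where $\CE(\gm)\simeq M_{n(\gm)}(D(\gm))$, by Theorem \ref{12Jun23} (writing endomorphisms on the right).

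I do not expect a genuine obstacle here, since every ingredient is already proved; the only mild subtlety is making sure the hypothesis of Corollary \ref{b2Aug23} is checked for \emph{all} of $\Spec(Z(\mA))$ and not just the ``generic'' stratum $\mM\cup\CN$ — in particular one must confirm that when $s\in\gp$ or $t\in\gp$ the relevant tensor product collapses to the field described in Corollary \ref{QqAc28Aug20}.(1)--(3), which requires noting that these prime quotients of $\mA$ coincide with their own quotient rings and are already simple. Once that verification is recorded, the corollary is a direct specialization of Theorem \ref{2Aug23} together with the arithmetic of the spectrum from Theorem \ref{qAc28Aug20} and the case analysis of Corollary \ref{QqAc28Aug20}.
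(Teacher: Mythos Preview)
Your approach is exactly the paper's: verify the hypotheses of Corollary \ref{b2Aug23} via Corollary \ref{QqAc28Aug20}, then read off (1)--(7) from Theorem \ref{2Aug23}. But the verification has a real gap, precisely where you flagged a ``mild subtlety''. Your claim that ``if $s\in\gp$ or $t\in\gp$ then Corollary \ref{QqAc28Aug20}.(1,2,3) shows the quotient is a finite field extension'' conflates primes of $\mA$ with primes of $Z(\mA)$. Corollary \ref{QqAc28Aug20}.(1)--(3) computes $\mA/P$ for $P\in\{(x,\gq),(h,\gq'),(x,h)\}\subset\Spec(\mA)$, not $k(\gp)\otimes_{Z(\mA)}\mA$ for $\gp\in\Spec(Z(\mA))$. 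Take $\gp=(t,s-\lambda)\in\Max(Z(\mA))$ with $\lambda\in K^\times$: then $\mA\gp=(x^n,h^n-\lambda)$, so
\[
k(\gp)\otimes_{Z(\mA)}\mA\;\simeq\;\mA/\mA\gp\;\simeq\;\big(E(\lambda),\sigma,0\big),\qquad E(\lambda)=K[h]/(h^n-\lambda),
\]
which by Theorem \ref{qA2Sep20}.(3b) has radical $(x)\neq 0$ and is \emph{not} simple. The same failure occurs at $\gp=(t)$, $(s)$, $(s,t)$, and every maximal ideal in $V(s)\cup V(t)$. Since $(t),(s)\in{\rm ZL}(\mA)$ (they are the contractions of $(x),(h)\in\Spec(\mA)$), neither Corollary \ref{b2Aug23} nor Theorem \ref{2Aug23} applies to $\mA$ as stated.

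This gap is shared by the paper's own proof, and it affects the literal content of the statement: for $\lambda=\mu^n\in K^\times$ the fiber $\Prim\big(\mA,(t,s-\lambda)\big)$ contains all $n$ distinct ideals $(x,h-q^i\mu)$ while $\mA\cdot(t,s-\lambda)$ is not even prime, so (2) and the middle equality in (3) are false as written. The substantive conclusions---$\Prim(\mA)=\Max(\mA)=\mX\sqcup\{(x,h)\}\sqcup\mM\sqcup\mathbb{H}$, the list of simples in (5), and the endomorphism rings in (7)---are nonetheless correct; they can be obtained by applying Theorem \ref{2Aug23} only on the stratum $s,t\notin\gp$ (where your argument is fine) and handling $V(s)\cup V(t)$ directly from Corollary \ref{QqAc28Aug20}.(1)--(3),(6),(7) and Lemma \ref{a25Jun23}, bypassing the PLM machinery there.
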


\begin{proof} By Corollary \ref{QqAc28Aug20}, the  algebra $\mA $ satisfies the assumptions of Corollary \ref{b2Aug23}, and so  ${\rm ZL}(\mA )=\Spec (Z(\mA ))$, and Theorem \ref{2Aug23} holds, i.e. the corollary holds apart from statement 4, 7,  and an explicit description of the simple modules in statement 5.  Statement 4 follows from Corollary  \ref{QqAc28Aug20}.(4).
 The explicit description of the simple modules in statement 5 follows from Corollary \ref{QqAc28Aug20}.(1--3)  and  statements 1 and  2. Statement 7 follows from statement 5 and Corollary \ref{QqAc28Aug20}.(1--4). 
 \end{proof}

{\bf Classification of completely prime ideals of the algebra $\mA$.}   Proposition  \ref{B20Jul23} is a classification of completely prime ideals of the algebra $\mA$.

\begin{proposition}\label{B20Jul23}
$\Spec_c (\mA )=\Big\{ \{ 0\}, (x), (h), (x,h)\} \, \sqcup \, \mX\, \sqcup \, \mathbb{H}\,  \sqcup \, \{  \CB\gr\in \CN\cup \mM \, | \,   E(s,\gr)=k(\gr)[h]/(h^n-s)$
is a field and $n=\min\{ d'\geq 1\, | \,  d'\mid n$  there is a matrix $X\in M_{d'}(E(s,\gr))$ such that $ X^{\s^{n-1}}\cdots X^\s X=a\}\Big\}$ where $\s (h)=qh$.
\end{proposition}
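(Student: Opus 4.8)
The strategy is to combine the structural description of $\Spec(\mA)$ from Theorem \ref{qAc28Aug20}.(1) with the identification of the relevant quotient rings given in Corollary \ref{QqAc28Aug20}, and then apply the criterion for a central simple finite-dimensional algebra to be a division algebra (Corollary \ref{b17Jun23}). Recall that a prime ideal $P$ of $\mA$ is completely prime if and only if $Q(\mA/P)$ is a domain: indeed $\mA/P\hookrightarrow Q(\mA/P)$, so if the quotient ring is a domain then $\mA/P$ is a domain, and conversely a prime Noetherian ring with no zero divisors has a quotient ring which is a division ring, hence a domain. So the task reduces to deciding, for each stratum of $\Spec(\mA)$ listed in Theorem \ref{qAc28Aug20}.(1), whether the corresponding quotient ring from Corollary \ref{QqAc28Aug20} is a domain.

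First I would dispose of the ``easy'' strata. By Corollary \ref{QqAc28Aug20}.(1--3,6,7), for $(x,\gq)\in\mX$ the quotient ring is the field $K[h]/\gq$; for $(h,\gq')\in\mathbb{H}$ it is the field $K[x]/\gq'$; for $(x,h)$ it is $K$; for $(x)$ it is $K(h)$; for $(h)$ it is $K(x)$; and for $\{0\}$ it is $Q(\mA)=Q(\CB)$, which by Corollary \ref{QqAc28Aug20}.(5) is a central simple $n^2$-dimensional algebra over $K(s,t)$ isomorphic to the cyclic algebra $\CE(0)=(E(s,0),\s,a)$ — and this is in fact a division algebra since it is the quotient division ring of the Noetherian domain $\mA$. (Alternatively one reads this off Theorem \ref{qCAc28Aug20}.(5).) Hence all of $\{0\},(x),(h),(x,h)$ together with every element of $\mX$ and $\mathbb{H}$ are completely prime. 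This accounts for all the terms in the asserted answer except those coming from $\CN\cup\mM$.

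Next, for the remaining stratum: a prime ideal of $\mA$ not containing $x$ or $h$ and meeting $Z(\mA)$ in a nonzero prime $\gr$ (equivalently in $\CN$ or $\mM$, and these are precisely the ideals $\mA\gr$ with $\gr\in\Spec Z(\mA)$, $s,t\notin\gr$, $\gr\neq 0$) has quotient ring $Q(\mA/\mA\gr)\simeq \CE(\gr)=(E(s,\gr),\s,a)$ with $E(s,\gr)=k(\gr)[h]/(h^n-s)$ and $\s(h)=qh$, by Corollary \ref{QqAc28Aug20}.(4). Since $s,t\notin\gr$ we have $s,a\neq 0$ in $k(\gr)$ (here $a=t$ viewed in $\CE(\gr)$ — or more precisely $a$ is the image of $t$, which is invertible), so Corollary \ref{b17Jun23} applies: $\CE(\gr)$ is a division algebra if and only if $E(s,\gr)$ is a field and $n=\min\{d'\geq 1\,|\,d'\mid n,\ \exists\, X\in M_{d'}(E(s,\gr))\ \text{with}\ X^{\s^{n-1}}\cdots X^\s X=a\}$. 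This is exactly the condition appearing in the statement, so $\mA\gr\in\Spec_c(\mA)$ precisely for those $\gr\in\CN\cup\mM$ satisfying it. Collecting the two cases gives the claimed description of $\Spec_c(\mA)$.

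The only mild subtlety — and the place I would be most careful — is bookkeeping: Theorem \ref{qAc28Aug20}.(1) partitions $\Spec(\mA)$ as $\{0\},(x),(h),(x,h)$ together with $\mX,\mM,\mathbb{H},\CN$, and one must check that every stratum has been matched to the correct quotient ring and that the strata $\CN$ and $\mM$ are exactly the ideals of the form $\mA\gr$ with $\gr$ a nonzero prime of $Z(\mA)$ avoiding $s$ and $t$ (this is how they are defined in Theorem \ref{qAc28Aug20}.(1), since $(s),(t)$ are excluded from $\CN$ and $s,t\notin\gm$ is imposed in $\mM$, while height-$1$ primes avoiding $s,t$ together with all maximal ideals avoiding $s,t$ exhaust the nonzero primes of the polynomial ring $K[s,t]$ avoiding $s,t$). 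There is no deep obstacle; the proof is essentially the assembly of Corollary \ref{QqAc28Aug20} and Corollary \ref{b17Jun23} over the strata of Theorem \ref{qAc28Aug20}.(1), exactly paralleling the proof of Proposition \ref{A20Jul23} for $\CB$.
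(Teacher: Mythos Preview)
Your proposal is correct and follows essentially the same route as the paper's proof: reduce to checking whether $Q(\mA/P)$ is a domain, use Corollary \ref{QqAc28Aug20}.(1--3,5--7) to conclude that $\{0\},(x),(h),(x,h)$ and all of $\mX,\mathbb{H}$ are completely prime, and for $P=\mA\gr\in\CN\cup\mM$ identify $Q(\mA/\mA\gr)\simeq\CE(\gr)$ via Corollary \ref{QqAc28Aug20}.(4) and apply the division-algebra criterion Corollary \ref{b17Jun23}. Your additional remarks on bookkeeping and on why $Q(\mA)$ is a division ring are sound but not needed for the argument.
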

 
\begin{proof}    A prime ideal $P$ of the algebra $\mA$ is a completely prime ideal if and only if the algebra $Q(\mA /  P)$ is a domain.
 By Corollary \ref{QqAc28Aug20}.(1--3,6,7), $$\Spec_c (\mA )\supseteq \Big\{ \{ 0\}, (x), (h), (x,h)\} \, \sqcup \, \mX\, \sqcup \, \mathbb{H}.$$
  
If $P=\mA\gr\in \CN\cup \mM$ then, by  Corollary  \ref{QqAc28Aug20}.(4),
$$Q(\mA/\mA\gr )\simeq \CE (\gr)=(E(s,\gr),\s , a)\;\; {\rm where}\;\; E(s,\gr)=k(\gr)[h]/(h^n-s)\;\; {\rm   and}\;\;\s (h) = qh.$$
 Now, the proposition  follows from  Corollary \ref{b17Jun23}.   \end{proof}

{\bf The  algebra $\CA = K[h][x^{\pm 1};\s ]$, $\s (h)=qh$,  and its prime spectrum.} Recall that  $\CA = K[h][x^{\pm 1};\s ]$ is a skew polynomial algebra  where $K[h]$ is a polynomial algebra in the  variable $h$ and  the automorphism $\s$ of $K[h]$ is given by the rule $\s (h) = qh$, and $\CA /(h)\simeq K[x^{\pm 1}]$. The algebra $\CA$ is a Noetheiran domain of Gelfand-Kirillov dimension $2$. The element $h$ is a  regular normal element of the algebra $\CA$ and the corresponding  automorphism $\o_h$ of $\CA$ is  given by the rule $\o_h: h\mapsto h$, $x\mapsto q^{-1}x$. The automorphism $\o_h$ has order $n$ ($q$ is a primitive $n$'th root of unity). Recall that $\CA = \mA_x$, $s=h^n$, and $t=x^n$. 
By (\ref{CB=Arxi}),  
\begin{equation}\label{CACB=Arxi}
\CB \simeq \mA_{xh}=\Big(\mA_x \Big)_h=\CA_h.
\end{equation}
The centre of the algebra $\mA$  is equal to
\begin{equation}\label{CAqZmA}
Z(\CA )=\CA\cap Z(\CB )=K[s,t^{\pm 1}]=K[h^n, x^{\pm n}].
\end{equation}
Indeed,
\begin{eqnarray*}
K[s,t^{\pm 1}] &\subseteq &Z(\CA)\subseteq \CA\cap Z(\CA_x)=\CA \cap  Z(\CB)\stackrel{(\ref{qCAZmA})}{=}\CA \cap K[s^{\pm 1},t^{\pm 1}]\\
 &\stackrel{(\ref{qZmA}), (\ref{qmASum})}{=}&\bigg( \bigoplus_{i,j=0}^{n-1}K[s,t^{\pm 1}]h^ix^j\bigg)\cap K[s^{\pm 1},t^{\pm 1}]= K[s,t^{\pm 1}]\cap K[s^{\pm 1},t^{\pm 1}]=K[s,t^{\pm 1}].
\end{eqnarray*}
Now, by (\ref{qZmA}) and  (\ref{qmASum}), 
\begin{equation}\label{CAqmASum}
 \CA =\bigoplus_{i,j=0}^{n-1}Z(\CA ) h^ix^j.
\end{equation}
Since $\o_h (x^j)=q^{-j}x^j$ for all $j\geq 0$ and the elements $1,q^{-1},  \ldots , q^{-n+1}$ are distinct elements of the field $K$, the algebra 
\begin{equation}\label{CAqmASum1}
\CA =\bigoplus_{j=0}^{n-1}\Bigg(\bigoplus_{i=0}^{n-1}Z(\CA ) h^i\Bigg)x^j
\end{equation}
is a direct sum of eigenspases $\Big(\bigoplus_{i=0}^{n-1}Z(\CA ) h^i\Big)x^j$, $0\leq j\leq n-1$, of the automorphism $\o_h $ of the algebra $\CA$ and the set of eigenvalues of $\o_h$ is $\Ev_\CA (\o_h)=\{ q^{-j}\, | \, j=0,1, \ldots , n-1\}= \{ q^j\, | \, j=0,1, \ldots , n-1\}$.

Similarly, since $\o_x (h^i)=q^ih^i$ for all $i\geq 0$ and the elements $1,q,  \ldots , q^{n-1}$ are distinct elements of the field $K$, the algebra 
\begin{equation}\label{CAqmASum1-ox}
\CA =\bigoplus_{i=0}^{n-1}\Bigg(\bigoplus_{j=0}^{n-1}Z(\CA ) x^j\Bigg)h^i
\end{equation}
is a direct sum of eigenspases $\Bigg(\bigoplus_{j=0}^{n-1}Z(\CA ) x^j\Bigg)h^i$, $0\leq i\leq n-1$, of the inner automorphism $\o_x $ of the algebra $\CA$ and the set of eigenvalues of $\o_x$ is $\Ev_\CA (\o_x)=\{ q^i\, | \, j=0,1, \ldots , n-1\}$.  Hence, the vector spaces   $Z(\CA) h^ix^j$ in  (\ref{CAqmASum}) are common eigenspaces for the pair of  automorphisms $\o_h$ and $\o_x$  of $\CA$  that correspond to distinct pairs $(q^{-j}, q^i)$ of eigenvalues.

 Since $\CB= \CA_h=\CA_s$, the map
\begin{equation}\label{CAqCA1Sum4}
\Spec_{s} (Z(\CA ))=\Spec (Z(\CA ))\backslash V(s)\ra \Spec (Z(\CB)), \;\;\gr \mapsto \gr':= Z(\CB )\gr 
\end{equation}
is a bijection with  inverse $\gr' \mapsto \gr :=Z(\CA )\cap \gr'$. Clearly,  $k(\gr'):=Q( Z(\CB )/\gr')=Q( Z(\CA )/\gr)=k(\gr )$.\\

{\bf The prime spectrum of the algebra $\CA$.} The prime spectrum of the algebra $\CA$ is given in Theorem \ref{CAqAc28Aug20}.  
\begin{theorem}\label{CAqAc28Aug20}

\begin{enumerate}
 \item  $\Spec (\CA )=\{\{0\},  (h) \}  )\, \sqcup \,  \mM'\, \sqcup \,\mathbb{H}'\, \sqcup \,\CN'$ where
 \begin{eqnarray*}
\mM' &:=& 	\{ \CA \gm\,|\, \gm \in \Max (Z(\CA )), s\not\in \gm \}, \\
\mathbb{H}' &:=& \{ (h,\gq' )\, | \, \gq' \in \Spec \,K[x^{\pm 1}]\backslash \{ 0\}\}, \\
\CN' &:=& \{\CA\gn \,|\, \gn \in \Spec\, Z(\CA ), {\rm ht} (\gn ) =1, \gn \neq (s) \}, 
\end{eqnarray*}
 and $Z(\CA )=K[s,t^{\pm 1}]$,  $s=h^n$ and $ t=x^n$,   
\begin{align}
	\begin{tikzpicture}[scale=1.3]
	\node (I) at (-4,2) {\small$\mM'$};
	\node (p) at (0,1) { \small$\CN'$};
	\node (0) at (0,0) {\small$0$};
	\node (I') at (4,2) {\small$\mathbb{H}'$};
	\node (D') at (4, 1) {\small$(h)$};	
	\draw [ shorten <=-2pt, shorten >=-2pt] (0)--(D')--(I');
	\draw [ shorten <=-2pt, shorten >=-2pt] (D);
	\draw [ shorten <=-2pt, shorten >=-2pt] (0)--(p);
	\draw [semithick, dotted][ shorten <=-2pt, shorten >=-2pt]  (p)--(I);
	\draw [semithick, dotted][ shorten <=-2pt, shorten >=-2pt]  (p)--(I');
	\end{tikzpicture}  \label{CAqXYZA} 
	\end{align}
	\item  Every nonzero  ideal of the algebra $\CA$ meets the centre  $Z(\CA )$.
\item  $\Max (\CA ) =\mM'\, \sqcup \,\mathbb{H}'$.

\end{enumerate}
\end{theorem}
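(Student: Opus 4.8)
The strategy mirrors the proof of Theorem \ref{qAc28Aug20} for $\mA$, with the two central elements $s,t$ replaced by the single obstruction $s=h^n$, since $t=x^n$ is already a unit in $\CA$. The starting point is the factorization $\CB\simeq\CA_h=\CA_s$ recorded in (\ref{CACB=Arxi}), together with the fact that $h$ is a regular normal element of $\CA$ with $(h^i)=(h)^i$ for all $i\geq1$ (this follows from (\ref{CAqmASum}) and the normality of $h$, as in the bracketed hypothesis of Proposition \ref{aA12Mar15}). Then Proposition \ref{aA12Mar15} gives the disjoint decomposition
$$
\Spec(\CA)=\Spec(\CA,h)\ \sqcup\ \Spec_h(\CA),
$$
and I would identify the two pieces separately.

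\textbf{Step 1: the piece $\Spec(\CA,h)$.} By Proposition \ref{aA12Mar15}.(1) this is in bijection with $\Spec(\CA/(h))$. Since $\CA/(h)\simeq K[x^{\pm1}]$ is a commutative principal ideal domain (a localization of $K[x]$), its prime spectrum is $\{0\}\sqcup\{(\gq')\,|\,\gq'\in\Spec K[x^{\pm1}]\backslash\{0\}\}$. Pulling back along $\pi:\CA\to\CA/(h)$ gives exactly $\{(h)\}\sqcup\mathbb{H}'$, where $\mathbb{H}'$ is as in the statement.

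\textbf{Step 2: the piece $\Spec_h(\CA)$.} By Proposition \ref{aA12Mar15}.(2), $\Spec_h(\CA)$ is in bijection with $\Spec(\CA_h)=\Spec(\CB)$, via $\gp\mapsto S_h^{-1}\gp$ and $\gq'\mapsto\sigma^{-1}(\gq')$. Now I invoke Theorem \ref{qCAc28Aug20}.(2), which says every prime of $\CB$ is of the form $\CB\gr'$ for a unique $\gr'\in\Spec Z(\CB)=\Spec K[s^{\pm1},t^{\pm1}]$. Contracting back to $\CA$ and using that $Z(\CA)=K[s,t^{\pm1}]$ (equation (\ref{CAqZmA})) together with the bijection (\ref{CAqCA1Sum4}) between $\Spec_s(Z(\CA))$ and $\Spec Z(\CB)$, one gets that $\CA\cap\CB\gr'=\CA\gr$ where $\gr=Z(\CA)\cap\gr'$; the key identity $\CA\cap\CB\gr'=\CA(Z(\CA)\cap\gr')$ is proved exactly as the statement (ii) in the proof of Theorem \ref{qCAc28Aug20}, using the eigenspace decomposition (\ref{CAqmASum}) of $\CA$ into common eigenspaces $Z(\CA)h^ix^j$ for the commuting pair $(\o_h,\o_x)$ and the fact that $h,x$ are units (or normal) in the relevant localization. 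Thus $\Spec_h(\CA)=\{\CA\gr\,|\,\gr\in\Spec Z(\CA),\ s\notin\gr\}$. Partitioning these by height: the height-$0$ prime is $\{0\}$; the height-$1$ primes split as $\CN'$ (the $\gn$ with $\operatorname{ht}(\gn)=1$, $\gn\neq(s)$, automatically $s\notin\gn$) — note there is no $(t)$ term since $t$ is a unit; and the maximal primes of $Z(\CA)$ not containing $s$ give $\mM'$. Assembling, $\Spec(\CA)=\{0\}\sqcup\{(h)\}\sqcup\mM'\sqcup\mathbb{H}'\sqcup\CN'$, proving statement 1. The containment diagram (\ref{CAqXYZA}) then follows from Proposition \ref{aA12Mar15}.(3) (primes containing $h$ are not contained in primes not containing $h$) and the obvious containments among the central primes.

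\textbf{Step 3: statements 2 and 3.} Statement 2 (every nonzero ideal meets $Z(\CA)$) follows from statement 1 by the same argument as in Theorem \ref{qAc28Aug20}: one checks it for prime ideals using the classification, then for a general nonzero ideal $I$ one has $0\neq I\supseteq$ (some nonzero prime or product of primes, via Noetherianity), each of which meets the centre; alternatively, this is inherited from the corresponding fact for $\CB$ (Theorem \ref{qCAc28Aug20}.(1)) together with the localization bijection, handling the $(h)$-containing primes by direct inspection of $\CA/(h)\simeq K[x^{\pm1}]$. Statement 3 is then read off: a prime is maximal iff it is not properly contained in another, and from the diagram the maximal ones are precisely $\mM'\sqcup\mathbb{H}'$ (the element $(h)$ is not maximal since $(h)\subsetneq(h,\gq')$, and $\{0\}$, $\CN'$ are not maximal).

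\textbf{Main obstacle.} The only genuinely non-formal point is the identity $\CA\cap\CB\gr'=\CA(Z(\CA)\cap\gr')$ in Step 2, i.e. that contracting a central prime of $\CB$ back to $\CA$ yields the ideal generated by the contracted central prime and nothing more. This requires the eigenspace/$\mathbb{Z}^2$-grading argument on $\CA=\bigoplus Z(\CA)h^ix^j$ together with the invertibility of $x$ (and the normality of $h$) in $\CB=\CA_s$; everything else is a mechanical transport along the two bijections of Proposition \ref{aA12Mar15} and an elementary analysis of $\Spec K[s,t^{\pm1}]$.
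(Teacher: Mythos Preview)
Your proposal is correct and follows essentially the same approach as the paper: the decomposition $\Spec(\CA)=\Spec(\CA,h)\sqcup\Spec_h(\CA)$ via Proposition \ref{aA12Mar15}, identification of the first piece through $\CA/(h)\simeq K[x^{\pm1}]$ and of the second via $\CB=\CA_h$ together with Theorem \ref{qCAc28Aug20}.(2) and the contraction identity $\CA\cap\CB\gp=\CA(Z(\CA)\cap\gp)$ from the eigenspace decomposition (\ref{CAqmASum}), with statements 2 and 3 read off from statement 1. Your identification of the contraction identity as the only non-formal step is exactly right, and the paper handles it the same way (citing (\ref{qCAmASum}) and (\ref{CAqmASum})).
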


\begin{proof}   1. Recall that $\CB=\CA_h=\CA_s$ where $s\in Z(\CA)$, the element $h$ is a normal element of $\CA$,  and the element $s$ is a central  element of $\CA$. By Proposition \ref{aA12Mar15},
$$ \Spec (\CA)=\Spec(\CA, h)\, \sqcup \,\Spec_{h}(\CA).$$

Since $\CA /(h)\simeq K[x^{\pm 1}]$,   $\Spec(\mA, h)=\{ (h)\}\, \sqcup \, \mathbb{H}'$. By Proposition \ref{aA12Mar15}.(2), Theorem \ref{qCAc28Aug20}.(2),  (\ref{CAqmASum}), and (\ref{CAqCA1Sum4}),
$$ \Spec_{h}(\CA) = \{ \CA\cap \CB \gp\, | \, \gp \in \Spec \, K[s^{\pm 1},t^{\pm 1}]\} = \{ \CA \gr \, | \, \gr\in \Spec \, K[s,t^{\pm 1}],  s\not\in\gr\}$$
since  $\CA\cap \CB \gp=\CA(Z(\CA)\cap \gp)=\CA \gr$ where $\gr =Z(\CA)\cap \gp$, by (\ref{qCAmASum}) and  (\ref{CAqmASum}). Now, the first equality of statement 1 follows. The containment information in the diagram follows from Proposition  
\ref{aA12Mar15}.(3). 

2 and 3. Statements 2 and 3 follow from statement 1.   \end{proof}

{\bf The quotient rings of prime factor algebras of $\CA$ and a classification of simple $\CA$-modules.}  Corollary \ref{CAQqAc28Aug20} describes the quotient rings of prime factor algebras of the algebra $\CA$.

\begin{corollary}\label{CAQqAc28Aug20}

\begin{enumerate}

\item For each $(h,\gq' )\in \mathbb{H}'$, $\mA/(h,\gq' )=
Q(\mA/(h,\gq' ))=K[x]/\gq'$ is a field. 

\item For each prime ideal $\CA\gr \in \CN' \cup \mM'$ (see Theorem \ref{CAqAc28Aug20}.(1)), the quotient ring  of the algebra $\CA / \CA \gr$, $$Q(\CA /\CA \gr)=\bigoplus_{i,j=0}^{n-1}k(\gr )h^ix^j\simeq k(\gr )\t_{Z(\CA )}\CA,$$ is a central simple $n^2$-dimensional algebra over the field $k(\gr )$ of fractions of the commutative domain $Z(\CA )/\gr$.
The algebra $Q(\CA /\CA \gr)$ is isomorphic to the algebra $\CE (\gr )=(E(s,\gr), \s, a)$ where $E(s,\gr):=k(\gr )[h]/(h^n-s)$ and  $\s (h)=qh$. 

\item The quotient ring  of the algebra $\mA$, $$Q(\CA)=Q(\CB)=\bigoplus_{i,j=0}^{n-1}K(s,t)h^ix^j\simeq K(s,t)\t_{Z(\mA )}\mA,$$ is a central simple $n^2$-dimensional algebra over the field $Z(Q(\CA))=K(s,t)$ of rational functions in two variables.  The algebra $Q(\CA )$  is isomorphic to the cyclic algebra $\CE (0):=(E(s,0), \s, a)$ where $E(s,0):=K(s,t)[h]/(h^n-s)$ and  $\s (h)=qh$.

\item $\mA/(h)=K[x]$ and $Q(\mA/(h))=K(x)$.
\end{enumerate}
\end{corollary}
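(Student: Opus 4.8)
The statement to prove is Corollary \ref{CAQqAc28Aug20}, which describes the quotient rings of prime factor algebras of the skew Laurent polynomial algebra $\CA = K[h][x^{\pm 1};\s]$. Looking at the structure, this is a direct analogue of Corollary \ref{QqAc28Aug20} (for $\mA$) and Corollary \ref{aqAc28Aug20}/Theorem \ref{qCAc28Aug20} (for $\CB$), so the proof should follow the same template: reduce each case to the already-established structure theory, exploiting the fact that $\CA$ sits between $\mA$ and $\CB$ as a localization, and that $\CB = \CA_h = \CA_s$.

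The plan is to handle the four items separately, in the order listed. For item 1, the prime ideal $(h,\gq')\in \mathbb{H}'$ has $h\in (h,\gq')$, so the factor algebra is a quotient of $\CA/(h)\simeq K[x^{\pm1}]$; since $\gq'\in \Spec K[x^{\pm 1}]\backslash\{0\}$ is a maximal ideal of the PID $K[x^{\pm1}]$ (being a nonzero prime of a one-dimensional domain that is not $(0)$), the factor is $K[x^{\pm1}]/\gq' = K[x]/\gq'$, a finite field extension of $K$; in particular it is already its own quotient ring. For item 2, the key point is the identity $Q(\CA/\CA\gr) = Q(\CB/\CB\gr')$ where $\gr' = Z(\CB)\gr$, which holds because $\CB = \CA_s$ is a localization of $\CA$ at a central element $s\notin\gr$ (this is exactly the content of (\ref{CAqCA1Sum4}) combined with the fact that $\CA\cap \CB\gr' = \CA\gr$ from the proof of Theorem \ref{CAqAc28Aug20}); then Theorem \ref{qCAc28Aug20}.(4) gives the explicit description $Q(\CB/\CB\gr')\simeq \CE(\gr') = (E(s,\gr'),\s,a)$ with $E(s,\gr) = k(\gr)[h]/(h^n-s)$, and since $k(\gr') = k(\gr)$ one gets the stated form with the direct sum basis $\bigoplus_{i,j=0}^{n-1}k(\gr)h^ix^j$ following from (\ref{CAqmASum}). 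Item 3 is the special case $\gr = 0$ (equivalently, localize everything, so $Q(\CA) = Q(\mA) = Q(\CB)$ by (\ref{CB=Arxi}) and (\ref{CACB=Arxi})), and the stated description of $Q(\CB)$ as the cyclic algebra $\CE(0)$ is Theorem \ref{qCAc28Aug20}.(5). Item 4 is immediate from $\CA/(h)\simeq K[x^{\pm1}]$, whose quotient ring is the field of rational functions $K(x)$.

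Concretely, I would write: Statement 1 follows since $\CA/(h)\simeq K[x^{\pm 1}]$ is a principal ideal domain of Krull dimension one, so every nonzero prime $\gq'$ is maximal and $\CA/(h,\gq')\simeq K[x^{\pm1}]/\gq'\simeq K[x]/\gq'$ is a finite field extension of $K$, hence equal to its own quotient ring. Statement 2: for $\CA\gr\in\CN'\cup\mM'$ we have $s\notin\gr$ by Theorem \ref{CAqAc28Aug20}.(1), so passing to $\CB=\CA_s$ and using $\CA\cap\CB\gr'=\CA\gr$ (proof of Theorem \ref{CAqAc28Aug20}) we get $Q(\CA/\CA\gr)=Q(\CB/\CB\gr')$, and the claim is Theorem \ref{qCAc28Aug20}.(4) with $k(\gr')=k(\gr)$ and the basis coming from (\ref{CAqmASum}). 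Statement 3 is the instance $\gr=(0)$, combined with $Q(\CA)=Q(\CB)$ and Theorem \ref{qCAc28Aug20}.(5). Statement 4 follows from $\CA/(h)\simeq K[x^{\pm1}]$.

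The main obstacle — such as it is — is bookkeeping: one must be careful that the passage $Q(\CA/\CA\gr) = Q(\CB/\CB\gr')$ is legitimate, i.e., that the quotient ring of a prime factor algebra is unchanged under central localization at an element outside the prime. This is standard (the set of regular elements of $\CA/\CA\gr$ and of its localization at $s$ have the same quotient ring because $s$ is already regular and central in $\CA/\CA\gr$), and it is essentially used already in Corollary \ref{QqAc28Aug20}.(4) via the remark ``since $Q(\mA/\mA\gr) = Q(\CB/\CB\gr)$''. Beyond that, every ingredient is either a cited earlier result (Theorem \ref{qCAc28Aug20}, Theorem \ref{CAqAc28Aug20}) or an elementary observation about $K[x^{\pm 1}]$, so the proof is short and routine; I expect no genuine difficulty.

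\begin{proof}   1--3. These statements follow from Theorem \ref{CAqAc28Aug20}.(1), Theorem \ref{qCAc28Aug20}.(4,5), and the equality $Q(\CA/ \CA \gr ) = Q(\CB / \CB \gr')$, which holds since $\CB = \CA_s$ and $s\not\in \gr$ for $\CA\gr\in \CN'\cup\mM'$ (and $\gr'=Z(\CB)\gr$, $k(\gr')=k(\gr)$). Statement 1 uses in addition that $\CA /(h)\simeq K[x^{\pm 1}]$ is a principal ideal domain of Krull dimension one, so every nonzero prime is maximal and the factor is a finite field extension of $K$. Statement 3 is the case $\gr =(0)$, using $Q(\CA)=Q(\CB)$.

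4. Statement 4 follows from the isomorphism $\CA /(h)\simeq K[x^{\pm 1}]$.  \end{proof}
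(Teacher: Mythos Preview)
Your proposal is correct and follows essentially the same approach as the paper's proof: statements 1 and 4 reduce to the structure of $\CA/(h)\simeq K[x^{\pm 1}]$ (the paper cites Theorem \ref{CAqAc28Aug20}.(1) for this), statement 2 is deduced from Theorem \ref{qCAc28Aug20}.(4) via the equality $Q(\CA/\CA\gr)=Q(\CB/\CB\gr)$, and statement 3 is the special case $\gr=0$ of statement 2. The only organizational difference is that the paper groups items 1 and 4 together rather than 1--3; otherwise the arguments coincide.
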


\begin{proof}   1 and 4.  Statements 1 and 4 follow from Theorem \ref{CAqAc28Aug20}.(1).

2. Statement 2 follows from Theorem \ref{qCAc28Aug20}.(4), since $Q(\CA/ \CA \gr ) = Q(\CB / \CB \gr)$.

3. Statement 3 is a particular case of  statement 2.  \end{proof}


\begin{proposition}\label{A4Aug23}
The  algebra $\CA $ satisfies the assumptions of Corollary \ref{b2Aug23}, ${\rm ZL}(\CA )=\Spec (Z(\CA ))$, and Theorem \ref{2Aug23} holds:
 \begin{enumerate}
\item The ring $\CA $ is PLM-ring with ${\rm PL}(\CA )=\Max (Z(\CA ))$.

\item  For every $\gm \in \Max (Z(\CA ))$, $\Prim (\CA ,\gm)=\{ \CA \gm\}$.

\item $\Prim(\CA )=\Max (\CA )=\{ \CA \gm\, | \, \gm\in \Max (Z(\CA ))\} =\mM'\, \sqcup \,\mathbb{H}'$. 

\item For each maximal ideal $\gm \in\mM'$, the factor algebra 
 $$\CA / \CA \gm =Q(\CA /\CA \gm)=\bigoplus_{i,j=0}^{n-1}k(\gm )h^ix^j\simeq k(\gm )\t_{Z(\CA )}\CA,$$ is a central simple $n^2$-dimensional algebra over the field $k(\gm )=Z(\CA )/\gm$. The algebra $\CA /\CA \gm$ is isomorphic to the algebra $\CE (\gm )=(E(s,\gm), \s, a)$ where $E(s,\gm):=k(\gm )[h]/(h^n-s)$ and  $\s (h)=qh$. Let $U(\gm)$ be a unique simple  $\CE (\gm )$-module (described in (\ref{xu=usxe}) and Theorem \ref{12Jun23}.(1,2))).

\item $\widehat{\CA } =\{ U(\gm )\, | \, \gm \in \Max(Z(\CA ))\}$  where $U(\gm)$ is a unique simple module of the simple artinian ring $k(\gm)\t_{Z(\CA )}\CA /\CA \gm$, for $\gm=(h,\gq')\in \mathbb{H}'$, $U(\gm):=K[x]/\gq'$.
All simple $\CA$-modules are finite dimensional.

\item The map $\widehat{\CA } \ra \Prim (\CA )$, $U(\gm )\mapsto \ann_{\CA }(U(\gm))=\CA /\CA \gm$ is a bijection with inverse $\CA \gm\mapsto U(\gm)$.

\item  For $\gm \in \Max (Z(\CA ))$, 
$$
\End_{\CA}(U(\gm)):=\begin{cases}
D(\gm)  & \text{ is described in  Theorem \ref{12Jun23}  if $\gm \in \mM'$},\\
K[x]/\gq' & \text{if }\gm=(h,\gq')\in \mathbb{H}'.
\end{cases}
$$
\end{enumerate}

\end{proposition}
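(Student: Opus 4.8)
The plan is to observe that Proposition \ref{A4Aug23} is the exact analogue for $\CA$ of Corollary \ref{aQqAc28Aug20} for $\mA$ and Corollary \ref{aqAc28Aug20} for $\CB$, and its proof runs along the same lines: first verify the hypotheses of Corollary \ref{b2Aug23}, then read off statements 1--8 from Theorem \ref{2Aug23}, Corollary \ref{b2Aug23}, and Corollary \ref{CAQqAc28Aug20}. Concretely, the two hypotheses of Corollary \ref{b2Aug23} are (a) $\CA$ is a free $Z(\CA)$-module, and (b) for every $\gp\in\Spec(Z(\CA))$ the ring $k(\gp)\t_{Z(\CA)}\CA/\gp$ is a simple artinian ring. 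For (a), the decomposition (\ref{CAqmASum}), $\CA=\bigoplus_{i,j=0}^{n-1}Z(\CA)h^ix^j$, exhibits $\CA$ as a free $Z(\CA)$-module of rank $n^2$. For (b), one uses Corollary \ref{CAQqAc28Aug20}: by statement (2) there, for $\gr\in\CN'\cup\mM'$ (i.e.\ $s\notin\gr$) the localization $Q(\CA/\CA\gr)=k(\gr)\t_{Z(\CA)}\CA/\gr$ is isomorphic to the central simple $n^2$-dimensional algebra $\CE(\gr)=(E(s,\gr),\s,a)$, which is simple (hence simple artinian, being finite-dimensional) by Theorem \ref{qA2Sep20}.(3a) since $s\neq 0$ in $k(\gr)$ and $a\neq 0$; and for $\gp$ with $s\in\gp$, i.e.\ $\gp\supseteq(s)=(h^n)$, so $\gp=(h,\gq')$ for some $\gq'\in\Spec K[x^{\pm1}]$ by Theorem \ref{CAqAc28Aug20}.(1), one has $k(\gp)\t_{Z(\CA)}\CA/\gp=\CA/(h,\gq')=K[x^{\pm1}]/\gq'$ which is a field by Corollary \ref{CAQqAc28Aug20}.(1), again simple artinian. (Strictly, to apply Corollary \ref{b2Aug23} one wants this for \emph{every} prime of $Z(\CA)$, so one should note that since $s$ is not a zero-divisor, $\Spec(Z(\CA))$ is partitioned by $V(s)$ and its complement, and the argument above covers both pieces.)

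Having checked the hypotheses, statement 1 follows from Corollary \ref{b2Aug23} together with Theorem \ref{2Aug23}.(1) (equivalently Corollary \ref{b31Jul23}), giving that $\CA$ is a PLM-ring with ${\rm PL}(\CA)=\Max(Z(\CA))$; the equality ${\rm ZL}(\CA)=\Spec(Z(\CA))$ is part of the conclusion of Corollary \ref{b2Aug23}. Statement 2 is Theorem \ref{2Aug23}.(2). For statement 3, Theorem \ref{2Aug23}.(3) gives $\Prim(\CA)=\Max(\CA)=\{\CA\gm\mid\gm\in\Max(Z(\CA))\}$, and the further identification of this set with $\mM'\sqcup\mathbb{H}'$ is read off Theorem \ref{CAqAc28Aug20}.(3). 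Statement 4 is precisely Corollary \ref{CAQqAc28Aug20}.(2) specialized to $\gm\in\mM'$ (so that the quotient ring \emph{is} the factor algebra, using statement 2 and the injectivity of $\phi_\gm$ for maximal $\gm$), with the realization of $Q(\CA/\CA\gm)$ as $\CE(\gm)=(E(s,\gm),\s,a)$ and its unique simple module as described in (\ref{xu=usxe}) and Theorem \ref{12Jun23}.(1,2). Statements 5--7 follow from Theorem \ref{2Aug23}.(4,5) together with the bijection $\widehat{\CA}\ra\Prim(\CA)$; the explicit form of $U(\gm)$ for $\gm=(h,\gq')\in\mathbb{H}'$ comes from Corollary \ref{CAQqAc28Aug20}.(1). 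Statement 8 follows from Theorem \ref{2Aug23}.(6) (equivalently the last sentence of the proof of Theorem \ref{2Aug23}), giving $\End_{\CA}(U(\gm))\simeq D(\gm)$ where $\CA/\CA\gm\simeq M_{n(\gm)}(D(\gm))$; for $\gm\in\mM'$ this $D(\gm)$ is the division algebra of $\CE(\gm)$ described in Theorem \ref{12Jun23}, and for $\gm=(h,\gq')\in\mathbb{H}'$ the factor algebra is the field $K[x^{\pm1}]/\gq'$, which equals its own endomorphism algebra.

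The only place that requires a small amount of care — and hence the ``main obstacle'', though it is a minor one — is the passage between the quotient ring $Q(\CA/\CA\gm)$ of Corollary \ref{CAQqAc28Aug20} and the factor algebra $\CA/\CA\gm$ itself in statements 4--8. For a general prime $\gr$ these two rings differ, but for a \emph{maximal} ideal $\gm\in\Max(Z(\CA))$ one has, as in the proof of Theorem \ref{2Aug23}.(3), that $\CA/\CA\gm=(\CA/\CA\gm)_\gm=k(\gm)\t_{Z(\CA)}\CA/\gm=Q(\CA/\CA\gm)$ is already simple artinian, so the map $\phi_\gm$ is a bijection and the distinction collapses; one just needs $\CA\gm\neq\CA$, which holds since $\CA$ is faithfully flat (indeed free) over $Z(\CA)$. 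Everything else is a routine transcription of the $\mA$-case, substituting $K[s,t^{\pm1}]$ for $K[s,t]$, $\mathbb{H}'$ for $\mathbb{H}$, $\mM'$ for $\mM$, and dropping the strata $\mX$ and $\{(x,h)\}$ that disappear once $x$ is inverted. I would therefore write the proof as: ``By Corollary \ref{CAQqAc28Aug20}, the algebra $\CA$ satisfies the assumptions of Corollary \ref{b2Aug23} (freeness by (\ref{CAqmASum}); simplicity of the relevant artinian rings by Corollary \ref{CAQqAc28Aug20}.(1,2) and Theorem \ref{qA2Sep20}.(3a)), and so ${\rm ZL}(\CA)=\Spec(Z(\CA))$ and Theorem \ref{2Aug23} holds for $\CA$. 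Statements 1--3 and 6--8 are then Theorem \ref{2Aug23}.(1--6) combined with Theorem \ref{CAqAc28Aug20}.(3); statement 4 is Corollary \ref{CAQqAc28Aug20}.(2); and the explicit descriptions of $U(\gm)$ and $\End_\CA(U(\gm))$ for $\gm\in\mathbb{H}'$ follow from Corollary \ref{CAQqAc28Aug20}.(1).''
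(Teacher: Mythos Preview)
Your overall approach and your suggested final write-up match the paper's proof essentially verbatim: invoke Corollary~\ref{CAQqAc28Aug20} to claim the hypotheses of Corollary~\ref{b2Aug23}, then read off statements 1--3, 5, 6 from Theorem~\ref{2Aug23}, and fill in statements 4, 5, 7 from Corollary~\ref{CAQqAc28Aug20}.(1,2).

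There is, however, a genuine slip in your detailed verification of hypothesis~(b). When $s\in\gp$ you write ``so $\gp=(h,\gq')$ for some $\gq'\in\Spec K[x^{\pm1}]$ by Theorem~\ref{CAqAc28Aug20}.(1)'', but here $\gp$ is a prime of $Z(\CA)=K[s,t^{\pm1}]$, not of $\CA$, and $h\notin Z(\CA)$. The primes of $Z(\CA)$ containing $s$ are $(s)$ and $(s,\gq'')$ with $\gq''\in\Max(K[t^{\pm1}])$; for any such $\gp$ the ideal $\CA\gp$ contains $h^n=s$ but \emph{not} $h$ itself, so $k(\gp)\otimes_{Z(\CA)}\CA/\CA\gp\simeq\bigoplus_{i,j=0}^{n-1}k(\gp)h^ix^j$ is the algebra $(E(0),\sigma,a)$ over $k(\gp)$, which by Theorem~\ref{qA2Sep20}.(3b) has nonzero radical $(h)$ and is therefore \emph{not} simple. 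Hence the literal hypotheses of Corollary~\ref{b2Aug23} fail at these primes (and, correspondingly, the middle equality $\Max(\CA)=\{\CA\gm\mid\gm\in\Max(Z(\CA))\}$ in statement~3 is not literally correct when $s\in\gm$, since then $\CA\gm\subsetneq(h,\gq')$ is not maximal). The paper's one-line proof glosses over the same point; the substantive conclusions are still true, but a careful argument must handle the stratum $V(s)$ separately, e.g.\ by observing directly that $\CA/(h)=K[x^{\pm1}]$ is a commutative PID so $(h)$ cannot be primitive, and that $\mathbb{H}'$ parametrizes the maximal ideals over $(s)$ rather than $\{\CA\gm:\gm\in\Max(Z(\CA)),\,s\in\gm\}$.
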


\begin{proof} By Corollary \ref{CAQqAc28Aug20}, the  algebra $\CA $ satisfies the assumptions of Corollary \ref{b2Aug23}, and so  ${\rm ZL}(\CA )=\Spec (Z(\CA ))$, and Theorem \ref{2Aug23} holds, 
 i.e. the corollary holds apart from statement 4, 7 and an explicit description of the simple modules in statement 5.  Statement 4 follows from Corollary  \ref{CAQqAc28Aug20}.(2).
 The explicit description of the simple modules in statement 5 follows from Corollary \ref{CAQqAc28Aug20}.(1). Statement 7 follows from statement 5 and Corollary \ref{CAQqAc28Aug20}.(1,2). 
\end{proof}

{\bf Classification of completely prime ideals of the algebra $\CA$.}   Proposition  \ref{CAB20Jul23} is a classification of completely prime ideals of the algebra $\mA$.

\begin{proposition}\label{CAB20Jul23}
$\Spec_c (\CA )=\Big\{ \{ 0\},  (h)\} \, \sqcup \,  \mathbb{H}'\,  \sqcup \, \{  \CB\gr\in \CN'\cup \mM' \, | \,   E(s,\gr)=k(\gr)[h]/(h^n-s)$
is a field and $n=\min\{ d'\geq 1\, | \,  d'\mid n$  there is a matrix $X\in M_{d'}(E(s,\gr))$ such that $ X^{\s^{n-1}}\cdots X^\s X=a\}\Big\}$ where $\s (h)=qh$.
\end{proposition}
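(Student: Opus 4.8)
The plan is to mirror exactly the proof of Proposition \ref{B20Jul23} (the completely prime ideals of $\mA$) and Proposition \ref{A20Jul23} (the completely prime ideals of $\CB$), since the algebra $\CA$ has already been shown to fit the same framework. First I would recall the basic criterion: a prime ideal $P\in\Spec(\CA)$ is completely prime if and only if the quotient ring $Q(\CA/P)$ is a domain. This is legitimate because $\CA$ is a Noetherian domain, so every prime factor $\CA/P$ is a Noetherian prime ring, its set of regular elements is an Ore set, and $\CA/P$ is a domain precisely when its quotient ring is; in fact $Q(\CA/P)$ is a central simple (artinian) algebra over the field $k(\gr)$ by Corollary \ref{CAQqAc28Aug20}, so being a domain is equivalent to being a division algebra.

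Next I would run through the list of prime ideals of $\CA$ given in Theorem \ref{CAqAc28Aug20}.(1), namely $\{0\}$, $(h)$, the family $\mathbb{H}'$, and the families $\CN'$ and $\mM'$, and decide which are completely prime. For $\{0\}$ the quotient ring is $Q(\CA)=Q(\CB)$, which is a division algebra by Theorem \ref{qCAc28Aug20}.(5), so $\{0\}\in\Spec_c(\CA)$. For $(h)$ we have $\CA/(h)\simeq K[x^{\pm1}]$, a commutative domain, so $(h)$ is completely prime. For $(h,\gq')\in\mathbb{H}'$, Corollary \ref{CAQqAc28Aug20}.(1) gives $\CA/(h,\gq')=K[x]/\gq'$, a field, hence completely prime. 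This settles the ``easy'' part: $\Spec_c(\CA)\supseteq\{\{0\},(h)\}\sqcup\mathbb{H}'$.

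The remaining case is a prime ideal of the form $\CA\gr$ with $\gr\in\CN'\cup\mM'$. Here Corollary \ref{CAQqAc28Aug20}.(2) identifies $Q(\CA/\CA\gr)\simeq\CE(\gr)=(E(s,\gr),\s,a)$ where $E(s,\gr)=k(\gr)[h]/(h^n-s)$ and $\s(h)=qh$. By Corollary \ref{b17Jun23}, the algebra $\CE(\gr)$ is a division algebra if and only if $E(s,\gr)$ is a field and $n=\min\{d'\geq1\mid d'\mid n,\ \exists\, X\in M_{d'}(E(s,\gr))\text{ with }X^{\s^{n-1}}\cdots X^\s X=a\}$. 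Substituting this criterion into the description of which $\CA\gr$ are completely prime yields precisely the stated set. One small point to check is that $s\neq 0$ in $k(\gr)$ for $\gr\in\CN'\cup\mM'$ (so that the two hypotheses ``$s\neq0$'' and ``$a\neq0$'' of Corollary \ref{b17Jun23} hold): this follows from $s\notin\gr$ for $\gr\in\mM'$ by definition, and $\gr\neq(s)$ together with $\hht(\gr)=1$ for $\gr\in\CN'$, while $a=t=x^n$ is a unit in $\CA$ hence nonzero in every quotient; these facts are already recorded in the setup preceding Theorem \ref{CAqAc28Aug20}.

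There is no genuine obstacle here — the proposition is a direct transcription of the $\mA$ and $\CB$ cases with $\mA$ (resp.\ $\CB$) replaced by $\CA$ and the prime families relabelled. The only mild subtlety, which I would state explicitly rather than grind through, is the reduction ``completely prime $\iff$ $Q(\CA/P)$ is a domain $\iff$ $Q(\CA/P)$ is a division ring,'' valid because all the relevant quotient rings $Q(\CA/P)$ are finite-dimensional central simple algebras over a field (Corollary \ref{CAQqAc28Aug20}), for which being a domain, being a division algebra, and having no zero divisors all coincide. Thus the proof is short:

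\begin{proof} A prime ideal $P$ of the algebra $\CA$ is a completely prime ideal if and only if the algebra $Q(\CA/P)$ is a domain. By Corollary \ref{CAQqAc28Aug20}.(1,3,4),
$$\Spec_c(\CA)\supseteq\Big\{\{0\},(h)\Big\}\sqcup\mathbb{H}'.$$
If $P=\CA\gr\in\CN'\cup\mM'$ then, by Corollary \ref{CAQqAc28Aug20}.(2),
$$Q(\CA/\CA\gr)\simeq\CE(\gr)=(E(s,\gr),\s,a)\;\;{\rm where}\;\;E(s,\gr)=k(\gr)[h]/(h^n-s)\;\;{\rm and}\;\;\s(h)=qh.$$
Since $s\notin\gr$ (for $\gr\in\mM'$) and $\gr\neq(s)$ with $\hht(\gr)=1$ (for $\gr\in\CN'$), the image of $s$ in $k(\gr)$ is nonzero; also $a=t=x^n$ is a unit of $\CA$, so its image is nonzero. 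Now the proposition follows from Corollary \ref{b17Jun23}. \end{proof}
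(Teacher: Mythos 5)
Your proof is correct and follows the paper's argument essentially verbatim: reduce to the question of whether $Q(\CA/P)$ is a domain, dispose of $\{0\}$, $(h)$, and $\mathbb{H}'$ via Corollary \ref{CAQqAc28Aug20}.(1,3,4), identify $Q(\CA/\CA\gr)$ with $\CE(\gr)$ for $\gr\in\CN'\cup\mM'$, and apply Corollary \ref{b17Jun23}. The paper's proof does not bother to record the hypotheses $s\neq0$, $a\neq0$ in $k(\gr)$ that Corollary \ref{b17Jun23} requires; your added verification of these is a welcome bit of bookkeeping but does not change the route.
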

 
\begin{proof}    A prime ideal $P$ of the algebra $\CA$ is a completely prime ideal if and only if the algebra $Q(\CA /  P)$ is a domain.
 By Corollary \ref{CAQqAc28Aug20}.(1,3,4), $$\Spec_c (\CA )\supseteq \Big\{ \{ 0\}, (h)\} \, \sqcup \, \mathbb{H}'.$$
  
If $P=\CA\gr\in \CN'\cup \mM'$ then, by  Corollary  \ref{CAQqAc28Aug20}.(2),
$$Q(\CA/\CA\gr )\simeq \CE (\gr)=(E(s,\gr),\s , a)\;\; {\rm where}\;\; E(s,\gr)=k(\gr)[h]/(h^n-s)\;\; {\rm   and}\;\;\s (h) = qh.$$
 Now, the proposition  follows from  Corollary \ref{b17Jun23}.   \end{proof}

There are algebra isomorphisms 
\begin{eqnarray*}
\tau_{A_1(q)} &:&  A_1(q)\ra A_1(q^{-1}), \;\; x\mapsto -qy, \;\; y\mapsto x, \\
\tau_\mA  &:& \mA (q)\ra  \mA (q^{-1}), \;\; x\mapsto h, \;\; h\mapsto x, \\
\tau_\CA  &:& \CA (q)\ra  \CA (q^{-1}), \;\; x\mapsto x^{-1}, \;\; h\mapsto h, \\
\tau_\CB  &:&  \CB (q)\ra \CB (q^{-1}), \;\; x\mapsto h, \;\; h\mapsto x.
\end{eqnarray*}

In particular,  $ \tau_{A_1(-1)}\in \Aut_K(A_1(-1))$, $\tau_\mA\in \Aut_K(\mA (-1))$, $\tau_\CA \in \Aut_K(\CA (-1))$, and  $\tau_\CB \in \Aut_K(\CB (-1))$.

An algebra $B$ is called a {\bf self-opposite} algebra if it is isomorphic to its {\bf opposite algebra} $B^{\rm op}$. Recall that the opposite algebra $B^{\rm op}$ coincides with $B$  as a vector space over the field  $K$ but the product in $B^{\rm op}$ is defined as follows: $a\cdot b:=ba$, the latter is a  product in the algebra $B$. Properties of a self-opposite algebra is left-right symmetric. In particular, a classification of simple {\em right} modules is obtained straight away  from the classification of simple {\em left} modules. 
The algebras $\mA$, $A_1$, and $\CB$ are self-opposite:
There are isomorphisms 
\begin{eqnarray*}
 \iota_{A_1(q)} &:&  A_1(q)\ra A_1(q)^{\rm op}, \;\; x\mapsto y, \;\; y\mapsto x, \\
\iota_\mA &:&  \mA \ra \mA^{\rm op}, \;\; x\mapsto h, \;\; h\mapsto x, \\
\iota_\CB &:&  \CB\ra \CB^{\rm op}, \;\; x\mapsto h, \;\; h\mapsto x.
\end{eqnarray*}


\section{Classifications of  prime ideals  and simple modules of the quantum Weyl algebra $A_1(q)$}\label{QUANTWEYL}

  The aim of the section is to classify the sets of prime, completely prime,  maximal and primitive ideals,  and simple modules of the quantum Weyl algebra  $A_1=A_1(q)$. The case of the quantum Weyl algebra  $A_1=A_1(q)$ is  more difficult than comparing with the three algebras 
  considered in the previous section.\\

	{\bf The quantum Weyl algebra $A_1$ is a generalized Weyl algebra}. 
	\begin{definition}
	 \cite{Bav-GWA-FA-91, Bav-UkrMathJ-92, Bav 2}. Let $D$ be a ring, $\s$ be an automorphism of $D$ and $a$ is an element of the centre of $D$. {\bf The generalized Weyl algebra} (GWA)
$A:=D(\sigma, a):=D[X,Y; \s , a]$ is a ring  generated by $D$,
$X$ and $Y$ subject to the defining relations:
$$
X\alpha=\sigma(\alpha)X \;\; {\rm and}\;\;  Y\alpha=\sigma^{-1}(\alpha)Y\;\;  {\rm for \; all}\;\;
\alpha \in D, \;\;  \ YX=a \;\;  {\rm and}\;\; XY=\sigma(a).
$$
\end{definition}
The algebra $A=\bigoplus_{n\in {\bf \Z}}\, A_n$
is $\Z$-graded where $A_n=Dv_n$,
$v_n=X^n$ for $n>0$, $v_n=Y^{-n}$ for $n<0$ and $v_0=1.$
It follows from the above relations that
$v_nv_m=(n,m)v_{n+m}=v_{n+m}\langle n,m\rangle $
for some $(n,m)\in D$. If $n>0$ and $m>0$ then
\begin{eqnarray*}
	n\geq m:& &  (n,-m)=\sigma^n(a)\cdots \sigma^{n-m+1}(a),\;\;  (-n,m)=\sigma^{-n+1}(a)
	\cdots \sigma^{-n+m}(a),\\
	n\leq m: & & (n,-m)=\sigma^{n}(a)\cdots \sigma(a),\,\,\,\;\;\; \;\;\; \;\;\; (-n,m)=\sigma^{-n+1}(a)\cdots a,
\end{eqnarray*}
in other cases $(n,m)=1$. Clearly, $\langle n,m\rangle =\s^{-n-m}((n,m))$.

It follows from the relations (where $\s (h)=qh$) 
\begin{equation}\label{xhyhs}
xh=\s(h)x,\;\; yh=\s^{-1}(h)y, \;\; yx=h-\frac{1}{q-1}, \;\; {\rm and}\;\; xy=\s\bigg(h-\frac{1}{q-1} \bigg)=qh-\frac{1}{q-1}, 
\end{equation}
that  the quantum Weyl algebra $A_1$ is a GWA $A_1=K[h][x,y;\s , a=h-\frac{1}{q-1}]$ and  
the element $h$ of the algebra $A_1$ is a regular normal element. It determines an automorphism $\o_h: A_1\ra A_1$, $\alpha \mapsto h\alpha h^{-1}$ of the algebra $A_1$, By (\ref{xhyhs}),   $\o_h(x)=q^{-1}x$ and $\o(y)=qy$. 
 The element $t$ is a central element of the algebras $\mA$ and $A_1$. It follows from (\ref{qmAAWy1})   that 
\begin{equation}\label{qA1Sum5}
\mA \subset A_1\subset  \CA = \mA_t=A_{1,t}.
\end{equation}
  Notice that 
\begin{equation}\label{qA1Sum1}
rt=(-1)^{n-1}\bigg(s-\frac{1}{(q-1)^n}\bigg) \;\; {\rm where}\;\; s=h^n\;\; {\rm and}\;\; h=yx+\frac{1}{q-1}
\end{equation}
since 
\begin{eqnarray*}
rt&=&y^nx^n=y^{n-1}\bigg(h-\frac{1}{q-1}\bigg) x^{n-1}=\bigg(q^{-n+1}h-\frac{1}{q-1}\bigg)
y^{n-1}x^{n-1}
=\prod_{i=0}^{n-1}\bigg(q^{-i}h-\frac{1}{q-1} \bigg)\\
&=&q^{-\frac{n(n-1)}{2}}\prod_{i=0}^{n-1}\bigg(h-q^i\frac{1}{q-1} \bigg)\stackrel{(\ref{xnix2})}{=}(-1)^{n-1}\bigg(h^n-\frac{1}{(q-1)^n}\bigg)=(-1)^{n-1}\bigg(s-\frac{1}{(q-1)^n}\bigg).
\end{eqnarray*}
 Hence,
\begin{equation}\label{qA1Sum8}
s=(-1)^{n-1}rt+\frac{1}{(q-1)^n}\in Z(A_1)
\end{equation} 
 and  so $r=(-1)^{n-1}\bigg(s-\frac{1}{(q-1)^n}\bigg)t^{-1}\in K[s,t^{-1}]=Z (\CA$) and  
\begin{equation}\label{qA1Sum2}
Z(\mA )=K[s,t] \subset K[r,t]=Z(A_1)\subset  K[s,t^{\pm 1}]=K[r,t^{\pm 1}]=Z(\mA )_t= Z(A_1)_t.
\end{equation}
Since $A_1=K[h][x,y;\s , a=h-\frac{1}{q-1}]$ is a GWA and $r\in Z(A_1)$, 
\begin{equation}\label{qA1Sum7}
A_{1,r}=K[h][y^{\pm 1}; \s^{-1}]=K[h][\Big( y^{-1}\Big)^{\pm 1}; \s ]\simeq \CA\;\; {\rm and}\;\; Z(A_{1,r})=K[s,r^{\pm 1}]=K[t,r^{\pm 1}].
\end{equation}  
  
{\bf Prime ideals of the Weyl algebra $A_1$.} Let us show that  
\begin{equation}\label{qA1Sum}
 A_1 =\bigoplus_{i,j=0}^{n-1}Z( A_1 )x^iy^j\;\; {\rm  where}\;\; Z(A_1)=K[r,t],\;\; r:=y^n, \;\; {\rm and}\;\; t:=x^n.
\end{equation}
 The centre of quantum  Weyl algebra $A_1$, $Z(A_1)=K[r,t]$, is a polynomial algebra in the variables $r$ and $t$. 

\begin{proof}  (i) $r,t\in Z(A_1)$: The statement (i) follows from the following calculations:
\begin{eqnarray*}
ty &=& x^ny=x^{n-1}\s (a)=\s^n(a)x^{n-1}=ax^{n-1}=yxx^{n-1}=yx^n=yt,\\
th &=&x^nh=\s^n(h)x^n =
ht,\\
x r&=& xy^n=\s (a)y^{n-1}=y^{n-1}\s^n(a)=y^{n-1}a=y^{n-1}yx=rx ,\\
rh &=&y^nh=\s^{-n}(h)y^n=
hr.
\end{eqnarray*}
(ii) $A_1 =\bigoplus_{i,j=0}^{n-1}Zx^iy^j$ {\em where} $Z:=K[r,t]\subseteq Z(A_1)$:  By the statement (i), $Z=K[r,t]\subseteq Z(A_1)$. Now, 
 the statement (ii) follows from the fact that the GWA $A_1$ is a $\Z$-graded algebra.

(iii) $Z=Z(A_1)$: The statement (iii) follows from the statement (ii), (\ref{qA1Sum2}), and  (\ref{qA1Sum7}):
$$K[t,r]\subseteq Z(A_1)\subseteq Z(A_{1,t})\cap  Z(A_{1,r})=K[r,t]\cap K[r^{\pm 1},t]=K[t,r]. $$
 \end{proof}

Since $\o_h(x^i)=q^{-i}x^i$  and $\o_h(y^i)=q^iy^i$ for all $i\geq 0$ and the elements $1,q,  \ldots , q^{n-1}$ are distinct elements of the field $K$ (since the element $q$ is a primitive $n$'th root of unite), the algebra 
\begin{equation}\label{qA1Sum3}
 A_1 =\bigoplus_{j=0}^{n-1}Z( A_1) h^j\oplus\bigoplus_{i=1}^{n-1}\Bigg(\bigoplus_{j=1}^{n-1}Z( A_1) h^jx^i\oplus \bigoplus_{j=1}^{n-1}Z( A_1) h^jy^{n-i}\Bigg)
\end{equation}
is a direct sum of the eigenspaces  of the automorphism  $\o_h$ of the quantum Weyl algebra $ A_1$ and the set of eigenvalues of $\o_h$ is $\Ev_{A_1} (\o_h)=\{1,q,  \ldots , q^{n-1}\}$.

In the algebra $A_1$: For all $i=1,\ldots , n-1$,
\begin{equation}\label{qA1Sum12}
[x,y^i]=(1-q^{-i})xy^i-\frac{q^{-i+1}-1}{q-1}y^{i-1}\;\; {\rm and}\;\;
[y,x^i]=(1-q^i)yx^i-\frac{q^i-1}{q-1}x^{i-1}. 
\end{equation}
In more detail (recall that $a=h-\frac{1}{q-1}$),
\begin{eqnarray*}
&[x,y^i] =& \Big(\s (a)-\s^{-i+1}(a)\Big)y^{i-1}=(q-q^{-i+1})hy^{i-1}= (q-q^{-i+1})q^{-1}\bigg(xy+\frac{1}{q-1}\bigg)y^{i-1} \\ 
&=&(1-q^{-i})xy^i-\frac{q^{-i+1}-1}{q-1}y^{i-1},\\
&[y, x^i]=& \Big( a-\s^i(a)\Big)x^{i-1}=(1-q^i)hx^{i-1}=(1-q^i)\bigg(yx+\frac{1}{q-1}\bigg)x^{i-1}=(1-q^i)yx^i-\frac{q^i-1}{q-1}x^{i-1}. 
\end{eqnarray*}
\begin{lemma}\label{a24Jul23}

\begin{enumerate}
\item In  the factor algebras $A_1(t)$, $A_1/(r)$, and $A_1/(t,r)$ the element $s$ is equal to the   nonzero scalar $\frac{1}{(q-1)^n}$. In particular, the elements $s=h^n$ and $h$ are units. 
\item $(t,s)=(r,s)=(t,h)=(r,h)=A_1$.
\end{enumerate}
\end{lemma}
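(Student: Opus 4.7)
The plan is to derive everything from the central identity (\ref{qA1Sum8}), namely
\[
s \;=\; (-1)^{n-1}rt + \frac{1}{(q-1)^n},
\]
which was established in (\ref{qA1Sum1}) by computing $rt = y^n x^n$ via the GWA commutation rules and using $\prod_{i=0}^{n-1}(h - q^i\tfrac{1}{q-1}) = h^n - \tfrac{1}{(q-1)^n}$ from (\ref{xnix1}) together with $\xi_n = (-1)^{n-1}$ from (\ref{xnix2}). All four claims in the lemma are immediate consequences of reducing this identity modulo the appropriate ideal.

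For statement~(1), I would simply pass to the factor algebras $A_1/(t)$, $A_1/(r)$, and $A_1/(t,r)$. In each of these the product $rt$ becomes $0$, so the identity collapses to $s \equiv \frac{1}{(q-1)^n}$. Since $\frac{1}{(q-1)^n}\in K^\times$ (note $q\neq 1$), the image of $s$ is a nonzero scalar, hence a unit, in each factor. Because $s=h^n$, it follows that $h$ is also a unit there with explicit inverse $h^{-1} = (q-1)^n h^{n-1}$.

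For statement~(2), the argument is uniform: in each case I show $1=0$ in the proposed quotient, so the ideal must equal $A_1$. In $A_1/(t,s)$ we have both $t\equiv 0$ and $s\equiv 0$, so the identity above gives $0 \equiv \frac{1}{(q-1)^n}$, forcing $1\equiv 0$ and hence $(t,s)=A_1$. The same argument applied to $A_1/(r,s)$ gives $(r,s)=A_1$. For the remaining two equalities I use that $h\in(h)$ implies $s=h^n\in(h)$, whence $(t,h)\supseteq (t,s)=A_1$ and $(r,h)\supseteq (r,s)=A_1$.

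There is no real obstacle here: once the identity (\ref{qA1Sum8}) is in hand, the whole lemma is a one-line reduction modulo each ideal. The only minor point to verify cleanly is that $(q-1)^n$ is genuinely nonzero in $K$, which is guaranteed by the standing assumption that $q\in K\setminus\{0,1\}$ is a primitive $n$th root of unity.
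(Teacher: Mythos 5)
Your proposal is correct and follows exactly the paper's route: the author likewise derives statement (1) directly from the identity $rt=(-1)^{n-1}\big(s-\frac{1}{(q-1)^n}\big)$ of (\ref{qA1Sum1}) and obtains statement (2) as an immediate consequence. Your write-up merely makes explicit the reductions modulo each ideal (and the inverse of $h$) that the paper leaves implicit.
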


\begin{proof}   1. Statement 1 follows from the equality $rt=(-1)^{n-1}\bigg(s-\frac{1}{(q-1)^n}\bigg)$, see (\ref{qA1Sum1}). 

2. Statement 2 follows from statement 1.\end{proof}

By (\ref{xhyhs}),
\begin{equation}\label{qA1Sum11}
 A_1/(h) =K[x^{\pm 1}]\simeq K[y^{\pm 1}].
\end{equation}

Lemma \ref{b28Jul23} describes  simple modules, prime and primitive ideals of the algebra $A_1/(h)$.

\begin{lemma}\label{b28Jul23}

\begin{enumerate}
\item $\Spec (A_1/(h))=\{ \{ \{ 0\}, (f)\, | \, f \in \Irr_m(K[x])\backslash \{ x\} \}$.
\item $\Prim (A_1/(h))=\Max (A_1/(h))=\{  (f)\, | \, f \in \Irr_m(K[x])\backslash \{ x\} \}$.
\item $\widehat{A_1/(h)}=\{ A_1/(h,f)\, | \, f \in \Irr_m(K[x])\backslash \{ x\} \}$.
\end{enumerate}
\end{lemma}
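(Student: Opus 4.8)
\textbf{Proof plan for Lemma \ref{b28Jul23}.}

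The plan is to reduce everything to the well-understood commutative ring $A_1/(h)\simeq K[x^{\pm 1}]$, which was already identified in (\ref{qA1Sum11}). Since $(h)$ is the ideal generated by a regular normal element, the prime ideals of $A_1$ containing $h$ are in bijection with $\Spec(A_1/(h))$ via $\gp\mapsto\gp/(h)$ (this is Proposition \ref{aA12Mar15}.(1) applied with $s=h$, noting that $h$ is regular normal so $(h^i)=(h)^i$), and similarly the simple modules of $A_1/(h)$ are exactly those simple $A_1$-modules killed by $h$. So it suffices to compute $\Spec$, $\Prim=\Max$, and the simple module classes of the Laurent polynomial ring $K[x^{\pm 1}]$.

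For statement (1): $K[x^{\pm 1}]$ is a principal ideal domain (it is the localization of $K[x]$ at the powers of $x$, hence a PID of Krull dimension $1$), so its prime ideals are $\{0\}$ together with the maximal ideals $(f)$ where $f$ runs over monic irreducible polynomials of $K[x]$ that remain prime after inverting $x$ — that is, all monic irreducibles except $x$ itself (since $(x)=K[x^{\pm 1}]$). This gives $\Spec(A_1/(h))=\{\{0\},(f)\mid f\in\Irr_m(K[x])\backslash\{x\}\}$. For statement (2): in a commutative PID that is not a field, every nonzero prime is maximal, and $\{0\}$ is not maximal, so $\Max(K[x^{\pm 1}])=\{(f)\mid f\in\Irr_m(K[x])\backslash\{x\}\}$; moreover a commutative ring is primitive iff it is a field, so $\Prim(K[x^{\pm 1}])=\Max(K[x^{\pm 1}])$ as well. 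For statement (3): the simple $K[x^{\pm 1}]$-modules are exactly the residue fields $K[x^{\pm 1}]/(f)\simeq K[x]/(f)$, one for each $f\in\Irr_m(K[x])\backslash\{x\}$, and pulling back along $A_1\twoheadrightarrow A_1/(h)$ identifies these with the $A_1$-modules $A_1/(h,f)$.

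I do not expect any serious obstacle here: the lemma is essentially the observation that $A_1/(h)$ is commutative. The only points requiring a word of care are (i) checking that $h$ is genuinely a regular normal element of $A_1$ so that Proposition \ref{aA12Mar15} applies — but this was already recorded just after (\ref{xhyhs}), where $h$ is noted to be regular normal with $\o_h(x)=q^{-1}x$, $\o_h(y)=qy$ — and (ii) the bookkeeping that $x$ must be excluded from $\Irr_m(K[x])$ because $x$ becomes a unit in the Laurent ring. Everything else is standard commutative algebra of a one-dimensional PID.
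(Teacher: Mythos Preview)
Your proposal is correct and follows the same approach as the paper: both reduce immediately to the identification $A_1/(h)\simeq K[x^{\pm 1}]$ from (\ref{qA1Sum11}), after which everything is standard commutative algebra of a Laurent polynomial ring. The paper's proof is literally one line (``Lemma follows at once from (\ref{qA1Sum11})''), so you have simply spelled out the details.

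One minor remark: your invocation of Proposition \ref{aA12Mar15} is unnecessary here, since the lemma is stated directly for the quotient $A_1/(h)$, not for prime ideals of $A_1$ containing $h$; the bijection you mention is used later (in Theorem \ref{28Jul23}) but is not part of this lemma.
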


\begin{proof}   Lemma  follows at once from (\ref{qA1Sum11}).
 \end{proof}

Recall that $A_{1,trs}=A_{1,trh}$ and  $h$ is a normal element of $A_1$.  
Applying Proposition \ref{aA12Mar15} several times  and using the bijections/identifications there we see that the spectrum of the algebra $A_1$ is a {\em disjoint} union of the spectra of the following 8 algebras:
\begin{eqnarray*}
 A_{1,trh}, & \Big( A_1/(t) \Big)_{rh}=\Big( A_1/(t) \Big)_{r},\;\;\Big( A_1/(r) \Big)_{th}=\Big( A_1/(r) \Big)_{t}, \;\;
\Big( A_1/(t,r) \Big)_{h}=A_1/(t,r), \\
& \Big( A_1/(h) \Big)_{rt}\stackrel{(\ref{qA1Sum11})}{=} A_1/(h),\;\;\Big( A_1/(r,h) \Big)_{t}=0,\;\; \Big( A_1/(t,h) \Big)_{r}=0\;\;A_1/(t,r,h)=0. 
\end{eqnarray*}
By Lemma \ref{a24Jul23}.(1), the subscript `$h$' is dropped in three algebras in the first row and 
the last three algebras are equal to zero by Lemma \ref{a24Jul23}.(2). Proposition \ref{aA12Mar15} provides also a containment information between primes that will be used later.  In view of (\ref{qA1Sum9}), (\ref{qA1Sum10}), and (\ref{qA1Sum11}), Proposition \ref{A25Jul23} it remains to  clarifies the structure of the three  remaining algebras: $\Big( A_1/(t) \Big)_{r}$, 
$\Big( A_1/(r) \Big)_{t}$, and  $A_1/(t,r)$. 
Let $\L$ be any of the three algebras.  By (\ref{qA1Sum8}), in the algebra $\L$, 
$$ 0=s-\frac{1}{(q-1)^n}=h^n-\frac{1}{(q-1)^n}=\prod_{i=0}^n\bigg(h-\frac{q^i}{q-1}\bigg).$$
By (\ref{qA1Sum}), the algebra $\L$ contains the subalgebra
$$\CK :=K[h]/ \bigg(h^n-\frac{1}{(q-1)^n} \bigg)\simeq \prod_{i=0}^{n-1}K[h]/ \bigg(h-\frac{q^i}{(q-1)^n} \bigg)\simeq K^n.$$
Let $1=\varepsilon_0+\cdots +\varepsilon_{n-1}$ be the corresponding sum of nonzero orthogonal idempotents. Notice that
\begin{equation}\label{qA1Sum15}
\varepsilon_i =\frac{\prod_{j\neq i} \Big(h-\frac{q^j}{(q-1)^n}\Big)}{\prod_{j\neq i} \Big(\frac{q^i}{(q-1)^n}-\frac{q^j}{(q-1)^n}\Big)}\;\; {\rm for}\;\; i=0, \ldots , n-1.
\end{equation}
Then $\CK=\prod_{i=0}^{n-1}\CK e_i$ and $\CK e_i\simeq K[h]/ \bigg(h^n-\frac{q^i}{(q-1)^n} \bigg)\simeq K$ for all $i=0, \ldots , n-1$.  
The automorphism $\s$ of the algebra $K[h]$ preserves the ideal $(s-\frac{1}{(q-1)^n})$. Hence, $\s$ is also an automorphism of the factor algebra $\CK\simeq K^n$ which  cyclicly permutes its  direct components since  
\begin{equation}\label{qA1Sum17}
\s(\varepsilon_i)=\varepsilon_{i-1}   \;\;{\rm for}\;\; i=0, \ldots , n-1\;\; (\varepsilon_{-1}:=\varepsilon_{n-1}).
\end{equation}
We consider the subscript `$i$' in $\varepsilon_i$ as an element of the abelian group $\Z/n\Z$. \\

{\bf The algebra $A_1/(t,r)\simeq M_n(K)$ and its unique simple module $L$.} By (\ref{qA1Sum}),
the algebra
\begin{equation}\label{qA1Sum13}
A_1/(t,r) =\bigoplus_{i,j=0}^{n-1}Kx^iy^j
\end{equation}
is an $n^2$-dimensional algebra. By (\ref{qA1Sum13}), the $n$-dimensional $A_1/(t,r)$-module 
\begin{equation}\label{qA1Sum14}
L:=A_1/A_1(t,y) =\bigoplus_{i=0}^{n-1}Kx^i\b1, \;\; {\rm where}\;\; \b1 :=1+ A_1(t,y), 
\end{equation}
is a direct sum  of $1$-dimensional eigenspaces with distinct eigenvalues for the action of the element $h$:
$$ hx^i\b1=q^{-i}x^ih\b1=q^{-i}x^iq^{-1}\bigg(xy+\frac{1}{q-1}\bigg)\b1=\frac{q^{-i-1}}{q-1}x^i\b1\;\; {\rm for}\;\; i=0, \ldots , n-1.$$
Therefore, any nonzero submodule, say $M$, of $L$ contains an element $x^i\b1$ for some $i$. Then it also contains the element
$$y^ix^i\b1 =
\prod_{j=0}^{i-1}\s^{-j}(a)\b1=
\prod_{j=0}^{i-1}\bigg(q^{-j}h-\frac{1}{q-1} \bigg)\b1=\prod_{j=0}^{i-1}\frac{q^{-j-1}-1}{q-1} \b1\neq 0
$$
which is a generator for the module $L$, and so $M=L$. This proves that the module $L$ is a {\em simple} module. Clearly,
$$ x\cdot x^i\b1 =
\begin{cases}
 x^{i+1}\b1 & \text{if }i=0, \ldots , n-2,\\
0& \text{if }i=n-1,\\
\end{cases}
\;\; {\and}\;\;
y\cdot x^i\b1 =
\begin{cases}
0& \text{if }i=0,\\
\frac{q^{-i}-1}{q-1}x^{i-1}\b1& \text{if }i=1, \ldots , n-1.
\end{cases}
$$
In the $K$-basis $\b1, x\b1, \ldots , x^{n-1}\b1$ of the module $L$,  the matrices of the linear maps $h\cdot$, $x\cdot$, and $y\cdot$ are the diagonal matrix ${\rm diag}\Big(\frac{q^{-1}}{q-1}, \ldots ,  \frac{q^{-n+1}}{q-1}\Big)$, the lower triangular matrix with $1$ on the lower diagonal, and the upper triangular matrix with elements $\frac{q^{-1}-1}{q-1}, \ldots ,  \frac{q^{-n+1}-1}{q-1}$ on the upper diagonal, respectively. 

The following useful lemma is used in the proof of Proposition \ref{A26Jul23}. 

\begin{lemma}\label{a26Jul23}
Suppose that $A$ is an $n^2$-dimensional algebra over an arbitrary field $F$. Then $A\simeq M_n(F)$ iff there is a simple $n$-dimensional $A$-module. 
\end{lemma}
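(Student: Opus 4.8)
\textbf{Proof plan for Lemma \ref{a26Jul23}.}

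The statement is an elementary but useful fact about finite dimensional algebras, and the plan is to prove the two implications separately. For the forward direction, if $A \simeq M_n(F)$, then the standard column module $F^n$ is a simple $A$-module (the matrix algebra $M_n(F)$ acts transitively on nonzero column vectors, so $F^n$ has no proper nonzero submodules), and it is $n$-dimensional over $F$; this direction is immediate. The substance of the lemma is the reverse implication, so I would spend essentially all the effort there.

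For the reverse direction, suppose $M$ is a simple $A$-module with $\dim_F(M) = n$. The plan is as follows. First, consider the structure homomorphism $\rho : A \to \End_F(M) \simeq M_n(F)$. Since $M$ is simple, $\ker(\rho)$ is a maximal ideal, hence a primitive ideal, and $\bar{A} := A/\ker(\rho)$ is a primitive algebra that embeds in $M_n(F)$, so in particular $\dim_F(\bar{A}) \le n^2$. Next, I would invoke the Jacobson density theorem: since $M$ is a faithful simple $\bar{A}$-module and $D := \End_{\bar{A}}(M)$ is a division ring, $\bar{A}$ is a dense subring of $\End_D(M)$. Because $M$ is finite dimensional over $F$, it is finite dimensional over $D$ (say $\dim_D(M) = d$), and density over a finite dimensional module forces $\bar{A} = \End_D(M) \simeq M_d(D)$. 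Now count dimensions over $F$: writing $\dim_F(D) = e$, we have $n = \dim_F(M) = \dim_D(M)\cdot\dim_F(D) = d e$, and $\dim_F(\bar{A}) = \dim_F(M_d(D)) = d^2 e = d \cdot (de) = dn$. Since $\bar{A}$ is a quotient of $A$, we get $dn = \dim_F(\bar{A}) \le \dim_F(A) = n^2$, whence $d \le n$; but also $n = de$ forces $d \mid n$ in the relevant sense and $e = n/d$.

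To finish, I would observe that the only way to pin down $\bar{A} = A$ and $D = F$ is to use the dimension hypothesis $\dim_F(A) = n^2$ more tightly. The key inequality is $\dim_F(A) \ge \dim_F(\bar{A}) = dn$, combined with $\dim_F(A) = n^2$, giving $d \le n$, i.e. $e = n/d \ge 1$; this alone is not enough. Instead I would argue: $\dim_F(\bar A) = dn \le n^2$, and equality $\dim_F(\bar A) = \dim_F(A) = n^2$ holds if and only if $d = n$, which then forces $e = 1$, i.e. $D = F$ and $M \simeq F^n$ as $D$-module, so $\bar{A} \simeq M_n(F)$ and $\ker(\rho) = 0$, i.e. $A \simeq M_n(F)$. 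So the real content is showing $\ker(\rho) = 0$: since $A$ is a quotient-free situation, if $\ker(\rho) \ne 0$ then $\dim_F(\bar A) < n^2$, forcing $dn < n^2$, i.e. $d < n$, hence $e = n/d \ge 2$, so $D \ne F$; but then $\dim_F(\bar A) = d^2 e$, and I must check this is compatible — the cleanest route is simply: $n^2 = \dim_F(A) \ge \dim_F(\bar{A}) = d^2 e = (de)^2/e = n^2/e$, so $e \le 1$, forcing $e = 1$, $D = F$, $d = n$, and $\dim_F(\bar A) = n^2 = \dim_F(A)$, hence $\ker(\rho) = 0$. Therefore $A \simeq \bar{A} \simeq M_n(F)$.

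\textbf{Expected main obstacle.} The only delicate point is justifying the application of Jacobson density to conclude $\bar{A} = \End_D(M)$ exactly (not merely densely); this is standard precisely because $M$ is finite dimensional over $F$, hence finite length over $D$, so density upgrades to surjectivity onto $\End_D(M)$. Everything else is bookkeeping with $F$-dimensions. An alternative, perhaps slicker, route avoiding density: note $\bar{A}$ is a finite dimensional primitive $F$-algebra, hence simple artinian by Artin–Wedderburn, so $\bar{A} \simeq M_d(D)$ directly, and then run the same dimension count $n^2 \ge d^2\dim_F(D) = n^2/\dim_F(D)$ to force $\dim_F(D) = 1$ and $\bar A = A$. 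I would likely present this Wedderburn version as it is the shortest and uses only results the reader will grant freely.
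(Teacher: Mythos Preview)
Your approach—passing to the primitive quotient $\bar A = A/\ker\rho$, identifying it with $M_d(D)$ via Jacobson density (or, in your alternative version, Artin--Wedderburn), and then counting $F$-dimensions—is exactly what the paper does. But there is a genuine error in your final inequality, and the same gap sits in the paper's own argument. You write $n^2 = \dim_F(A) \ge \dim_F(\bar A) = d^2 e = n^2/e$ and conclude $e \le 1$. The inequality $n^2 \ge n^2/e$ with $e$ a positive integer says $e \ge 1$, which is vacuous; it does \emph{not} force $e = 1$. The paper's proof has the mirror-image problem: it simply asserts, without justification, that $\rad(A)=0$ and that the Wedderburn decomposition has a single factor, and then the arithmetic goes through—but that assertion is exactly the point at issue.

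This is not a slip that can be patched by rearranging the bookkeeping: the lemma is false over a general field. Take $F = \mathbb{R}$, $n = 2$, and $A = \mathbb{C} \times \mathbb{C}$ (or $\mathbb{C} \times \mathbb{R} \times \mathbb{R}$). Then $\dim_F(A) = 4 = n^2$, the module $M = \mathbb{C}$ via the first projection is simple with $\dim_{\mathbb{R}}(M) = 2 = n$, yet $A$ is commutative and certainly not $M_2(\mathbb{R})$. More generally, for any field extension $L/F$ with $[L:F]=n$, the algebra $A = L \times F^{\,n^2-n}$ is $n^2$-dimensional with a simple $n$-dimensional module $L$, but $A \not\simeq M_n(F)$. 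In your notation these examples have $e = n$, $d = 1$, $\dim_F(\bar A) = n < n^2$. So some extra hypothesis is needed—for instance $F$ algebraically closed, or $\End_A(M)=F$, either of which forces $e=1$ and then your argument (and the paper's) goes through cleanly. In the paper's sole application (to $A_1/(t,r)$) one can instead exhibit matrix units directly, so the gap is harmless there, but the abstract lemma as stated does not hold.
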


\begin{proof}   $(\Rightarrow)$ The $n$-dimensional vector space $F^n$ is a simple $ M_n(F)$-module.

$(\Leftarrow )$ Let $U$ be an $n$-dimensional simple $A$-module and $\gr$ be the radical of the algebra $A$. By the Artin-Wedernburn Theorem, the factor algebra $A/\gr\simeq \prod_{i=1}^lM_{m_i}(D_i)$ is a direct product of matrix algebras over division rings. Since $\dim_F(A)=n^2=\Big(\dim_F(U) \Big)^2$, we must have $\gr=0$, $l=1$, $n^2=\dim_F(A)=m_1^2\dim_F(D_1)$, and $n=\dim_F(U)=m_1\dim_F(D_1)$. It follows from 
$$m_1^2\dim_F(D_1)=n^2=m_1^2\Big(\dim_F(D_1)\Big)^2$$ that $\dim_F(D_1)=1$ and $m_1=n$, i.e. $A\simeq M_n(F)$.   \end{proof}

\begin{proposition}\label{A26Jul23}
The algebra $A_1/(t,r)$ is isomorphic to the matrix algebra $M_n(K)$, $\Spec(A_1/(t,r))=\{0\}$,  and the $A_1/(t,r)$-module $L$ is a unique (up to isomorphism) simple $A_1/(t,r)$-module. 
\end{proposition}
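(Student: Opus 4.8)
The plan is to combine the explicit module $L$ constructed just before the statement with Lemma~\ref{a26Jul23}. First I would recall that by (\ref{qA1Sum13}) the algebra $A_1/(t,r)$ is $n^2$-dimensional over $K$, and that the preceding paragraph exhibits a $K$-basis $\b1, x\b1, \ldots , x^{n-1}\b1$ of the module $L = A_1/A_1(t,y)$, so $\dim_K L = n$. The same paragraph verifies that every nonzero submodule of $L$ contains some $x^i\b1$, and then, applying the element $y^i$ and using the formula $y^ix^i\b1 = \prod_{j=0}^{i-1}\frac{q^{-j-1}-1}{q-1}\b1 \neq 0$ (the product is nonzero precisely because $q$ is a primitive $n$'th root of unity, so none of the factors $q^{-j-1}-1$ with $1 \le j+1 \le n-1$ vanishes), contains the generator $\b1$; hence $L$ is simple. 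Thus $L$ is an $n$-dimensional simple $A_1/(t,r)$-module, and Lemma~\ref{a26Jul23} immediately gives $A_1/(t,r) \simeq M_n(K)$.

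Next I would deduce the remaining two assertions from this isomorphism. Since $M_n(K)$ is a simple algebra, its only prime ideal is the zero ideal, so $\Spec(A_1/(t,r)) = \{0\}$. Likewise, $M_n(K)$ has, up to isomorphism, a unique simple module (the column space $K^n$); transporting along the isomorphism $A_1/(t,r)\simeq M_n(K)$ and noting that $L$ is one simple $A_1/(t,r)$-module of the correct dimension $n$, we conclude that $L$ is, up to isomorphism, the unique simple $A_1/(t,r)$-module. Alternatively one can invoke Theorem~\ref{2Aug23} or the general theory of simple artinian rings, but the direct argument via $M_n(K)$ is cleanest here.

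The main obstacle—though it is already handled in the text preceding the statement—is the verification that $L$ is simple, i.e. that the coefficients $\prod_{j=0}^{i-1}\frac{q^{-j-1}-1}{q-1}$ are nonzero for all $i = 1, \ldots, n-1$. This is exactly where the hypothesis that $q$ is a \emph{primitive} $n$'th root of unity is used: were $q$ merely some root of unity of smaller order, one of these factors could vanish, $L$ would fail to be simple, and the algebra $A_1/(t,r)$ would not be a full matrix algebra. Everything else is bookkeeping: the dimension count $\dim_K(A_1/(t,r)) = n^2$ from (\ref{qA1Sum13}), the dimension count $\dim_K L = n$ from (\ref{qA1Sum14}), and the application of Lemma~\ref{a26Jul23}. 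So the proof is short: establish simplicity of $L$, apply the lemma, and read off $\Spec$ and $\widehat{A_1/(t,r)}$ from the structure of $M_n(K)$.
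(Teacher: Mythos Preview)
Your proposal is correct and follows essentially the same argument as the paper: the paper's proof is the single line ``Since $\dim_K(A_1/(t,r))=n^2$ and $\dim_K(L)=n$, $A_1/(t,r)\simeq M_n(K)$, by Lemma~\ref{a26Jul23}. Hence, $\Spec(A_1/(t,r))=\{0\}$,'' relying on the simplicity of $L$ established in the text preceding the proposition, exactly as you outline. Your discussion of the role of primitivity of $q$ is accurate and more explicit than the paper's presentation, but the underlying route is the same.
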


\begin{proof}   Since $\dim_K(A_1/(t,r))=n^2$ and $\dim_K(L)=n$, $A_1/(t,r)\simeq M_n(K)$, by Lemma \ref{a26Jul23}. Hence, $\Spec(A_1/(t,r))=\{0\}$.
 \end{proof}

 {\bf The generalized Weyl algebra $A_1/(tr) $.} 
The algebra $A_1=K[h][x,y; \s ,a=h-\frac{1}{q-1}$ is a GWA. Then,   by  (\ref{qA1Sum1}),
the factor algebra
\begin{equation}\label{qA1Sum20}
A_1/(tr)=A_1/\Big( s-\frac{1}{(q-1)^n}\Big)=\CK [x,y;\s, a=h-\frac{1}{q-1}]=\bigoplus_{i\in \Z}\CK v_i
\end{equation}
 is a GWA where $v_i=x^i$  and $v_{-i}=y^i$ for $i\in \N$. It follows from the equalities $y=yxx^{-1}=ax$ and $x=y^{-1}yx=y^{-1}a$ that the algebras 
\begin{equation}\label{qA1Sum18}
\Big( A_1/(r) \Big)_{t}=\Big( A_1/(rt) \Big)_{t}
=\bigoplus_{i\in \Z}\CK x^i=\bigoplus_{i=0}^{n-1}\CK [t^{\pm 1}]x^i=\CK [x^{\pm 1};\s],
\end{equation}
\begin{equation}\label{qA1Sum19}
\Big( A_1/(t) \Big)_{r}=\Big( A_1/(rt) \Big)_{r}
=\bigoplus_{i\in \Z}\CK y^i=\bigoplus_{i=0}^{n-1}\CK [r^{\pm 1}]y^i=\CK [y^{\pm 1};\s^{-1}],
\end{equation}
are skew Laurent polynomial algebras. Furthermore, they are isomorphic: $$\Big( A_1/(t) \Big)_{r}
=\CK [y^{\pm 1};\s^{-1}]=\CK [\Big( y^{-1}\Big)^{\pm 1};\s]\simeq \Big( A_1/(r) \Big)_{t}.$$

{\bf The algebras $\Big( A_1/(r) \Big)_{t}$ and 
$\Big( A_1/(t) \Big)_{r}$.} By (\ref{qA1Sum}),

\begin{equation}\label{qA1Sum16}
\Big( A_1/(t) \Big)_{r}=\bigoplus_{i,j=0}^{n-1}K[t^{\pm 1}]x^iy^j\;\; {\rm and}\;\; \Big( A_1/(r) \Big)_{t}=\bigoplus_{i,j=0}^{n-1}K[r^{\pm 1}]x^iy^j.
\end{equation}

For a polynomial algebra $K[z]$,  we denote by $\Irr_m(K[z])$ the set of its monic irreducible polynomials. Clearly, $\Spec (K[z])=\{ \{ 0\}, (f)\, | \, f\in \Irr_m(K[t]) \}$. Proposition \ref{A25Jul23} shows that the algebras $\Big( A_1/(r) \Big)_{t}$ and 
$\Big( A_1/(t) \Big)_{r}$ and matrix algebras over  Laurent polynomial algebra and describes their prime spectra.

\begin{proposition}\label{A25Jul23}

\begin{enumerate}
\item $\Big( A_1/(r) \Big)_{t}=\bigoplus_{i,j=0}^{n-1}K[t^{\pm 1}]E_{ij}\simeq M_n(K[t^{\pm 1}])$ where $E_{ij}:=\varepsilon_ix^{j-i}\varepsilon_j$ are the matrix units.

\item $\Spec \Big(\Big( A_1/(r) \Big)_{t}\Big)=\{ \{ 0\}, (f)\, | \, f\in \Irr_m(K[t])\backslash \{ t\} \}$ and the factor algebra $$A_1/(r,f)=\Big( A_1/(r) \Big)_{t}/(f)\simeq M_n(K[t]/(f))$$ is  the matrix algebra over the field $K[t]/(f)$ for all $ f\in \Irr_m(K[t])\backslash \{ t\} $.

\item $\Big( A_1/(t) \Big)_{r}=\bigoplus_{i,j=0}^{n-1}K[r^{\pm 1}]E_{ij}\simeq M_n(K[r^{\pm 1}])$
where $E_{ij}:=\varepsilon_iy^{i-j}\varepsilon_j$ are the matrix units.

\item $\Spec \Big(\Big( A_1/(t) \Big)_{r}\Big)=\{ \{ 0\}, (g)\, | \, g\in \Irr_m(K[r])\backslash \{ r\} \}$ and the factor algebra $$ A_1/(t,g)=\Big( A_1/(t) \Big)_{r}/(g)\simeq M_n(K[r]/(g))$$ is  the matrix algebra over the field $K[r]/(g)$ for all $ g\in \Irr_m(K[r])\backslash \{ r\} $.

\end{enumerate}
\end{proposition}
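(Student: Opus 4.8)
The plan is to exploit the crossed--product description $\Big(A_1/(r)\Big)_t=\CK[x^{\pm1};\s]$ already established in (\ref{qA1Sum18}), where $\CK=\prod_{i\in\Z/n\Z}K\varepsilon_i\simeq K^n$, the automorphism $\s$ acts on the idempotents by $\s(\varepsilon_i)=\varepsilon_{i-1}$ (see (\ref{qA1Sum17})), and $t=x^n$ is a central unit (so $x$ is itself a unit, $x^{-1}=x^{n-1}t^{-1}$). The basic commutation rule is $x^k\varepsilon_j=\s^k(\varepsilon_j)x^k=\varepsilon_{j-k}x^k$ for all $k\in\Z$, indices of idempotents being read in $\Z/n\Z$ and exponents of $x$ in $\Z$; equivalently $\varepsilon_jx^m=x^m\varepsilon_{j+m}$. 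First I would check that the elements $E_{ij}:=\varepsilon_ix^{j-i}\varepsilon_j$ ($0\le i,j\le n-1$) are matrix units: from $\varepsilon_j\varepsilon_k=\delta_{jk}\varepsilon_j$ together with the commutation rule one gets, after a short computation, $E_{ij}E_{kl}=\delta_{jk}E_{il}$, while $\sum_{i=0}^{n-1}E_{ii}=\sum_i\varepsilon_i=1$; moreover each $E_{ij}$ commutes with $t$ since $t$ is central.

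Next I would establish that $\{\,t^aE_{ij}\mid a\in\Z,\ 0\le i,j\le n-1\,\}$ is a $K$-basis of $\CK[x^{\pm1};\s]=\bigoplus_{m\in\Z}\bigoplus_{i=0}^{n-1}K\varepsilon_ix^m$. Indeed, the commutation rule gives $E_{ij}=\varepsilon_ix^{j-i}$, hence $t^aE_{ij}=\varepsilon_ix^{an+(j-i)}$; for each fixed $i$ the differences $j-i$ ($0\le j\le n-1$) form a complete residue system modulo $n$, so $(a,j)\mapsto an+(j-i)$ is a bijection $\Z\times\{0,\dots,n-1\}\to\Z$, and summing over $i$ proves the basis claim. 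Consequently $\Big(A_1/(r)\Big)_t=\bigoplus_{i,j=0}^{n-1}K[t^{\pm1}]E_{ij}$ is free of rank $n^2$ over $K[t^{\pm1}]=K[x^{\pm n}]$, and by the matrix--unit relations the $K[t^{\pm1}]$-linear map carrying the standard matrix units of $M_n(K[t^{\pm1}])$ to the $E_{ij}$ is an algebra isomorphism $M_n(K[t^{\pm1}])\simeq\Big(A_1/(r)\Big)_t$ with centre $K[t^{\pm1}]$; this is statement 1. Statement 3 follows by the identical argument applied to $\Big(A_1/(t)\Big)_r=\CK[y^{\pm1};\s^{-1}]$ of (\ref{qA1Sum19}), with $y$, $\s^{-1}$, $r=y^n$ in the roles of $x$, $\s$, $t$ (now $y^k\varepsilon_j=\varepsilon_{j+k}y^k$ and $E_{ij}=\varepsilon_iy^{i-j}\varepsilon_j$), or by transporting statement 1 through the isomorphism $\CK[y^{\pm1};\s^{-1}]=\CK[(y^{-1})^{\pm1};\s]\simeq\CK[x^{\pm1};\s]$ recorded before (\ref{qA1Sum19}).

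For statement 2 I would invoke the standard fact that the two--sided ideals of $M_n(R)$ are exactly the $M_n(I)$ for ideals $I\trianglelefteq R$, with $M_n(R)/M_n(I)\simeq M_n(R/I)$ and $M_n(I)$ prime iff $I$ is prime; thus $\Spec\big(M_n(K[t^{\pm1}])\big)$ is order--isomorphic to $\Spec\big(K[t^{\pm1}]\big)$. Since $K[t^{\pm1}]$ is a PID (a localization of $K[t]$) in which $t$ is a unit, its primes are $\{0\}$ and the principal ideals $(f)$ with $f\in\Irr_m(K[t])\setminus\{t\}$, which gives the stated description of $\Spec\Big(\big(A_1/(r)\big)_t\Big)$. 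To identify $A_1/(r,f)$ with $\big(A_1/(r)\big)_t/(f)$ for such $f$, note that $\gcd(f,t)=1$ in $K[t]$ yields $uf+vt=1$ for some $u,v\in K[t]$, so $t$ is invertible in $A_1/(r,f)$; hence $A_1/(r,f)=\big(A_1/(r,f)\big)_t=\big(A_1/(r)\big)_t/(f)\simeq M_n\big(K[t^{\pm1}]/(f)\big)\simeq M_n\big(K[t]/(f)\big)$, the last step because $t$ is already a unit modulo $f$. (The prime $f=t$ is precisely the one removed by the localization, and indeed $A_1/(t,r)\simeq M_n(K)$ by Proposition \ref{A26Jul23}.) Statement 4 is obtained from the matrix description of $\big(A_1/(t)\big)_r$ in exactly the same way, replacing $t$ by $r$ and $K[t]$ by $K[r]$. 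The whole argument is bookkeeping of the crossed--product/matrix--unit structure; the only point that is not purely formal is this last one — passing from the spectrum of the localized algebra to the concrete factor algebras $A_1/(r,f)$ — together with the residue--system bijection that secures the $K[t^{\pm1}]$-independence of the $E_{ij}$, and these I expect to be the part deserving the most care.
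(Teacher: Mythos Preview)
Your proposal is correct and follows essentially the same approach as the paper: verify the matrix-unit relations for the $E_{ij}$, use the crossed-product description $\CK[x^{\pm1};\s]$ from (\ref{qA1Sum18}) together with the idempotent decomposition $1=\sum_i\varepsilon_i$ to obtain the direct-sum decomposition over $K[t^{\pm1}]$, and then read off statements 2 and 4 from the Morita correspondence between ideals of $M_n(R)$ and of $R$, finishing with the observation that $t$ is invertible in $A_1/(r,f)$ for $f\neq t$. The paper's basis argument is phrased as a chain of direct-sum identities $\bigoplus_{i}\CK[t^{\pm1}]x^i=\bigoplus_{j}\bigoplus_{i}K[t^{\pm1}]\varepsilon_{j-i}x^i\varepsilon_j=\bigoplus_{i,j}K[t^{\pm1}]E_{ij}$ rather than your explicit residue-system bijection $(a,j)\mapsto an+(j-i)$, but the content is the same.
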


\begin{proof}   1. The elements $E_{ij}$ are matrix units: For all $i,j,k,l=0,\ldots , n-1$, $$E_{ij}E_{kl}=\varepsilon_ix^{j-i}\varepsilon_j\varepsilon_kx^{l-k}\varepsilon_l=\d_{jk}\varepsilon_ix^{j-i}\varepsilon_jx^{l-k}\varepsilon_l=\d_{jk}\varepsilon_ix^{j-i}x^{l-j}\varepsilon_l=\varepsilon_ix^{l-i}\varepsilon_k=\d_{jk}E_{il}.$$
By (\ref{qA1Sum17}), (\ref{qA1Sum18}), and the equality $1=\varepsilon_0+\cdots +\varepsilon_{n-1}$, 
$$ \Big( A_1/(r) \Big)_{t}=\bigoplus_{i=0}^{n-1}\CK [t^{\pm 1}]x^i=\bigoplus_{j=0}^{n-1}
\bigg(\sum_{i=0}^{n-1}\CK [t^{\pm 1}]x^i\varepsilon_j\bigg)
=\bigoplus_{j=0}^{n-1}
\bigg(\sum_{i=0}^{n-1}K [t^{\pm 1}]\varepsilon_{j-i}x^i\varepsilon_j\bigg)
=\bigoplus_{i,j=0}^{n-1}K[t^{\pm 1}]E_{ij},
$$
and statement 1 follows.

2. The first equality of statement 2 and the isomorphism $\Big( A_1/(r) \Big)_{t}/(f)\simeq M_n(K[t]/(f))$ follow from statement 1.
 Since $ f\in \Irr_m(K[t])\backslash \{ t\} $, the elements $x$ and $t=x^n$ are invertible in the field $K[x]/(f)$. The factor algebra $A_1/(r,f)$ contains the field 
$K[x]/(f)$, and so  $A_1/(r,f)=\Big(A_1/(r,f) \Big)_t=\Big(A_1/(r) \Big)_t/(f)$.

3 and 4. Repeat the arguments of the proofs of statements 1 and 2 making obvious adjustments.
 \end{proof}

For the algebras $\Big( A_1/(r) \Big)_{t}$ and 
$\Big( A_1/(t) \Big)_{r}$, Corollary \ref{a28Jul23} describes their simple modules and primitive ideals.

\begin{corollary}\label{a28Jul23}

\begin{enumerate}

\item $\Prim \Big(\Big( A_1/(r) \Big)_{t}\Big)=\Max  \Big(\Big( A_1/(r) \Big)_{t}\Big)=\{  (f)\, | \, f\in \Irr_m(K[t])\backslash \{ t\} \}$.

\item $\widehat{\Big( A_1/(r) \Big)_{t}}= \{ \Big( K[t]/(f)\Big)^n \; | \; f\in \Irr_m(K[t])\backslash \{ t\} \}$ where the  $\Big( K[t]/(f)\Big)^n \simeq \Big(M_n( K[t]/(f)\Big)e_0$ is a unique simple $M_n( K[t]/(f))$-module and its annihilator   as an $\Big( A_1/(r) \Big)_{t}$-module is $(f)$.

\item $\Prim \Big(\Big( A_1/(t) \Big)_{r}\Big)=\Max  \Big(\Big( A_1/(t) \Big)_{r}\Big)=\{  (g)\, | \, g\in \Irr_m(K[r])\backslash \{ r\} \}$.

\item $\widehat{\Big( A_1/(t) \Big)_{r}}= \{ \Big( K[r]/(g)\Big)^n \; | \; g\in \Irr_m(K[r])\backslash \{ r\} \}$ where the  $\Big( K[r]/(g)\Big)^n \simeq \Big(M_n( K[r]/(g)\Big)e_0$ is a unique simple $M_n( K[r]/(g))$-module and its annihilator   as an $\Big( A_1/(t) \Big)_{r}$-module is $(g)$.

\end{enumerate}
\end{corollary}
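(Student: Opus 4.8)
\textbf{Proof plan for Corollary \ref{a28Jul23}.}
The plan is to deduce all four statements directly from Proposition \ref{A25Jul23}, which already identifies $\bigl(A_1/(r)\bigr)_t\simeq M_n(K[t^{\pm1}])$ and $\bigl(A_1/(t)\bigr)_r\simeq M_n(K[r^{\pm1}])$, together with the complete description of their prime spectra. By the left–right symmetry of the two algebras (they are isomorphic via $x\leftrightarrow y^{-1}$, as noted around (\ref{qA1Sum19})), it suffices to prove statements 1 and 2 for $\bigl(A_1/(r)\bigr)_t$ and then obtain statements 3 and 4 by the obvious relabelling $t\mapsto r$, $x\mapsto y$.

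First I would invoke the standard Morita-equivalence facts for a matrix algebra over a commutative ring: for any commutative ring $S$, the prime (resp.\ primitive, resp.\ maximal) ideals of $M_n(S)$ are exactly $M_n(\gp)$ for $\gp$ a prime (resp.\ primitive, resp.\ maximal) ideal of $S$, and the simple $M_n(S)$-modules are exactly $V^n$ for $V$ a simple $S$-module, with annihilator $M_n(\ann_S(V))$. Applying this with $S=K[t^{\pm1}]$: since $\Spec(K[t^{\pm1}])=\{\{0\},(f)\mid f\in \Irr_m(K[t])\backslash\{t\}\}$ and every nonzero prime of the PID $K[t^{\pm1}]$ is maximal, we get $\Prim=\Max=\{(f)\mid f\in\Irr_m(K[t])\backslash\{t\}\}$ at the level of $K[t^{\pm1}]$, hence (transporting through the isomorphism of Proposition \ref{A25Jul23}.(1), which matches the contraction $(f)$ in $\bigl(A_1/(r)\bigr)_t$ with $M_n((f))$ as in Proposition \ref{A25Jul23}.(2)) statement 1 follows. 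For statement 2, the unique simple $M_n(K[t]/(f))$-module is the column space $\bigl(K[t]/(f)\bigr)^n$, isomorphic to $M_n(K[t]/(f))e_0$ where $e_0=E_{00}$ is the first diagonal matrix unit from Proposition \ref{A25Jul23}.(1); pulling back along $\bigl(A_1/(r)\bigr)_t\twoheadrightarrow \bigl(A_1/(r)\bigr)_t/(f)\simeq M_n(K[t]/(f))$ gives a simple $\bigl(A_1/(r)\bigr)_t$-module with annihilator $(f)$, and simplicity of $K[t]/(f)$ as a field module gives that these exhaust $\widehat{\bigl(A_1/(r)\bigr)_t}$ (each simple module, being annihilated by a primitive ideal, factors through one of the $M_n(K[t]/(f))$). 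Statements 3 and 4 are then verbatim the same argument with $r$ in place of $t$ and $E_{ij}=\varepsilon_iy^{i-j}\varepsilon_j$ from Proposition \ref{A25Jul23}.(3).

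I do not expect a genuine obstacle here: the corollary is essentially a bookkeeping consequence of Proposition \ref{A25Jul23} plus textbook Morita theory for $M_n(S)$. The only point requiring a little care is to confirm that the idempotent $e_0$ used to realize the simple module is precisely $\varepsilon_0$ (equivalently $E_{00}$) under the explicit isomorphisms of Proposition \ref{A25Jul23}, so that the stated description $\bigl(M_n(K[t]/(f))\bigr)e_0$ is literally correct rather than merely correct up to isomorphism; this is immediate from the matrix-unit formulas $E_{ij}=\varepsilon_ix^{j-i}\varepsilon_j$. One should also remark that $t$ (resp.\ $r$) is excluded from the irreducible polynomials because $t$ (resp.\ $r$) is a unit in these localized algebras, which is exactly why $\Spec(K[t^{\pm1}])$ omits $(t)$.
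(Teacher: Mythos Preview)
Your proposal is correct and takes essentially the same approach as the paper: the paper's proof consists of a single sentence stating that the corollary follows from Proposition~\ref{A25Jul23}, and your argument is precisely the natural unpacking of that reference via standard Morita theory for $M_n(S)$ over a commutative ring $S$. The additional care you take in identifying $e_0$ with $\varepsilon_0=E_{00}$ and in noting why $t$ (resp.\ $r$) is excluded is implicit in the paper's setup and makes your write-up more self-contained than the original.
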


\begin{proof}   The corollary follows from Proposition \ref{A25Jul23}.   \end{proof}

{\bf The ideals $(r)$ and $(t)$ are prime ideals of the algebra $A_1$.} Proposition \ref{A29Jul23}
gives a basis for the algebras $A_1/(r)$ and $A_1/(t)$. The ideals $(r)$ and $(t)$ of $A_1$ are homogeneous ideals with respect to the $\Z$-grading of the GWA $A_1$. Furthermore, Proposition \ref{A29Jul23}
describes the $\Z$-grading  for the algebras $A_1/(r)$ and $A_1/(t)$.

\begin{proposition}\label{A29Jul23}

\begin{enumerate}
\item $A_1/(r)=\bigoplus_{i=1}^{n-1}\Big(\bigoplus_{j=i}^{n-1} \CK e_j\Big)y^i\tilde{1}\oplus \CK [x;\s]\tilde{1}$ and  $t+(r)\in \CC_{A_1/(r)}$ where $\tilde{1}:=1+(r)$, and $Z(A_1/(r))=K[t]$.
\item $A_1/(t)=\bigoplus_{i=1}^{n-1}\Big(\bigoplus_{j=0}^{i} \CK e_j\Big)x^i\overline{1}\oplus \CK [y;\s^{-1}]\overline{1}$ and  $r+(t)\in \CC_{A_1/(t)}$ where $\overline{1} :=1+(t)$, and $Z(A_1/(t))=K[r]$.

\end{enumerate}
\end{proposition}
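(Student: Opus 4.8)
The plan is to exploit that $A_1 = A_1(q)$ is a $\Z$-graded generalized Weyl algebra, $A_1 = \bigoplus_{l\in\Z} K[h]v_l$ with $v_l = x^l$ for $l\geq 0$ and $v_{-l} = y^l$ for $l>0$, and that by (\ref{qA1Sum}) the elements $r = v_{-n}$ and $t = v_n$ are central. Then $(r) = A_1 r$ and $(t) = A_1 t$ are homogeneous ideals, so $A_1/(r)$ and $A_1/(t)$ inherit the $\Z$-grading with $l$-th component $\big(K[h]v_l\big)\big/\big((r)\cap K[h]v_l\big)$ (respectively with $(t)$). The whole statement thus reduces to determining, for each $l$, the homogeneous degree-$l$ part of the ideal $(r)$ (resp. $(t)$) inside $K[h]v_l$, which I would compute from the explicit product formulas for the structure constants $(i,j)\in K[h]$ of the GWA recalled just before the statement, using $\sigma^k(a) = q^k h - \frac1{q-1}$.

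First I would handle $A_1/(r)$. Writing $(r) = \bigoplus_l K[h]\,(v_l v_{-n})$ with $v_l v_{-n} = (l,-n)v_{l-n}$: for $l\geq n$ one finds $(l,-n) = \prod_{k=l-n+1}^{l}(q^k h - \frac1{q-1})$, a product over $n$ consecutive powers of $q$, hence by (\ref{xnix1})--(\ref{xnix2}) a nonzero scalar times $h^n - \frac1{(q-1)^n}$; this gives $\big(A_1/(r)\big)_l = \CK x^l\tilde{1}$ for all $l\geq 0$, and summing produces the summand $\CK[x;\sigma]\tilde{1}$. For $1\leq i\leq n-1$ the degree-$(-i)$ part comes from $v_{n-i}v_{-n} = (n-i,-n)v_{-i}$ with $(n-i,-n) = \prod_{k=1}^{n-i}(q^k h - \frac1{q-1})$ of degree $n-i$; rewriting its roots as $h = q^j/(q-1)$ for $j$ in a block of consecutive residues and comparing with the primitive idempotents $\varepsilon_j$ of $\CK$ (supported where $h = q^j/(q-1)$, with $\sigma(\varepsilon_j) = \varepsilon_{j-1}$ by (\ref{qA1Sum17})), this polynomial read in $\CK\cong K^n$ is a unit on exactly $i$ of the $n$ components, so imposing $(n-i,-n)\,y^i\tilde{1} = 0$ kills $\varepsilon_j y^i\tilde{1}$ for precisely those $j$ and leaves the asserted block of $\CK e_j$'s spanning $\big(A_1/(r)\big)_{-i}$; and $v_0 v_{-n} = v_{-n}$, $v_{-k}v_{-n} = v_{-k-n}$ give $\big(A_1/(r)\big)_{-k} = 0$ for $k\geq n$. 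Assembling these yields the displayed decomposition of $A_1/(r)$.

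Regularity of $t+(r)$ and the centre would then come directly from the grading. Left multiplication by the central $t = x^n$ sends the degree-$l$ component into the degree-$(l+n)$ one; on $\CK x^l\tilde{1}$ ($l\geq 0$) it is the obvious injective degree shift, and on $\big(A_1/(r)\big)_{-i}$ one has $ty^i = (n,-i)x^{n-i}$ with $(n,-i) = \prod_{k=n-i+1}^{n}(q^k h - \frac1{q-1})$ a unit on exactly the surviving components of $\big(A_1/(r)\big)_{-i}$, hence again injective; so $t$ is a non-zero-divisor (as a central element, injectivity of left multiplication forces injectivity of right multiplication), i.e. $t+(r)\in\CC_{A_1/(r)}$. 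For the centre, Lemma \ref{a24Jul23}.(1) gives $h^n = \frac1{(q-1)^n}$ in $A_1/(r)$, so $h$ is a unit and $\omega_h$ is inner; any central element is then $\omega_h$-invariant, and since $\omega_h$ fixes $\CK$ and acts on the degree-$l$ part by $q^{-l}$, the fixed ring is $\bigoplus_{n\mid l,\,l\geq 0}\CK x^l\tilde{1} = \CK[t]$. Writing a central $z = \sum_k\lambda_k t^k$ with $\lambda_k\in\CK$ and using $xz = zx$ forces $\sigma(\lambda_k) = \lambda_k$ for all $k$, and $\CK^{\sigma} = K$ since $\sigma$ cyclically (hence transitively) permutes $\varepsilon_0,\dots,\varepsilon_{n-1}$; therefore $Z(A_1/(r)) = K[t]$.

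Statement (2) I would prove by running the same scheme on $(t) = A_1 x^n$: now the non-positive-degree components are all of $\CK$, giving the summand $\CK[y;\sigma^{-1}]\overline{1}$, while the positive-degree components $x^i\overline{1}$ ($1\leq i\leq n-1$) get truncated to the block of $\CK e_j$'s remaining after killing the degree-$(n-i)$ structure constant $(-(n-i),n) = \prod_{k=0}^{n-i-1}(q^{-k}h - \frac1{q-1})$; the regularity of $r+(t)$ and $Z(A_1/(t)) = K[r]$ follow exactly as before, with $\omega_h$ still inner in $A_1/(t)$ ($h^n = \frac1{(q-1)^n}$ there as well) and commutation against $y$ replacing that against $x$. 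Alternatively, one can transport (1) through the isomorphism $\tau_{A_1(q)}:A_1(q)\to A_1(q^{-1})$, which carries $(t)$ to $(r)$ up to a unit and puts $q^{-1}$ — again a primitive $n$-th root of unity — in the role of $q$. The step I expect to be the main obstacle is precisely this index bookkeeping in the truncated components: matching the roots $q^{\pm k}/(q-1)$ of the structure constants against the cyclic labelling $\sigma(\varepsilon_j) = \varepsilon_{j-1}$ of the idempotents of $\CK$, so as to name exactly which block of $\varepsilon_j$'s survives in each degree (under $\tau_{A_1(q)}$ this block gets relabelled $j\mapsto n-1-j$, which is where a shift error most easily slips in).
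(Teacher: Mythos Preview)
Your proof is correct and follows the paper's route: compute the $\Z$-graded pieces of $(r)$ and $(t)$ from the GWA structure constants $(m,\pm n)$ (the paper does this after first passing to $A_1/(rt)$ via (\ref{qA1Sum20}), but it is the same calculation), then check regularity of $t$ (resp.\ $r$) degree by degree exactly as you outline. The one substantive deviation is the centre: the paper uses the just-established regularity to embed $A_1/(r)\hookrightarrow (A_1/(r))_t\simeq M_n(K[t^{\pm 1}])$ (Proposition~\ref{A25Jul23}) and reads off $Z(A_1/(r))=A_1/(r)\cap K[t^{\pm 1}]=K[t]$, whereas your $\omega_h$-eigenspace argument followed by commutation with $x$ is a self-contained alternative that avoids invoking Proposition~\ref{A25Jul23}; both are short, and yours is arguably cleaner.

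Your caution about the index bookkeeping in part 2 is well placed: carrying out the computation, the surviving block in degree $i$ is $\bigoplus_{j=0}^{n-i-1}\CK e_j$ (this is also what the paper's proof actually obtains), so when you match roots against idempotents make sure you land on that range rather than $j=0,\dots,i$.
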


\begin{proof}   1.
\begin{eqnarray*}
\Big(A_1/(rt)\Big)r&\stackrel{(\ref{qA1Sum20})}{=}& \bigoplus_{i\geq n}\CK y^i\oplus \bigoplus_{i=1}^{n-1}\CK x^{n-i}y^n=\bigoplus_{i\geq n}\CK y^i\oplus \bigoplus_{i=1}^{n-1}\CK \prod_{\nu =1}^{n-i}\s^\nu (a)y^i\\
&=&\bigoplus_{i\geq n}\CK y^i\oplus \bigoplus_{i=1}^{n-1}\CK \prod_{\nu =1}^{n-i}(h-q^{-\nu}(q-1)^{-1})y^i=\bigoplus_{i\geq n}\CK y^i\oplus \bigoplus_{i=1}^{n-1}\CK \prod_{\mu =i}^{n-1}(h-q^{\mu}(q-1)^{-1})y^i\\
&=&\bigoplus_{i\geq n}\CK y^i\oplus \bigoplus_{i=1}^{n-1}\bigg( \bigoplus_{\g =0}^{i-1}\CK e_\nu\bigg) y^i,\\
 A_1/(r)&=&  A_1/(rt,r) = \Big(A_1/(rt)\Big)/\Big(A_1/(rt)\Big)r\stackrel{(\ref{qA1Sum20})}{=}
 \bigoplus_{i=1}^{n-1}\bigg(\bigoplus_{j=i}^{n-1} \CK e_j\bigg)y^i\tilde{1}\oplus \CK [x;\s ]\tilde{1}.
\end{eqnarray*}
Now, for each $i=1,\ldots, n-1$ the map
$\cdot t: \bigg(\bigoplus_{j=i}^{n-1} \CK e_j\bigg)y^i\tilde{1}\ra \CK x^{n-i}\tilde{1}$ is an {\em injection} since $y^it=y^ix^n=\prod_{\g =0}^{i-1}\s^{-\g}(a)\cdot x^{n-i}= \prod_{\g =0}^{i-1}\Big(q^{-\g}h-(q-1)^{-1}\Big)\cdot x^{n-i}= q^{-\frac{i(i-1)}{2}}\prod_{\g =0}^{i-1}\Big(h-q^{\g}(q-1)^{-1}\Big)\cdot x^{n-i}$. Therefore, $t+(r)\in \CC_{A_1/(r)}$. Hence, $A_1/(r)\subseteq \Big( A_1/(r)\Big)_t$, and so 
$$ Z(A_1/(r))= A_1/(r)\cap  Z\Big(\Big( A_1/(r)\Big)_t\Big)= A_1/(r)\cap K[t^{\pm 1}]=K[t]$$
as $Z\Big(\Big( A_1/(r)\Big)_t\Big)=  K[t^{\pm 1}]$, by Proposition \ref{A25Jul23}.(1).

2. 
\begin{eqnarray*}
\Big(A_1/(tr)\Big)t&\stackrel{(\ref{qA1Sum20})}{=}& \bigoplus_{i\geq n}\CK x^i\oplus \bigoplus_{i=1}^{n-1}\CK y^{n-i}x^n=\bigoplus_{i\geq n}\CK x^i\oplus \bigoplus_{i=1}^{n-1}\CK \prod_{\nu =0}^{n-i-1}\s^{-\nu} (a)y^i\\
&=&\bigoplus_{i\geq n}\CK x^i\oplus \bigoplus_{i=1}^{n-1}\CK \prod_{\nu =0}^{n-i-1}(h-q^{\nu}(q-1)^{-1})x^i=\bigoplus_{i\geq n}\CK x^i\oplus \bigoplus_{i=1}^{n-1}\bigg( \bigoplus_{\mu =n-i}^{n-1}\CK e_\nu\bigg) x^i,\\
 A_1/(t)&=&  A_1/(rt,t) = \Big(A_1/(rt)\Big)/\Big(A_1/(rt)\Big)t\stackrel{(\ref{qA1Sum20})}{=}
 \bigoplus_{i=1}^{n-1}\bigg(\bigoplus_{j=0}^{n-i-1} \CK e_j\bigg)x^i\overline{1}\oplus \CK [y;\s^{-1}]\overline{1}.
\end{eqnarray*}
Now, for each $i=1,\ldots, n-1$ the map
$\cdot r: \bigg(\bigoplus_{j=0}^{n-i-1} \CK e_j\bigg)x^i\overline{1}\ra \CK y^{n-i}\tilde{1}$ is an {\em injection} since $x^ir=x^iy^n=\prod_{\g =1}^i\s^{\g}(a)\cdot y^{n-i}= \prod_{\g =1}^i\Big(q^{\g}h-(q-1)^{-1}\Big)\cdot y^{n-i}= q^{\frac{i(i+1)}{2}}\prod_{\g =1}^i\Big(h-q^{-\g}(q-1)^{-1}\Big)\cdot y^{n-i}= q^{\frac{i(i+1)}{2}}\prod_{j=n-i}^{n-1}\Big(h-q^j(q-1)^{-1}\Big)\cdot y^{n-i}$. Therefore, $t+(r)\in \CC_{A_1/(r)}$. Hence, $A_1/(t)\subseteq \Big( A_1/(t)\Big)_r$, and so 
$$ Z(A_1/(t))= A_1/(t)\cap  Z\Big(\Big( A_1/(t)\Big)_r\Big)= A_1/(t)\cap K[r^{\pm 1}]=K[r]$$
as $Z\Big(\Big( A_1/(t)\Big)_r\Big)=  K[r^{\pm 1}]$, by Proposition \ref{A25Jul23}.(3).
  \end{proof}
  
For a left denominator set $S$ of a ring $R$ and an $R$-module $M$, the subset of $M$,
$$ \tor_S(M):=\{ m\in M\, | \, sm=0\;\;{\rm for\; some}\;\; s\in S\},$$
is a submodule which is called the {\bf $S$-torsion submodule} of $M$. If $S$ is also a right denominator set of $R$ then $ \tor_S(R)$ is an ideal of $R$. 

\begin{corollary}\label{a29Jul23}
$(r),(t)\in \Spec (A_1)$.

\end{corollary}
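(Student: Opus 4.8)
The plan is to show that each of the ideals $(r)$ and $(t)$ of $A_1$ is prime by localizing at the central element $t$ (resp.\ $r$) and transporting primality across the localization. First I would recall from Proposition~\ref{aA12Mar15} that, since $t$ is a regular normal (in fact central) element of the Noetherian domain $A_1$ with $(t^i)=(t)^i$, the localization map $A_1/(r)\to\bigl(A_1/(r)\bigr)_t$ induces a bijection between $\Spec_t\bigl(A_1/(r)\bigr)$ and $\Spec\bigl(\bigl(A_1/(r)\bigr)_t\bigr)$; the key extra input, already established in Proposition~\ref{A29Jul23}.(1), is that $t+(r)\in\CC_{A_1/(r)}$, i.e.\ $t$ acts injectively modulo $(r)$, so that $A_1/(r)$ embeds into $\bigl(A_1/(r)\bigr)_t$ and in particular $\tor_{S_t}\bigl(A_1/(r)\bigr)=0$. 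Consequently the zero ideal of $A_1/(r)$ lies in $\Spec_t$ (it does not contain $t$), and it corresponds under the bijection to the zero ideal of $\bigl(A_1/(r)\bigr)_t$.

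Next I would invoke Proposition~\ref{A25Jul23}.(1), which identifies $\bigl(A_1/(r)\bigr)_t\simeq M_n(K[t^{\pm1}])$ via the matrix units $E_{ij}=\varepsilon_ix^{j-i}\varepsilon_j$. The ring $M_n(K[t^{\pm1}])$ is prime because $K[t^{\pm1}]$ is a domain (hence prime) and primeness is Morita invariant. Therefore the zero ideal of $\bigl(A_1/(r)\bigr)_t$ is a prime ideal. Pulling back along the localization bijection, the zero ideal of $A_1/(r)$ is prime, i.e.\ $(r)\in\Spec(A_1)$. The argument for $(t)$ is entirely symmetric: Proposition~\ref{A29Jul23}.(2) gives $r+(t)\in\CC_{A_1/(t)}$, Proposition~\ref{A25Jul23}.(3) gives $\bigl(A_1/(t)\bigr)_r\simeq M_n(K[r^{\pm1}])$, which is prime, and the same localization bijection forces $(t)\in\Spec(A_1)$.

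I do not expect any genuine obstacle here, since all the substantive work—computing the bases of $A_1/(r)$ and $A_1/(t)$, verifying that $t$ (resp.\ $r$) is a regular element modulo the ideal, and identifying the localizations with matrix algebras over Laurent polynomial rings—has been carried out in Propositions~\ref{A29Jul23} and \ref{A25Jul23}. The only point requiring a little care is making sure the hypotheses of Proposition~\ref{aA12Mar15} genuinely apply to the factor ring $A_1/(r)$: one needs $S_t=\{t^i\mid i\in\N\}$ to be a denominator set there and $(t^i)=(t)^i$, both of which follow from $t$ being a central regular element of the Noetherian ring $A_1/(r)$ (centrality makes $t$ normal, regularity is Proposition~\ref{A29Jul23}.(1), and powers of principal ideals generated by a central element behave as required). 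With that observed, the proof is a one-line deduction: prime $\leftrightarrow$ prime under the localization correspondence, and $M_n$ over a domain is prime.

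\begin{proof}
The element $t$ is a central regular element of the Noetherian ring $A_1/(r)$ (centrality is clear and regularity is Proposition~\ref{A29Jul23}.(1)), so $S_t=\{t^i\,|\,i\in\N\}\in\Den(A_1/(r))$, $(t^i)=(t)^i$, and $\tor_{S_t}(A_1/(r))=0$. By Proposition~\ref{aA12Mar15}, the localization map $A_1/(r)\to\bigl(A_1/(r)\bigr)_t$ induces a bijection $\Spec_t(A_1/(r))\to\Spec\bigl(\bigl(A_1/(r)\bigr)_t\bigr)$, $\gp\mapsto S_t^{-1}\gp$. Since $t\notin 0$, the zero ideal of $A_1/(r)$ lies in $\Spec_t(A_1/(r))$ if and only if it is prime, and it corresponds to the zero ideal of $\bigl(A_1/(r)\bigr)_t$. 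By Proposition~\ref{A25Jul23}.(1), $\bigl(A_1/(r)\bigr)_t\simeq M_n(K[t^{\pm1}])$, which is a prime ring because $K[t^{\pm1}]$ is a domain and primeness is Morita invariant. Hence the zero ideal of $\bigl(A_1/(r)\bigr)_t$ is prime, so the zero ideal of $A_1/(r)$ is prime, i.e.\ $(r)\in\Spec(A_1)$. Replacing $r$ by $t$, $t$ by $r$, and using Proposition~\ref{A29Jul23}.(2) and Proposition~\ref{A25Jul23}.(3) in place of their counterparts, the same argument gives $\bigl(A_1/(t)\bigr)_r\simeq M_n(K[r^{\pm1}])$, a prime ring, whence $(t)\in\Spec(A_1)$.
\end{proof}
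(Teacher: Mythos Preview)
Your proof is correct and follows essentially the same approach as the paper's: both use Proposition~\ref{A29Jul23} to get that $t$ (resp.\ $r$) is regular modulo $(r)$ (resp.\ $(t)$), then invoke the localization bijection of Proposition~\ref{aA12Mar15} together with the identification of $\bigl(A_1/(r)\bigr)_t$ from Proposition~\ref{A25Jul23} to pull back primality of the zero ideal. The only cosmetic difference is that the paper cites Proposition~\ref{A25Jul23}.(2) (which lists $\Spec\bigl((A_1/(r))_t\bigr)$ explicitly, so $\{0\}$ is visibly prime) rather than~(1) plus the Morita argument; and the paper phrases the transfer as $\tau^{-1}(0)=\tor_T(A_1/(r))=0$ rather than via the forward map, but the content is identical.
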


\begin{proof}   Let $\tau : A_1/(r)\ra \Big(A_1/(r)\Big)_t$, $\alpha\mapsto \frac{\alpha}{1}.$
By Proposition \ref{aA12Mar15}, $\Spec (A_1/(r))=\Spec (A_1/(r), t)\, \sqcup\, \Spec_t (A_1/(r))$.
  By Proposition \ref{A25Jul23}.(2),
$\Spec \Big(\Big( A_1/(r) \Big)_{t}\Big)=\{ \{ 0\}, (f)\, | \, f\in \Irr_m(K[t])\backslash \{ t\} \}$. By Proposition \ref{aA12Mar15}.(2), $\tau^{-1}(0)\in \Spec_t (A_1/(r))$. Since $t+(r)\in \CC_{A_1/(r)}$ (Proposition \ref{A29Jul23}.(1)),  $$\tau^{-1}(0)=\tor_T(A_1/(r))=\{ 0\}$$ where $T=\{t^i\, | \, i\in \N\}$. Therefore, $\{0\}\in \Spec (A_1/(r))$, and so $(r)\in \Spec (A_1)$.

Let $\rho : A_1/(t)\ra \Big(A_1/(t)\Big)_r$, $\beta\mapsto \frac{\beta}{1}.$
By Proposition \ref{aA12Mar15}, $\Spec (A_1/(t))=\Spec (A_1/(t), r)\, \sqcup\, \Spec_r (A_1/(t))$.
  By Proposition \ref{A25Jul23}.(4),
$\Spec \Big(\Big( A_1/(t) \Big)_{r}\Big)=\{ \{ 0\}, (g)\, | \, g\in \Irr_m(K[r])\backslash \{ r\} \}$. By Proposition \ref{aA12Mar15}.(2), $\rho^{-1}(0)\in \Spec_r (A_1/(t))$. Since $r+(t)\in \CC_{A_1/(t)}$ (Proposition \ref{A29Jul23}.(2)),  $$\rho^{-1}(0)=\tor_R(A_1/(t))=\{ 0\}$$ where $R=\{r^i\, | \, i\in \N\}$. Therefore, $\{0\}\in \Spec (A_1/(t))$, and so $(t)\in \Spec (A_1)$.   \end{proof}  

For an $A_1$-module $M$, let $\soc_{A_1}(M)$ be its  socle. 

\begin{corollary}\label{a31Jul23}
\begin{enumerate}
\item $\soc_{A_1}(A_1/(r))=0$.
\item $\soc_{A_1}(A_1/(t))=0$.
\end{enumerate}
\end{corollary}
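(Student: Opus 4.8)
The plan is to prove that each of the modules $A_1/(r)$ and $A_1/(t)$ has zero socle by showing it contains no nonzero simple $A_1$-submodule; the two cases are mirror images of one another (swap $x\leftrightarrow y$, $t\leftrightarrow r$), so I describe (1) and indicate the change needed for (2).

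First I would suppose, towards a contradiction, that $L$ is a nonzero simple $A_1$-submodule of $A_1/(r)$. The central element $t=x^n\in Z(A_1)$ is regular modulo $(r)$: by Proposition \ref{A29Jul23}.(1) we have $t+(r)\in\CC_{A_1/(r)}$, so multiplication by $t$ is injective on $A_1/(r)$, hence on $L$. Since $t$ is central, $tL$ is an $A_1$-submodule of $L$, and it is nonzero as $L\neq 0$; simplicity of $L$ then gives $tL=L$, whence $t^kL=L$ for all $k\ge 1$ and in particular $L\subseteq\bigcap_{k\ge 1}t^k(A_1/(r))$.

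The remaining, and essential, point is that this intersection is $\{0\}$, which I would deduce from the $\Z$-grading of $A_1/(r)$. The ideal $(r)$ is homogeneous for the $\Z$-grading of the generalized Weyl algebra $A_1$ (with $\deg x=1$, $\deg y=-1$), so $A_1/(r)$ is $\Z$-graded, and the explicit basis in Proposition \ref{A29Jul23}.(1) shows that all its nonzero homogeneous components sit in degrees $\ge -(n-1)$. Since $\deg t=n$, left multiplication by $t^k$ carries $A_1/(r)$ into the sum of its homogeneous components of degree $\ge nk-(n-1)$. Any fixed element of $A_1/(r)$ is a finite sum of homogeneous elements, hence lies in the sum of the components of degree $\le N$ for some $N$; taking $k$ with $nk-(n-1)>N$ shows it must be $0$. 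Thus $\bigcap_{k\ge1}t^k(A_1/(r))=0$, contradicting $L\neq0$, and (1) follows. For (2) one argues identically with the central element $r=y^n$, which is regular modulo $(t)$ by Proposition \ref{A29Jul23}.(2): by that proposition $A_1/(t)$ is $\Z$-graded with all homogeneous components in degrees $\le n-1$, and multiplication by $r^k$ lowers degrees by $nk$, so $\bigcap_{k\ge1}r^k(A_1/(t))=0$ and $A_1/(t)$ has no nonzero simple submodule.

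I expect the main obstacle to be purely a matter of extracting the right consequence of Proposition \ref{A29Jul23}: namely the one-sided boundedness of the $\Z$-grading on $A_1/(r)$ (respectively $A_1/(t)$), which is exactly what makes the chain $A_1/(r)\supseteq t(A_1/(r))\supseteq t^2(A_1/(r))\supseteq\cdots$ have zero intersection. As an alternative to the grading argument, once $t$ acts bijectively on $L$ one can regard $L$ as a simple module over $(A_1/(r))_t\cong M_n(K[t^{\pm1}])$ (Proposition \ref{A25Jul23}.(1)), so by Corollary \ref{a28Jul23}.(2) it is annihilated by some $f=f(t)$ with $f\in\Irr_m(K[t])\setminus\{t\}$; but $f(t)$ is a non-zero-divisor of $M_n(K[t^{\pm1}])$ and hence acts injectively on the submodule $A_1/(r)$, so it cannot kill $L\neq 0$ — the same contradiction.
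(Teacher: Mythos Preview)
Your proposal is correct and follows essentially the same approach as the paper: suppose a simple submodule exists, use that $t$ (resp.\ $r$) is central and regular to force $tL=L$, and then derive a contradiction from $\bigcap_{k\ge1}t^k(A_1/(r))=0$ via the $\Z$-grading coming from Proposition~\ref{A29Jul23}. Your write-up is in fact cleaner than the paper's (which phrases the contradiction a bit awkwardly), and your alternative route via $(A_1/(r))_t\simeq M_n(K[t^{\pm1}])$ is a valid second proof not given in the paper.
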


\begin{proof}   1.  Suppose that $\soc_{A_1}(A_1/(r))\neq 0$. Then there is a simple submodule, say $U$, of $A_1/(r)$. By Proposition \ref{A29Jul23}.(1), $t+(r)\in \CC_{A_1/(r)}$ and $t\in Z(A_1)$, and so $tU=U$. By Proposition \ref{A29Jul23}.(1),
$A_1/(r)=\bigoplus_{i=1}^{n-1}\Big(\bigoplus_{j=i}^{n-1} \CK e_j\Big)y^i\tilde{1}\oplus \CK [x;\s]\tilde{1}$. It follows that the nonzero  $A_1$-module 
$tU$ is  strictly contained in the simple $A_1$-module $U$, a contradiction (since $\bigcap_{i\geq 1}t^iU=0$, use the $\Z$-grading of the $A_1-$module $A_1/(r)$). Therefore, $\soc_{A_1}(A_1/(r))=0$.

2. Suppose that $\soc_{A_1}(A_1/(t))\neq 0$. Then there is a simple submodule, say $V$, of $A_1/(t)$. By Proposition \ref{A29Jul23}.(2), $r+(t)\in \CC_{A_1/(t)}$ and $r\in Z(A_1)$, and so $rV=V$. By Proposition \ref{A29Jul23}.(2),
$A_1/(t)=\bigoplus_{i=1}^{n-1}\Big(\bigoplus_{j=0}^{i} \CK e_j\Big)x^i\overline{1}\oplus \CK [y;\s^{-1}]\overline{1}$. It follows that the nonzero  $A_1$-module 
$rV$ is  strictly contained in the simple $A_1$-module $V$, a contradiction (since $\bigcap_{i\geq 1}r^iV=0$, use the $\Z$-grading of the $A_1-$module $A_1/(t)$). Therefore, $\soc_{A_1}(A_1/(t))=0$.    \end{proof}

{\bf The algebra $A_{1,trs}$.} Let $\CZ :=Z(\CB)_{s-(q-1)^{-n}}=K[t^{\pm 1}, s^{\pm 1}]_{s-(q-1)^{-n}}=K[t^{\pm 1}, r^{\pm 1}]_s$ where the third equality follows from the equality $rt=(-1)^{n-1}(s-(q-1)^{-n})$, see (\ref{qA1Sum1}).
Recall that $s=(-1)^{n-1}rt+\frac{1}{(q-1)^n}\in Z(A_1)=K[r,t]$. By (\ref{qA1Sum}) and the inclusion $trs\in Z(A_1)$, 
\begin{equation}\label{qA1Sum9}
 A_{1,trs} =\bigoplus_{i,j=0}^{n-1}Z( A_1 )_{trs}x^iy^j\;\; {\rm  and }\;\; Z(A_{1,trs})=Z(A_1)_{trs}=K[r^{\pm 1},t^{\pm 1}]_s=K[s^{\pm 1},t^{\pm 1}]_{s-\frac{1}{(q-1)^n}}=\CZ .
\end{equation}
Let $\CD := K[h^{\pm 1}]_{s-\frac{1}{(q-1)^n}}$.  Furthermore, 
\begin{equation}\label{qA1Sum10}
A_{1,trs}=\CB_{s-(q-1)^{-n}}=\CD [x^{\pm 1}, \s]=\CD [y^{\pm 1}, \s^{-1}].
\end{equation}
In more detail, 
\begin{eqnarray*}
 A_{1,trs} &=& \Big(A_{1,t}\Big)_{rs}=\CA_{rs}=\Big(\CA_s\Big)_r=\CB_r=\CB_{rt^{-1}}=\CB_{s-(q-1)^{-n}}=K[h^{\pm 1}][x^{\pm 1}; \s]_{s-\frac{1}{(q-1)^n}}\\
 &=&K[h^{\pm 1}]_{s-\frac{1}{(q-1)^n}}[x^{\pm 1}; \s ]= \CD [x^{\pm 1}; \s]=\CD [x^{\pm 1}; \s]=\CD [y^{\pm 1}, \s^{-1}].
\end{eqnarray*}

Proposition \ref{B28Jul23}.(1) and Proposition \ref{B28Jul23}.(2) are  explicit descriptions of  ideals and  prime ideals of the algebra $\CA_{1,trs}$. Proposition \ref{B28Jul23}.(4) shows that the quotient rings of prime factor algebras of  the algebra $\CA_{1,trs}$ are central simple $n^2$-dimensional algebras  of type $(E,\s ,a)$ from Section \ref{ALGCE}.

\begin{proposition}\label{B28Jul23}

\begin{enumerate}
\item The map $\CI (Z(A_{1,trs}))\ra \CI (A_{1,trs})$, $\ga \mapsto A_{1,trs}\ga$ is a bijection with  inverse $ I\mapsto Z(A_{1,trs})\cap I$. In particular, $\CI (A_{1,trs})=\{ A_{1,trs}\ga \, | \, \ga \in \CI (Z(A_{1,trs}))\}$ and every nonzero ideal of $A_{1,trs}$ meets the centre of $A_{1,trs}$.
\item   $\Spec (A_{1,trs} )=\{A_{1,trs} \gr' \, | \,   \gr' \in \Spec\, Z(A_{1,trs} )\}$. 
\item   $\Max (A_{1,trs} ) = \{ A_{1,trs} \gm\,|\, \gm \in \Max (Z(A_{1,trs} )) \}$.

\item For each  prime ideal $\gr' \in \Spec\, Z(A_{1,trs} )$, the quotient ring  of the algebra $A_{1,trs} / A_{1,trs} \gr'$, $$Q(A_{1,trs} /A_{1,trs} \gr')=\bigoplus_{i,j=0}^{n-1}k(\gr' )h^ix^j\simeq k(\gr' )\t_{Z(A_{1,trs} )}A_{1,trs},$$ is a central simple $n^2$-dimensional algebra over the field $k(\gr' )$ of fractions of the commutative domain $Z(A_{1,trs} )/\gr'$. The algebra $Q(A_{1,trs} /A_{1,trs} \gr')$ is isomorphic to the algebra $\CE (\gr'):=(E(s,\gr'), \s, a)$ where $E(s,\gr'):=k(\gr' )[h]/(h^n-s)\simeq k(\gr' )[h,h^{-1}]/(h^n-s)$ and  $\s (h)=qh$.

\end{enumerate}
\end{proposition}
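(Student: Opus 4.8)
The plan is to follow the same template that was used for the algebra $\CB$ in Theorem \ref{qCAc28Aug20}, since $A_{1,trs}$ is structurally of the same type: by \eqref{qA1Sum9} it is a free module over its centre $Z(A_{1,trs})=\CZ$ with basis $\{h^ix^j\}_{0\le i,j\le n-1}$, and the grading is by common eigenspaces of the two commuting inner automorphisms $\o_h$ and $\o_x$. First I would establish statement (1): given an ideal $\gb$ of $A_{1,trs}$, decompose $\gb=\bigoplus_{i,j=0}^{n-1}\bigl(\gb\cap \CZ h^ix^j\bigr)$ using that the sum $A_{1,trs}=\bigoplus \CZ h^ix^j$ is a direct sum of common eigenspaces for $(\o_h,\o_x)$ with pairwise distinct eigenvalue pairs $(q^{-j},q^i)$, and that $h$ and $x$ are units in $A_{1,trs}$ (this is where invertibility of $s$, hence of $h$, and of $t$, hence of $x$, is used — all of $t,r,s$ being inverted). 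Then the conjugation inclusions $h^{-i}(\gb\cap \CZ h^ix^j)x^{-j}\subseteq \ga$ and $\ga h^ix^j\subseteq \gb\cap \CZ h^ix^j$, where $\ga=\CZ\cap\gb$, give $\gb\cap\CZ h^ix^j=\ga h^ix^j$ and hence $\gb=A_{1,trs}\ga$.

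Next, statements (2) and (3) follow formally from (1): the bijection $\CI(\CZ)\leftrightarrow\CI(A_{1,trs})$ restricts to a bijection on prime (resp. maximal) ideals, because $A_{1,trs}/A_{1,trs}\gr'\simeq k(\gr')\t_\CZ A_{1,trs}$ is a finite-dimensional algebra over the field $k(\gr')$ whose ideals correspond to those of $\CZ/\gr'=k(\gr')$, i.e. it has no nontrivial ideals, so $A_{1,trs}\gr'$ is prime (indeed maximal) exactly when $\gr'$ is. For statement (4), I would identify $Q(A_{1,trs}/A_{1,trs}\gr')=k(\gr')\t_\CZ A_{1,trs}=\bigoplus_{i,j=0}^{n-1}k(\gr')h^ix^j$; this is already a ring (no further localization needed) since the central element that would need inverting has already been inverted in passing to $A_{1,trs}$. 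The defining relations $xh=qhx$, $h^n=s$, $x^n=t$ — all central modulo $\gr'$ and with $s,t\in k(\gr')^\times$ — present this algebra precisely as $\CE(\gr')=(E(s,\gr'),\s,a)$ in the notation of Section \ref{ALGCE}, with $E(s,\gr')=k(\gr')[h]/(h^n-s)\simeq k(\gr')[h,h^{-1}]/(h^n-s)$ and $a=t$; that it is central simple of dimension $n^2$ then follows from Theorem \ref{qA2Sep20}.(3a), since $s\ne 0$ and $a\ne 0$ in $k(\gr')$.

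I do not expect a serious obstacle here — the argument is a verbatim transcription of the proof of Theorem \ref{qCAc28Aug20}, with $\CB$ replaced by $A_{1,trs}$ and $Z(\CB)=K[s^{\pm1},t^{\pm1}]$ replaced by $\CZ=K[s^{\pm1},t^{\pm1}]_{s-(q-1)^{-n}}$. The only point that requires a moment's care, and is the closest thing to a genuine step, is checking that all four of $h$, $x$ (equivalently $s=h^n$, $t=x^n$) are genuinely units in $A_{1,trs}$: $t$ and $r$ are inverted by hypothesis, $s=(-1)^{n-1}rt+(q-1)^{-n}$ is inverted because we localize at $s-(q-1)^{-n}=(-1)^{n-1}rt$ — wait, more precisely $A_{1,trs}$ inverts $s$ itself by \eqref{qA1Sum9} — and once $s=h^n$ and $t=x^n$ are units, so are $h$ and $x$ by $h^{-1}=h^{n-1}s^{-1}$ and $x^{-1}=x^{n-1}t^{-1}$. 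This is exactly what makes the eigenspace decomposition argument go through, and it is the one place where the hypothesis that $t$, $r$, and $s$ are all inverted is essential. Everything else is a routine rerun of the established machinery.
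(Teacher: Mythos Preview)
Your approach is sound and follows the template of Theorem \ref{qCAc28Aug20} faithfully, but the paper is more economical: rather than rerunning the eigenspace argument, it simply invokes the isomorphism $A_{1,trs}\simeq \CB_{s-(q-1)^{-n}}$ (equation \eqref{qA1Sum10}) and observes that all four statements then follow immediately from Theorem \ref{qCAc28Aug20}.(1--4) by standard behaviour of ideals, primes, and quotient rings under central localization. Your direct argument buys self-containment; the paper's buys a one-line proof.

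One slip worth flagging in your derivation of (2) and (3): the isomorphism $A_{1,trs}/A_{1,trs}\gr'\simeq k(\gr')\t_{\CZ}A_{1,trs}$ holds only when $\gr'$ is \emph{maximal}, so that $\CZ/\gr'=k(\gr')$. For a non-maximal prime $\gr'$ the quotient is $\bigoplus_{i,j}(\CZ/\gr')h^ix^j$ over the domain $\CZ/\gr'$, not over its fraction field, and is certainly not simple. The clean way to get (2) from (1) is to note that the ideal bijection descends to the quotient (same eigenspace argument, now over $\CZ/\gr'$), so every nonzero ideal of $A_{1,trs}/A_{1,trs}\gr'$ meets the centre $\CZ/\gr'$; hence the quotient is a prime ring precisely when $\CZ/\gr'$ is a domain, i.e.\ when $\gr'$ is prime. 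Your statement (4) argument is fine, since there you do pass to the full quotient ring $Q(A_{1,trs}/A_{1,trs}\gr')$ where the centre genuinely becomes $k(\gr')$.
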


\begin{proof}   The proposition follows at once from 
the fact that $A_{1,trs}\simeq \CB_{s-\frac{1}{(q-1)^n}}$ and Theorem \ref{qCAc28Aug20}.(1--4).    \end{proof}

  Corollary \ref{c28Jul23}.(1) is a classification of primitive ideals of the algebra $A_{1,trs}$. Every primitive  ideal of the algebra $A_{1,trs}$ is a maximal ideal (and vice versa) and co-finite (Corollary \ref{c28Jul23}.(2)). There is a one-to-one correspondence between the set of primitive ideals of the algebra $A_{1,trs}$ and the set of isomorphism classes of simple $A_{1,trs}$-modules (Corollary \ref{c28Jul23}.(4)). Every simple $A_{1,trs}$-module is finite dimensional over the field $K$.  
 
\begin{corollary}\label{c28Jul23}

\begin{enumerate}
\item  $\Prim (A_{1,trs} )=\Max (A_{1,trs} ) = \{ A_{1,trs} \gm\,|\, \gm \in \Max (Z(A_{1,trs} )) \}$.

\item For each maximal ideal $\gm \in \Prim (A_{1,trs} )=\Max (A_{1,trs} )$, the factor algebra 
 $$A_{1,trs} / A_{1,trs} \gm =Q(A_{1,trs} /A_{1,trs} \gm)=\bigoplus_{i,j=0}^{n-1}k(\gm )h^ix^j\simeq k(\gm )\t_{Z(A_{1,trs} )}A_{1,trs},$$ is a central simple $n^2$-dimensional algebra over the field $k(\gm )=Z(A_{1,trs} )/\gm$. The algebra $A_{1,trs} /A_{1,trs} \gm$ is isomorphic to the algebra $\CE (\gm )=(E(s,\gm), \s, a)$ where $E(s,\gm):=k(\gm )[h]/(h^n-s)\simeq k(\gm )[h,h^{-1}]/(h^n-s)$ and  $\s (h)=qh$. Let $U(\gm):=U(\CE(\gm ))$ be a unique simple  $\CE (\gm )$-module (described in (\ref{xu=usxe}) and Theorem \ref{12Jun23}.(1,2)). 

\item $\widehat{A_{1,trs} }=\{ U(\gm)\, |\, \gm \in \Max (Z(A_{1,trs})) \}$. All simple $A_{1,trs}$-modules are finite dimensional.

\item The map $\widehat{A_{1,trs}} \ra {\rm Prim} (A_{1,trs} ) =\Max (A_{1,trs} )$, $U\mapsto \ann_{A_{1,trs}} (U)$ is a bijection with  inverse $A_{1,trs} \gm \mapsto U(\gm)$.

\end{enumerate}
\end{corollary}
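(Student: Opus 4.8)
\textbf{Proof proposal for Corollary \ref{c28Jul23}.}

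The plan is to deduce all four statements directly from Proposition \ref{B28Jul23}, exactly as the analogous corollaries (Corollary \ref{aqAc28Aug20} for $\CB$, Corollary \ref{aQqAc28Aug20} for $\mA$, Proposition \ref{A4Aug23} for $\CA$) were deduced from their structural counterparts. The underlying mechanism is Theorem \ref{2Aug23}: once we know that $A_{1,trs}$ is a free $Z(A_{1,trs})$-module and that $k(\gr')\t_{Z(A_{1,trs})}A_{1,trs}/A_{1,trs}\gr'$ is a simple artinian ring for every prime $\gr'$ of the centre, all the listed conclusions follow formally. Both hypotheses are already in hand: freeness over the centre is visible from the decomposition $A_{1,trs}=\bigoplus_{i,j=0}^{n-1}Z(A_1)_{trs}x^iy^j$ in (\ref{qA1Sum9}), and simplicity together with finite dimensionality over $k(\gr')$ is the content of Proposition \ref{B28Jul23}.(4), which identifies the quotient ring with the central simple $n^2$-dimensional algebra $\CE(\gr')=(E(s,\gr'),\s,a)$.

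First I would record that $A_{1,trs}$ satisfies the hypotheses of Corollary \ref{b2Aug23} (equivalently of Theorem \ref{2Aug23} applied with ${\rm ZL}(A_{1,trs})=\Spec(Z(A_{1,trs}))$): freeness gives injectivity of all the maps $\phi_{\gr'}$, and Proposition \ref{B28Jul23}.(4) gives that each $k(\gr')\t_{Z(A_{1,trs})}A_{1,trs}/A_{1,trs}\gr'$ is simple artinian. Statement (1) is then immediate from Theorem \ref{2Aug23}.(3) together with Proposition \ref{B28Jul23}.(3): $\Prim(A_{1,trs})=\Max(A_{1,trs})=\{A_{1,trs}\gm\mid \gm\in\Max(Z(A_{1,trs}))\}$. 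Statement (2) is Proposition \ref{B28Jul23}.(4) specialized to maximal ideals $\gm$, combined with the observation (from Theorem \ref{2Aug23}.(4), Corollary \ref{aqAc28Aug20}.(2)) that the simple $\CE(\gm)$-module $U(\gm)=U(\CE(\gm))$ is described explicitly by (\ref{xu=usxe}) and Theorem \ref{12Jun23}.(1,2). Statement (3) follows from Theorem \ref{2Aug23}.(4,5): $\widehat{A_{1,trs}}=\{U(\gm)\mid \gm\in\Max(Z(A_{1,trs}))\}$, and each $U(\gm)$ is finite dimensional over $K$ because $\CE(\gm)$ is finite dimensional over $k(\gm)$ and $k(\gm)$ is a finite field extension of $K$ (here one uses that $Z(A_{1,trs})=\CZ=K[t^{\pm1},r^{\pm1}]_s$ is a finitely generated $K$-algebra, so its residue fields at maximal ideals are finite extensions of $K$ by the Nullstellensatz). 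Statement (4) is Theorem \ref{2Aug23}.(5): the map $\widehat{A_{1,trs}}\ra\Prim(A_{1,trs})$, $U\mapsto\ann_{A_{1,trs}}(U)$, is a bijection with inverse $A_{1,trs}\gm\mapsto U(\gm)$.

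I do not expect a real obstacle here; the work is entirely bookkeeping, verifying that the cited hypotheses of Theorem \ref{2Aug23}/Corollary \ref{b2Aug23} genuinely hold for $A_{1,trs}$. The one point requiring a line of care is the finite-dimensionality claim: it is not automatic from simplicity and artinianness over $k(\gm)$ alone, but needs $[k(\gm):K]<\infty$, which holds because $Z(A_{1,trs})$ is an affine $K$-algebra (a localization of the polynomial ring $K[r,t]$). So the proof is essentially: \emph{The corollary follows from Proposition \ref{B28Jul23} and Theorem \ref{2Aug23}}, with the finite-dimensionality of simple modules noted separately via the Nullstellensatz applied to the affine centre $Z(A_{1,trs})=\CZ$.
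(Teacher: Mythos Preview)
Your proposal is correct. The paper, however, takes a shorter route: it simply observes that $A_{1,trs}\simeq \CB_{s-\frac{1}{(q-1)^n}}$ (this is (\ref{qA1Sum10})) and then invokes Corollary \ref{aqAc28Aug20}.(1--4), which has already established the identical statements for $\CB$; those statements pass to any central localization because maximal ideals of the localized centre correspond bijectively to maximal ideals of $Z(\CB)$ missing $s-\frac{1}{(q-1)^n}$, and the factor algebras at such ideals are unchanged. Your approach instead reruns the Theorem \ref{2Aug23}/Corollary \ref{b2Aug23} machinery directly for $A_{1,trs}$, using Proposition \ref{B28Jul23} to verify the hypotheses. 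This is perfectly valid and perhaps more transparent, since Proposition \ref{B28Jul23} itself was proved via the same isomorphism to a localization of $\CB$, so the paper is really transferring twice (once for Proposition \ref{B28Jul23}, once for the corollary), whereas you transfer once and then argue intrinsically. Your added remark on the Nullstellensatz to justify finite dimensionality over $K$ is a point the paper leaves implicit.
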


\begin{proof}   The corollary follows at once from 
the fact that $A_{1,trs}\simeq \CB_{s-\frac{1}{(q-1)^n}}$ and Corollary \ref{aqAc28Aug20}.(1--4).    \end{proof}

 Corollary  \ref{d28Jul23} is a classification of completely prime ideals of the algebra $A_{1,trs}$.

\begin{corollary}\label{d28Jul23}
$\Spec_c (A_{1,trs} )=\Big\{ \{ 0\}, A_{1,trs}\gr' \, | \, 0\neq \gr' \in \Spec \,Z(A_{1,trs})$,  $E(s,\gr')=k(\gr')[h]/(h^n-s)$
is a field and $n=\min\{ d'\geq 1\, | \,  d'\mid n$  there is a matrix $X\in M_{d'}(E(s,\gr'))$ such that $ X^{\s^{n-1}}\cdots X^\s X=a\}\Big\}$ where $\s (h)=qh$.
\end{corollary}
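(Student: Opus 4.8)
The plan is to reduce the statement to the classification of completely prime ideals of $\CB$ (Proposition \ref{A20Jul23}), exactly mirroring the proofs of Proposition \ref{A20Jul23}, Proposition \ref{B20Jul23}, and Proposition \ref{CAB20Jul23}. The starting point is that a prime ideal $P$ of $A_{1,trs}$ is completely prime if and only if the quotient ring $Q(A_{1,trs}/P)$ is a domain, since $A_{1,trs}$ is a Noetherian domain and passing to the quotient ring of a prime factor algebra does not change the property of being a domain (the quotient ring is a simple artinian ring, hence a matrix ring over a division ring, and it is a domain iff that matrix size is $1$).

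First I would recall from Proposition \ref{B28Jul23} that $\Spec (A_{1,trs})=\{A_{1,trs}\gr' \mid \gr'\in \Spec\, Z(A_{1,trs})\}$, so every prime ideal of $A_{1,trs}$ is of the form $A_{1,trs}\gr'$ for a unique prime $\gr'$ of the centre $Z(A_{1,trs})=\CZ$. The zero ideal corresponds to $\gr'=\{0\}$, and by Proposition \ref{B28Jul23}.(4) (the case $\gr'=\{0\}$, or directly the fact that $A_{1,trs}\simeq \CB_{s-(q-1)^{-n}}$ together with Theorem \ref{qCAc28Aug20}.(5)) the quotient ring $Q(A_{1,trs})$ is a cyclic division algebra, so $\{0\}\in \Spec_c(A_{1,trs})$. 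For a nonzero prime $P=A_{1,trs}\gr'$ with $0\neq \gr'\in \Spec\, \CZ$, Proposition \ref{B28Jul23}.(4) gives
$$Q(A_{1,trs}/A_{1,trs}\gr')\simeq \CE(\gr')=(E(s,\gr'),\s,a),\qquad E(s,\gr')=k(\gr')[h]/(h^n-s),\ \s(h)=qh.$$
Now I would invoke Corollary \ref{b17Jun23}, the criterion for the algebra $\CE=(E,\s,a)$ to be a division algebra: since $s\neq 0$ and $a\neq 0$ hold here (we are working over the localization $A_{1,trs}$ where $s$, $t=a$, $r$ are all invertible, so they are nonzero in $k(\gr')$), the algebra $\CE(\gr')$ is a division algebra if and only if $E(s,\gr')$ is a field and $n=\min\{d'\geq 1 \mid d'\mid n,\ \exists\, X\in M_{d'}(E(s,\gr'))\ \text{with}\ X^{\s^{n-1}}\cdots X^\s X=a\}$. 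Combining this with the equivalence ``$P$ completely prime $\iff$ $Q(A_{1,trs}/P)$ is a domain $\iff$ $Q(A_{1,trs}/P)$ is a division algebra'' gives precisely the description in the statement of Corollary \ref{d28Jul23}.

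There is essentially no hard part here: the corollary is a direct specialization of the already-established machinery, and the proof is the one-line argument ``follows from $A_{1,trs}\simeq \CB_{s-\frac{1}{(q-1)^n}}$, Proposition \ref{B28Jul23}.(4) (equivalently Theorem \ref{qCAc28Aug20}.(4)), and Corollary \ref{b17Jun23}.'' The only point requiring a moment's care is the bookkeeping on $s$ and $a$ being nonzero so that Corollary \ref{b17Jun23} applies: in $A_{1,trs}$ the element $s=(-1)^{n-1}rt+(q-1)^{-n}$ need not itself be inverted, but $a=t$ is inverted and $r$ is inverted, and one should check that $s$ maps to a nonzero (indeed possibly zero?) element — in fact $s$ is inverted in $A_{1,trs}$ by definition of the localization, so $s$ is a unit and in particular $s\neq 0$ in every $k(\gr')$; likewise $a=t$ is a unit. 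Hence the hypothesis ``$s\neq 0$ and $a\neq 0$'' of Corollary \ref{b17Jun23} is automatically satisfied for every prime $\gr'$ of $\CZ$, which is why the description is uniform and no extra cases arise.
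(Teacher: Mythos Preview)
Your proposal is correct and takes essentially the same approach as the paper: the paper's one-line proof cites the isomorphism $A_{1,trs}\simeq \CB_{s-\frac{1}{(q-1)^n}}$ together with Proposition~\ref{A20Jul23} (the $\CB$ case), whose own proof is exactly the combination of Theorem~\ref{qCAc28Aug20}.(4) and Corollary~\ref{b17Jun23} that you invoke via Proposition~\ref{B28Jul23}.(4). Your extra care in checking that $s$ and $a=t$ are units in $k(\gr')$ (so that Corollary~\ref{b17Jun23} applies) is well placed and implicit in the paper's argument.
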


\begin{proof}    The corollary follows at once from 
the fact that $A_{1,trs}\simeq \CB_{s-\frac{1}{(q-1)^n}}$ and Corollary \ref{A20Jul23}.    \end{proof}    

{\bf The prime spectrum of the quantum Weyl algebra $A_1$.} Theorem \ref{28Jul23} classifies the prime ideals of the algebra $A_1$ and describes the containments between primes.

Below is a proof of Theorem \ref{28Jul23}. 

\begin{proof}    We have seen above that 
$\Spec (A_1)=\Spec (A_{1,trs})\, \sqcup \, \Spec \Big( (A_1/(r))_{t}\Big) \, \sqcup \, \Spec \Big( (A_1/(t))_{r}\Big) \, \sqcup \,\Spec (A_1/(t,r)) \, \sqcup \,
\Spec (A_1/(h))$.
Since  $t,r, s\in Z(A_1)$, 
$$\Spec (A_1)=\Spec_{trs} (A_1)\, \sqcup \, \Spec_{t} (A_1/(r)) \, \sqcup \, \Spec_{r} (A_1/(t)) \, \sqcup \,\Spec (A_1/(t,r)) \, \sqcup \,
\Spec (A_1/(h)),$$  by Proposition \ref{aA12Mar15}. 

Recall that $Z(A_1)=K[t,r]$ and $Z(A_{1,trs})=Z(A_1)_{trs}=K[t,r]_{trs}$. Then, 
by Proposition \ref{B28Jul23}.(2), $\Spec(A_{1,trs})=\{A_{1,trs}\gr'\, | \, \gr'\in \Spec(A_1); t,r,s\not\in \gr')\}$. 
It follows from  Proposition \ref{aA12Mar15} and the equality $A_1=\bigoplus_{i,j=0}^{n-1}K[t,r]x^iy^j$ that $$\Spec_{trs} (A_1)=\{A_1\gr'\, | \, \gr'\in \Spec(A_1), t,r,s\not\in \gr')\}$$
since $\tor_{\S}(A_1/A_1\gr')=\tor_{\S}\bigg(\bigoplus_{i,j=0}^{n-1}\Big( K[t,r]/\gr'\Big)x^iy^j\bigg)=\{0\}$ where $\S$ is a submonoid of $A_1$ generated by the central elements $t$, $r$, and $s$. The algebra $A_1$ is a domain. Therefore, $\{ 0\}\in \Spec (A_1)$. Clearly, $\Spec_{trs} (A_1)=\{ 0\} \, \sqcup \,\CN'' \, \sqcup \, \mM''$.

By Proposition \ref{A25Jul23}.(1,2), $\Spec \Big(\Big( A_1/(r) \Big)_{t}\Big)=\{ \{ 0\}, (f)\, | \, f\in \Irr_m(K[t])\backslash \{ t\} \}$ and the factor algebra $A_1(r,f)=\Big( A_1/(r) \Big)_{t}/(f)\simeq M_n(K[t]/(f))$ is  the matrix algebra over the field $K[t]/(f)$ for every element $ f\in \Irr_m(K[t])\backslash \{ t\} $. Hence,
$$\Spec_t  (A_1/(r))=\{ (r), (r,f)\, | \,  f\in \Irr_m(K[t])\backslash \{ t\} \}\;\; {\rm (Corollary\; \ref{a29Jul23})}.$$
By Proposition \ref{A25Jul23}.(3,4), $\Spec \Big(\Big( A_1/(t) \Big)_{r}\Big)=\{ \{ 0\}, (g)\, | \, g\in \Irr_m(K[r])\backslash \{ r\} \}$ and the factor algebra $A_1(t,g)=\Big( A_1/(t) \Big)_{r}/(g)\simeq M_n(K[r]/(g))$ is  the matrix algebra over the field $K[r]/(g)$ for every element $ g\in \Irr_m(K[r])\backslash \{ r\} $. Hence,
$$\Spec_r (A_1/(t))=\{ (t), (t,g)\, | \,  g\in \Irr_m(K[r])\backslash \{ r\} \}\;\; {\rm (Corollary\; \ref{a29Jul23})}.$$
By Proposition \ref{A26Jul23}, $$\Spec (A_1/(t,r)) = \{ (t,r)\}.$$ By Lemma \ref{b28Jul23}.(1), 
$$\Spec (A_1/(h))= \{ (h), (h,f)\, | \, f\in \Irr_m(K[x]\backslash \{ x\}\}=\{ (h)\} \, \sqcup \, \mathbb{H}''.$$
Hence, the diagram (\ref{qXYZA2}) contains all the prime ideals of the algebra $A_1$. By the definitions of the sets in the diagram (\ref{qXYZA2}), we have connections as in the diagram. It remains to show that there are no additional connections. Clearly, there is no connection between $(t)$ and $\mR$ (since the ideal $(t,r)$ and the ideals in the set $\mR$ are maximal). Similarly, there is no connection between $(r)$ and $\mT$ (since the ideal $(t,r)$ and the ideals in the set $\mT$ are maximal).
Since $(t,h)=(r,h)=A_1$ (Lemma \ref{a24Jul23}.(2)), there is no connection between the following pairs:  
$(t)$ and $\mathbb{H}''$, $(r)$ and $\mathbb{H}''$, $(h)$ and $\mT$,  $(h)$ and $(t,r)$, and  $(h)$ and $\mR$.
 By the definition of the set $\mM''$, there is no connection between $\mM''$ and the elements  $(t)$, $(r)$, and $(h)$.   \end{proof}  
 
 \begin{corollary}\label{aa30Jul23}
$\Max(A_1)=\{ (t,r)\}\, \sqcup\,  \mT\, \sqcup\, \mR\, \sqcup\, \mM''\,\sqcup\, \mathbb{H}''$.
\end{corollary}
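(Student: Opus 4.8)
The plan is to derive Corollary \ref{aa30Jul23} directly from Theorem \ref{28Jul23} by inspecting the Hasse diagram (\ref{qXYZA2}): a prime ideal is maximal precisely when it is a \emph{maximal element} of the poset $(\Spec (A_1),\subseteq )$, so I just need to read off which vertices of the diagram have nothing strictly above them, and then justify that the "dotted" (possible) containments do not disturb this reading. First I would list the pieces of the partition $\Spec(A_1)=\Spec_{trs}(A_1)\sqcup\Spec_t(A_1/(r))\sqcup\Spec_r(A_1/(t))\sqcup\Spec(A_1/(t,r))\sqcup\Spec(A_1/(h))$ together with their internal order, as recorded in Theorem \ref{28Jul23}: inside $\Spec_{trs}(A_1)=\{0\}\sqcup\CN''\sqcup\mM''$ the ideals in $\mM''$ are maximal in $Z(A_1)$ hence $A_1\gm$ is a maximal ideal of $A_1$ (this is exactly Corollary \ref{aqAc28Aug20}.(1) / Proposition \ref{B28Jul23}.(3) transported along $A_1/A_1\gm$ being $n^2$-dimensional and simple); inside $\Spec_t(A_1/(r))=\{(r)\}\sqcup\mR$ the ideals $(r,f)\in\mR$ are maximal because $\Big(A_1/(r)\Big)_t/(f)\simeq M_n(K[t]/(f))$ is simple artinian (Proposition \ref{A25Jul23}.(2)); symmetrically the ideals $(t,g)\in\mT$ are maximal by Proposition \ref{A25Jul23}.(4); $(t,r)$ is maximal because $A_1/(t,r)\simeq M_n(K)$ is simple (Proposition \ref{A26Jul23}); and inside $\Spec(A_1/(h))$ the ideals $(h,f)\in\mathbb H''$ are maximal by Lemma \ref{b28Jul23}.(2).

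Next I would argue that nothing else is maximal. The candidates to rule out are $\{0\}$, the height-one primes in $\CN''$, and the three "edge" primes $(r)$, $(t)$, $(h)$. The ideal $\{0\}$ is properly contained in, say, $(h)$, so it is not maximal. Each $\mA\gn\in\CN''$ (with $\gn\in\Spec Z(A_1)$, $\hht(\gn)=1$, $\gn\neq (t),(r),(s)$) is properly contained in some $A_1\gm$ with $\gm\in\mM''$: indeed $\gn$ is a height-one prime of the two-dimensional polynomial ring $Z(A_1)=K[r,t]$ not equal to $(r),(t)$, hence avoids $\{t,r,s\}$ (note $s=(-1)^{n-1}rt+\tfrac1{(q-1)^n}\notin\gn$ since $\gn\neq (s)$, and $\gn\neq (t),(r)$), so $\gn$ is contained in some maximal ideal $\gm$ of $K[r,t]$ avoiding $t,r,s$, giving $A_1\gn\subsetneq A_1\gm\in\mM''$; the inclusion is strict because the contraction to $Z(A_1)$ is strict and $A_1$ is free over $Z(A_1)$ (cf. the maps $\phi_\gp$ of Section \ref{PLMRINGS}). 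Finally $(r)\subsetneq (r,f)$ for any $f\in\Irr_m(K[t])\setminus\{t\}$ shows $(r)$ is not maximal, likewise $(t)\subsetneq(t,g)$ and $(h)\subsetneq (h,f)$ for $f\in\Irr_m(K[x])\setminus\{x\}$; all three such polynomial rings are infinite so such $f,g$ exist.

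Combining the two halves gives $\Max(A_1)=\mM''\sqcup\mR\sqcup\mT\sqcup\{(t,r)\}\sqcup\mathbb H''$, which is the asserted decomposition (the order of the summands being immaterial). The only subtlety, and the place I would be most careful, is the dotted edges in diagram (\ref{qXYZA2}): a priori a prime in $\CN''$ might sit above a prime in $\mT$, $\mR$ or $\mathbb H''$, or between $(t)$ and something, which would not affect maximality of the top pieces but I should make sure no vertex currently declared maximal secretly lies below another. This is handled by the proof of Theorem \ref{28Jul23} itself, where it is shown that $(t,h)=(r,h)=A_1$ (Lemma \ref{a24Jul23}.(2)) kills the cross-containments involving $(h)$ and $\mathbb H''$ versus the $r,t$-side, and that the elements of $\mM''$ sit above nothing except possibly primes in $\CN''$ — so the five listed families genuinely consist of maximal elements and exhaust them. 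Hence no extra work beyond quoting Theorem \ref{28Jul23} and the four structural propositions is required; the corollary is essentially a diagram chase.
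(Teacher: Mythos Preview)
Your proof is correct and takes essentially the same approach as the paper, which simply says ``The corollary follows from the diagram (\ref{qXYZA2}).'' You have supplied far more detail than the paper does---in particular the explicit justification that every $A_1\gn\in\CN''$ (you wrote $\mA\gn$, a typo) sits below some maximal ideal---but the underlying idea of reading off the maximal elements from the poset of Theorem \ref{28Jul23} is identical.
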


 \begin{proof} The corollary follows from the diagram (\ref{qXYZA2}).
 \end{proof}

 For an algebra $A$, its {\bf classical Krull dimension} is the largest length of chains of prime ideals.  An algebra $A$ is called a {\bf catenary algebra} if for each pair of prime ideals $\gp$ and $\gq$ of $A$ such that $\gp\subseteq \gq$ the lengths of longest chains  of prime ideals 
between $\gp$ and $\gq$ are the same. A polynomial algebra $F[x_1, \ldots , x_n]$ over a field $F$ is a catenary algebra.

 \begin{corollary}\label{a30Jul23}
The algebra $A_1$ is a catenary algebra of classical dimension 2.
\end{corollary}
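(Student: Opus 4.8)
\textbf{Proof plan for Corollary \ref{a30Jul23}.}

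The plan is to read off both assertions directly from the diagram (\ref{qXYZA2}) in Theorem \ref{28Jul23}, which already lists every prime ideal of $A_1$ together with all containments. First I would establish that the classical Krull dimension is $2$. The diagram exhibits chains of length $2$, for instance $\{0\}\subsetneq (t)\subsetneq (t,r)$ (using Corollary \ref{a29Jul23} that $(t)\in\Spec(A_1)$ and Proposition \ref{A26Jul23} that $(t,r)\in\Spec(A_1)$), or $\{0\}\subsetneq (r)\subsetneq (r,f)$ for $f\in\Irr_m(K[t])\backslash\{t\}$, so $\clKdim(A_1)\geq 2$. For the upper bound I would argue there is no chain of length $3$: by Theorem \ref{28Jul23} the minimal prime is $\{0\}$ alone, the height-one primes are exactly $(x)$-free ... more precisely the primes strictly between $\{0\}$ and a maximal ideal are $(t)$, $(r)$, $(h)$, and the height-one primes $\CN''$ (which are maximal-minus-one by definition), while every element of $\mT,\mR,\mM'',\mathbb{H}'',\{(t,r)\}$ is maximal by Corollary \ref{aa30Jul23}. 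Inspecting the solid lines of (\ref{qXYZA2}) shows every maximal ideal sits directly above either $\{0\}$ or one of $(t),(r),(h)$, and each of the latter sits directly above $\{0\}$; hence the longest chain has length $2$ and $\clKdim(A_1)=2$.

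For catenarity I would proceed pair by pair over the (finitely many types of) prime ideals $\gp\subseteq\gq$ visible in (\ref{qXYZA2}). When $\gp=\gq$ there is nothing to prove; when $\gp=\{0\}$ and $\gq$ is maximal the diagram shows the maximal chains from $\{0\}$ to $\gq$ all have the same length (length $1$ when $\gq\in\mM''\cup\mathbb{H}''$ and one of the two relevant minimal-height primes is skipped — wait, one must check $\mM''$ covers $\{0\}$ directly and also that no intermediate prime exists; by the definition of $\mM''$ there is no connection between $\mM''$ and $(t),(r),(h)$, so these chains have length $1$; length $2$ when $\gq\in\{(t,r)\}\cup\mT\cup\mR$). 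When $\gp=\{0\}$ and $\gq$ is one of $(t),(r),(h),(x),\ldots$ the chain has length $1$. When $\gp$ is height-one and $\gq$ is maximal, the diagram again shows a unique covering relation in each case. The key structural input is that the relevant factor algebras are catenary: $A_1/(r)\subseteq (A_1/(r))_t\simeq M_n(K[t^{\pm1}])$ and $A_1/(t)\subseteq(A_1/(t))_r\simeq M_n(K[r^{\pm1}])$ by Proposition \ref{A25Jul23}, while $A_{1,trs}$ has centre a localization of $K[t,r]$ with $\Spec$ in bijection with it by Proposition \ref{B28Jul23}, and $K[x_1,x_2]$ is catenary; pulling these back via the identifications of Proposition \ref{aA12Mar15} gives catenarity on each stratum, and the strata are glued along the primes $(t),(r),(h),\{0\}$ in a way that the diagram shows is length-consistent.

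The main obstacle I anticipate is purely bookkeeping: one must verify that gluing the (individually catenary) pieces $\Spec(A_{1,trs})$, $\Spec((A_1/(r))_t)$, $\Spec((A_1/(t))_r)$, $\Spec(A_1/(t,r))$, $\Spec(A_1/(h))$ along the boundary primes does not create two maximal chains of different lengths between some $\gp$ and $\gq$. Concretely, the subtle point is a chain of the form $\{0\}\subsetneq(t)\subsetneq(t,r)$ has length $2$, whereas $\{0\}\subsetneq\gm$ for $\gm\in\mM''$ has length $1$, but these have \emph{different} endpoints, so catenarity is not violated; one must simply confirm that whenever $\gp\subseteq\gq$ with \emph{both} fixed, all saturated chains agree — and the diagram (\ref{qXYZA2}), having been proved complete in Theorem \ref{28Jul23} (including the statement that there are no additional connections), makes this a finite check. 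Since $A_1$ is Noetherian of Gelfand–Kirillov dimension $2$, the bound $\clKdim(A_1)\leq 2$ is also consistent with general theory, which I would cite as a sanity check rather than rely on.
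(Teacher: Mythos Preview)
Your approach is essentially the paper's own: the proof there is the single sentence ``The corollary follows from Theorem \ref{28Jul23}'', i.e.\ read everything off the diagram (\ref{qXYZA2}). So the strategy is right, but your execution of the bookkeeping contains two concrete misreadings of that diagram.

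First, you assert that saturated chains from $\{0\}$ to $\gq\in\mM''$ have length $1$, reasoning that $\mM''$ is disconnected from $(t),(r),(h)$. You have forgotten $\CN''$. For $\gm\in\Max(Z(A_1))$ with $t,r,s\notin\gm$, the ring $Z(A_1)=K[t,r]$ is a polynomial ring in two variables, so $\gm$ has height $2$ and there is a height-$1$ prime $\gn\subsetneq\gm$; since $t,r,s\notin\gm$ forces $\gn\neq(t),(r),(s)$, we get $A_1\gn\in\CN''$ with $\{0\}\subsetneq A_1\gn\subsetneq A_1\gm$. So every element of $\mM''$ has height $2$, not $1$. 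Second, you lump $\mathbb{H}''$ with $\mM''$ as height $1$, but the diagram has a \emph{solid} edge $(h)$---$\mathbb{H}''$: the chain $\{0\}\subsetneq(h)\subsetneq(h,f)$ has length $2$.

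Once these are fixed, the picture is cleaner than you describe: \emph{every} maximal ideal of $A_1$ has height exactly $2$, and the primes of $A_1$ fall into three layers (height $0$: $\{0\}$; height $1$: $(t),(r),(h)$ and $\CN''$; height $2$: the maximals of Corollary \ref{aa30Jul23}). Catenarity is then immediate: any saturated chain from $\{0\}$ to a maximal ideal must pass through exactly one height-$1$ prime, hence has length $2$, and all other pairs $\gp\subseteq\gq$ are covered trivially. Your worry about ``gluing'' catenary strata of different lengths does not arise.
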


\begin{proof}   The corollary follows from Theorem \ref{28Jul23}.   \end{proof} 
 
{\bf The prime factor algebras of $A_1$ and their quotient algebras.} Corollary \ref{e28Jul23} describes the prime factor algebras of $A_1$ and their quotient algebras.
 
 \begin{corollary}\label{e28Jul23}
We keep the notation of Theorem \ref{28Jul23}. 
\begin{enumerate}
\item For every $A_1\gp \in \Spec_{trs} (A_1)$,
\begin{eqnarray*}
A_1/A_1\gp&=&\bigoplus_{i,j=0}^{n=1}\Big(Z(A_1)/\gp \Big)x^iy^j\\
&=&\bigoplus_{j=0}^{n-1}\Big(Z(A_1)/\gp \Big) h^j\oplus\bigoplus_{i=1}^{n-1}\Bigg(\bigoplus_{j=1}^{n-1}\Big(Z(A_1)/\gp \Big) h^jx^i\oplus \bigoplus_{j=1}^{n-1}\Big(Z(A_1)/\gp \Big) h^jy^{n-i}\Bigg),\\
Q(A_1/A_1\gp)&=& Q(A_{1,trs}/A_{1,trs}\gp )=\bigoplus_{i,j=0}^{n-1}k(\gp )h^ix^j
\end{eqnarray*}
 is a central simple $n^2$-dimensional algebra over the field $k(\gp )$ of fractions of the commutative domain $Z(A_1 )/\gp$. The algebra $Q(A_1/A_1\gp)$ is isomorphic to the algebra $\CE (\gp):=(E(s,\gp), \s, a)$ where $E(s,\gp):=k(\gp )[h]/(h^n-s)\simeq k(\gp )[h,h^{-1}]/(h^n-s)$ and  $\s (h)=qh$. In particular,
 the quotient algebra  of the algebra $A_1$, $$Q(A_1 )=\bigoplus_{i,j=0}^{n-1}K(s,t)h^ix^j$$ is a central simple $n^2$-dimensional division algebra over the field $K(s,t)$ of rational functions in two variables. The algebra $Q(A_1 )$  is isomorphic to the cyclic algebra $\CE (0):=(E(s,0), \s, a)$ where $E(s,0):=K(s,t)[h]/(h^n-s)$ and  $\s (h)=qh$.

\item $A_1/(t)=\bigoplus_{i=1}^{n-1}\Big(\bigoplus_{j=0}^{i} \CK e_j\Big)x^i\overline{1}\oplus \CK [y]\overline{1}$ where $\overline{1}:=1+(t)$ and $Q(A_1/(t))=\bigoplus_{i,j=0}^{n-1}K(r)E_{ij}=M_n(K(r))$
where $E_{ij}:=\varepsilon_iy^{i-j}\varepsilon_j$ are the matrix units.

\item $A_1/(r)=\bigoplus_{i=1}^{n-1}\Big(\bigoplus_{j=i}^{n-1} \CK e_j\Big)y^i\tilde{1}\oplus \CK [x]\tilde{1}$ where $\tilde{1}:=1+(r)$ and $Q(A_1/(r))=\bigoplus_{i,j=0}^{n-1}K(t)E_{ij}=M_n(K(t))$.
 where $E_{ij}:=\varepsilon_ix^{j-i}\varepsilon_j$ are the matrix units.

\item $A_1/(h)\simeq K[x^{\pm 1}]$ and  $Q(A_1/(h))\simeq K(x)$.

\item For every $(t,g)\in \mT$, $A_1/(t,g)=Q(A_1/(t,g))=\bigoplus_{i,j=0}^{n-1}\Big(K[r]/(g)\Big) E_{ij}= M_n(K[r]/(g))$ is the matrix algebra over the field  $K[r]/(g)$   where $E_{ij}=\varepsilon_iy^{i-j}\varepsilon_j$. 

\item For every $(r,f)\in \mR$, $A_1/(r,f)=Q(A_1/(r,f))=\bigoplus_{i,j=0}^{n-1}\Big(K[t]/(f)\Big) E_{ij}=  M_n(K[t]/(f))$ is the matrix algebra over the field  $K[t]/(f)$  where $E_{ij}=\varepsilon_ix^{j-i}\varepsilon_j$.

\item For every $(h,f)\in \mathbb{H}''$, $A_1/(h,f)=Q(A_1/(h,f))\simeq K[x]/(f)$ is a finite field extension of $K$. 

\item $A_1/(t,r)\simeq M_n(K)$ and $L=A_1/A_1(t,y)$ is a unique simple $A_1/((t,r)$-module.

\end{enumerate}

\end{corollary}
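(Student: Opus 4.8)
This corollary is a collation of descriptions already obtained in this section, and the plan is to go through it one stratum at a time, following the partition of $\Spec(A_1)$ from Theorem \ref{28Jul23} and quoting in each case the relevant localization or factor-algebra result.

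For the stratum $\Spec_{trs}(A_1)$ (statement 1), I would begin from the two free $Z(A_1)$-module decompositions of $A_1$ recorded in (\ref{qA1Sum}) and (\ref{qA1Sum3}); applying $-\otimes_{Z(A_1)}Z(A_1)/\gp$ to them produces the two displayed $Z(A_1)/\gp$-bases of $A_1/A_1\gp=A_1\otimes_{Z(A_1)}Z(A_1)/\gp$. Since $t,r,s\in Z(A_1)$ are nonzero modulo $\gp$ (we are in the stratum $t,r,s\notin\gp$), multiplication by each acts injectively on the torsion-free module $A_1/A_1\gp$ over the domain $Z(A_1)/\gp$, so the canonical map $A_1/A_1\gp\to(A_1/A_1\gp)_{trs}=A_{1,trs}/A_{1,trs}\gp$ is injective and hence $Q(A_1/A_1\gp)=Q(A_{1,trs}/A_{1,trs}\gp)$. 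Using $A_{1,trs}\simeq\CB_{s-(q-1)^{-n}}$ from (\ref{qA1Sum10}), Proposition \ref{B28Jul23}.(4) identifies this ring with the central simple $n^2$-dimensional $k(\gp)$-algebra $\CE(\gp)=(E(s,\gp),\s,a)$. Taking $\gp=\{0\}$ gives $Q(A_1)=Q(A_{1,trs})=Q(\CB)\simeq\CE(0)$, a division algebra by Theorem \ref{qCAc28Aug20}.(5) (equivalently by the inclusions $\mA\subset A_1\subset\CB$ of (\ref{qmAAWy1}) and Corollary \ref{QqAc28Aug20}.(5)).

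For the strata over $(t)$ and $(r)$ (statements 2, 3, 5, 6) the displayed bases are exactly Proposition \ref{A29Jul23}.(2) and (1). From the same proposition $r+(t)\in\CC_{A_1/(t)}$ and $t+(r)\in\CC_{A_1/(r)}$, so $A_1/(t)$ and $A_1/(r)$ embed into their localizations $(A_1/(t))_r\simeq M_n(K[r^{\pm1}])$ and $(A_1/(r))_t\simeq M_n(K[t^{\pm1}])$ with matrix units $\varepsilon_iy^{i-j}\varepsilon_j$, $\varepsilon_ix^{j-i}\varepsilon_j$ respectively (Proposition \ref{A25Jul23}.(3),(1)); hence their quotient rings are $M_n(K(r))$ and $M_n(K(t))$. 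For a maximal ideal $(t,g)\in\mT$ with $g\in\Irr_m(K[r])\setminus\{r\}$, the central element $r$ is a unit in $A_1/(t,g)$ (which contains the field $K[r]/(g)$), so $A_1/(t,g)=(A_1/(t))_r/(g)\simeq M_n(K[r]/(g))$ by Proposition \ref{A25Jul23}.(4); being simple artinian, it is its own quotient ring. The case $(r,f)\in\mR$ is symmetric, via Proposition \ref{A25Jul23}.(2).

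For the remaining strata (statements 4, 7, 8): (\ref{qA1Sum11}) gives $A_1/(h)\simeq K[x^{\pm1}]$ with quotient ring $K(x)$, and for $(h,f)\in\mathbb{H}''$ with $f\in\Irr_m(K[x])\setminus\{x\}$ the ideal $(f)$ is maximal in $K[x^{\pm1}]$, so $A_1/(h,f)=K[x^{\pm1}]/(f)=K[x]/(f)$ is a finite field extension of $K$ (and equals its own quotient ring); finally $A_1/(t,r)\simeq M_n(K)$ and the uniqueness of the simple module $L=A_1/A_1(t,y)$ are Proposition \ref{A26Jul23}, together with the explicit $A_1$-action on $L$ computed just before it. I expect no genuine difficulty anywhere; the only point that calls for a word of justification is the identification $Q(A_1/A_1\gp)=Q(A_{1,trs}/A_{1,trs}\gp)$ in the first stratum, which holds because $A_1/A_1\gp$ is a $Z(A_1)/\gp$-order inside the finite-dimensional central simple $k(\gp)$-algebra $A_{1,trs}/A_{1,trs}\gp$ and the latter already coincides with its own quotient ring.
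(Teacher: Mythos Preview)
Your proposal is correct and follows essentially the same route as the paper's proof: each statement is reduced to the corresponding proposition already established in the section (Propositions \ref{B28Jul23}, \ref{A29Jul23}, \ref{A25Jul23}, \ref{A26Jul23} and the identity (\ref{qA1Sum11})), exactly as the paper does. The one imprecision is in your closing remark: for non-maximal $\gp\in\Spec_{trs}(A_1)$ the ring $A_{1,trs}/A_{1,trs}\gp$ is \emph{not} already the finite-dimensional $k(\gp)$-algebra (one must still invert $Z(A_1)/\gp\setminus\{0\}$), so it does not ``coincide with its own quotient ring''; but your earlier argument---that $t,r,s$ are regular central elements of $A_1/A_1\gp$, so the localization map is injective and the two prime rings share a common quotient ring---is the correct one and is precisely what the paper uses.
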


\begin{proof}    1. By Theorem \ref{28Jul23}.(1), $\Spec_{trs} (A_1) = \{   A_1\gp \, | \, \gp \in \Spec(Z(A_1) ); t,r,s \not\in \gp\}$. Hence, the first two equalities of statement 1 follow from (\ref{qA1Sum}) and (\ref{qA1Sum3}). Hence, $t,r,s \in \CC_{A_1/A_1\gp}$ since $t,r,s \not\in \gp$. Therefore,
$$Q(A_1/A_1\gp)= Q(A_{1,trs}/A_{1,trs})$$
and statement 1 follows from Proposition \ref{B28Jul23}.(4).

2. By Proposition \ref{A25Jul23}.(3), 
$\Big( A_1/(t) \Big)_{r}=\bigoplus_{i,j=0}^{n-1}K[r^{\pm 1}]E_{ij}\simeq M_n(K[r^{\pm 1}])$
where $E_{ij}:=\varepsilon_iy^{i-j}\varepsilon_j$ are the matrix units. By Proposition \ref{A29Jul23}.(2), $r+(t)\in \CC_{A_1/(t)}$.
Therefore, $$Q( A_1/(t))=Q \Big(\Big( A_1/(t) \Big)_{r}\Big)=Q(M_n(K[r^{\pm 1}]))=Q(M_n(K(r))).$$

3. By Proposition \ref{A25Jul23}.(1), $\Big( A_1/(r) \Big)_{t}=\bigoplus_{i,j=0}^{n-1}K[t^{\pm 1}]E_{ij}\simeq M_n(K[t^{\pm 1}])$ where $E_{ij}:=\varepsilon_ix^{j-i}\varepsilon_j$ are the matrix units. By Proposition \ref{A29Jul23}.(1), $t+(r)\in \CC_{A_1/(r)}$. Therefore, $$Q( A_1/(r))=Q \Big(\Big( A_1/(r) \Big)_{t}\Big)=Q(M_n(K[t^{\pm 1}]))=Q(M_n(K(t))).$$

4. By (\ref{qA1Sum11}), $A_1/(h)\simeq K[x^{\pm 1}]$, and statement 4 follows.

5. By Proposition \ref{A25Jul23}.(3,4), $$A_1/(t,g)=\Big( A_1/(t)\Big)_r/(g)=\bigoplus_{i,j=0}^{n-1}\Big(K[r]/(g)\Big) E_{ij}=M_n(K[r]/(g))$$ where $E_{ij}=\varepsilon_iy^{i-j}\varepsilon_j$.
 Hence, $Q(A_1/(t,g))=Q(M_n(K[r]/(g)))=M_n(K[r]/(g))$.

6. By Proposition \ref{A25Jul23}.(1,2), $$A_1/(r,f)=\Big( A_1/(r)\Big)_t/(f)=\bigoplus_{i,j=0}^{n-1}\Big(K[t]/(f)\Big) E_{ij}=M_n(K[t]/(f))$$ where $E_{ij}=\varepsilon_ix^{j-i}\varepsilon_j$.
 Hence, $Q(A_1/(r,f))=Q(M_n(K[t]/(f)))=M_n(K[t]/(f))$.

7. Statement 7 follows from statement 4.   

8. Statement 8 is Proposition \ref{A26Jul23}.  \end{proof}

{\bf Classifications of simple $A_1$-modules and  primitive ideals of $A_1$.} 

\begin{proposition}\label{30Jul23}
The quantum Weyl algebra $A_1$ satisfies the assumptions of Corollary \ref{b2Aug23}, ${\rm ZL}(A_1)=\Spec (Z(A_1))$, and Theorem \ref{2Aug23} holds:
 \begin{enumerate}
\item The ring $A_1$ is PLM-ring with ${\rm PL}(A_1)=\Max (Z(A_1))$.

\item  For every $\gm \in \Max (Z(A_1))$, $\Prim (A_1,\gm)=\{ A_1\gm\}$.

\item $\Prim(A_1)=\Max (A_1)=\{ A_1\gm\, | \, \gm\in \Max (Z(A_1))\}=\{ (t,r)\}\, \sqcup\,  \mT\, \sqcup\, \mR\, \sqcup\, \mM''\,\sqcup\, \mathbb{H}''$. 

\item For every $\gp \in \mM''$,
 $A_1/A_1\gp =
Q(A_1/A_1\gp)=\bigoplus_{i,j=0}^{n-1}k(\gp )h^ix^j
$
 is a central simple $n^2$-dimensional algebra over the field $k(\gp )=Z(A_1 )/\gp$. The algebra $A_1/A_1\gp$ is isomorphic to the algebra $\CE (\gp):=(E(s,\gp), \s, a)$ where $E(s,\gp):=k(\gp )[h]/(h^n-s)\simeq k(\gp )[h,h^{-1}]/(h^n-s)$ and  $\s (h)=qh$.  Let $U(\gm)$ be a unique simple  $\CE (\gm )$-module (described in (\ref{xu=usxe}) and Theorem \ref{12Jun23}.(1,2))). 

\item $\widehat{A_1} =\{ U(\gm )\, | \, \gm \in \Max(Z(A_1))\}$  where $U(\gm)$ is a unique simple module of the simple artinian ring $k(\gm)\t_{Z(A_1)}A_1/A_1\gm$
$$
U(\gm):=\begin{cases}
A_1/A_1(t,y)& \text{if }\gm=(t,r),\\
A_1/(t,g)\varepsilon_0& \text{if }\gm=(t,g)\in \mT,\\
A_1/(r,f)\varepsilon_0& \text{if }\gm=(r,f)\in \mR,\\
 & \text{ is described in (\ref{xu=usxe}) and Theorem \ref{12Jun23}.(1,2) if $\gm \in \mM''$)}\\
 A_1/(h,f)\varepsilon_0& \text{if }\gm=(h,f)\in \mathbb{H}''.\\
\end{cases}
$$
All simple $\mA$-modules are finite dimensional.

\item The map $\widehat{A_1} \ra \Prim (A_1)$, $U(\gm )\mapsto \ann_{A_1}(U(\gm))=A_1/A_1\gm$ is a bijection with inverse $A_1\gm\mapsto U(\gm)$.

\item For $\gm \in \Max (Z(A_1))$,
$$
\End_{A_1}(U(\gm)):=\begin{cases}
K& \text{if }\gm=(t,r),\\
K[r]/(g)& \text{if }\gm=(t,g)\in \mT,\\
K[t]/(f)& \text{if }\gm=(r,f)\in \mR,\\
D(\gm)  & \text{ is described in Theorem \ref{12Jun23}.(1,2) if $\gm \in \mM''$)},\\
 K[x]/(f)& \text{if }\gm=(h,f)\in \mathbb{H}''.\\
\end{cases}
$$

\end{enumerate}

\end{proposition}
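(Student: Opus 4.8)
The plan is to follow, almost verbatim, the pattern used for $\mA$, $\CA$ and $\CB$ in Corollary~\ref{aQqAc28Aug20} and Proposition~\ref{A4Aug23}, feeding in the structural results already established in this section. The three inputs are: (a) the centre $Z(A_1)=K[r,t]$ together with the free decomposition $A_1=\bigoplus_{i,j=0}^{n-1}Z(A_1)x^iy^j$ of~(\ref{qA1Sum}), which exhibits $A_1$ as a free (rank $n^2$) $Z(A_1)$-module and in particular as a module-finite extension of an affine central subalgebra; (b) the description of the prime spectrum and of $\Max(A_1)$ in Theorem~\ref{28Jul23} and Corollary~\ref{aa30Jul23}; and (c) the explicit list of all prime factor algebras of $A_1$ and of their quotient rings in Corollary~\ref{e28Jul23}, together with the generalized-Weyl-algebra analyses of Propositions~\ref{A25Jul23}, \ref{A26Jul23}, \ref{A29Jul23} and~\ref{B28Jul23} and the matrix-norm description of the unique simple $\CE$-module in~(\ref{xu=usxe}) and Theorem~\ref{12Jun23}.

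First I would establish the PLM property and the equality ${\rm PL}(A_1)=\Max(Z(A_1))$. Since $A_1$ is a finitely generated module over its affine centre $K[r,t]$, it is an affine PI algebra, so every simple $A_1$-module is finite-dimensional over $K$ (this may also simply be read off Corollary~\ref{e28Jul23}); hence $\End_{A_1}$ of each simple module is a finite field extension of $K$, an algebraic algebra, and $A_1$ is a PLM-ring by Proposition~\ref{B30Jul23}.(1). Finite-dimensionality of the simples also gives ${\rm PL}(A_1)\subseteq\Max(Z(A_1))$ (the image of $Z(A_1)$ in a finite division ring over $K$ is a field), while freeness of $A_1$ over $Z(A_1)$ gives the reverse inclusion by Lemma~\ref{a2Aug23}.(2); this proves part~(1), and with Corollary~\ref{b31Jul23}/Theorem~\ref{2Aug23} one also records that the maps $\phi_\gp$ are injective and ${\rm ZL}(A_1)=\Spec(Z(A_1))$. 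For parts~(2) and~(3) I would combine this with Corollary~\ref{e28Jul23}: for $\gm\in\Max(Z(A_1))$ the factor ring $A_1/A_1\gm$ is finite-dimensional and, in each of the cases listed in Corollary~\ref{e28Jul23}.(1,5,6,7,8), a simple artinian ring (a matrix algebra over a finite field extension of $K$, or the central simple $\CE(\gm)$); consequently any prime $P$ with $P\cap Z(A_1)=\gm$ contains $A_1\gm$ and equals it, so $\Prim(A_1,\gm)=\{A_1\gm\}$, and $\Prim(A_1)=\bigsqcup_{\gm}\Prim(A_1,\gm)=\Max(A_1)$ coincides with the list $\{(t,r)\}\sqcup\mT\sqcup\mR\sqcup\mM''\sqcup\mathbb{H}''$ of Corollary~\ref{aa30Jul23}, after matching each of $(t,r)$, $(t,g)$, $(r,f)$, $A_1\gm$ ($\gm\in\mM''$), $(h,f)$ with its contraction to $K[r,t]$ via the matrix/cyclic presentations of Corollary~\ref{e28Jul23}.

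Parts~(4)--(7) are then bookkeeping built on Theorem~\ref{2Aug23}. Part~(4) is precisely the case $\gm\in\mM''$ of Corollary~\ref{e28Jul23}.(1). For part~(5), the unique simple module of the simple artinian ring $k(\gm)\t_{Z(A_1)}A_1/A_1\gm=A_1/A_1\gm$ is identified case by case: $A_1/A_1(t,y)$ when $\gm=(t,r)$ (Proposition~\ref{A26Jul23}); the standard simple module $A_1/(t,g)\varepsilon_0\cong(K[r]/(g))^n$ resp.\ $A_1/(r,f)\varepsilon_0\cong(K[t]/(f))^n$ of the matrix algebra when $\gm\in\mT$ resp.\ $\gm\in\mR$ (Proposition~\ref{A25Jul23}, Corollary~\ref{a28Jul23}); the $\CE(\gm)$-module $U(\CE(\gm))$ of~(\ref{xu=usxe}) and Theorem~\ref{12Jun23} when $\gm\in\mM''$; and $A_1/(h,f)\varepsilon_0\cong K[x]/(f)$ when $\gm\in\mathbb{H}''$ (Corollary~\ref{e28Jul23}.(7)); finite-dimensionality over $K$ holds throughout. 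Part~(6) is the bijectivity statement of Theorem~\ref{2Aug23}.(5), and part~(7) follows from~(5) together with Theorem~\ref{2Aug23}.(6): $\End_{A_1}(U(\gm))$ is the division ring $D(\gm)$ in $A_1/A_1\gm\cong M_{n(\gm)}(D(\gm))$, which is $K$ for $\gm=(t,r)$, the residue fields $K[r]/(g)$, $K[t]/(f)$, $K[x]/(f)$ in the matrix cases, and the division algebra $D$ of Theorem~\ref{12Jun23} when $\gm\in\mM''$.

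The hard part will be the non-uniformity of the prime factor algebras of $A_1$: unlike $\mA,\CA,\CB$, the centre $Z(A_1)=K[r,t]$ carries height-one primes $(r),(t),(s)$ over which the naive factor $A_1/A_1\gp$ degenerates (it is a generalized Weyl algebra with $h$ nilpotent or a matrix algebra over a Laurent ring rather than a simple artinian ring), and one must instead route through Propositions~\ref{A25Jul23}--\ref{B28Jul23} to recognize the correct simple artinian factor and its simple module, and — crucially — one must verify the socle-vanishing inputs $\soc_{A_1}(A_1/(r))=\soc_{A_1}(A_1/(t))=0$ of Corollary~\ref{a31Jul23} and, by a direct argument, $\soc_{A_1}(A_1/A_1(s))=\soc_{A_1}(A_1/h^nA_1)=0$ (any simple submodule is killed by $h$, hence lies in $\ker(h\cdot)\cong A_1/hA_1\cong K[x^{\pm1}]$, which has no finite-dimensional submodule). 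Keeping track of which members of $\Max(Z(A_1))$ produce which block of $\Max(A_1)$ — in particular the maximal ideals containing $s$ but not $r,t$, whose associated primitive ideals sit in $\mathbb{H}''$ — is the delicate accounting step.
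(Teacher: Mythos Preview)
Your affine-PI argument for part~(1) is correct and more robust than the paper's bare appeal to Corollary~\ref{b2Aug23}. But for parts~(2) and~(3) both your proposal and the paper's own proof share a genuine gap, rooted in the \emph{statement}: $A_1$ does \emph{not} satisfy the hypotheses of Corollary~\ref{b2Aug23}. For any prime $\gp\in\Spec(Z(A_1))$ containing $s=(-1)^{n-1}rt+(q-1)^{-n}$ but neither $r$ nor $t$ --- so $\gp=(s)$ or a maximal $\gm=(s,p(t))$ --- the ring $k(\gp)\otimes_{Z(A_1)}A_1/A_1\gp$ carries $h$ as a nonzero nilpotent normal element (since $h^n=s=0$ there, while $h=q^{-1}xy+(q(q-1))^{-1}\neq 0$ in the basis $\{x^iy^j\}$ of~(\ref{qA1Sum})), hence is not simple. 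Your assertion that ``for $\gm\in\Max(Z(A_1))$ the factor ring $A_1/A_1\gm$ is \ldots\ a simple artinian ring'' fails exactly here: Corollary~\ref{e28Jul23}.(7) computes $A_1/(h,f)$, not $A_1/A_1\gm$, and the two differ. For $\gm=(s,p(t))$ the ideal $A_1\gm$ is not prime, and $\Prim(A_1,\gm)$ consists of those $(h,f)\in\mathbb{H}''$ for which $x^n$ has minimal polynomial $p$ in $K[x]/(f)$; over $K=\overline{K}$ there are $n$ such $f$, so part~(2) and the equality $\Max(A_1)=\{A_1\gm:\gm\in\Max(Z(A_1))\}$ in~(3) are false as written. (The same defect affects Corollary~\ref{aQqAc28Aug20} and Proposition~\ref{A4Aug23} at the primes of $Z(\mA)$, $Z(\CA)$ containing $s$ or $t$.)

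Your final paragraph rightly flags $(s)$ as the trouble spot, and your socle-vanishing argument for $A_1/A_1(s)$ is correct; but it only bears on the PLM property, which you already have by the PI argument --- it cannot make $A_1\gm$ prime or collapse $\Prim(A_1,\gm)$ to a singleton. What survives unscathed: part~(1) by your PI route, the equality $\Prim(A_1)=\Max(A_1)$ in~(3) (Corollary~\ref{aa30Jul23} plus finite-dimensionality of simples), part~(4), and the content of~(5)--(7) when read as indexed by $\Max(A_1)$ rather than by $\Max(Z(A_1))$.
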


\begin{proof} By Corollary \ref{e28Jul23}, the  algebra $A_1$ satisfies the assumptions of Corollary \ref{b2Aug23}, and so  ${\rm ZL}(A_1)=\Spec (Z(A_1))$, and Theorem \ref{2Aug23} holds apart from statement 4, 7, and explicit description of simple modules in statement 5. Statement 4 follows from Corollary \ref{e28Jul23}.(1). The explicit description of simple modules in statement 5 follows from Corollary \ref{e28Jul23}.(5--8). Statement 7 follows from statement 5 and Corollary \ref{e28Jul23}.(1, 5--8).
\end{proof}

{\bf The set of completely prime ideals of the algebra $A_1$.}  Corollary  \ref{b30Jul23} is a classification of completely prime ideals of the algebra $A_1$.

\begin{corollary}\label{b30Jul23}
$\Spec_c (A_1)=\Big\{ \{ 0\}, (h), (h,f), A_1\gp \, | \, f\in \Irr_m(K[x])\backslash \{ x\},  0\neq \gp \in \Spec \,Z(A_1)$; $t,r,s\not\in \gp$,  $E(s,\gp)=k(\gp)[h]/(h^n-s)$
is a field and $n=\min\{ d'\geq 1\, | \,  d'\mid n$  there is a matrix $X\in M_{d'}(E(s,\gp))$ such that $ X^{\s^{n-1}}\cdots X^\s X=a\}\Big\}$ where $k(\gp)$ is the field of fractions of the commutative domain $Z(A_1)/\gp$ and $\s (h)=qh$.

\end{corollary}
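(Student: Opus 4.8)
The plan is to decompose $\Spec(A_1)$ along the prime spectrum using Theorem~\ref{28Jul23} and then run the completely-prime test on each piece separately, exactly as was done for $\mA$, $\CA$, and $\CB$ in Propositions~\ref{B20Jul23}, \ref{CAB20Jul23}, and~\ref{A20Jul23}. Recall that a prime $P$ of $A_1$ is completely prime if and only if $Q(A_1/P)$ is a domain. So first I would go through the blocks of the partition $\Spec(A_1)=\Spec_{trs}(A_1)\,\sqcup\,\Spec_t(A_1/(r))\,\sqcup\,\Spec_r(A_1/(t))\,\sqcup\,\Spec(A_1/(t,r))\,\sqcup\,\Spec(A_1/(h))$ and test each using the explicit quotient algebras computed in Corollary~\ref{e28Jul23}.

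For the blocks other than $\Spec_{trs}(A_1)$, everything is already spelled out: by Corollary~\ref{e28Jul23}.(5,6,8), for $(t,g)\in\mT$, $(r,f)\in\mR$, and $(t,r)$ the quotient rings are $M_n(K[r]/(g))$, $M_n(K[t]/(f))$, and $M_n(K)$ respectively, hence \emph{not} domains for $n\geq 2$ (and $n\geq 3$ is forced whenever $q$ is a primitive $n$th root of unity, so these are genuinely non-domains); similarly $Q(A_1/(t))=M_n(K(r))$ and $Q(A_1/(r))=M_n(K(t))$ are not domains, so $(t),(r)\notin\Spec_c(A_1)$. By contrast, Corollary~\ref{e28Jul23}.(4,7) gives $A_1/(h)\simeq K[x^{\pm1}]$ with $Q(A_1/(h))\simeq K(x)$ a field, and $A_1/(h,f)\simeq K[x]/(f)$ a finite field extension of $K$, so $(h)$ and all $(h,f)\in\mathbb{H}''$ \emph{are} completely prime; and $\{0\}\in\Spec_c(A_1)$ since $A_1$ is a domain. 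This disposes of everything except the generic block $\Spec_{trs}(A_1)=\{0\}\sqcup\CN''\sqcup\mM''$.

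For the generic block, the key input is Corollary~\ref{e28Jul23}.(1): for $A_1\gp\in\Spec_{trs}(A_1)$ with $0\neq\gp$, one has $Q(A_1/A_1\gp)\simeq\CE(\gp)=(E(s,\gp),\s,a)$ where $E(s,\gp)=k(\gp)[h]/(h^n-s)$ and $\s(h)=qh$. Now apply Corollary~\ref{b17Jun23}: since $s\neq0$ and $a\neq0$ in $\CE(\gp)$ (because $t,r,s\notin\gp$ means $s,t$ are units, and $a=h-\frac{1}{q-1}$ relates to these via $rt=(-1)^{n-1}(s-(q-1)^{-n})$), the algebra $\CE(\gp)$ is a division algebra if and only if $E(s,\gp)$ is a field and $n=\min\{d'\geq1\mid d'\mid n,\ \exists\,X\in M_{d'}(E(s,\gp))\text{ with }X^{\s^{n-1}}\cdots X^\s X=a\}$. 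This is precisely the condition appearing in the statement. Assembling the five blocks then yields the claimed description of $\Spec_c(A_1)$; I would record that $k(\gp)$ is the field of fractions of $Z(A_1)/\gp$ and $\s(h)=qh$, matching the formula.

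The main obstacle — really the only nontrivial point — is verifying that in the block $\Spec_{trs}(A_1)$ one has genuinely $a\neq 0$ in the quotient $\CE(\gp)$, i.e.\ that the hypotheses of Theorem~\ref{NF-qA2Sep20}/Corollary~\ref{b17Jun23} ($s\neq0$, $a\neq0$) are met; this follows because $t=x^n$ is a unit in $A_1/A_1\gp$ and $x^n-a$ is (up to a scalar) the defining relation, so $a$ is a unit, but one should check this carefully against the isomorphism $Q(A_1/A_1\gp)\simeq\CE(\gp)$ of Corollary~\ref{e28Jul23}.(1), where the roles of $s$ (free, a unit since $s\notin\gp$) and $a$ (= the image of $t=x^n$, again a unit) are both fine. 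Everything else is a direct citation of results already established in the excerpt, so the proof is short: it is a case analysis over the five blocks of Theorem~\ref{28Jul23}, using Corollary~\ref{e28Jul23} to identify the quotient rings and Corollary~\ref{b17Jun23} to decide which are domains.
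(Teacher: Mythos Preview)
Your approach is essentially the same as the paper's: both use the case analysis from Theorem~\ref{28Jul23}, invoke Corollary~\ref{e28Jul23} to identify the quotient rings $Q(A_1/P)$, and then test which are domains. The only cosmetic difference is that for the generic block $\Spec_{trs}(A_1)$ the paper routes through the already-proved Corollary~\ref{d28Jul23} (the $\Spec_c$ classification for the localization $A_{1,trs}$) via the bijection $\Spec_{trs}(A_1)\to\Spec(A_{1,trs})$, whereas you go directly from Corollary~\ref{e28Jul23}.(1) to Corollary~\ref{b17Jun23}; these are the same computation unpacked one step further.

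One small correction: your parenthetical ``$n\geq 3$ is forced whenever $q$ is a primitive $n$th root of unity'' is false in general --- $n=2$ (i.e.\ $q=-1$) is allowed whenever $\char(K)\neq 2$. This does not affect your argument, since $M_n(\cdot)$ already fails to be a domain for $n\geq 2$, but you should drop that remark.
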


\begin{proof}   A prime ideal $P$ of the algebra $A_1$ is a completely prime ideal iff the quotient algebra $Q(A_1/P)$ is a division algebra. 
 A prime ideal $P\in \Spec_{trs}(A_1)$ is a completely prime ideal  of  $A_1$ iff $P\in \Spec_c(A_{1,trs})$.
By Proposition \ref{aA12Mar15}.(2) and Theorem \ref{28Jul23}, the map  $$\Spec_{trs}(A_1)\ra \Spec(A_{1,trs}), \;\;A_1\gp\mapsto A_{1,trs}\gp$$ is a bijection. Now, the corollary follows from   Corollary \ref{d28Jul23}  and Corollary \ref{e28Jul23}.   \end{proof}


\section{Automorphism groups of the  algebras $\mA$, $A_1$, $\CA$, and $\CB$}\label{AUTGROUPS}

The aim of the section is to give explicit descriptions of the automorphism groups of the algebras  $\mA$, $A_1$, $\CA$, and $\CB$. Recall that $q$ is an $n$'th primitive root of unity. \\

{\bf The automorphism group $\Aut_K(\mA )$.}
  There are obvious  subgroups of $\Aut_K(\mA)$:
\begin{eqnarray*}
 \mI&:=& \{1, \iota \}, \;\; \iota:  h\mapsto x, \;\; x\mapsto  h, \;\; {\rm if}\;\; n=2,\\
\mT^2 &:=& \{ t_{\l,\mu}\, | \, \l, \mu\in K^\times\}, \;\; t_{\l , \mu}:  h\mapsto \l h, \;\; x\mapsto \mu x.
\end{eqnarray*}
The group $\mI$ has order 2 since $\iota^2=1$. The group $\mT$ is a  {\em 2-dimensional algebraic torus} since $\mT^2\simeq \Big(K^\times\Big)^2$. Let $G$ be a group and $N$, $H$ be its subgroups such that $N$ is a normal subgroup. If each element $g\in G$ is a unique product $d=hn$ for some elements $h\in H$ and $a\in N$ then the group is denote $H\ltimes N$ and is  called a {\em semidirect product} of groups.

\begin{theorem}\label{Aut(mA)}
$\Aut_K(\mA )=\begin{cases}
\mT^2\ltimes \mI& \text{if }n=2,\\
\mT^2& \text{if }n\neq 2,\\
\end{cases}
=\begin{cases}
\{ t_{\l,\mu}, t_{\l,\mu}'\, |\, \l,\mu \in K^\times\}& \text{if }n=2,\\
\{ t_{\l,\mu}\, |\, \l\in K^\times\}& \text{if }n\neq 2,\\
\end{cases}$
 where $t_{\l, \mu}': h\mapsto \l x$, $x\mapsto \mu h$.
\end{theorem}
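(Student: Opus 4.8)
The strategy is to use the action of an arbitrary automorphism $\phi \in \Aut_K(\mA)$ on the prime spectrum $\Spec(\mA)$, which was described in Theorem \ref{qAc28Aug20}, together with the "peculiarities" of this spectrum: the poset $(\Spec(\mA),\subseteq)$ has very few height one primes that are both "extremal" in the sense of Proposition \ref{aA12Mar15}.(3) and lie below infinitely many maximal ideals. Since $\phi$ induces a topological (poset) automorphism of $\Spec(\mA)$, it must permute the distinguished prime ideals. First I would pin down the images of the two height one primes $(x)$ and $(h)$. Looking at the diagram (\ref{qXYZA}): $(x)$ sits below the infinite chain-family $\mX$ and below $(x,h)$; $(h)$ sits below $\mathbb{H}$ and below $(x,h)$; the family $\CN$ of central height one primes is not comparable with $(x,h)$. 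So $\{(x),(h)\}$ is characterised intrinsically as the set of height one primes $\gp$ with $\gp \subsetneq (x,h)$ and with infinitely many maximal ideals above $\gp$ (all the primes in $\CN$ fail the first condition, and the minimal primes of type $\mA\gn$ that could in principle lie below infinitely many maximals are excluded by non-comparability with the co-dimension-2 point $(x,h)$). Hence $\phi$ either fixes each of $(x),(h)$ or swaps them, and in the swapping case one needs $\mA/(x) \simeq \mA/(h)$, i.e. $K[h]\simeq K[x]$, which is fine, but the swap can only be realised if $n=2$: indeed $\o_x$ has order $n$ on $\mA$ and conjugation by $x$ acts on $h$ by $q$, conjugation by $h$ acts on $x$ by $q^{-1}$; an automorphism sending $x\mapsto$ (something generating $(h)$ up to units) and $h\mapsto$ (something generating $(x)$) forces the relation $xh=qhx$ to go to a relation of the same shape, which after comparing is only consistent with $q=q^{-1}$, i.e. $q^2=1$, i.e. $n=2$. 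This will be the first main step and the place where the case division $n=2$ vs. $n\neq 2$ enters.

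Next, assume $\phi$ fixes $(x)$ and $(h)$ (the generic case, and the only possibility when $n\neq 2$). Then $\phi(x)=\alpha x$ and $\phi(h)=\beta h$ where $\alpha,\beta$ are units of $\mA$, because $(x)$ and $(h)$ are principal ideals generated by normal elements and the only normal elements generating them are unit multiples of $x$ and $h$ respectively (the units of $\mA$ are $K^\times$ since $\mA$ is an iterated skew polynomial ring over a domain — I would note $\mA^\times = K^\times$, which follows from the $\N^2$-grading / degree argument, or cite that $\mA$ is a Noetherian domain with the obvious associated graded being a quantum plane). Hence $\phi(h)=\beta h$ and $\phi(x)=\alpha x$ with $\alpha,\beta\in K^\times$; such a map preserves $xh=qhx$ automatically and is exactly $t_{\beta,\alpha}$. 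This shows $\Aut_K(\mA)=\mT^2$ when $n\neq 2$, and $\Aut_K(\mA)\supseteq \mT^2$ always; moreover every automorphism fixing $(x),(h)$ lies in $\mT^2$.

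Finally, for $n=2$: the swap map $\iota:h\mapsto x, x\mapsto h$ is a well-defined algebra automorphism when $q=-1$ (check $xh=-hx \mapsto hx = -xh$, consistent), so $\iota\in\Aut_K(\mA(-1))$; it exchanges $(x)\leftrightarrow(h)$, hence $\iota\notin\mT^2$. Any $\phi$ that swaps $(x)$ and $(h)$ has $\phi\iota^{-1}$ fixing both $(x)$ and $(h)$, so $\phi\iota^{-1}\in\mT^2$ by the previous paragraph, giving $\phi = t_{\beta,\alpha}\iota$ for suitable $\beta,\alpha$; explicitly this is $t_{\beta,\alpha}':h\mapsto \beta x,\ x\mapsto \alpha h$. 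One checks $\mT^2$ is normalised by $\iota$ (indeed $\iota t_{\l,\mu}\iota^{-1}=t_{\mu,\l}$), that $\mT^2\cap\mI=\{1\}$, and that $\mI$ acts as claimed, so $\Aut_K(\mA)=\mT^2\ltimes\mI$, with the two families $\{t_{\l,\mu}\}$ and $\{t_{\l,\mu}'\}$ exhausting it. The only genuinely delicate point is the spectral characterisation of $\{(x),(h)\}$ as a $\phi$-stable set and the proof that a swap forces $n=2$; everything else is routine bookkeeping with units and defining relations. I would also remark that the same method (and the same diagrams) handles $\CA$ and $\CB$ in the subsequent results, so the argument here is the prototype.
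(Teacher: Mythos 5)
Your overall strategy is the same as the paper's, and the later steps (units of $\mA$ are scalars, the swap forcing $q=q^{-1}$ hence $n=2$, the semidirect product bookkeeping) are correct and in fact more carefully spelled out than in the paper. But the crucial first step --- identifying $\{(x),(h)\}$ as a set stable under any $\phi\in\Aut_K(\mA)$ --- has a genuine gap, because your proposed topological characterisation is false.

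You claim that the primes in $\CN$ are \emph{not} comparable with $(x,h)$, so that ``$\gp$ is a height-one prime, $\gp\subsetneq(x,h)$, and $\gp$ has infinitely many maximal ideals above it'' picks out exactly $(x)$ and $(h)$. This is not so. Take $f=s+t=h^n+x^n\in Z(\mA)=K[s,t]$. The polynomial $f$ is irreducible in $K[s,t]$ and $(f)\neq(s),(t)$, so $\mA f\in\CN$ and has height one. But $f=h^n+x^n\in(x,h)$, hence $\mA f\subsetneq(x,h)$, and $\mA f$ lies below infinitely many maximal ideals (all $\mA\gm$ with $f\in\gm\in\Max K[s,t]$). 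So $\mA f$ satisfies both of your conditions while being neither $(x)$ nor $(h)$. (Proposition \ref{aA12Mar15}.(3) only forbids $\gp\not\subseteq\gq$ for $\gp$ containing $xh$ and $\gq$ not; it says nothing about the reverse inclusion, and the dotted-line diagram in Theorem \ref{qAc28Aug20} does not claim to be exhaustive.) In fact, I do not see a purely poset-theoretic invariant that separates $(x),(h)$ from all of $\CN$; the primes $\mA(s+t)$ and $\mA(s-t)$, for example, also meet in the single point $(x,h)$ when $\char K\neq 2$, so intersection-counting criteria fail as well.

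The paper's fix is ring-theoretic rather than topological: $(x)$ and $(h)$ are the only \emph{height-one primes generated by a normal non-central element} of $\mA$. Every $\mA\gn\in\CN$ is generated by a single irreducible $f\in K[s,t]=Z(\mA)$, i.e. by a central element; and since $\mA$ is a domain with $\mA^\times=K^\times$, any two generators of a height-one principal ideal differ by a scalar, so $\mA\gn$ cannot also be generated by a normal non-central element. Normality, centrality, and the ideal lattice are all preserved by $K$-algebra automorphisms, so this characterisation gives the $\phi$-stability of $\{(x),(h)\}$ that your argument needs. Once you swap in this characterisation, the rest of your proposal is a complete proof, and it also adds two genuinely useful explicit verifications that the paper's proof leaves implicit: that $\mA^\times=K^\times$, and that an automorphism interchanging $(x)$ and $(h)$ forces $q^2=1$.
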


\begin{proof} The second equality of the theorem is obvious. Clearly, the group $\Aut_K(\mA )$ contains the semidirect product $\mT^2\ltimes \mI$ if $n=2$,  and $\mT^2$ if $n\neq 2$. Let us show that the reverse inclusions hold. 

Let $\tau \in \Aut_K(\mA)$. By (\ref{qXYZA}), the  ideals $(h)$ and $(x)$ are the only prime ideals of the algebra $\mA$ of height 1 that are generated by a  normal non central element of $\mA$. Therefore, there two cases either $\tau (h)=\l h$, $\tau (x)=\mu x$ or otherwise $\tau (h)=\l x$, $\tau (x)=\mu h$ for some units $\l , \mu \in \mA^\times = K^\times$ of the algebra $\mA$. 
So, in the first case, $\tau \in \mT^2$. In the second case, up to  multiplication by an element of $\mT^2$, we may assume that  $\l = \mu =1$, i.e. $\tau =\iota\in \mI$, and the theorem follows. 
\end{proof}

{\bf The automorphism group $\Aut_K(A_1)$.}   There are obvious  subgroups of $\Aut_K(A_1)$:
\begin{eqnarray*}
 \mJ&:=& \{1, \zeta \}, \;\; \zeta:  x\mapsto y, \;\; y\mapsto  x, \;\; {\rm if}\;\; n=2,\\
\mT^1 &:=& \{ t_{\l}\, | \, \l\in K^\times\}, \;\; t_{\l}:  x\mapsto \l x, \;\; y\mapsto \l^{-1} y.
\end{eqnarray*}
Notice that the conditions that $n=2$ and $q=-1$ is a primitive 2nd root of unity imply that $\char (K)\neq 2$. The group $\mJ$ has order 2 since $\zeta^2=1$. The group $\mT^1$ is a  {\em 1-dimensional algebraic torus} since $\mT^1\simeq K^\times $.

\begin{theorem}\label{Aut(A1(q))}
$\Aut_K(A_1 )=\begin{cases}
\mT^1\ltimes \mJ& \text{if }n=2,\\
\mT^1& \text{if }n\neq 2,\\
\end{cases}
=\begin{cases}
\{ t_{\l}, t_{\l}'\, |\, \l \in K^\times\}& \text{if }n=2,\\
\{ t_{\l}\, |\, \l\in K^\times\}& \text{if }n\neq 2,\\
\end{cases}$
 where $t_{\l}': x\mapsto \l y$, $y\mapsto \l^{-1} x$. For all $\tau \in \mT^1$, $\tau (h)=h$ but $\zeta (h)=-h$ if $n=2$. 
\end{theorem}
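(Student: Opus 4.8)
\textbf{Proof plan for Theorem \ref{Aut(A1(q))}.}

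The plan is to mimic the strategy used for Theorem \ref{Aut(mA)}: exploit the fact that every $K$-algebra automorphism of $A_1$ induces a homeomorphism of $\Spec(A_1)$ for the Zariski topology, and read off the constraints this places on where the generators $x$ and $y$ can go. First I would examine the poset $\Spec(A_1)$ described in Theorem \ref{28Jul23} (diagram (\ref{qXYZA2})) together with the factor-algebra information in Corollary \ref{e28Jul23}, and isolate an intrinsic (automorphism-invariant) description of the height-one primes $(t)$, $(r)$, and $(h)$. The key observations are: (i) among all height-one primes of $A_1$, exactly three — $(t)$, $(r)$, $(h)$ — are \emph{completely prime} and generated by a normal non-central element whose factor ring is still ``large'' in a suitable sense (one sees from Corollary \ref{e28Jul23}.(2)--(4) that $A_1/(t)$, $A_1/(r)$, $A_1/(h)$ are the prime factor rings whose quotient rings are $M_n(K(r))$, $M_n(K(t))$, $K(x)$ respectively); (ii) the prime $(h)$ is distinguished from $(t)$ and $(r)$ because $A_1/(h)\simeq K[x^{\pm1}]$ is a \emph{commutative} domain whereas $A_1/(t)$ and $A_1/(r)$ are not commutative (they have PI-degree $n\geq 2$). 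Hence any $\tau\in\Aut_K(A_1)$ must fix $(h)$, and must permute the two-element set $\{(t),(r)\}$.

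Next I would use these constraints on generators. The element $h=yx+\frac{1}{q-1}$ generates $(h)$ and is normal; since $\tau$ fixes the ideal $(h)$ and $A_1^\times=K^\times$, we get $\tau(h)=\alpha h + \beta$ for some $\alpha\in K^\times$, $\beta\in K$ — but applying $\tau$ to the relation $xh=qhx$ and comparing forces $\beta=0$, so $\tau(h)=\alpha h$. Now $t=x^n$ generates $(t)$ and $r=y^n$ generates $(r)$. In the first case, $\tau((t))=(t)$ and $\tau((r))=(r)$: then $\tau(x^n)=\lambda' x^n$ and $\tau(y^n)=\mu' y^n$ for units $\lambda',\mu'$, and from the $\Z$-grading of the GWA $A_1$ (graded pieces $A_n=K[h]x^n$, $A_{-n}=K[h]y^n$, and more finely the $\omega_h$-eigenspace decomposition (\ref{qA1Sum3})) one deduces $\tau(x)=\lambda x$, $\tau(y)=\mu y$ for some $\lambda,\mu\in K^\times$; the relation $yx=h-\frac1{q-1}$ combined with $\tau(h)=\alpha h$ gives $\lambda\mu=1$ and $\alpha=1$, so $\tau=t_\lambda\in\mT^1$. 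In the second case $\tau((t))=(r)$, $\tau((r))=(t)$: then $\tau(x)=\lambda y$, $\tau(y)=\mu x$, and the same grading/relation analysis — now using $\tau(yx)=\tau(h)-\frac1{q-1}$ and the relation in $A_1(q^{-1})$... actually one checks directly that $xy=qyx+(\text{scalar})$ type relations force $q=q^{-1}$, i.e. $n\le 2$; when $n=2$ one gets, up to composing with an element of $\mT^1$, exactly $\tau=\zeta$, and $\zeta(h)=\zeta(yx)+\frac{1}{q-1}=xy+\frac{1}{-1-1}=\bigl(-1\cdot yx+\text{stuff}\bigr)\ldots$ — more cleanly, $xy = -yx$ when $q=-1$ so $\zeta(h)=xy-\tfrac12 = -yx-\tfrac12 = -(yx+\tfrac12)=-h$. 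This yields the last sentence of the theorem. When $n\neq 2$ this second case is impossible, so $\Aut_K(A_1)=\mT^1$.

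Finally I would assemble: the semidirect-product structure $\mT^1\ltimes\mJ$ for $n=2$ follows since $\mJ$ normalizes $\mT^1$ (indeed $\zeta t_\lambda\zeta^{-1}=t_{\lambda^{-1}}$) and every automorphism is uniquely $t_\lambda$ or $t_\lambda' = t_\lambda\zeta$. The remark about $\tau(h)$ is immediate from the computations above: $t_\lambda$ fixes $h$ (since $\lambda\mu=1$ and the $\alpha=1$ computation), and $\zeta(h)=-h$ by the displayed calculation. The main obstacle I anticipate is step two in the ``swap'' case: one must rule out $\tau$ interchanging $(t)$ and $(r)$ when $n\neq 2$, and this requires carefully tracking how the relations $xh=qhx$, $yh=q^{-1}hy$ transform — the cleanest route is probably to observe that $\omega_h$ acts on the graded piece $K[h]x^n$ by $q^{-n}=1$ and on $K[h]y^n$ by $q^n=1$, so that finer information is needed; the decisive point is that $\tau$ must carry the relation $xh=qhx$ to $\tau(x)\tau(h)=q\,\tau(h)\tau(x)$, and if $\tau(x)\in K[h]y\,(\cdots)$ this forces $q=q^{-1}$. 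I expect this to be a short but slightly delicate computation rather than a genuine difficulty, since all the heavy lifting (the description of $\Spec(A_1)$ and of the prime factor algebras) is already available from Theorem \ref{28Jul23} and Corollary \ref{e28Jul23}.
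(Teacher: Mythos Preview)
Your overall strategy is right and close in spirit to the paper's, but point (i) contains a concrete error that leaves a genuine gap. The ideals $(t)$ and $(r)$ are \emph{not} completely prime --- by Corollary~\ref{e28Jul23}.(2),(3) their quotient rings are $M_n(K(r))$ and $M_n(K(t))$, not domains for $n\ge 2$ --- and they are generated by the \emph{central} elements $t=x^n$, $r=y^n$, not by normal non-central ones. So your criterion in (i) singles out only $(h)$, and you are left with no justification that $\tau$ permutes $\{(t),(r)\}$. There is no easy poset-theoretic rescue either: a generic $A_1\gn\in\CN''$ also has a non-commutative prime factor ring whose quotient ring is central simple of degree $n$, so Corollary~\ref{e28Jul23} alone does not separate $(t),(r)$ from $\CN''$.

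A clean repair that stays inside your plan: once you have $\tau(h)=\alpha h$ with $\alpha\in K^\times$ (your argument via (ii) does give this, since $(h)$ is the unique height-one prime with commutative factor ring and $A_1^\times=K^\times$), restrict $\tau$ to $Z(A_1)=K[t,r]$. Then $\tau(s)=\alpha^n s$ with $s=(-1)^{n-1}rt+(q-1)^{-n}$, so $\tau(r)\tau(t)=\alpha^n rt+\text{const}$. A total-degree count in $K[t,r]$ forces $\deg\tau(t)=\deg\tau(r)=1$; writing these as affine linear in $t,r$ and matching the $t^2$, $r^2$, $t$, $r$ coefficients yields $\tau(t)\in K^\times t\cup K^\times r$, $\tau(r)\in K^\times r\cup K^\times t$, and incidentally $\alpha^n=1$. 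Now your case split is justified, and the remainder of your sketch goes through: the $\omega_h$-eigenspace constraint together with a $\Z$-grading degree argument applied to $\tau(x)^n\in\{\lambda'x^n,\lambda' y^n\}$ forces $\tau(x)\in K[h]x$ in the first case and $\tau(x)\in K[h]y$ with $n=2$ in the second, and then the relation $yx=h-(q-1)^{-1}$ cuts the $K[h]$-coefficient down to a scalar exactly as you indicate. (The paper's own one-line proof asserts that ``$(x)$ and $(y)$'' are distinguished height-one primes of $A_1$, which cannot be taken literally since $1=xy-qyx\in(x)$; so an argument like the one above is needed in any case.)
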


\begin{proof} The second equality of the theorem is obvious. Clearly, the group $\Aut_K(A_1 )$ contains the semidirect product $\mT^1\ltimes \mJ$ if $n=2$,  and $\mT^1$ if $n\neq 2$. Let us show that the reverse inclusions hold. 

Let $\tau \in \Aut_K(A_1)$. By (\ref{qXYZA2}),
 the  ideals $(x)$ and $(y)$ are the only prime ideals of the algebra $\mA$ of height 1 that are generated by a non normal non central element of $\mA$. Therefore, there two cases either $\tau (x)=\l x$, $\tau (y)=\mu y$ or otherwise $\tau (x)=\l y$, $\tau (y)=\mu x$ for some units $\l , \mu \in A_1^\times = K^\times$ of the algebra $A_1$. By applying the automorphism $\tau $ that belongs to the first case to the defining relation $xy-qyx=1$ of the algebra $A_1$, we deduce that $\mu=\l^{-1}$.  So, in the first case, $\tau \in \mT^1$. 
 
 In the second case, up to  multiplication by an element of $\mT^1$, we may assume that  $\l=1$. Then applying $\tau$ to the defining relation of the algebra $A_1$ we deduce that $\mu =1$ and $n=2$,  i.e. $\tau =\zeta\in \mJ$, and the reverse inclusions hold. 
 
 It follows from the definition of the element  $h=yx+\frac{1}{q-1}$ (see (\ref{qA1Sum1})) that  $\tau (h)=h$  for all $\tau \in \mT^1$ but $\zeta (h)=-h$ if $n=2$.
\end{proof}

{\bf The automorphism group $\Aut_K(\CA)$.}   There are obvious  subgroups of $\Aut_K(\CA)$:
\begin{eqnarray*}
 \mK &:=& \{1, \kappa \}, \;\; \kappa : h\mapsto  h,\;\;  x\mapsto x^{-1},  \;\; {\rm if}\;\; n=2,\\ 
  \mU &:=& \{ \s_{\l x^i}\, | \, \l \in K^\times, i\in \Z\}, \;\; \s_{\l x^i}: h\mapsto \l x^ih, \;\; x\mapsto x,\\
  \mT^1_h &:=& \{ t_{1, \mu}\, | \,  \mu\in K^\times\}, \;\; t_{1,\mu}:  h\mapsto  h, \;\; x\mapsto \mu x,\\
   \mT^1_x &:=& \{ t_{\l, 1}\, | \,  \l\in K^\times\}, \;\; t_{\l,1}:  h\mapsto  \l h, \;\; x\mapsto  x,\\
 \Xi &:=&\{ \xi_i\, | \, i\in \Z\}, \;\; \xi_i: h\mapsto x^ih, \;\; x\mapsto x, \\
\mT^2 &:=& \{ t_{\l,\mu}\, | \, \l, \mu\in K^\times\}, \;\; t_{\l , \mu}:  h\mapsto \l h, \;\; x\mapsto \mu x.
\end{eqnarray*}
The group $\mK$ has order 2 since $\kappa^2=1$.  The group $\mU$ is isomorphic to the group of units $\CA^\times=\{ \l x^i\, | \, \l \in K^\times, i\in \Z\}$ of the algebra $\CA$ via $\s_{\l x^i}\mapsto \l x^i$. In particular, the group $\mU$ is an abelian group.  The group $\Xi$ is isomorphic to $\Z$ via $\xi_i\mapsto i$ and  $\mT^2=\mT^1_h\times \mT^1_x$ is a direct product of groups.  The group $\Xi$ is isomorphic to the group of inner automorphisms $\Inn (\CA)\simeq \mA^\times /K^\times$ of the algebra $\CA$ via $\xi_i\mapsto \o_{K^\times x^i}$. In particular, the group $\Xi$ is a normal subgroup of $\Aut_K(\CA)$. Notice that $\mU =\Xi\times \mT^1_x$.

\begin{theorem}\label{Aut(CA)}
$\Aut_K(\CA )=
\Big( \mU\ltimes \mT^1_h\Big)  \ltimes \mK =   
\{ \s_{\l x^i,\mu}, \s_{\l x^i, \mu}'\, |\, \l, \mu \in K^\times, i\in \Z\}$
 where $\s_{\l x^i,\mu}: h\mapsto \l x^i h$, $x\mapsto \mu x$ and  $\s_{\l x^i,\mu}': h\mapsto \l x^i h$, $x\mapsto \mu x^{-1}$.
\end{theorem}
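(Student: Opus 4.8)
The plan is to follow the same pattern as the proofs of Theorem \ref{Aut(mA)} and Theorem \ref{Aut(A1(q))}: use the classification of prime ideals of $\CA$ (Theorem \ref{CAqAc28Aug20}) to pin down where an automorphism $\tau$ can send the "distinguished" primes, and then exploit the defining relations to determine $\tau$ completely. First I would record that the subgroups $\mU$, $\mT_h^1$, $\mK$ listed before the statement are genuinely contained in $\Aut_K(\CA)$ and that the asserted products make sense as groups: $\Xi\simeq\Inn(\CA)$ is normal, $\mU=\Xi\times\mT_x^1$ is abelian and normalised by $\mT_h^1$ (since $t_{\l,1}$ fixes $x$ and rescales $h$), so $\mU\ltimes\mT_h^1$ is a well-defined subgroup, and finally $\kappa$ of order $2$ normalises $\mU\ltimes\mT_h^1$ (one checks $\kappa\s_{\l x^i,1}\kappa^{-1}=\s_{\l x^{-i},1}$ and $\kappa t_{1,\mu}\kappa^{-1}=t_{1,\mu^{-1}}$), giving the semidirect product $(\mU\ltimes\mT_h^1)\ltimes\mK$ as a subgroup of $\Aut_K(\CA)$. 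I would also note that $\kappa$ exists because $x$ is a unit of $\CA$ and that the hypothesis "$q$ a primitive $2$nd root of unity'' is not needed here — $\kappa$ is an automorphism for any $n$, since $\s(x^{-1})=q^{-1}x^{-1}$ — wait, more precisely $h\mapsto h$, $x\mapsto x^{-1}$ sends $xh=qhx$ to $x^{-1}h=qhx^{-1}$, i.e. $hx^{-1}=q^{-1}x^{-1}h$, which is the correct relation $xh=qhx$ read for $x^{-1}$ only if $q^{-1}=q$. So in fact $\kappa$ requires $q^2=1$, i.e. $n=2$; for $n\neq 2$ one should instead use $x\mapsto x^{-1}$ composed with $q\mapsto q^{-1}$, which is only an (anti)isomorphism $\CA(q)\to\CA(q^{-1})$, not an automorphism. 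I would double-check the theorem statement wording against this, but take the statement as given and prove the stated group is the full automorphism group, being careful to include the case distinction on $n$ exactly as the displayed answer does.

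The core of the argument: let $\tau\in\Aut_K(\CA)$. By Theorem \ref{CAqAc28Aug20}.(1) the only height-one prime of $\CA$ generated by a normal, non-central element is $(h)$ — indeed the primes in $\mathbb H'$ are $(h,\gq')$ which are maximal of height $2$, the primes in $\CN'$ and $\mM'$ meet the centre nontrivially and are generated by central elements up to the structure there, and $(h)$ is the unique one left; crucially $\CA$, unlike $\mA$, has no second normal coordinate hyperplane because $x$ is already a unit, so $(x)$ is not a prime of $\CA$ at all. Hence $\tau$ must preserve the ideal $(h)$, so $\tau(h)=uh$ for some unit $u\in\CA^\times=K^\times x^{\Z}$, say $u=\l x^i$. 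Post-composing with $\s_{\l^{-1}x^{-i},1}\in\mU$ I may assume $\tau(h)=h$. Then $\tau$ fixes $s=h^n$, hence fixes the centre? Not quite — I only know $\tau(h)=h$; I still must determine $\tau(x)$. Since $\tau$ is an automorphism it must send the group of units $\CA^\times$ to itself and send $Z(\CA)=K[h^n,x^{\pm n}]$ to itself; from $\tau(h)=h$ we get $\tau$ fixes $K[h^n]$ and permutes $K[x^{\pm n}]$, forcing $\tau(x^n)=\nu x^{\pm n}$ for some $\nu\in K^\times$ (the only unit-valued ring automorphisms of $K[x^{\pm n}]$ fixing $K$ send $x^n$ to $\nu x^{\pm n}$). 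Correspondingly $\tau(x)=\mu x$ or $\tau(x)=\mu x^{-1}$ for some $\mu\in K^\times$: this is because $\tau(x)$ is a unit, so $\tau(x)=\mu x^j$ for some $j\in\Z$, and applying $\tau$ to $xh=qhx$ with $\tau(h)=h$ gives $\mu x^j h=q h\mu x^j$, i.e. $x^jh=q^{1}\cdot\,$? — expanding, $x^jh=q^jhx^j$, so we need $q^j=q$, forcing $j\equiv 1\pmod n$; together with $\tau(x)$ a unit and $\tau$ surjective (so $j=\pm1$ is the only way $x$ lies in the image), we get $j=1$ or, in the $n=2$ case where $q=q^{-1}$, also $j=-1$. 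In the first sub-case $\tau=t_{1,\mu}\circ(\text{the }\mU\text{-correction})=\s_{\l x^i,\mu}$. In the second sub-case $\tau=\s_{\l x^i,\mu}'$ and this forces $n=2$.

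The step I expect to be the main obstacle is the clean justification that $\tau(h)\in(h)$ — i.e. that $(h)$ is the unique height-one prime of $\CA$ generated by a normal non-central element, so that its preimage/image under $\tau$ must be itself. The primes in $\CN'$ are of the form $\CA\gn$ with $\gn\subset Z(\CA)=K[h^n,x^{\pm n}]$ of height one, $\gn\neq(s)$; a generator of such a prime is a central element, hence central, not "non-central", so these are excluded — but I must make sure there is no subtle prime generated by, say, $h-\alpha x^{-n}h$ or similar normal element that I am overlooking. The safest route is: a normal element $w$ with $(w)$ prime of height one must generate a prime; since $\CB=\CA_s$ and every nonzero ideal of $\CB$ meets $Z(\CB)$ (Theorem \ref{qCAc28Aug20}.(1)), either $w$ is central (giving a prime in $\mM'\cup\CN'\cup\{(s)\}$) or $(w)$ becomes the unit ideal after inverting $s$, which forces $w$ to be a unit times a power of $h$; the only such prime of height one is $(h)$. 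I would write this out carefully, and also handle the bookkeeping of the semidirect-product decomposition $(\mU\ltimes\mT_h^1)\ltimes\mK=\{\s_{\l x^i,\mu},\s_{\l x^i,\mu}'\}$ — checking that every element of the stated group is hit exactly once and that the composition rules match, which is routine but must be stated. Everything else (the unit group of $\CA$, that $\Inn(\CA)\simeq\CA^\times/K^\times\simeq\Xi$, the relation checks for $\kappa$) follows from the structure already established in Section \ref{qAKHSKEW}.
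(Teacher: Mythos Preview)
Your approach matches the paper's: pin down $\tau(h)=uh$ via the uniqueness of $(h)$ among height-one primes of $\CA$ generated by a normal non-central element (Theorem \ref{CAqAc28Aug20}), then determine $\tau(x)=\mu x^{\pm1}$ from the induced automorphism of $\CA^\times/K^\times\simeq\Z$. Your worry about $\kappa$ is well-founded and in fact sharper than the paper's own argument: the map $\s'_{\l x^i,\mu}$ is an automorphism only when $q^2=1$, i.e.\ $n=2$, consistent with the paper's definition of $\mK$ carrying the caveat ``if $n=2$''; read the theorem with $\mK=\{1\}$ (and no $\s'$ maps) when $n\neq 2$.
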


\begin{proof} The second equality of the theorem is obvious. Clearly, the group $\Aut_K(\CA )$ contains the iterated semidirect product $\Big( \mU\ltimes \mT^1_h\Big)  \ltimes \mK$. Let us show that the reverse inclusion hold. 

Let $\tau \in \Aut_K(\CA)$. By (\ref{CAqXYZA}),
 the  ideal $(h)$ is the only prime ideals of the algebra $\CA$ of height 1 that is generated by a  normal non central element of $\CA$. Therefore, $\tau (h)=uh$ for some unit $u \in \CA^\times =\{ \l x^i\, | \, \l \in K^\times , i\in \Z\}$, i.e. $u=\l x^i$ for some $\l \in K^\times$ and $i\in \Z$. The group of inner automorphisms 
$\Inn (\CA)$ of the algebra $\CA$ is isomorphic to the factor group $\mA^\times /K^\times\simeq \Z$ that has only two group generators: $K^\times x$ or $K^\times x^{-1}$. Therefore, there two cases either $\tau (x)=\mu x$ or otherwise $\tau (x)=\mu x^{-1}$ for some $\mu \in K^\times$. 
Hence, either $\tau =\s_{\l x^i, \mu} $ or $\tau =\s_{\l x^i, \mu}' $, and the reverse inclusion follows.
\end{proof}

\begin{corollary}\label{aAut(CA)}
\begin{enumerate}
\item 
$\Aut_K(\CA)=\Big(\Xi\ltimes \mT^2\Big)\ltimes \mK \simeq \Big(\Inn (\CA)\ltimes \mT^2\Big)\ltimes \mK$.
\item The group of outer isomorphisms $\Out (\CA):=\Aut_K(\CA)/\Inn (\CA)$ is isomorphic to the group $\mT^2\ltimes \mK $.
\end{enumerate}
\end{corollary}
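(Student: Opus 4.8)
\textbf{Proof proposal for Corollary \ref{aAut(CA)}.}

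The plan is to derive both statements directly from Theorem \ref{Aut(CA)} by reorganizing the iterated semidirect product. First I would establish statement (1). Recall that in Theorem \ref{Aut(CA)} the group $\mU$ was described as $\mU = \Xi \times \mT^1_x$ (a direct product, since the elements $\xi_i$ fix $x$ and commute with the rescalings $t_{\l,1}$), and that $\mT^2 = \mT^1_h \times \mT^1_x$. Thus
\begin{equation*}
\mU \ltimes \mT^1_h = \big(\Xi \times \mT^1_x\big) \ltimes \mT^1_h.
\end{equation*}
The key observation is that $\mT^1_x$ and $\mT^1_h$ commute with each other inside $\Aut_K(\CA)$ (both are given by scalar rescalings of the generators), so the subgroup of $\Aut_K(\CA)$ generated by $\Xi$, $\mT^1_h$, and $\mT^1_x$ equals $\Xi \ltimes (\mT^1_x \times \mT^1_h) = \Xi \ltimes \mT^2$, with $\Xi$ normal (it is $\Inn(\CA)$, which is always normal). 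Then I would check that the three-factor product $\big(\Xi \ltimes \mT^2\big) \ltimes \mK$ is well-defined, i.e. $\mK$ normalizes $\Xi \ltimes \mT^2$: since $\kappa$ sends $x \mapsto x^{-1}$ and $h \mapsto h$, conjugation by $\kappa$ sends $\xi_i \mapsto \xi_{-i}$, fixes $\mT^1_h$, and inverts $\mT^1_x$, all of which land back in $\Xi \ltimes \mT^2$. The resulting decomposition has the same underlying set as the one in Theorem \ref{Aut(CA)} (every automorphism is uniquely $\s_{\l x^i,\mu}$ or $\s_{\l x^i,\mu}'$, which matches a unique triple in $\Xi \times \mT^2 \times \mK$), so the regrouping is an equality of groups. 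The isomorphism $\Xi \simeq \Inn(\CA)$ via $\xi_i \mapsto \o_{K^\times x^i}$ was already noted before Theorem \ref{Aut(CA)}, giving $\Aut_K(\CA) \simeq \big(\Inn(\CA) \ltimes \mT^2\big)\ltimes \mK$.

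For statement (2), I would use that $\Inn(\CA) = \Xi$ is the normal subgroup appearing in the innermost position of the decomposition from (1). Taking the quotient $\Aut_K(\CA)/\Inn(\CA)$ kills the $\Xi$ factor and leaves exactly $\mT^2 \ltimes \mK$: concretely, the composite $\mT^2 \ltimes \mK \hookrightarrow \Aut_K(\CA) \twoheadrightarrow \Out(\CA)$ is a group homomorphism which is surjective (every coset of $\Inn(\CA)$ is represented by some $t_{\l,\mu}$ or $t_{\l,\mu}\kappa$, reading off the "$\mu$-part" and the "$\kappa$-or-not part" of $\s_{\l x^i,\mu}^{(\prime)}$) and injective (if $t_{\l,\mu}\kappa^\ve \in \Inn(\CA)$ then since inner automorphisms fix the centre $Z(\CA) = K[s,t^{\pm1}]$ pointwise, applying to $s = h^n$ forces $\l = 1$ and $\ve = 0$, and applying to $t = x^n$ forces $\mu = 1$). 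Hence $\Out(\CA) \simeq \mT^2 \ltimes \mK$.

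I do not expect a serious obstacle here; the corollary is essentially a bookkeeping exercise with semidirect products. The one point requiring a little care is verifying the commutation and normalization relations needed to legitimately regroup $\big(\mU \ltimes \mT^1_h\big)\ltimes \mK$ as $\big(\Xi \ltimes \mT^2\big)\ltimes \mK$ — in particular that $\mK$ normalizes $\Xi \ltimes \mT^2$ and that $\mT^1_x$, $\mT^1_h$ genuinely commute — but these are immediate from the explicit formulas for the generators acting on $h$ and $x$. The injectivity argument in part (2) is where I would invoke the structure of $Z(\CA)$ from \eqref{CAqZmA}, which cleanly rules out any nontrivial torus or flip element being inner.
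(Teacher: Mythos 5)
Your proposal follows essentially the same route as the paper: regroup the iterated semidirect product $\big(\mU\ltimes \mT^1_h\big)\ltimes\mK$ from Theorem \ref{Aut(CA)} using $\mU=\Xi\times\mT^1_x$ and $\mT^2=\mT^1_h\times\mT^1_x$ to obtain $\big(\Xi\ltimes\mT^2\big)\ltimes\mK$, identify $\Xi$ with $\Inn(\CA)$, and then quotient by $\Xi$ to get $\Out(\CA)$. The paper compresses part (2) into one line; your fleshing-out is in the right spirit.

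However, the injectivity check in your part (2) has a small but genuine error. You claim that if $t_{\l,\mu}\kappa^\ve\in\Inn(\CA)$, then since inner automorphisms fix $Z(\CA)=K[s,t^{\pm1}]$ pointwise, applying to $s=h^n$ forces $\l=1$ and $\ve=0$, and applying to $t=x^n$ forces $\mu=1$. First, $\kappa$ fixes $s$ (since $\kappa(h)=h$), so $s$ gives no information about $\ve$; it is $t=x^n$, sent to $\mu^n t^{(-1)^\ve}$, that forces $\ve=0$. Second, and more importantly, the condition $s\mapsto\l^n s=s$ only gives $\l^n=1$, not $\l=1$ (and likewise $t$ only gives $\mu^n=1$). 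Since $q$ is a primitive $n$'th root of unity with $n\geq 2$, there do exist $\l,\mu\neq 1$ with $\l^n=\mu^n=1$, so the center argument alone does not pin down the triple. The correct way to finish is to compare directly with the explicit form of $\Inn(\CA)=\Xi=\{\xi_i\}$: an equality $t_{\l,\mu}\kappa^\ve=\xi_i$ evaluated on $x$ forces $\ve=0$ and $\mu=1$ (since $\xi_i(x)=x$ while $t_{\l,\mu}\kappa(x)=\mu x^{-1}$ cannot equal $x$), and evaluated on $h$ forces $\l h=x^i h$, i.e. $i=0$ and $\l=1$. With that repair, your argument is complete and equivalent to the paper's.
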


\begin{proof} 1. Recall that $\mU =\Xi\times \mT^1_x$,   $\mT^2=\mT^1_h\times \mT^1_x$, and  $\Inn (\CA)\simeq \mA^\times /K^\times\simeq \Xi$. 
 By Theorem \ref{Aut(CA)}),
 $$\Aut_K(\CA )=
\Big( \mU\ltimes \mT^1_h\Big)  \ltimes \mK = \Big( \Big( \Xi \times \mT^1_x\Big) \ltimes \mT^1_h\Big)  \ltimes \mK =\Big(\Xi\ltimes \mT^2\Big)\ltimes \mK \simeq \Big(\Inn (\CA)\ltimes \mT^2\Big)\ltimes \mK.$$
2. Statement 2 follows from statement 1.
\end{proof}

{\bf The automorphism group $\Aut_K(\CB)$.}  
The group of units $\CB^\times$of the algebra $\CB$ is equal to the set $\{ \l h^ax^b\, | \, \l \in K^\times, a,b\in \Z\}$. The multiplication in the group $\CB^\times$ is given by the rule: For all elements $\l , \mu \in K^\times$ and $a,b,c,d\in \Z$,
$$ \l h^ax^b\cdot \mu h^cx^d= \l \mu q^{bc} h^{a+c}x^{b+d}.$$
The group $\CB^\times$ is not an abelian group. 
It contains the group $K^\times$ that belongs to the centre of the group $\CB^\times$. So, $K^\times$ is a normal subgroup of $\CB^\times$ such that the factor group $\CB^\times / K^\times$ is isomorphic  to the abelian group $\Z^2$. Let $\Aut_{{\rm gr}, K^\times}(\CB^\times)$ be the  subgroup of the automorphism group of the group $\CB^\times$ such that every element of which fixes elements of $K^\times$ ($\g (\l) =\l$ for all $\l \in K^\times$). Clearly,
$$
\Aut_{{\rm gr}, K^\times}(\CB^\times)=
\{ \tau_{A,\l,\mu}\, |\, A\in \SL_2(\Z),  \l, \mu \in K^\times\}\;\;  {\rm where}\;\;\tau_{A,\l,\mu}: h\mapsto \l h^ax^b,\;\; x\mapsto \mu h^cx^
d$$
 and  $A=\begin{pmatrix}
a & b \\  c & d
 \end{pmatrix}
$. In more detail, let $\tau \in \Aut_{{\rm gr}, K^\times}(\CB^\times)$. The automorphism $\tau$
induces a group automorphism of the factor group
$\CB^\times /K^\times\simeq \Z^2$. Then necessarily $\tau = \tau_{A,\l,\mu}$ for some $A\in \GL_2(\Z)$
 and $\l,\mu \in K^\times$ such that $\tau$ respects the defining relation $xh=qhx$, i.e. $\tau (x)\tau (h)=q\tau (h)\tau (x)$ or equivalently $q^{da}h^{a+c}x^{d+b}=q\cdot q^{cb}h^{a+c}x^{d+b}$, i.e. $$q^{da-bc-1}=1.$$ Hence, $A\in \SL_2(\Z)$.

\begin{theorem}\label{Aut(CB)}
$\Aut_K(\CB )=\Aut_{{\rm gr}, K^\times}(\CB^\times) =
\{ \tau_{A,\l,\mu}\, |\,A\in \SL_2(\Z),  \l, \mu \in K^\times\}
$
 where $\tau_{A,\l,\mu}: h\mapsto \l h^ax^b $, $x\mapsto \mu h^cx^d$. 
\end{theorem}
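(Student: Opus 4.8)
\textbf{Proof plan for Theorem \ref{Aut(CB)}.}

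The plan is to proceed exactly as in the proofs of Theorems \ref{Aut(mA)}, \ref{Aut(A1(q))}, and \ref{Aut(CA)}: exploit the rigidity of the algebra structure on $\CB$, this time using the fact that $\CB$ has \emph{no} prime ideals of height $1$ generated by normal elements to discard the spectral argument and instead work directly with the group of units $\CB^\times$. First I would observe that the inclusion $\Aut_{{\rm gr},K^\times}(\CB^\times)\subseteq \Aut_K(\CB)$ is clear: each $\tau_{A,\l,\mu}$ with $A\in\SL_2(\Z)$ respects the defining relation $xh=qhx$ (this is precisely the computation $q^{da}h^{a+c}x^{d+b}=q\cdot q^{cb}h^{a+c}x^{d+b}$ carried out in the paragraph preceding the theorem, which forces $\det A=1$), and one checks it is bijective because $\SL_2(\Z)$ is a group — the inverse of $\tau_{A,\l,\mu}$ is again of this form. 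So the content is the reverse inclusion.

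For the reverse inclusion, let $\tau\in\Aut_K(\CB)$. The key point is that any $K$-algebra automorphism of $\CB$ restricts to a group automorphism of the unit group $\CB^\times=\{\l h^ax^b\mid \l\in K^\times,\, a,b\in\Z\}$, and it fixes $K^\times$ pointwise since $K^\times\subseteq\CB^\times$ consists exactly of the units that are algebraic over the prime subfield (more simply: $\tau$ is $K$-linear, so $\tau(\l)=\l$ for $\l\in K$, hence for $\l\in K^\times$). Therefore the restriction $\tau|_{\CB^\times}$ lies in $\Aut_{{\rm gr},K^\times}(\CB^\times)$, i.e.\ $\tau(h)=\l h^ax^b$ and $\tau(x)=\mu h^cx^d$ for some $\l,\mu\in K^\times$ and some matrix $A=\begin{pmatrix}a&b\\ c&d\end{pmatrix}\in\GL_2(\Z)$; and the requirement that $\tau$ preserve $xh=qhx$ forces $A\in\SL_2(\Z)$ by the computation already recorded in the excerpt. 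Finally, since $\CB=\bigoplus_{i,j=0}^{n-1}Zh^ix^j$ is generated as a $K$-algebra by $h$ and $x$, the automorphism $\tau$ is completely determined by the pair $(\tau(h),\tau(x))$, so $\tau=\tau_{A,\l,\mu}$. This gives $\Aut_K(\CB)\subseteq\Aut_{{\rm gr},K^\times}(\CB^\times)$ and hence equality.

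The one genuine obstacle is justifying that \emph{every} unit of $\CB$ has the form $\l h^ax^b$, i.e.\ $\CB^\times=\{\l h^ax^b\mid \l\in K^\times,\,a,b\in\Z\}$. This is where the grading is used: $\CB$ carries a $\Z^2$-grading (or filtration) with $h,x$ homogeneous of degrees $(1,0),(0,1)$, and $\CB$ is a Noetherian domain, so by a standard leading-term argument a unit must be homogeneous of some degree $(a,b)$ and its leading coefficient must itself be a unit of the degree-$(0,0)$ component $K$; conversely $h,x$ are invertible by construction of $\CB=K[h^{\pm1}][x^{\pm1};\s]$. I would state this as a short lemma (or cite it as immediate from the skew-Laurent structure), since it is the only input not already spelled out in the excerpt; everything else is bookkeeping with $\SL_2(\Z)$ and the relation $xh=qhx$. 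Note that, in contrast to the earlier three theorems, no appeal to the prime spectrum is needed here because in $\CB$ there are no proper nonzero proper-height-one primes cut out by normal elements to constrain $\tau$; the unit group alone does the job.
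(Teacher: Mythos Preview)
Your proposal is correct and follows essentially the same approach as the paper: both argue that any $K$-algebra automorphism of $\CB$ restricts to a group automorphism of $\CB^\times$ fixing $K^\times$, and hence must be of the form $\tau_{A,\l,\mu}$ with $A\in\SL_2(\Z)$. You supply more detail than the paper (in particular, you explicitly flag and justify the description of $\CB^\times$ via the $\Z^2$-grading, which the paper simply states), but the structure and key idea are identical.
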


\begin{proof}  The second equality of the theorem has been proven above. Clearly, $\Aut_K(\CB )\supseteq \Aut_{{\rm gr}, K^\times}(\CB^\times) $, as we checked already that every  $\tau_{A,\l,\mu}$ respects the defining relation $xh=qhx$ of the algebra $\CB$. Since every automorphism of the algebra $\CB$ induces a group automorphism of $\CB^\times$, we must have $\Aut_K(\CB )= \Aut_{{\rm gr}, K^\times}(\CB^\times)$. 
\end{proof}
The group $\Aut_K(\CB)$  contains the subgroup 
$$\mT^2 = \{ t_{\l,\mu}=\tau_{E,\l, \mu}\, | \, \l, \mu\in K^\times\}, \;\; t_{\l , \mu}:  h\mapsto \l h, \;\; x\mapsto \mu x.$$
Notice that the subset $\{ \tau_{A,1,1}\, |\,A\in \SL_2(\Z)\}$ is  not subgroups of $\Aut_K(\CB)$.

Recall that the group of inner automorphisms 
$\Inn (\CB)\simeq \CB^\times /K^\times$ of the algebra $\CB$ is isomorphic to $\Z^2$ via $K^\times h\mapsto (1,0), \;\; K^\times x\mapsto (0,1)$.  So, $\Inn (\CB)=\langle \o_h\rangle \times 
\langle \o_x\rangle =\{t_{q^i, q^j}\, | \, i,j\in \Z\} \subseteq \mT^2$. The set  $\O :=\{(q^i,q^j)\, | \, (i,j)\in \Z^2\}$ is a subgroup of $\Big(K^\times\Big)^2$.

\begin{corollary}\label{aAut(CB)}
 $\Out (\CB)=\Aut_{\rm gr}(\CB ^\times)/\Inn (\CB) =
\{ \tau_{A,\l,\mu}\, |\,A\in \SL_2(\Z), (\l, \mu)\in \Big(K^\times\Big)^2/\O\}.
$
 \end{corollary}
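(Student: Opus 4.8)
\textbf{Proof proposal for Corollary \ref{aAut(CB)}.}

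The plan is to deduce the description of $\Out(\CB) = \Aut_K(\CB)/\Inn(\CB)$ directly from Theorem \ref{Aut(CB)}, which already identifies $\Aut_K(\CB) = \{\tau_{A,\l,\mu}\,|\,A\in\SL_2(\Z),\ \l,\mu\in K^\times\}$, together with the explicit description of $\Inn(\CB)$ given in the paragraph preceding the corollary. So the first step is to record that $\Inn(\CB) = \langle\o_h\rangle\times\langle\o_x\rangle = \{t_{q^i,q^j}\,|\,i,j\in\Z\}$, where $t_{q^i,q^j} = \tau_{E,q^i,q^j}$ and $E$ is the identity matrix; this is exactly the subgroup of $\mT^2\subseteq\Aut_K(\CB)$ corresponding under $\tau_{E,\l,\mu}\mapsto(\l,\mu)$ to the subgroup $\O = \{(q^i,q^j)\,|\,(i,j)\in\Z^2\}$ of $(K^\times)^2$.

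Next I would compute the quotient. Since $\Inn(\CB)$ consists only of elements $\tau_{E,\l,\mu}$ with the \emph{identity} matrix part, and since (by inspection of the composition law, which one checks multiplies the matrix parts in $\SL_2(\Z)$ and adjusts the scalar parts) the map $\tau_{A,\l,\mu}\mapsto A$ is a group homomorphism $\Aut_K(\CB)\to\SL_2(\Z)$, the normal subgroup $\Inn(\CB)$ lies in the kernel of this homomorphism. Therefore the matrix component $A$ descends to a well-defined invariant on cosets, and two automorphisms $\tau_{A,\l,\mu}$ and $\tau_{A',\l',\mu'}$ lie in the same $\Inn(\CB)$-coset precisely when $A = A'$ and $(\l,\mu)$ and $(\l',\mu')$ differ by an element of $\O$, i.e. have the same image in $(K^\times)^2/\O$. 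This gives the bijection of sets $\Out(\CB)\leftrightarrow\{\tau_{A,\l,\mu}\,|\,A\in\SL_2(\Z),\ (\l,\mu)\in(K^\times)^2/\O\}$ asserted in the statement.

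Finally I would note that $\Inn(\CB)$ is indeed normal in $\Aut_K(\CB)$ (a standard fact: inner automorphisms form a normal subgroup of the full automorphism group of any ring), so that $\Out(\CB)$ is genuinely a group and the identification above is compatible with the group structure. The only mildly delicate point — and the one I expect to be the main obstacle — is verifying that the composition rule on the symbols $\tau_{A,\l,\mu}$ really does make the matrix-part map a homomorphism onto $\SL_2(\Z)$ with kernel exactly $\Inn(\CB)$; this requires tracking how the scalars $\l,\mu$ transform under composition (they pick up powers of $q$ coming from the cross terms in $h^a x^b\cdot h^c x^d = q^{bc}h^{a+c}x^{b+d}$), but once one observes that all such correction factors are powers of $q$, hence lie in the subgroup $\O$, the quotient by $\Inn(\CB)$ washes them out and the scalar part of a coset is well-defined only modulo $\O$, exactly as claimed. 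All of this is routine bookkeeping given Theorem \ref{Aut(CB)}, so the corollary follows.
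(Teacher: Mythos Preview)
Your proposal is correct and follows essentially the same approach as the paper, which simply says ``The corollary follows from Theorem \ref{Aut(CB)}.'' You have filled in the details the paper omits: identifying $\Inn(\CB)$ with $\{\tau_{E,q^i,q^j}\}$, using the factorization $\tau_{A,\l,\mu}=\tau_{A,1,1}\circ t_{\l,\mu}$ (or equivalently the matrix-part homomorphism) to see that cosets are parametrized by $A\in\SL_2(\Z)$ and $(\l,\mu)\bmod\O$, and observing that all $q$-power correction factors arising from composition are absorbed by $\O$.
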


\begin{proof} The corollary follows from Theorem \ref{Aut(CB)}.
\end{proof}



{\bf Licence.} For the purpose of open access, the author has applied a Creative Commons Attribution (CC BY) licence to any Author Accepted Manuscript version arising from this submission.

{\bf Disclosure statement.} No potential conflict of interest was reported by the author.

{\bf Data availability statement.} Data sharing not applicable – no new data generated.

\small{

School of Mathematics and Statistics

University of Sheffield

Hicks Building

Sheffield S3 7RH

UK

email: v.bavula@sheffield.ac.uk}


\begin{thebibliography}{99}

		
		\bibitem{Bav-GWA-FA-91} V. V. Bavula,  Finite-dimensionality of ${\rm Ext}\sp n$ and ${\rm Tor}\sb n$ of simple modules over a class of algebras. {\it  Funct. Anal. Appl.} \textbf{25} (1991), no. 3,
		229--230.


 \bibitem{Bav-UkrMathJ-92} V. V. Bavula, Simple $D[X,Y;\sigma,a]$-modules. {\it  Ukrainian Math. J.} {\bf 44} (1992) no. 12, 1500--1511.

	



\bibitem{Bav 2} V. V. Bavula, Generalized Weyl algebras and their representations,  {\em Algebra i Analiz} {\bf  4} (1992), no. 1, 75-97; English transl. in  {\em St.Petersburg Math. J.} {\bf 4} (1993), no. 1, 71--92.

\bibitem{Bav 3} V. V. Bavula, Generalized Weyl algebras, kernel and tensor-simple algebras, their simple modules, Representations of algebras. Sixth International Conference, August 19--22, 1992. CMS Conference proceedings (V.Dlab and H.Lenzing Eds.), v. {\bf 14} (1993), 83--106. 


\bibitem{Bav 5} V. V. Bavula,  The simple modules of the Ore extensions with coefficients from   Dedekind ring. {\it  Comm.  Algebra}  27 (1999), no. 6, 2665--2699.


\bibitem{Bav-AutWeylCharp} V. V. Bavula, The group of automorphisms of the first Weyl algebra in
prime characteristic and the restriction map, {\em Glasgow Math.
J.} {\bf 51} (2009) 263--274. 

\bibitem{JC2n-DPn-Bavula} V. V. Bavula, 
 The Jacobian ${\rm Conjecture}_{2n}$ implies the Dixmier ${\rm Problem}_n$, {\it Springer Proceedings in Mathematics and Statistics, Vol. 317: Algebraic Structures and Applications}, Ch.17 (2020) 411--425.  arXiv:math/0512250.

\bibitem{Bav-SpecWeylcharp} V. V. Bavula,   Classifications of  prime ideals and simple modules of the Weyl algebra $A_1$ in prime characteristic,  {\it Tokyo J. Math.}, {\bf 46} (2023), no. 1, 1--31. 

\bibitem{Bav-Lu-BL-qAge}  V. V. Bavula and T. Lu, The prime spectrum and simple modules over the quantum spatial ageing algebra.  {\em Algebra  Represent. Theory} {\bf 19} (2016), no. 5, 1109--1133.

\bibitem{PLM-rings}  V. V. Bavula,  PLM-rings and  Quillen's Lemma, submitted.

\bibitem{Bel-Kon05JCDP} A. Belov-Kanel and M. Kontsevich, The
Jacobian conjecture is stably equivalent to the Dixmier
Conjecture, {\it Mosc. Math. J.}, 7:2 (2007), 209--218. arXiv:math. RA/0512171.





\bibitem{Bl} R. E. Block, The irreducible representations of the Lie algebra $sl(2)$ and of the Weyl algebra,  {\em Adv. Math.}  {\bf 39} (1981) 69--110. 

\bibitem{Dix}
J. Dixmier,  Sur les alg\`{e}bres de Weyl,
{\it Bull. Soc. Math. France} {\bf 96} (1968), 209--242.

 
 
\bibitem{Makar-Limanov-1984} L. Makar-Limanov, On automorphisms of Weyl algebra, {\em  Bull. Soc. Math. France} {\bf  112} (1984), no. 3, 359–363.
 


\bibitem{MR} J. C. McConnell and J. C. Robson, Noncommutative Noetherian rings,
Wiley, Chichester,  1987.


\bibitem{Pierce-AssAlg} R. S. Pierce,  Associative algebras. Graduate Texts in Mathematics, 88. Studies in the History of Modern Science, 9. Springer-Verlag, New York-Berlin, 1982. xii+436 pp.


\bibitem{Quillen'sLemma} D. Quillen, On the endomorphism ring of a simple module over an  enveloping algebra, {\it Proc. Amer. Math. Soc.} {\bf 21} (1969), 171--172.



\bibitem{Tsuchi'03} Y. Tsuchimoto, Preliminaries on Dixmier
Conjecture, {\it Mem. Fac. Sci. Kochi Univ. Ser. A Math.}, {\bf
24} (2003), 43--59.

\bibitem{Tsuchi'05} Y. Tsuchimoto,  Endomorphisms of Weyl algebra and p-curvatures, {\it  Osaka J. Math.},   {\bf 42}  (2005),  no. 2, 435--452.















\end{thebibliography}
\end{document}